\theoremstyle{plain}
\newtheorem*{theorem*}{Theorem}
\newtheorem{theorem}{Theorem}[section] 
\newtheorem{lemma}[theorem]{Lemma}
\newtheorem{proposition}[theorem]{Proposition}
\newtheorem{corollary}[theorem]{Corollary}
\newtheorem{assumption}[theorem]{Assumption}
\theoremstyle{definition}
\newtheorem{definition}[theorem]{Definition}
\newtheorem{remark}[theorem]{Remark}
\numberwithin{equation}{section}
\DeclareMathOperator{\im}{Im}
\DeclareMathOperator{\re}{Re}
\renewcommand{\Re}{\re}
\renewcommand{\Im}{\im}
\DeclareMathOperator{\diag}{diag}
\newcommand{\ds}{\displaystyle}
\DeclareMathOperator{\sgn}{sgn}
\title{The two periodic Aztec diamond and matrix valued orthogonal
polynomials}
\author[1]{Maurice Duits}
\author[2]{Arno B.J. Kuijlaars}
\affil[1]{Department of Mathematics, 
Royal Institute of Technology (KTH),
Stockholm, Sweden. Email: duits@kth.se}
\affil[2]{Department of Mathematics, KU Leuven - University of Leuven,
Belgium, 
Email: arno.kuijlaars@kuleuven.be}
\date{\today}
\begin{document}

\maketitle


\begin{abstract}
We analyze domino tilings of the two-periodic Aztec diamond
by means of matrix valued orthogonal polynomials
that we obtain from a reformulation of the Aztec diamond 
as  a non-intersecting path model with periodic 
transition matrices. In a more general framework we express the
correlation kernel for the underlying determinantal point
process as a double contour integral that contains 
the reproducing kernel of matrix valued orthogonal 
polynomials. We use the Riemann-Hilbert problem to 
simplify this formula for the case of the two-periodic Aztec diamond.
 
In the large size limit we recover the three phases
of the model known as solid, liquid and gas. 
We describe fine asymptotics for the gas phase
and at the cusp points of the liquid-gas boundary, 
thereby complementing and extending results of
Chhita and Johansson. 

\end{abstract}
\tableofcontents

\section{Introduction} \label{sec:intro} 
We study domino tilings of the Aztec diamond with
a two periodic weighting. This model  falls into a class of models 
for which existing techniques for studying fine asymptotics are not adequate and only recently first 
important progress has been made \cite{BCJ,CJ}.  
We  introduce a new approach  based on matrix valued 
orthogonal polynomials that allows us to compute the determinantal
correlations at finite size and their asymptotics as the size of
the diamond gets large in a rather orderly way. 
We strongly believe that  this approach will also prove to be 
a good starting point for other tiling models with a periodic weighting. 

Random tilings of planar domains have been studied intensively in 
the past decade. Such models exhibit a rich structure 
including a limit shape and fluctuations that are expected 
to fall in  various important universality classes 
(see  \cite{BKMM,CEP,CKP,J05,J17,K,KO,KOS} for general references on the topic
and \cite{BK,Pe1,Pe2} for recent contributions).
When the correlation structure is determinantal, there is hope 
to understand the fine  asymptotic structure  by studying 
the asymptotic behavior of the correlation kernel. 
An important source of  examples of such models is the Schur 
process \cite{OR1}. For these models the correlation kernel 
can be explicitly computed in terms of a  double integral
representation, opening up to the possibility of performing an 
asymptotic analysis by means of classical steepest decent 
(or stationary phase) techniques. 

Of course, the  Schur process is rather special and many models 
of interest fall  outside this class. In particular this is 
true for random tilings or dimer models  with doubly periodic weightings.
Yet, these models have exciting new features and have 
therefore been discussed in the physics and mathematics literature
\cite{BCJ,CJ,CY,FR,KOS,NHB}. An important feature is the 
appearance of a so-called gas phase. For instance, in the 
two periodic weighting  for domino tiling of the Aztec diamond, 
the diamond can be partitioned into three regions: 
the solid, liquid and gas region \cite{OR1} 
(as we will see in Figure \ref{fig:AlgCurve} below). 
The gas region has not been observed in models that are in the Schur class. 
The $2$-point correlations  (for an associated particle process) 
in  the gas region behave differently when compared to the 
liquid regions. Indeed, the correlation kernel decays exponentially 
with the distance $d$ between the points, 
instead of $\sim 1/d$ in the liquid region. At the 
liquid-solid boundary one expects the Airy process to appear, 
but the situation at the gas-liquid boundary is far more 
complicated \cite{BCJ,CJ}. 

To the best of our knowledge, the two periodic Aztec diamond is 
the only  model  with periodic weightings for which rigorous 
results on fine asymptotics exist \cite{BCJ, CJ}. Inspired by  
a formula for  the Kasteleyn matrix found by Chhita and Young
\cite{CY}, Chhita and Johansson \cite{CJ} found a way to 
compute the asymptotic  behavior of the Kasteleyn matrix as the 
size of the Aztec diamond goes to infinity.  
We will follow  a different approach to studying such models with periodic weightings.

As we will recall in Section \ref{sec:paths}, the Aztec diamond 
can be described by non-intersecting paths  (we refer to \cite{J17} 
and the references therein for more background on the relation 
between dimers, tilings, non-intersecting paths and all that). 
For a general class of discrete non-intersecting paths with  
$p$-periodic transition matrices
(which includes $p$-periodic weightings for domino tilings of 
the Aztec diamond and $p$-periodic weightings for lozenge tilings 
of the hexagon), we show in Section \ref{sec:MVOP} how the 
correlation kernel can be written as a double integral 
formula involving matrix valued polynomials that satisfy 
a non-hermitian orthogonality.

We believe that this general setup has a high potential 
for a rigorous asymptotic analysis. The key fact is that 
these matrix valued orthogonal polynomials can be characterized 
in terms of the solution of  a $2p \times 2p$ matrix valued Riemann-Hilbert 
problem. With the highly developed Riemann-Hilbert toolkit 
at hand, we may thus hope to compute the asymptotic behavior 
of the polynomials, and more importantly the correlation
kernel. The formalism will be worked out in Section \ref{sec:MVOP}.
It provides a new perspective even on the classical examples 
of uniform domino tilings of  the Aztec diamond and 
lozenge tilings of a hexagon, as we will discuss briefly in 
Sections~\ref{subsec:uniformAD} and \ref{subsec:hexagon}.

The main focus of the paper is to show how the Riemann-Hilbert
approach can be exploited to find an asymptotic analysis for 
the two periodic Aztec diamond. Remarkably, in this case the 
result of the Riemann-Hilbert analysis is a surprisingly simple 
double integral formula for the correlation kernel. 
It is not an asymptotic result, but an exact formula valid for 
fixed finite $N$.
This representation also appears to be more elementary than the 
one given in Chhita and Johansson \cite{CJ}. 
The Riemann-Hilbert analysis is given in Section \ref{sec:RHP}. 
We analyze the double integral formula for the  kernel 
asymptotically using classical steepest descent techniques 
in Section \ref{sec:asymp}. 

The model of the two periodic Aztec diamond is explained and the main results are summarized in the next section.

\section{Statement of results} \label{sec:results}
In this section we will introduce the two periodic Aztec diamond and state our main results for this model. 

\subsection{Definition of the model}

The Aztec diamond is a region on the square lattice with a sawtooth boundary that can
be covered by $2 \times 1$ and $1 \times 2$ rectangles, called dominos. 
The squares have a black/white checkerboard coloring and a possible
tiling of the Aztec diamond of size $4$ is shown in Figure \ref{fig:AztecTiling}. 
There are four types of dominos, namely North, West, East, and South, 
that are also shown in the figure. 
The Aztec diamond model was first introduced in \cite{EKLP}.

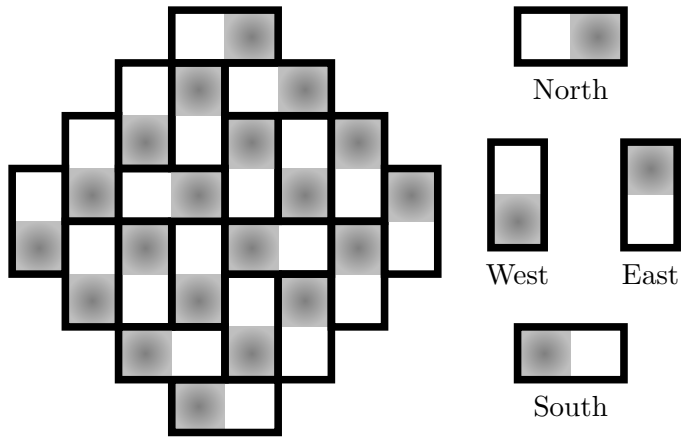
\begin{figure}[t]
\begin{center}
\begin{tikzpicture}[scale=0.7]


\foreach \x/\y in {-4/0,-3/1,-3/-1,-2/2,-1/-1,0/-2,1/1}
{ \draw[outer color=lightgray,inner color=gray]
(\x,\y)--(\x,\y-1)--(\x+1,\y-1)--(\x+1,\y); 
\draw [line width = 1mm] (\x,\y-1) rectangle (\x+1,\y+1);
}

\foreach \x/\y in {-2/-1,-1/2,0/1,1/-2,2/-1,2/1,3/0}
{  \draw[outer color=lightgray,inner color=gray]
(\x,\y)--(\x,\y+1)--(\x+1,\y+1)--(\x+1,\y);
\draw [line width = 1mm] (\x,\y-1) rectangle (\x+1,\y+1);
}

\foreach \x/\y in {0/-3,-1/-2,1/0}
{  \draw[outer color=lightgray,inner color=gray]
(\x,\y-1)--(\x-1,\y-1)--(\x-1,\y)--(\x,\y);
\draw [line width = 1mm] (\x-1,\y-1) rectangle (\x+1,\y);
}

\foreach \x/\y in {-1/1,0/4,1/3}
{  \draw[outer color=lightgray,inner color=gray]
(\x,\y-1)--(\x+1,\y-1)--(\x+1,\y)--(\x,\y);
\draw [line width=1mm] (\x-1,\y-1) rectangle (\x+1,\y);
}

 \draw[outer color=lightgray,inner color=gray]
(5,0.5)--(5,-0.5)--(6,-0.5)--(6,0.5);
\draw [line width = 1mm] (5,-0.5) rectangle (6,1.5);
\draw (5.5,-1) node {West};

 \draw[outer color=lightgray,inner color=gray]
(6.5,3)--(7.5,3)--(7.5,4)--(6.5,4);
\draw[line width = 1mm] (5.5,3) rectangle (7.5,4);
\draw (6.5,2.5) node {North};

 \draw[outer color=lightgray,inner color=gray]
(6.5,-3)--(5.5,-3)--(5.5,-2)--(6.5,-2);
\draw[line width = 1mm] (5.5,-3) rectangle (7.5,-2);
\draw (6.5,-3.5) node {South};

 \draw[outer color=lightgray,inner color=gray]
(7.5,0.5)--(7.5,1.5)--(8.5,1.5)--(8.5,0.5);
\draw[line width = 1mm] (7.5,-0.5) rectangle (8.5,1.5);
\draw (8,-1) node {East};
\end{tikzpicture}

\caption{Possible tiling of a $4 \times 4$ Aztec diamond with four kinds of dominos.
\label{fig:AztecTiling} } 
\end{center}
\end{figure}

In the two periodic Aztec diamond we assign a weight to each
domino in a tiling, depending on its shape (horizontal or vertical)
and its location in the Aztec diamond. We assume the Aztec diamond
is of even size.

To describe the two periodic weighting we introduce a coordinate system where 
$(0,0)$ is at the center of the Aztec diamond. 
The center of a horizontal domino has coordinates $(x,y + 1/2)$ with
$x,y \in \mathbb Z$. We then say that the horizontal
domino is in column $x$.
The center of a vertical domino has coordinates $(x+1/2,y)$ with 
$x,y \in \mathbb Z$, and we say that the
vertical domino is in row $y$. The row and column numbers 
run from $-N+1$ to $N-1$, where $2N$ is the size of the Aztec diamond.

We fix two positive numbers $a$ and $b$ and define the weights   as follows.
\begin{definition} The weight of a domino $D$ in a tiling $\mathcal T$ of
the Aztec diamond is
\begin{equation} \label{eq:weightD} 
	w(D) =
	 \begin{cases} a, & \text{if $D$ is a horizontal domino in an even column}, \\
	  b, & \text{if $D$ is a horizontal domino in an odd column}, \\
	  b, & \text{if $D$ is a vertical domino in an even row}, \\
	  a, & \text{if $D$ is a vertical domino in an odd row}. 
	\end{cases}	 
	 \end{equation}

The weight of the tiling $\mathcal T$ is 
\begin{equation} \label{eq:weightT}
	w(\mathcal T) = \prod_{D \in \mathcal T}  w(D),  
	\end{equation}
and the probability for $\mathcal T$ is
\begin{equation} \label{eq:probT} 
	\mathcal P(\mathcal T) =   \frac{w(\mathcal T)}{Z_N}, 
	\end{equation}
where $Z_N = \sum\limits_{\mathcal T'} w(\mathcal T')$ (sum over all
possible tilings $\mathcal T'$ of an Aztec diamond of size $2N$) 
is the partition function.
\end{definition}

In the example from Figure \ref{fig:AztecTiling} the weights are
shown in Figure \ref{fig:AztecWeights}. The weight of the tiling
is $w(\mathcal T) = a^{10} b^{10}$.

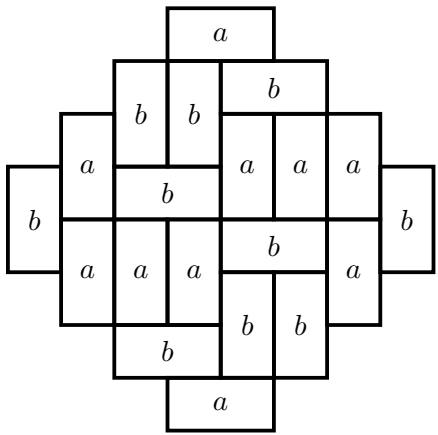
\begin{figure}[t]
\begin{center}
\begin{tikzpicture}[scale=.7]


\foreach \x/\y in {-4/0,-3/1,-3/-1,-2/2,-1/-1,0/-2,1/1}
{ 
\draw [line width = 0.5mm] (\x,\y-1) rectangle (\x+1,\y+1);
}

\foreach \x/\y in {-2/-1,-1/2,0/1,1/-2,2/-1,2/1,3/0}
{ 
\draw [line width = 0.5mm] (\x,\y-1) rectangle (\x+1,\y+1);
}

\foreach \x/\y in {0/-3,-1/-2,1/0}
{  
\draw [line width = 0.5mm] (\x-1,\y-1) rectangle (\x+1,\y);
}

\foreach \x/\y in {-1/1,0/4,1/3}
{ 
\draw [line width=0.5mm] (\x-1,\y-1) rectangle (\x+1,\y);
}

\foreach \x in {-3.5,3.5} \draw (\x,0) node {$b$};
\foreach \x in {-2.5,0.5,1.5,2.5} \draw (\x,1) node {$a$}; 
\foreach \x in {-2.5,-1.5,-0.5,2.5} \draw (\x,-1) node {$a$};
\foreach \x in {-1.5,-0.5} \draw (\x,2) node {$b$}; 
\foreach \x in {0.5,1.5} \draw (\x,-2) node {$b$};

\foreach \y in {-3.5,3.5} \draw(0,\y) node {$a$};
\foreach \y in {-0.5,2.5} \draw(1,\y) node {$b$};
\foreach \y in {-2.5,0.5} \draw(-1,\y) node {$b$};
\end{tikzpicture}
   \caption{Two periodic weights of dominos in a tiling of 
   the Aztec diamond. The vertical dominos
   in an even row and the horizontal dominos in an
   odd column have weight $a$. Other dominos have weight $b$. 
\label{fig:AztecWeights}}
\end{center}
\end{figure}

The model is homogeneous in the sense that the probabilities \eqref{eq:probT} 
do not change
if we multiply $a$ and $b$ by a common factor. We may and do assume $a b = 1$.
In what follows it will be more convenient to work with
\begin{equation} \label{eq:defalphabeta}
	\alpha = a^2, \qquad \beta = b^2 
	\end{equation}
instead of $a$ and $b$. We have $\alpha \beta = 1$, and without loss of
generality we assume $\alpha \geq 1$. If $\alpha = \beta = 1$ then the model
reduces to the uniform weighting on domino tilings, and so the true 
interest is in the case $\alpha > 1$, and this is what we assume from now on.

\subsection{Particle system and determinantal point process}

By putting a particle in the black square of the West and South dominos, we obtain a random particle system. In our running example
the particle systems is shown in Figure \ref{fig:AztecParticles}. 


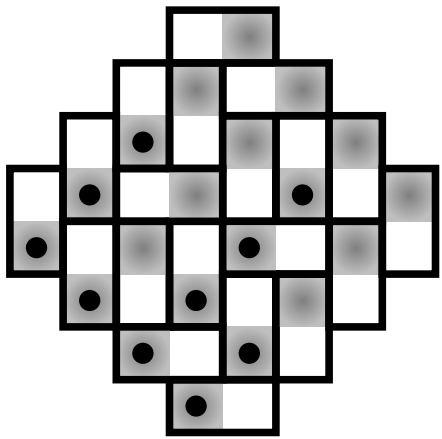
\begin{figure}[t]
\begin{center}
\begin{tikzpicture}[scale=.7]


\foreach \x/\y in {-4/0,-3/1,-3/-1,-2/2,-1/-1,0/-2,1/1}
{ \draw[outer color=lightgray,inner color=gray]
(\x,\y)--(\x,\y-1)--(\x+1,\y-1)--(\x+1,\y); 
\draw [line width = 1mm] (\x,\y-1) rectangle (\x+1,\y+1);
\fill (\x+1/2,\y-1/2) circle [radius=2mm];
}

\foreach \x/\y in {-2/-1,-1/2,0/1,1/-2,2/-1,2/1,3/0}
{  \draw[outer color=lightgray,inner color=gray]
(\x,\y)--(\x,\y+1)--(\x+1,\y+1)--(\x+1,\y);
\draw [line width = 1mm] (\x,\y-1) rectangle (\x+1,\y+1);
}

\foreach \x/\y in {0/-3,-1/-2,1/0}
{  \draw[outer color=lightgray,inner color=gray]
(\x,\y-1)--(\x-1,\y-1)--(\x-1,\y)--(\x,\y);
\draw [line width = 1mm] (\x-1,\y-1) rectangle (\x+1,\y);
\fill (\x-1/2,\y-1/2) circle [radius=2mm];
}

\foreach \x/\y in {-1/1,0/4,1/3}
{  \draw[outer color=lightgray,inner color=gray]
(\x,\y-1)--(\x+1,\y-1)--(\x+1,\y)--(\x,\y);
\draw [line width=1mm] (\x-1,\y-1) rectangle (\x+1,\y);
}
\end{tikzpicture}

\caption{Particles in a domino tiling.
\label{fig:AztecParticles} }
\end{center}
\end{figure}

We rotate the picture over 45 degrees in clockwise direction
and we change the coordinate system so that 
black squares are identified with the product set
\begin{equation} \label{calBN} 
 \mathcal B_N = \{0, \ldots, 2N \} \times \{0, \ldots, 2N-1 \} 
 \end{equation}
Any possible tiling of the Aztec diamond gives rise to a subset 
$\mathcal X = \mathcal X(\mathcal T)$  of $\mathcal B_N$ containing
the squares that are occupied by a particle.

We use $(m,n) \in \mathcal B_N$ to denote an element $\mathcal B_N$ 
and we will refer to $m$ as the level in $\mathcal B_N$. 
Any $\mathcal X$ that comes from a tiling will have $2N-m$
particles at level $m$ for each $m=0,1, \ldots, 2N$. Therefore the
cardinality is
\[ | \mathcal X | =  N(2N+1). \]
There are also interlacing conditions that are satisfied when
comparing the particles at level $m$ with those at level $m+1$.

The probability measure \eqref{eq:probT} on tilings gives rise to a
probability measure on subsets $\mathcal X$, that turns out to be
determinantal. This means that there exists  a kernel 
\begin{equation} \label{eq:AztecKernel} 
	K_N : \mathcal B_N \times \mathcal B_N \to \mathbb R 
	\end{equation}
with the property that for any subset $S \subset \mathcal B_N$
\[ \mathcal P \left[ S \subset \mathcal X \right] = 
	 \det \left[ K_N(x,y) \right]_{x,y \in S}. \]
This is a discrete determinantal point process \cite{B}.

We found an explicit double contour integral formula for the kernel $K_N$. 
We take $(m,n), (m', n') \in \mathcal B_N$, and instead of 
$K_N((m,n), (m',n'))$ we write $K_N(m,n; m',n')$. We
collect $K_N(m,n;m',n')$ with some of its neighbors in
a $2 \times 2$ matrix 
\begin{equation} \label{eq:KNmatrix}
	\mathbb K_N(m,n; m',n')
	=
	\begin{pmatrix} 
	K_N(m,n; m',n') & K_N(m,n+1; m',n') \\
	K_N(m,n; m', n'+ 1) & K_N(m,n+1; m', n'+1) \end{pmatrix}
\end{equation}
and this matrix  appears in our formula \eqref{eq:theo12}.

\begin{theorem} \label{thm:formula}
Assume $N$ is even and $(m,n) \in \mathcal B_N$, $(m',n') \in \mathcal B_N$ are such that
$m+n$ and $m'+ n'$ are even. Then 
\begin{multline} \label{eq:theo12} 
	\mathbb K_N(m,n; m',n') =
	 - \frac{\chi_{m > m'}}{2\pi i} \oint_{\gamma_{0,1}} A^{m-m'}(z)  z^{(m'+ n')/2 - (m+n)/2} \frac{dz}{z} 
	\\
	+ \frac{1}{(2\pi i)^2} \oint_{\gamma_{0,1}} \frac{dz}{z} 
		\oint_{\gamma_{1}} \frac{dw}{z-w} A^{N-m'}(w) F(w) A^{-N+m}(z) \\
		\times
		\frac{z^{N/2} (z-1)^N}{w^{N/2} (w-1)^N} \frac{w^{(m'+n')/2}}{z^{(m+n)/2}} 
		\end{multline}
	where
\begin{equation} \label{eq:defA}
	A(z) = \frac{1}{z-1} \begin{pmatrix} 2 \alpha z & \alpha (z+1) \\
		\beta z (z+1) & 2 \beta z \end{pmatrix} \end{equation}
and
\begin{equation} \label{eq:defF}
	F(z) = \frac{1}{2} I_2 
		+ \frac{1}{2\sqrt{z(z+\alpha^2)(z+\beta^2)}} \begin{pmatrix} (\alpha-\beta) z & \alpha (z+1) \\
			\beta z (z+1) & - (\alpha-\beta) z \end{pmatrix},
			\end{equation}
where $I_2$ denotes the $2 \times 2$ identity matrix and we use
the principal branch of the square root in \eqref{eq:defF}. The notation 
$\chi$ in \eqref{eq:theo12} denotes the indicator function, $\chi_{m>m'}=1 $ if $m>m'$ and $\chi_{m>m'}=0$ otherwise. 

The contour $\gamma_{0,1}$ in \eqref{eq:theo12} is a simple closed contour going
around $0$ and $1$ in positive direction. The contour $\gamma_1$ is a simple closed
contour in the right half-plane that goes around  $1$ in positive direction, and it
lies in the interior of $\gamma_{0,1}$, see Figure \ref{fig:Contours1}.		
\end{theorem}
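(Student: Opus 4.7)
The plan is to prove Theorem \ref{thm:formula} by following the pipeline advertised in the introduction: rewrite the tiling as non-intersecting paths, apply the general MVOP double-integral representation, then collapse the reproducing kernel by solving a $4\times 4$ Riemann--Hilbert problem in closed form. I would organize the argument in four steps.

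First, using the reformulation of Section \ref{sec:paths}, I would express the $2\times 2$ block kernel $\mathbb K_N$ in terms of a non-intersecting path model whose one-step transition matrix is $2$-periodic in the relevant coordinate. Composing two consecutive one-step transitions should produce exactly the generating matrix $A(z)$ of (1.4); this is a short symbolic calculation once the black/white parity conventions for the weights $\alpha$ and $\beta$ are fixed. The factor $z^{N/2}(z-1)^N$ in the integrand should already appear at this stage as the $z$-generating function of the paths over $N$ double steps, with the shift $z^{(m'+n')/2-(m+n)/2}$ accounting for the starting and ending levels.

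Second, I would apply the general double-contour formula of Section \ref{sec:MVOP}. Its output is a formula of the same shape as \eqref{eq:theo12}, but with an unknown factor $F(w)$ (or more precisely a piece built from the reproducing kernel of the associated matrix valued orthogonal polynomials) sitting between $A^{N-m'}(w)$ and $A^{-N+m}(z)$. The term $-\chi_{m>m'}/(2\pi i)\oint A^{m-m'}(z) z^{(m'+n')/2-(m+n)/2}dz/z$ should emerge as the ``diagonal'' contribution coming from the identity part of the reproducing kernel, exactly as the analogous term shows up in the Eynard--Mehta style kernel for non-intersecting paths.

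The core step is the third one: identifying $F(z)$ explicitly. Here I would set up the $4\times 4$ Riemann--Hilbert problem for the associated MVOPs using the weight dictated by $A$ and solve it. The key observation that makes a closed form possible is that $\det(wI-A(z))$ defines an algebraic curve of genus one with branch cut visible through $\sqrt{z(z+\alpha^2)(z+\beta^2)}$ (note that $\alpha\beta=1$ gives $\det A(z)=-z$ and a clean discriminant). Diagonalizing $A$ on this spectral curve, or equivalently performing the standard ``global parametrix''-type transformation of the RHP, should yield $F(z)$ as the projector onto the relevant eigenspace of $A(z)$; this projector naturally has the form $\tfrac12 I_2$ plus a rank-one piece scaled by $\tfrac{1}{2\sqrt{z(z+\alpha^2)(z+\beta^2)}}$, matching \eqref{eq:defF}. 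The principal branch of the square root is then dictated by the requirement that $F(z)\to \operatorname{diag}(1,0)$ at infinity, which is the correct normalization for a spectral projector onto the ``large'' eigenvalue.

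Finally, substituting the explicit $F$ back into the MVOP formula and tracking contours gives \eqref{eq:theo12}. The contour $\gamma_{0,1}$ comes from residues at $z=0$ and $z=1$ produced by the path generating function, while $\gamma_1$ inside $\gamma_{0,1}$ encircles only $w=1$ because $F(w)$ is analytic in the right half-plane away from $w=1$ (the branch points $-\alpha^2,-\beta^2$ lie on the negative axis) and because we need to avoid the singularity at $z=w$. I expect the principal obstacle to be the explicit solution of the $4\times 4$ RHP in step three: establishing that the global parametrix built from the elliptic spectral curve of $A$ matches the MVOP RHP exactly, including normalizations at $z=0,1,\infty$, and verifying that no local parametrices or error terms are needed (which is special to this finite-$N$ exact identity, as opposed to the asymptotic analysis carried out later in Section \ref{sec:asymp}).
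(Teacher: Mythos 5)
Your outline follows exactly the paper's pipeline (non-intersecting paths $\to$ Eynard--Mehta/MVOP double-integral kernel $\to$ explicit solution of the $4\times 4$ Riemann--Hilbert problem via the spectral curve of $A$), and the key identification of $F$ as the spectral projector $E\,\mathrm{diag}(1,0)\,E^{-1}$ onto the $\rho_1$-eigenspace is the right one. Three of your supporting claims are off, though. First, $F(z)$ does \emph{not} tend to $\mathrm{diag}(1,0)$ at infinity: the off-diagonal entry $\beta z(z+1)/(2\sqrt{z(z+\alpha^2)(z+\beta^2)})$ grows like $z^{1/2}$, so $F(z)=O(z^{1/2})$; the branch is instead fixed by positivity of the square root on $(0,\infty)$. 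Second, the contour $\gamma_1$ must encircle $1$ not because $F$ is singular there ($F$ is analytic at $z=1$; its only singularities are the cuts $(-\infty,-\alpha^2]\cup[-\beta^2,0]$) but because $A^{N-m'}(w)(w-1)^{-N}$ has a high-order pole at $w=1$. Third, the single-integral term with $\chi_{m>m'}$ is the transition-matrix term of the Eynard--Mehta formula itself, not a piece of the reproducing kernel. Beyond these, be aware that the hard work in the paper is not just writing down a global parametrix: one needs the chain of transformations that absorbs $\lambda_{1,2}^{N/2}(z)/(z-1)^{N}$ differently inside and outside $\gamma_1$, opens a lens so that the remaining jump is lower triangular, solves for $S$ as a single Cauchy integral of $F(s)(s-1)^{-2N}$, and then collapses the resulting triple integral by a residue computation in $w$ (valid only when $m'+n'\ge 2m'$, i.e.\ $2y\ge m'$ in the path coordinates) to land on \eqref{eq:theo12}. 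Your plan is compatible with all of this, but these are the steps where the actual proof lives.
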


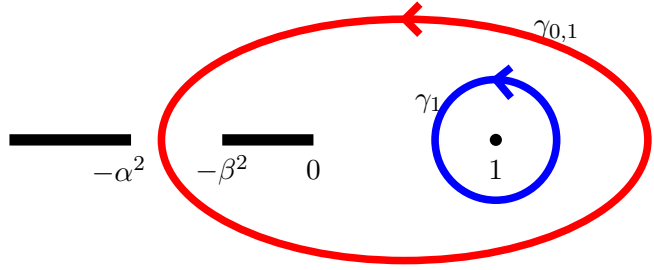
\begin{figure}[t]
\begin{center}
\begin{tikzpicture}[scale=0.8]

\draw [line width=1.5mm] (-6,0)--(-4,0);
\draw (-4.2,-0.5) node {$-\alpha^2$};

\draw [line width= 1.5mm] (-2.5,0)--(-1,0);
\draw (-2.5,-0.5) node {$-\beta^2$};
\draw (-1,-0.5) node {$0$};

\fill (2,0) circle [radius=1mm];
\draw (0.9,0.6) node {$\gamma_1$};
\draw (2,-0.5) node {$1$};
\draw [blue,line width =1mm] (2,0) circle (1cm);
\draw [blue,line width = 1mm] (2.25,0.7)--(2,1)--(2.25,1.2);

\draw (3,1.8) node {$\gamma_{0,1}$};
\draw [red,line width=1mm,] (0.5,0) ellipse (4cm and 2cm);
\draw [red,line width=1mm] (0.75,1.75)--(0.5,2)--(0.75,2.25);
\end{tikzpicture} 
\end{center}
\caption{Contours $\gamma_{0,1}$ and $\gamma_1$ used in
the definition of $\mathbb K_N$ in \eqref{eq:theo12} in
Theorem \ref{thm:formula}.
\label{fig:Contours1}}
\end{figure}

\begin{remark} \label{rem:RS}
The square root factor in \eqref{eq:defF} is defined and analytic for
$z \in \mathbb C \setminus ((-\infty,-\alpha^2] \cup [-\beta^2, 0])$ with
the branch that is positive for real $z > 0$.
This is one sheet of the Riemann surface $\mathcal R$ associated with the cubic
equation 
\begin{equation} \label{eq:RSeq} 
	y^2 =  z (z+\alpha^2) (z + \beta^2), 
	\end{equation}
that will play an important role in what follows. It is a two sheeted 
surface consisting of two sheets
$\mathbb C \setminus ((-\infty,-\alpha^2] \cup [-\beta^2,0])$ glued together
along the two cuts $(-\infty,-\alpha^2]$ and $[-\beta^2,0]$ in the usual
crosswise manner. The surface has genus $1$ unless $\alpha = \beta = 1$ 
in which case the genus drops to $0$.

The matrix valued function $F(z)$ from \eqref{eq:defF} is considered on the first sheet.
Its analytic continuation to the second sheet is given by $I_2 - F(z)$.
\end{remark}

\begin{remark}
The eigenvalues of $A(z)$, see \eqref{eq:defA}, are equal to $\frac{\rho_{1,2}(z)}{z-1}$
where 
	\begin{equation} \label{eq:rho12}
	\rho_{1,2}(z) = (\alpha + \beta) z \pm \sqrt{z(z+\alpha^2)(z+\beta^2)}.
	\end{equation}
are the eigenvalues of  $\begin{pmatrix} 2 \alpha z & \alpha (z+1) \\
		\beta z (z+1) & 2 \beta z \end{pmatrix}$. 
Thus \eqref{eq:rho12} are the two branches of the meromorphic function 
$\rho = (\alpha + \beta) z + y$ on the Riemann surface $\mathcal R$ 
associated with \eqref{eq:RSeq}, with $\rho_{1}$ on the first sheet and $\rho_2$ on the second sheet.

The eigenvectors can also be considered on the Riemann surface.
There is a matrix $E(z)$ whose columns are the eigenvectors such that
\begin{equation} \label{eq:Adecomp}
	A(z) = \frac{1}{z-1} E(z) \begin{pmatrix} \rho_1(z) & 0 \\ 0 & \rho_2(z) \end{pmatrix} E^{-1}(z).
	\end{equation}
See \eqref{eq:defE} below for the precise formula for $E(z)$.
It turns out that (see \eqref{eq:defF} for the definition of $F(z)$)
\begin{equation}  \label{eq:Fdecomp}
  F(z) = E(z) \begin{pmatrix} 1 & 0 \\ 0 & 0 \end{pmatrix} E^{-1}(z).
\end{equation}
Thus $F(z)$ has eigenvalues $0$ and $1$, and $F(z)$ commutes with $A(z)$.
	
We will also work with
\begin{equation} \label{eq:defW}
	W(z) = \frac{A^2(z)}{z} 
	\end{equation}
which in view of \eqref{eq:Adecomp} has eigenvalue decomposition
\begin{equation} \label{eq:Wdecomp} 
	W(z) = E(z) \Lambda(z) E^{-1}(z),
	\qquad \Lambda(z) = \begin{pmatrix} \lambda_1(z) & 0 \\ 0 & \lambda_2(z) \end{pmatrix} \end{equation}
with 
\begin{equation} \label{eq:lambda12}
	\lambda_{1,2}(z) = \frac{\rho_{1,2}^2(z)}{z(z-1)^2}.
	\end{equation}
These eigenvalues are the two branches of a meromorphic function $\lambda$ on the
Riemann surface $\mathcal R$, with $\lambda_{1}$ defined on the first 
sheet and $\lambda_2$ on the second sheet.
\end{remark}

\begin{remark} 
The only singularity for the $w$ integral in \eqref{eq:theo12} 
that is inside
the contour $\gamma_1$ is the pole at $w=1$. Because of \eqref{eq:defA} we see that
$A^{N-m'}(w)$ has a pole of order $N-m'$ (it is a zero if $m' > N$) 
and therefore the integrand in the double integral of \eqref{eq:theo12} has a 
pole of order $N-m'+ N = 2N-m'$ at $w=1$. 
There is no pole if $m'=2N$, and thus it follows that \eqref{eq:theo12} 
vanishes identically for $m'  = 2N$. This is in agreement with the fact 
that there are no particles at level~$2N$.

Level $0$ is full of particles, which means that 
$K_N(0,n; 0,n) = 1$ for all $n = 0, \ldots, 2N-1$. 
This can be seen from formula \eqref{eq:theo12} as follows.
For $n$ even, the formula gives for $\mathbb K_N(0,n;0,n)$,
\begin{multline} \label{eq:KN0n0n}
	 \frac{1}{(2\pi i)^2}
	\oint_{\gamma_{0,1}} \frac{dz}{z} \oint_{\gamma_1} \frac{dw}{z-w}
	A^N(w) F(w) A^{-N}(z) \frac{z^{N/2} (z-1)^N}{w^{N/2} (w-1)^N}
	\frac{w^{n/2}}{z^{n/2}} \end{multline}

In view of \eqref{eq:Fdecomp}, \eqref{eq:defW}, and \eqref{eq:Wdecomp}
we have \begin{align} \label{eq:AFproduct1}
 A^N(w) F(w) & = w^{N/2} F(w) \lambda_1^{N/2}(w), \\
 A^N(w)(I_2-F(w)) & = w^{N/2} (I_2-F(w)) \lambda_2^{N/2}(w).
 \label{eq:AFproduct2}
\end{align}
We will see in Lemma \ref{lem:lambdarho} (b) that $\lambda_1(w)$
has a double pole and $\lambda_2(w)$ has a double zero at $w=1$.
There are no other zeros and poles.  Thus \eqref{eq:AFproduct1}
has a pole of order $N$ at $w=1$, and \eqref{eq:AFproduct2}
has a zero of order $N$ at $w=1$. It follows that
the $w$-integrand in \eqref{eq:KN0n0n} has a pole of
order $2N$ at $w=1$. However, if we replace $F(w)$
by $I_2-F(w)$, then the integrand does not have a pole
at $w=1$  anymore,
and thus by Cauchy's theorem the double integral is zero. 

It follows that the value of the double integral \eqref{eq:KN0n0n}
remains the same if we remove the factor $F(w)$. Then the 
integrand that remains is rational in $w$, with a simple pole at
$w=z$ and a decay $O(w^{-1-N+n/2})$ as $w \to \infty$. 
We apply the residue theorem to the exterior of $\gamma_1$
and the only contribution comes from the pole at $w=z$. 
The $z$-integral that remains in \eqref{eq:KN0n0n} reduces to
\[ \frac{1}{2\pi i} \oint_{\gamma_{0,1}} \frac{dz}{z}  I_2 = I_2
\]
and so by looking at the diagonal entries, see \eqref{eq:KNmatrix},
we get $K_N(0,n;0,n) = K_N(0,n+1;0,n+1) = 1$, as claimed.
\end{remark}

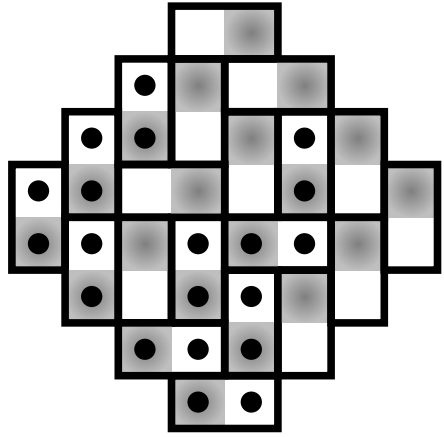
\begin{figure}[t]
\begin{center}
\begin{tikzpicture}[scale=.7]


\foreach \x/\y in {-4/0,-3/1,-3/-1,-2/2,-1/-1,0/-2,1/1}
{ \draw[outer color=lightgray,inner color=gray]
(\x,\y)--(\x,\y-1)--(\x+1,\y-1)--(\x+1,\y); 
\draw [line width = 1mm] (\x,\y-1) rectangle (\x+1,\y+1);
\fill (\x+1/2,\y-1/2) circle [radius=2mm];
\fill (\x+1/2,\y+1/2) circle [radius=2mm];
}

\foreach \x/\y in {-2/-1,-1/2,0/1,1/-2,2/-1,2/1,3/0}
{  \draw[outer color=lightgray,inner color=gray]
(\x,\y)--(\x,\y+1)--(\x+1,\y+1)--(\x+1,\y);
\draw [line width = 1mm] (\x,\y-1) rectangle (\x+1,\y+1);
}

\foreach \x/\y in {0/-3,-1/-2,1/0}
{  \draw[outer color=lightgray,inner color=gray]
(\x,\y-1)--(\x-1,\y-1)--(\x-1,\y)--(\x,\y);
\draw [line width = 1mm] (\x-1,\y-1) rectangle (\x+1,\y);
\fill (\x-1/2,\y-1/2) circle [radius=2mm];
\fill (\x+1/2,\y-1/2) circle [radius=2mm];
}

\foreach \x/\y in {-1/1,0/4,1/3}
{  \draw[outer color=lightgray,inner color=gray]
(\x,\y-1)--(\x+1,\y-1)--(\x+1,\y)--(\x,\y);
\draw [line width=1mm] (\x-1,\y-1) rectangle (\x+1,\y);
}
\end{tikzpicture}

\caption{Extended particle system that is equivalent to a domino tiling
of the Aztec diamond.
\label{fig:AztecParticlesExtended} }
\end{center}
\end{figure}

\begin{remark}
Observe that the particle system on the black squares is not 
equivalent to the tiling of the Aztec diamond. However, it can 
be extended to a larger system where we also also put a particle 
in the white square of the West and South dominos, as shown in
Figure \ref{fig:AztecParticlesExtended}. This extended
process is equivalent to the tiling. It is also determinantal and 
we will now give a double contour integral formula for the kernel.  
We  chose to 
work with the particle system determined by the black squares 
only since the formulas and their asymptotics  stated in the 
next paragraphs take an easier form.  The analogous asymptotics 
for the extended processes are straightforward from these results 
and the formula \eqref{eq:KNextended4} below.

So in addition to the set of black squares \eqref{calBN} 
we also consider the white squares
\begin{equation} \label{calWN} 
	\mathcal W_N = \{\tfrac{1}{2}, \ldots, 2N-\tfrac{1}{2} \} \times
	\{-\tfrac{1}{2}, \tfrac{1}{2}, \ldots, 2N-\tfrac{1}{2} \}. 
	\end{equation}
Then the extended particle system has a correlation kernel
(that we continue to denote by $K_N$)
\begin{equation} \label{eq:KNextended1} 
	K_N : \left(\mathcal B_N \cup \mathcal W_N \right)
	\times \left(\mathcal B_N \cup \mathcal W_N \right)
 	\to \mathbb R. \end{equation}
To write down the extended kernel we use the notation that will
also be used later in \eqref{eq:Ammh} and \eqref{eq:Amhmp},
namely, for an integer $m \in \mathbb Z$,
\begin{equation}  \label{eq:KNextended2} 
A_{m,m+1/2}(z) = \begin{pmatrix} \alpha & \alpha \\ \beta z & \beta \end{pmatrix},
	\quad
	A_{m+1/2,m+1}(z) = \frac{1}{z-1} \begin{pmatrix} z & 1 \\ z & z \end{pmatrix},
	\end{equation}
and for $m, m' \in \frac{1}{2} \mathbb Z$, 
\begin{equation} \label{eq:KNextended3} 
	A_{m,m'}(z) =  A_{m,m+1/2}(z) A_{m+1/2,m+1}(z) \cdots 
	A_{m'-1/2,m'}(z), \quad  \text{ if } m < m', 
	\end{equation}
$ A_{m,m'}(z) = A_{m',m}(z)^{-1}$ if $m > m'$, and $A_{m,m}(z) = I_2$. 
	
Then the extended kernel \eqref{eq:KNextended1} is as follows.
We assume $N$ is even $(m,n) \in \mathcal B_N \cup \mathcal W_N$
and $(m',n') \in \mathcal B_N \cup \mathcal W_N$ with both
$m+n$ and $m'+n'$ even. Then
\begin{multline} \label{eq:KNextended4} 
	\begin{pmatrix} 
	K_N(m,n; m',n') & K_N(m,n+1; m',n') \\
	K_N(m,n; m', n'+ 1) & K_N(m,n+1; m', n'+1) \end{pmatrix}
	\\ =
	 - \frac{\chi_{m > m'}}{2\pi i} \oint_{\gamma_{0,1}} A_{m',m}(z)  
	 z^{(m'+ n')/2 - (m+n)/2} \frac{dz}{z} 
	\\
	+ \frac{1}{(2\pi i)^2} \oint_{\gamma_{0,1}} \frac{dz}{z} 
		\oint_{\gamma_{1}} \frac{dw}{z-w} A_{m',N}(w) F(w) A_{N,m}(z) \\
		\times
		\frac{z^{N/2} (z-1)^N}{w^{N/2} (w-1)^N} \frac{w^{(m'+n')/2}}{z^{(m+n)/2}}. 
		\end{multline}
Observe that by \eqref{eq:defA} and \eqref{eq:KNextended1} 
\[ A_{m,m+1/2}(z) A_{m+1/2,m+1}(z) = A(z), \qquad m \in \mathbb Z, \]
and so by the definition \eqref{eq:KNextended3}, we have
$A_{m',m}(z) = A^{m-m'}(z)$, $A_{m',N}(w) = A^{N-m'}(w)$,
$A_{N,m}(z) = A^{-N+m}(z)$, if $m,m'$ are integers.
In this way we recover \eqref{eq:theo12} from \eqref{eq:KNextended4}.
\end{remark}

\subsection{Matrix valued orthogonal polynomials}
The starting point of our approach to Theorem \ref{thm:formula} 
is the non-intersecting path reformulation of the Aztec diamond
and the Lindstr\"om-Gessel-Viennot lemma. This will be developed
in Section \ref{sec:paths}.
The novel ingredient in the further analysis is the use of matrix
valued orthogonal polynomials (MVOP).

A matrix valued polynomial of degree $k$ and size $d$ is a function
\[ P(z) = C_0 z^k + C_{1} z^{k-1} + \cdots + C_k \]
where $C_0, \ldots, C_k$ are matrices of size $d \times d$.
Suppose $w(z)$ is a $d\times d$ weight matrix on a set $\Gamma$ in
the complex plane. 

\begin{definition}
Suppose $P_N$ is a matrix valued polynomial of degree $N$ 
with an invertible leading coefficient. 
Then $P_N$ is a matrix valued orthogonal polynomial with
weight matrix $w$ on $\Gamma$ if 
\begin{equation} \label{eq:MVOPdef} 
	\int_{\Gamma} P_N(z) w(z) Q^t(z) dz = 0_d, 
	\end{equation}
for all matrix polynomials $Q$ of degree $\leq N-1$, where $Q^t$ denotes the matrix transpose.
\end{definition}
The integral in \eqref{eq:MVOPdef} 
is to be taken entrywise, and $0_d$ denotes the $d \times d$ zero matrix. 
We note that the order of the factors in the integrand in 
\eqref{eq:MVOPdef} is important since we are dealing with matrices.

For us the weight matrix will be $\frac{1}{2\pi i} W^N(z)$ 
on  the closed contour  $\gamma_{1}$ around $1$. 
Thus $d=2$, and $W^N$ denotes the $N$th power of  $W$,
so that  the weight matrix is varying with $N$. 
Recall that $W$ is defined in \eqref{eq:defW}, and explicitly we have
\begin{equation} \label{eq:Wexplicit} 
	W(z) = \frac{1}{(z-1)^2} \begin{pmatrix} (z+1)^2 + 4 \alpha^2 z & 
	2\alpha (\alpha+\beta)(z+1) \\
		2\beta (\alpha+\beta) z(z+1) & (z+1)^2 + 4 \beta^2 z \end{pmatrix}. 
	\end{equation}
The existing literature on MVOP mostly deals with the case of
orthogonality on an interval of the real line, 
with a positive definite weight matrix $w$ with all existing moments.
In such a case the MVOP  exists for every degree $n$,
and they can be normalized in such a way that 
\begin{equation} \label{eq:MVOPdef2}
	\int_{\Gamma}	P_n(z) w(z) P_n^t(z) dz = I_d, 
	\qquad j=0, 1, \ldots, n-1.
	\end{equation}
However, it is interesting to note that MVOP first appeared
in connection with prediction theory where the orthogonality 
is on the unit circle, see \cite{Bing} for a recent survey.
The interest in MVOP on the real line has been steadily
growing since the early 1990s.  
The analytic theory of MVOP on the real line is surveyed in 
\cite{DPS} with \cite{AN} as one of the pioneering works.
MVOP satisfy recurrence relations \cite{DvA} and special cases 
satisfy differential equations \cite{DG}.  Interesting 
examples of 
MVOP come from  matrix valued spherical functions, see  \cite{GPT,KvPR} as well as many other papers.
	
We deviate from the usual set-up of MVOP in several ways
\begin{itemize}
\item $\Gamma$ is a closed contour in the complex plane,
\item the weight matrix $w(z) = \frac{1}{2\pi i} W^N(z)$
is complex-valued on $\Gamma$ and varies with $N$,
\item the weight matrix is not symmetric or Hermitian (let
alone positive definite), or have any other property that 
would imply existence and uniqueness of the MVOP.
\end{itemize}
Since there is no complex conjugation in \eqref{eq:MVOPdef},
we are thus dealing with non-Hermitian matrix valued orthogonality  
with varying weights on a closed contour in the plane.

As already noted, existence and uniqueness of the MVOP are not 
guaranteed in this general setting. However, for the weight 
$\frac{1}{2\pi i} W^N$ we can show that the 
monic MVOP up to degrees $N$ all exist and are unique. 
However, since the weight matrix is not symmetric, we cannot
normalize to obtain orthonormal MVOP as in \eqref{eq:MVOPdef2}.
In our case the MVOP of degrees $> N$ do not exist.

In Section \ref{sec:MVOP} we consider a situation that is
more general than the two periodic Aztec diamond. It deals with
a multi-level particle system that is determinantal, and
transitions between the levels are periodic. See Assumptions
\ref{ass:multilevel} and \ref{ass:blocksymbols} 
for the precise assumptions. In this general setting we make 
a connection with matrix valued (bi)orthogonal polynomials
and our main result in Section \ref{sec:MVOP} 
is Theorem \ref{thm:Kernelblock}
that expresses the correlation kernel 
as a double contour integral containing a reproducing
kernel for the matrix polynomials.

In the special situation of the two periodic Aztec diamond it
 gives that the matrix 
 $\mathbb K_N$  with the correlation kernels as 
 in \eqref{eq:theo12} is given by 
\begin{multline} \label{eq:AztecKernelwithCDSum}  
	- \frac{\chi_{m > m'}}{2\pi i} \oint_{\gamma_{0,1}} A^{m-m'}(z)  
	z^{(m'+ n')/2 - (m+n)/2} \frac{dz}{z} 
	\\
	+ \frac{1}{(2\pi i)^2} \oint_{\gamma_{0,1}} \frac{dz}{z} 
		\oint_{\gamma_{0,1}} \frac{dw}{z-w} 
		A^{2N-m'}(w) R_N(w,z)  
		A^{m}(z) \frac{w^{(m'+n')/2}}{z^{(m+n)/2} w^N}. 
		\end{multline}
where $R_N(w,z)$ is the reproducing kernel associated with
the matrix polynomials of degrees $\leq N-1$. That is,
$R_N(w,z)$ is a bivariate matrix valued polynomial of 
degree $N-1$ in both $w$ and $z$, such that
\[ \frac{1}{2\pi i} \oint_{\gamma_1}
	R_N(w,z) W^N(z) Q^t(z) dz = Q^t(w) \]
holds for every matrix valued polynomial $Q$ of degree $\leq N-1$.
	
The MVOP of degree $N$ is characterized by a Riemann-Hilbert problem 
and the reproducing kernel $R_N(w,z)$ can be expressed in terms
of the solution of the Riemann-Hilbert problem. 
This is known from work of Delvaux \cite{D} and  
we recall it in Section \ref{subsec:RHP}. Then we perform 
an analysis of the Riemann-Hilbert problem, and 
quite remarkably this produces the exact formula \eqref{eq:theo12}.

\subsection{Classification of phases}
\label{subsec:phases}

The explicit formula \eqref{eq:theo12} in Theorem \ref{thm:formula}
is suitable for asymptotic analysis as $N \to \infty$. See Figure \ref{fig:sample} for a sampling of a large 2-periodic Aztec diamond. In this figure  three  regions emerge where the tiling  appears to have different statistical behavior.
We first describe how we can distinguish these three phases (solid, liquid and gas) 
in the model. This classification will depend on the location of saddle points for 
the double integral in \eqref{eq:theo12}.

\begin{figure}[t]
\begin{center}
\includegraphics[angle=135,scale=.5]{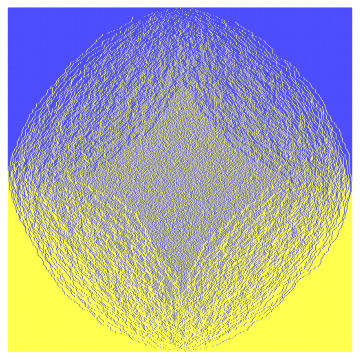}
\end{center}
\caption{A sample of a two-periodic Aztec diamond of size $500$. We colored the West and South dominos blue. The East and North dominons are colored yellow.
The gas phase is visible in the middle. 
This figure is generated by a code that was kindly provided to us by Sunil Chhita.}\label{fig:sample}
\end{figure}

We fix coordinates $-1 < \xi_1 < 1$ and $-1 < \xi_2 < 1$
and choose $ m, m' \approx (1+\xi_1)N$ and $n,n'\approx (1+\xi_2) N$.
Then from the formula \eqref{eq:theo12}, we see that the $z$ integral of
the double contour integral is dominated as $N \to \infty$ by the expression 
\[  A(z)^{\xi_1 N} z^{N/2} (z-1)^N  z^{-(1 + \frac{\xi_1}{2} + \frac{\xi_2}{2})N}
	= W^{\xi_1 N/2}(z)
		(z-1)^N z^{-(1 + \xi_2) N/2}
	\]
where $W$ is given by \eqref{eq:defW}. 
In view of the eigenvalue decomposition \eqref{eq:Wdecomp} this is
\[ E(z) \begin{pmatrix} e^{N \Phi_1(z)/2} & 0 \\ 0 & e^{N \Phi_2(z)/2} \end{pmatrix}
	E(z)^{-1} \]
with $\Phi_{j}(z) = 2 \log (z-1) - (1+ \xi_2) \log z + \xi_1 \log \lambda_j(z)$,
$j=1,2$. Hence  we are led to consider
\begin{equation} \label{eq:defPhi} 
	\Phi(z) = 2 \log(z-1) - (1+\xi_2) \log z + \xi_1 \log \lambda (z) 
	\end{equation}
as a function on the Riemann surface $\mathcal R$, depending on parameters
$\xi_1$ and $\xi_2$. It is multi-valued, but its differential
\begin{equation} \label{eq:MeroDiff} 
	\Phi'(z) dz = \left( \frac{2}{z-1} -  \frac{1+\xi_2}{z} + 
	\xi_1 \frac{\lambda'(z)}{\lambda(z)} \right) dz 
\end{equation}
is a single-valued meromorphic differential with simple poles at 
$z = 1^{(1)}$, $z = 1^{(2)}$, $z = 0$ and $z=\infty$, see also 
Section \ref{subsec:saddles}.
(For $j=1,2$, we use $1^{(j)}$ to denote the value $1$ 
on the $j$th sheet of the Riemann surface.)
There are also four zeros, counting multiplicities, since the genus is $1$.

\begin{definition} \label{def:saddles}
The saddle points are the zeros of $\Phi'(z) dz$.
\end{definition}

The real part $\mathcal R_r$ of the Riemann surface consists of all real tuples $(z,y)$ satisfying the algebraic equation \eqref{eq:RSeq} together with 
the point at infinity. The real part is the union of two 
cycles,
\begin{equation} \label{eq:RSreal} 
	\mathcal R_r = \mathcal C_1 \cup \mathcal C_2 
	\end{equation}
where $\mathcal C_1$ is the union of the intervals $[-\alpha^2, -\beta^2]$ 
on the two sheets,  and $\mathcal C_2$ is the union of
the two intervals $[0,\infty]$ on both sheets.

It turns out that there are always at least two distinct saddle 
points on the cycle $\mathcal C_1$, see Proposition \ref{prop:saddleC1}
below.  The location of the other two saddle points determines the phase.

\begin{definition} \label{def:phases}
Let $-1 < \xi_1, \xi_2 < 1$.
\begin{itemize}
\item[(a)] If two simple saddles are in $\mathcal C_2$, then
$(\xi_1,\xi_2)$ is in the \textbf{solid phase}, and we write 
$(\xi_1, \xi_2) \in \mathfrak{S}$.
\item[(b)] If two saddles are outside of the real part of 
the Riemann surface, then $(\xi_1, \xi_2)$ is in the \textbf{liquid phase}, and
we write $(\xi_1, \xi_2) \in \mathfrak{L}$ .
\item[(c)] If all four saddles are simple and belong to 
$\mathcal C_1$, then $(\xi_1, \xi_2)$ is in the 
\textbf{gas phase}, and  we write
$(\xi_1, \xi_2) \in \mathfrak{G}$.
\end{itemize}

Transitions between phases take place when two or more saddle points coalesce.
\begin{itemize}
\item[(d)] If there is a double saddle point on $\mathcal C_2$, then 
$(\xi_1, \xi_2)$ is on the \textbf{solid-liquid transition}.
\item[(e)] If there is a double or triple saddle point on 
$\mathcal C_1$ then $(\xi_1,\xi_2)$ is on the \textbf{liquid-gas transition}. 
\end{itemize}
\end{definition}
It is not possible to have a double saddle point outside of
the real part of the Riemann surface.

\begin{figure}[t]
\begin{center}
\includegraphics[scale=0.25]{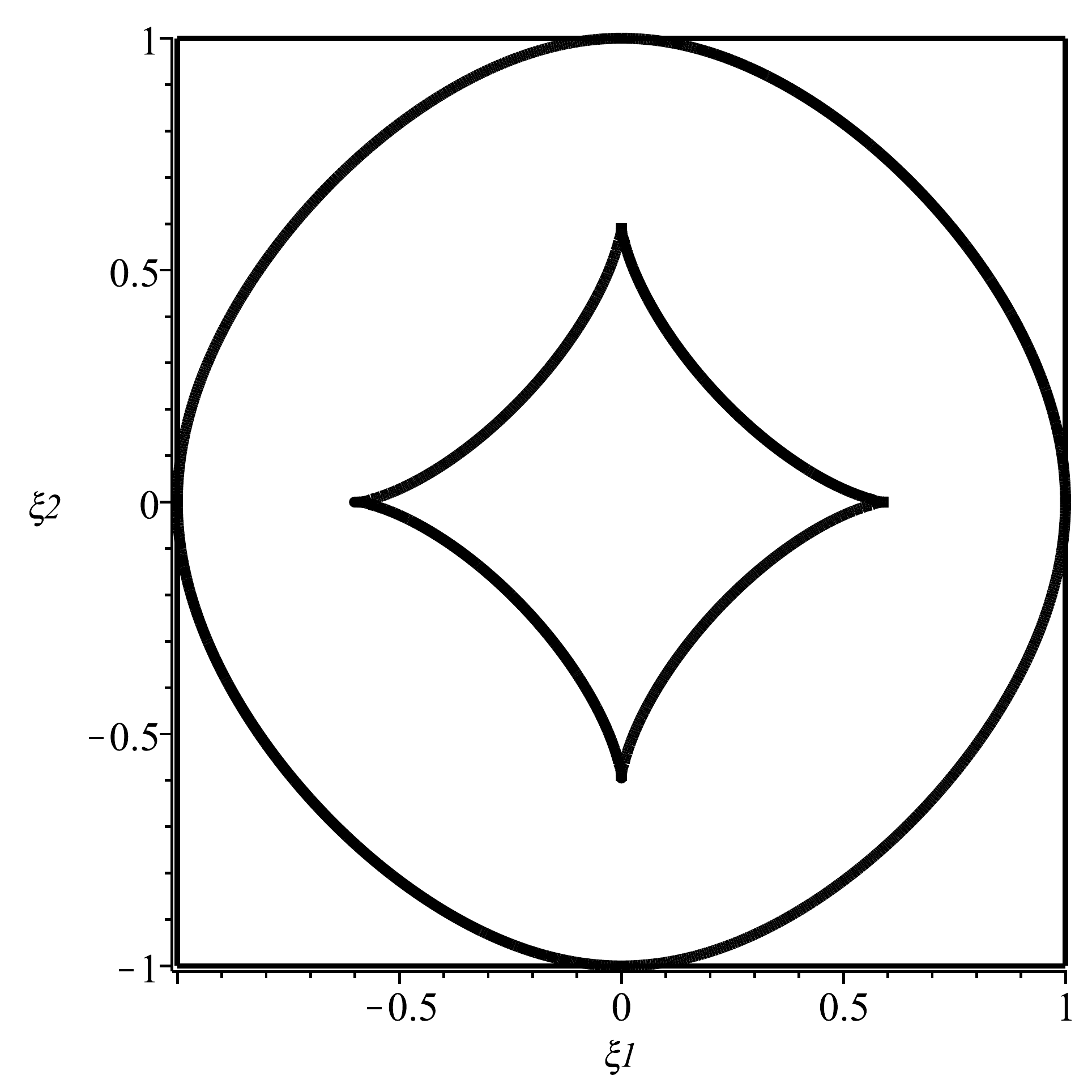} \qquad
\includegraphics[scale=0.25]{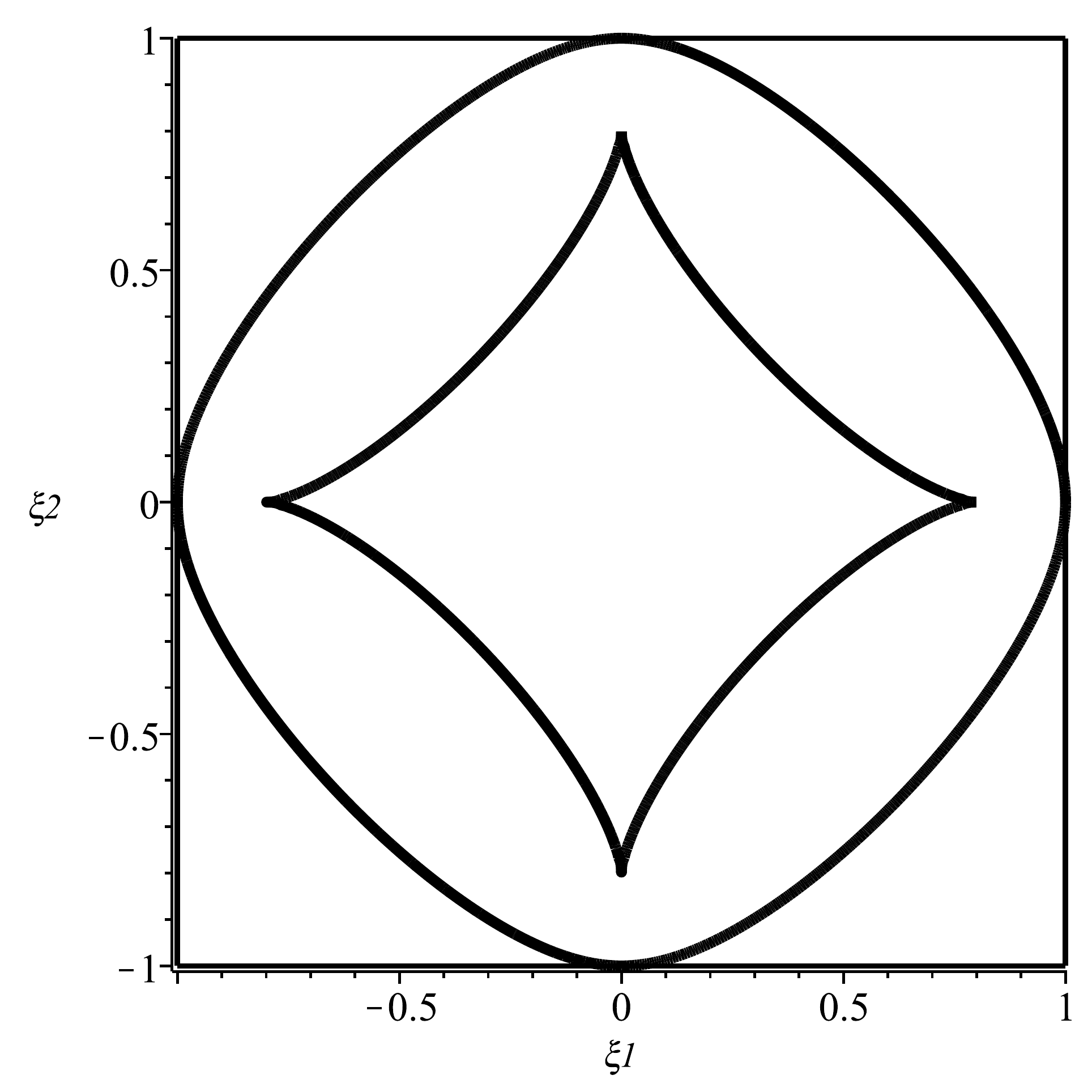}
\caption{Real section of the degree 8 algebraic curve 
in $\xi_1$-$\xi_2$ plane for the cases $\alpha=2$ (left) and 
$\alpha =3$ (right). The outer
component is the boundary between the solid and liquid phases
and the inner component is the boundary between the liquid and
gas phases.
\label{fig:AlgCurve}}
\end{center}
\end{figure} 

The condition for coalescing saddle points leads to an algebraic equation
of degree $8$ for $\xi_1, \xi_2$ and it precisely coincides with the 
equation listed in the appendix of \cite{CJ}, see also 
\eqref{eq:AlgEq} below.

The real section of the degree 8 algebraic equation has two components
in case $\alpha > 1$, as shown in Figure \ref{fig:AlgCurve}. Both components
are contained in the square $-1 \leq \xi_1, \xi_2 \leq 1$.
The outer component is a smooth closed curve that touches the square in
the points $(\pm 1,0)$, and $(0,\pm 1)$. It is the boundary between
the solid and liquid phases.

The inner component is the boundary between the liquid and gas phases.
It is a closed curve with four cusps at locations
$\left(\pm \tfrac{\alpha-\beta}{\alpha+\beta}, 0\right)$, and 
$\left(0,\pm \tfrac{\alpha-\beta}{\alpha+\beta}\right)$.

It can indeed be checked that for $\xi_1=0$, the eight solutions
of the degree $8$ equation for $\xi_2$ are explicit, 
namely $\xi_2 = \pm 1$ with multiplicity $1$ and 
$\xi_2 = \pm \frac{\alpha-\beta}{\alpha+\beta}$, both with multiplicity $3$. 

The intersections of the algebraic curve with the diagonal lines $\xi_2 = \pm \xi_1$
are explicit as well, namely $(\xi_1,\xi_2) = \left( \pm  \tfrac{\alpha+1}{2\sqrt{\alpha^2+1}},
		\pm  \tfrac{\alpha+1}{2\sqrt{\alpha^2+1}} \right)$
are on the outer component, and
$(\xi_1,\xi_2) = \left( \pm  \tfrac{\alpha-1}{2\sqrt{\alpha^2+1}},
		\pm  \tfrac{\alpha-1}{2\sqrt{\alpha^2+1}} \right)$		
are on the inner component.

\subsection{Gas phase}

Our next result gives the limit of $\mathbb K_N$ in the gas phase.
Recall that $\mathbb K_N$ is defined by \eqref{eq:KNmatrix}.

\begin{theorem} \label{thm:gaslimit}
Assume $(\xi_1, \xi_2) \in \mathfrak{G}$. Suppose $m, m', n, n'$ are
integers that vary with $N$ in such a way that 
\begin{align} \label{eq:mnscaling}
	m = (1+\xi_1) N + o(N), \quad n = (1+\xi_2)N + o(N), 
	\end{align}
as $N \to \infty$, while 
\begin{align} \label{eq:mndiff}
	m' - m = \Delta m, \qquad n' - n = \Delta n
	\end{align}
are fixed. Also assume that $m+n$ and $m' + n'$ are even.
Then we have for $N$ even,
\begin{equation} \label{eq:gaslimit}
	\lim_{N \to \infty} \mathbb K_N(m,n; m', n') 
	=	 \mathbb K_{gas}(m,n; m',n') 
	\end{equation}
with
\begin{equation} \label{eq:gaskernel}
	 \mathbb K_{gas}(m,n;m',n') =
	\frac{1}{2\pi i} \oint_{\gamma}   
	  \left(F(z) - \chi_{\Delta m < 0} I_2 \right) A^{-\Delta m}(z) z^{(\Delta m + \Delta n)/2} 
	   \frac{dz}{z} 
\end{equation}
where $\gamma$ is a closed contour in 
$\mathbb C \setminus((-\infty,-\alpha^2] \cup [-\beta^2,0])$ going around the interval $[-\beta^2,0]$
(which we can also view as a closed loop on the first sheet of the Riemann surface).
\end{theorem}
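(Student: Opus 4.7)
The strategy is to process the exact finite-$N$ formula \eqref{eq:theo12} in three stages: first isolate the $N$-independent single-integral piece, then extract the asymptotic main term from the double integral via a residue, and finally show the leftover double integral is negligible by steepest descent. With $\Delta m = m'-m$ and $\Delta n = n'-n$ fixed, the single-integral term in \eqref{eq:theo12} becomes $-\frac{\chi_{\Delta m<0}}{2\pi i}\oint_{\gamma_{0,1}} A^{-\Delta m}(z)\, z^{(\Delta m+\Delta n)/2}\frac{dz}{z}$, independent of $N$. For the double integral, I push the inner contour $\gamma_1$ outward, across $\gamma_{0,1}$, to a new contour $\tilde\gamma_1$ lying outside $\gamma_{0,1}$. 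The simple pole of $1/(z-w)$ at $w=z$ is swept out in this deformation; using that $F(z)$ commutes with $A(z)$ (see \eqref{eq:Fdecomp}), the resulting residue contribution works out to $+\frac{1}{2\pi i}\oint_{\gamma_{0,1}} F(z)\, A^{-\Delta m}(z)\, z^{(\Delta m+\Delta n)/2}\frac{dz}{z}$. The $\gamma_1$-deformation must be carried out within the first sheet of $\mathcal R$ so as not to alter $F$'s branch structure; this is arranged by letting $\tilde\gamma_1$ wrap around both $\{1\}$ and the cut $[-\beta^2,0]$ without crossing either cut of $F$.

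Next I analyse the remaining double integral with $w\in\tilde\gamma_1$. Using the eigendecomposition $A = E\,\diag(\mu_1,\mu_2)E^{-1}$, with $\mu_j(z) = \rho_j(z)/(z-1)$, together with \eqref{eq:Fdecomp}, the matrix integrand splits into pieces carrying exponential factors $\exp\!\bigl(\tfrac{N}{2}(\Phi_k(z) - \Phi_1(w))\bigr)$ for $k \in \{1,2\}$, with $\Phi_k$ as in \eqref{eq:defPhi}. By the gas-phase assumption, all four saddles of the differential $\Phi'(z)\,dz$ lie on the cycle $\mathcal C_1\subset\mathcal R$ with real projections in $(-\alpha^2,-\beta^2)$. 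I deform $\tilde\gamma_1$ to a steepest-descent contour for $-\Phi_1$ through its two saddles on sheet~1, and simultaneously deform $\gamma_{0,1}$ to a steepest-ascent contour through the corresponding saddles of $\Phi_1$ and $\Phi_2$. The gas-phase configuration of the level curves of $\Re\Phi_j$ on $\mathcal R$ ensures that such contours exist, can be taken mutually disjoint (keeping $1/(z-w)$ bounded), and satisfy $\Re(\Phi_k(z) - \Phi_1(w)) \le 0$ with equality only at the saddles. A standard stationary-phase estimate then shows this integral is $O(N^{-1})$, hence vanishes as $N \to \infty$.

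Combining the pieces yields the limit $\frac{1}{2\pi i}\oint_{\gamma_{0,1}} \bigl(F(z) - \chi_{\Delta m<0}\, I_2\bigr) A^{-\Delta m}(z)\, z^{(\Delta m+\Delta n)/2}\frac{dz}{z}$. To obtain the stated formula over $\gamma$, I deform $\gamma_{0,1}$ inward to $\gamma$; this is legitimate because the integrand is analytic at $z=1$. Indeed, writing $F(z)\, A^{-\Delta m}(z) = \mu_1^{-\Delta m}(z)\, E(z)\diag(1,0)E^{-1}(z)$ and $(F(z) - I_2)\, A^{-\Delta m}(z) = -\mu_2^{-\Delta m}(z)\, E(z)\diag(0,1)E^{-1}(z)$, the arithmetic identity $\sqrt{(1+\alpha^2)(1+\beta^2)} = \alpha+\beta$ (using $\alpha\beta=1$) gives $\rho_2(1)=0$, making $\mu_2(z)$ analytic at $z=1$; and for $\Delta m\ge 0$ the factor $\mu_1^{-\Delta m}$ vanishes at $z=1$ because $\mu_1$ has a simple pole there. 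Hence the $z=1$ residue drops out in all cases and $\oint_{\gamma_{0,1}} = \oint_\gamma$. The technical heart of the argument, and the main obstacle, is the saddle-point analysis in the middle stage: controlling the contours in the presence of $F$'s branch cuts and verifying the sign condition $\Re(\Phi_k(z) - \Phi_1(w)) \le 0$ along admissible deformations requires a detailed understanding of the topology of level lines of $\Re\Phi_j$ on $\mathcal R$, and is where the gas-phase hypothesis is used essentially.
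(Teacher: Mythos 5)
Your overall strategy mirrors the paper's (extract the gas kernel as the residue at $w=z$ when the $w$-contour is moved past $\gamma_{0,1}$, then kill the remainder by a saddle-point argument), but the contour deformation you use to extract that residue does not exist. A bounded closed contour $\tilde\gamma_1$ that lies outside $\gamma_{0,1}$ necessarily encircles the interval $[-\beta^2,0]$, whereas $\gamma_1$ is a small loop around $1$ with winding number $0$ about that interval. In the domain where the $w$-integrand is analytic, namely $\mathbb C\setminus\bigl(\{0\}\cup\{1\}\cup(-\infty,-\alpha^2]\cup[-\beta^2,0]\bigr)$, these two contours are not homotopic: any deformation from $\gamma_1$ to $\tilde\gamma_1$ must sweep across the branch cut of $F(w)$ on $[-\beta^2,0]$ (where $F_+\neq F_-$, since the continuation of $F$ to the second sheet is $I_2-F$) and across the singularity at $w=0$. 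Consequently $\oint_{\tilde\gamma_1}-\oint_{\gamma_1}$ equals $2\pi i$ times the residue at $w=z$ \emph{plus} the integral of the $w$-integrand around $[-\beta^2,0]$, and this extra term is of the same order as everything you keep; it cannot be dropped. The paper's way around this (Lemma \ref{lem:wdecay} and Proposition \ref{prop:prop52}) is to deform $\gamma_1$ instead to the \emph{unbounded} contour $\Gamma$ wrapping the entire negative real axis through infinity: the $O(w^{-3/2})$ decay of the integrand makes the contribution from infinity vanish, and the region swept then contains only the pole at $w=z$. The price is that the remainder has $w$ running over neighbourhoods of \emph{both} cuts, and showing that these (the contours $\Gamma_{1,j}$, $\Gamma_{2,j}$) can be placed inside $\Omega_j^+=\{\Re\Phi_j>\Re\Phi_j(z_{s,j})\}$ is exactly the content of Lemma \ref{lem:lemma57}. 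In your setup the analogous control of the $[-\beta^2,0]$ contribution is still needed, just hidden.

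Two further points in your final stage are too loose to stand as written. First, you cannot simultaneously have the $z$- and $w$-contours disjoint with $1/(z-w)$ bounded \emph{and} have $\Re(\Phi_k(z)-\Phi_1(w))\le 0$ with equality attained at common saddles; the paper instead separates the contours strictly in the level sets, obtaining $\Re\Phi_j(z)\le\Re\Phi_j(z_{s,j})<\Re\Phi_j(w)$ and hence \emph{exponential} (not $O(N^{-1})$) decay. Second, the $k=2$ term cannot be estimated against $\Phi_1(w)$ on sheet one: the paper first collapses the $w$-contour onto the cuts, uses $\Phi_{1,\pm}=\Phi_{2,\mp}$ and $F\mapsto I_2-F$ to pass to the second sheet, and only then places the contours relative to the level sets of $\Re\Phi_2$. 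Your closing deformation $\gamma_{0,1}\to\gamma$ via analyticity of $F(z)\lambda_1^{-\Delta m/2}(z)$ and $(F(z)-I_2)\lambda_1^{\Delta m/2}(z)$ at $z=1$ is correct and matches the paper.
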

The limit \eqref{eq:gaskernel} does not depend on $\xi_1$ and $\xi_2$ as long as these are in the gas region. 
The kernel $\mathbb K_{gas}$ still depends on the parameters $\alpha$ and $\beta$ 
(cf.\ Remark \ref{rem:diagonal}) and therefore the 2-periodic structure is still present in this limit. 

The proof of Theorem \ref{thm:gaslimit} is in Section \ref{subsec:gaslimit}.

\begin{remark} \label{rem:decay}
In \eqref{eq:gaskernel} we can see the exponential decay of correlations
that is characteristic for the gas phase as follows.
We combine the factor $z^{(\Delta m)/2}$ with $A^{-\Delta m}(z)$, since by
\eqref{eq:defW}
\[ z^{(\Delta m)/2} A^{-\Delta m}(z) = W^{-(\Delta m)/2}(z). \]
We  find from this and \eqref{eq:defF} and \eqref{eq:defW} that
\begin{align*} 
	F(z) z^{(\Delta m)/2} A^{-\Delta m}(z) 
	& = F(z) \lambda_1^{-(\Delta m)/2}(z)
	\end{align*} 
and
\begin{align*}
	(F(z) - I_2) z^{(\Delta m)/2} A^{-\Delta m}(z) 
	& 	= (F(z) - I_2) \lambda_2^{-(\Delta m)/2}(z) \\
	& = (F(z) - I_2) \lambda_1^{(\Delta m)/2}(z) 
	\end{align*}
since $\lambda_2 = \lambda_1^{-1}$, see part 
(d) of Lemma \ref{lem:lambdarho} below.
Thus  \eqref{eq:gaskernel} can be written as 
\begin{equation} \label{eq:gaskernel2}
	\mathbb K_{gas}(m,n;m',n') =
		\begin{cases} \ds	\frac{1}{2\pi i} \oint_{\gamma}   
	  F(z) \lambda_1^{-(\Delta m)/2}(z) z^{(\Delta n)/2} 
	   \frac{dz}{z}, & \text{ if } \Delta m \geq 0,  \\[10pt]
	\ds \frac{1}{2\pi i} \oint_{\gamma}   
	  (F(z) - I_2) \lambda_1^{(\Delta m)/2}(z) z^{(\Delta n)/2} 
	   \frac{dz}{z}, & \text{ if } \Delta m < 0.   
	  \end{cases} \end{equation}

By analyticity and Cauchy's theorem, we have the freedom to deform 
the contour $\gamma$ as long as it goes
around the interval $[-\beta^2,0]$ and does not intersect $(-\infty,-\alpha^2]$.
Since $\beta < 1 < \alpha$ we can deform it to a circle centered at zero
of radius $<1$ or to a circle of radius $>1$.
If $\Delta n \geq 0$ we deform to a circle $|z| = r < 1$ and if
$\Delta n < 0$ we deform to a circle $|z| = r > 1$.
In both cases the factor $z^{(\Delta n)/2}$ is exponentially small as
$|\Delta n| \to +\infty$.	

Since $|\lambda_1| > 1$ on $\gamma$ (as will follow from 
parts (d) and (f) of Lemma \ref{lem:lambdarho} below) the
factor $\lambda_1^{\pm (\Delta m)/2}(z)$ is also exponentially small as $|\Delta m| \to +\infty$.
It follows that the gas kernel \eqref{eq:gaskernel2}
decays exponentially as $|\Delta m| + |\Delta n| \to \infty$.
\end{remark}

\begin{remark} \label{rem:diagonal}
Let's see what we have for the gas kernel \eqref{eq:gaskernel}
on the diagonal, i.e., for $\Delta m = \Delta n = 0$.
then we obtain from \eqref{eq:gaskernel} and \eqref{eq:defF}
if $m+n$ is even
\begin{multline} \label{eq:Kgasdiag} \mathbb K_{gas}(m,n; m,n) =
 \frac{1}{2\pi i} \oint_{\gamma} F(z) \frac{dz}{z} \\
 = \frac{1}{2} I_2 + \frac{1}{4\pi i}
	\oint_{\gamma} \begin{pmatrix} (\alpha-\beta) z  & \alpha(z+1) \\
		\beta z (z+1) & - (\alpha-\beta) z \end{pmatrix}
	 \frac{dz}{z \sqrt{z(z+\alpha^2)(z+\beta^2)}}. 
	 \end{multline}
	 
The diagonal entries of \eqref{eq:Kgasdiag} are $K_{gas}(m,n; m,n)$
and $K_{gas}(m,n+1; m, n+1)$, see also \eqref{eq:KNmatrix}. Thus for
arbitary parity of $m+n$, 
\begin{align} \nonumber
K_{gas}(m,n;m,n) & =  
	\frac{1}{2}  + (-1)^{m+n} \frac{\alpha-\beta}{4\pi i} \oint_{\gamma}
	\frac{dz}{\sqrt{z(z+\alpha^2)(z+\beta^2)}} \\
	& = \frac{1}{2} + (-1)^{m+n} \frac{\alpha-\beta}{2\pi} \int_0^{\beta^2}
	\frac{dt}{\sqrt{t(\beta^2-t)(\alpha^2-t)}}. \label{eq:Kgasdiag2}
	 \end{align}
where \eqref{eq:Kgasdiag2} follows from deforming the contour $\gamma$ to 
the interval $[-\beta^2,0]$ and making a change of variables. 
It is easy  to check that
\[ 0 < \frac{\alpha-\beta}{2\pi} \int_0^{\beta^2}
	\frac{dt}{\sqrt{t(\beta^2-t)(\alpha^2-t)}} < \frac{1}{2} \]
	and so \eqref{eq:Kgasdiag2} is between
	$0$ and $1$, as it is the particle density of the gas phase. Note also that the density depends on the parameters $\alpha$ and $\beta$. 
\end{remark}

\subsection{Cusp points} \label{subsec:cusps}

On the boundary between the gas phase and the liquid phase,
the gas kernel \eqref{eq:gaskernel} is still the dominant
contribution. This phenomenon was already observed by Chhita and Johansson \cite{CJ} and further investigated by Beffara, Chhita and Johansson \cite{BCJ}, who looked
at the diagonal point $\xi_1 = \xi_2$ on the boundary and
proved that after averaging there is an Airy like behavior
in the first subleading term.

We consider the cusp points, and show explicitly the
appearance of a Pearcey like behavior in the subleading term
of the kernel $\mathbb K_N$.

The four cusp points are located at positions 
$(\xi_1,\xi_2) = \left(\pm \tfrac{\alpha-\beta}{\alpha+\beta},0\right)$ 
and $(\xi_1,\xi_2) = \left(0,\pm \tfrac{\alpha-\beta}{\alpha+\beta}\right)$
in the phase diagram.
We focus on the top cusp point with coordinates 
$(\xi_1^*, \xi_2^*) = \left(0,\tfrac{\alpha-\beta}{\alpha+\beta}\right)$.
At this cusp point the triple saddle point is located at the
branch point $-\alpha^2$.

\begin{theorem} \label{thm:cusplimit} 
Suppose $N$ and $m+n$ and $m'+n'$ are even. Write
$m = (1+\xi_1)N$, $n = (1+\xi_2)N$, $m' = (1+\xi_1')N$, $n' = (1+\xi_2')N$
and assume 
\begin{equation} \label{eq:scaling1}
\begin{aligned}
	 N^{3/4}\xi_1 & \to c_1 u, & 
	 N^{1/2}( \xi_2 - \xi_2^*) & \to c_2 v,
	& \xi_2^* & = \frac{\alpha-\beta}{\alpha+\beta}, \\
	N^{3/4} \xi_1' & \to c_1 u', & 
	N^{1/2} (\xi_2' - \xi_2^*) & \to c_2 v',
	\end{aligned}
	\end{equation}
	as $N \to \infty$, with fixed $u, u', v, v'$ and
with constants
\begin{equation} \label{eq:constants1}
c_1 = \frac{2^{1/4}}{\sqrt{\alpha-\beta}}, \quad c_2 = \frac{\sqrt{2}}{\alpha+\beta}.
\end{equation}
Then, in case $m$ is even, 	
\begin{multline} \label{eq:PearceyLimit1}
\lim_{N \to \infty} N^{1/4}  (-1)^{(\Delta n- \Delta m)/2} \alpha^{-\Delta n}  
	\left( \mathbb K_N(m,n;m',n') - \mathbb K_{gas}(m,n;m',n') \right)
	\\ = 
	\frac{\sqrt{\alpha-\beta}}{2^{1/4}} \begin{pmatrix} 1 & 1 \\ -1 & -1 \end{pmatrix}
\frac{1}{(2\pi i)^2} \int_{\Sigma \cup (-\Sigma)} \int_{i\mathbb R}
	\frac{e^{\frac{1}{4} s^4 + \frac{1}{2} v s^2 + us}}
	{e^{\frac{1}{4} t^4 + \frac{1}{2} v' t^2 + u't}} \frac{dsdt}{t-s}
	\end{multline}   
with contours $\Sigma$ and $-\Sigma$ as shown in Figure \ref{fig:PearceyIntegrals}.

In case $m$ is odd, we have
\begin{multline} \label{eq:PearceyLimit2}
\lim_{N \to \infty} N^{1/4}  (-1)^{(\Delta n- \Delta m)/2} \alpha^{-\Delta n}  
	\left( \mathbb K_N(m,n;m',n') - \mathbb K_{gas}(m,n;m',n') \right)
	\\ = 
	\frac{\sqrt{\alpha-\beta}}{2^{1/4}} \begin{pmatrix} 1 & 1 \\ -1 & -1 \end{pmatrix}
\frac{1}{(2\pi i)^2} \int_{\Sigma \cup (-\Sigma)^{-1}} \int_{i\mathbb R}
	\frac{e^{\frac{1}{4} s^4 + \frac{1}{2} v s^2 + us}}
	{e^{\frac{1}{4} t^4 + \frac{1}{2} v' t^2 + u't}} \frac{dsdt}{t-s}
	\end{multline}  
where $(-\Sigma)^{-1}$ indicates that the orientation on $(-\Sigma)$
is reversed. 
\end{theorem}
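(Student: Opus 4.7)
The plan is a steepest descent analysis of the exact double contour integral \eqref{eq:theo12}. At the cusp $(\xi_1^*,\xi_2^*)=(0,(\alpha-\beta)/(\alpha+\beta))$ all four saddle points of the meromorphic differential $\Phi'(z)\,dz$ from \eqref{eq:MeroDiff} coalesce into a single triple zero at the branch point $z=-\alpha^2$ of the Riemann surface $\mathcal R$. Since the local uniformizer at this branch point is $\tau=\sqrt{z+\alpha^2}$, the triple zero of $\Phi'\,dz$ produces a quartic expansion $\Phi(z)-\Phi^* = c_0\tau^4 + O(\tau^6)$ with an explicit positive constant $c_0$; this is precisely the geometry that yields Pearcey asymptotics.

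The first step is to recast the matrix integrand in scalar form. Using the eigenvalue decomposition \eqref{eq:Adecomp} and the projection identity \eqref{eq:Fdecomp}, the product $A^{N-m'}(w)F(w)$ factors as $F(w)$ times a scalar exponential, while $A^{-N+m}(z)$ splits as an $F(z)$-piece on the first sheet and an $(I_2-F(z))$-piece on the second sheet of $\mathcal R$. Collecting these with the remaining powers of $w,z,w-1,z-1$ in \eqref{eq:theo12}, the $w$-integrand carries the scalar phase $\exp(-\tfrac{N}{2}\Phi(w;\xi_1',\xi_2'))$ and the two $z$-pieces carry phases $\exp(\tfrac{N}{2}\Phi_j(z;\xi_1,\xi_2))$, $j=1,2$. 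Inserting the scaling \eqref{eq:scaling1} and expanding these phases around $z=-\alpha^2$ in the local parameter $\tau$ produces the leading $\tfrac{Nc_0}{2}\tau^4$, a $\tau^2$-correction proportional to $(\xi_2-\xi_2^*)N$ and a $\tau$-correction proportional to $\xi_1 N$. The rescaling $\tau=s/(\gamma N^{1/4})$ for an appropriate $\gamma$ then matches these into the Pearcey exponent $\tfrac14 s^4+\tfrac12 v s^2+us+o(1)$, uniformly on compact sets in $s$; the constants $c_1,c_2$ in \eqref{eq:constants1} are forced by this normalization.

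The next step is to deform the contours. I would push $\gamma_{0,1}$ onto the steepest descent curves for the $\Phi_j$ through $z=-\alpha^2$ on each sheet; the four descent directions at the quartic saddle lift in the local $s$-variable to the $\Sigma\cup(-\Sigma)$ pattern of the standard Pearcey contour. Simultaneously I would push $\gamma_1$ outward across $\gamma_{0,1}$: the simple pole at $w=z$ from $1/(z-w)$ is crossed, and its residue collapses the double integral to a single integral. Combining this residue contribution with the explicit first term in \eqref{eq:theo12} and, after deforming the $z$-contour from $\gamma_{0,1}$ across the branch cuts of $\mathcal R$ onto the cycle $\gamma$ around $[-\beta^2,0]$, one recognizes the resulting single integral as $\mathbb K_{gas}(m,n;m',n')$ from \eqref{eq:gaskernel}. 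After subtracting $\mathbb K_{gas}$ the remaining double integral has its integrand exponentially suppressed away from $z,w=-\alpha^2$ on the respective sheets.

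The final step is the local analysis near the branch point. The matrix prefactor $F(w)\,\cdots\,E^{-1}(z)$ inherits a $1/\sqrt{z+\alpha^2}$ singularity from \eqref{eq:defF}, and combined with the Jacobian $dz=2\tau\,d\tau$ from the change of variables this produces both the overall $N^{-1/4}$ scaling in \eqref{eq:PearceyLimit1} and the limiting constant matrix $\tfrac{\sqrt{\alpha-\beta}}{2^{1/4}}\begin{pmatrix}1&1\\-1&-1\end{pmatrix}$; uniform exponential tail estimates on the rescaled phases justify passage to the limit by dominated convergence. The parity of $m$ enters through a factor $(-1)^m$ hidden in $A^{m-m'}(z)$, acting at the branch point as a sheet-swap on $\mathcal R$: for odd $m$ the orientation of one lobe of the lifted descent contour is reversed, converting $\Sigma\cup(-\Sigma)$ into $\Sigma\cup(-\Sigma)^{-1}$ and yielding \eqref{eq:PearceyLimit2}. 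The main technical obstacle will be the simultaneous control of the steepest descent contours on both sheets of $\mathcal R$, which must avoid the cuts $(-\infty,-\alpha^2]\cup[-\beta^2,0]$ except at the triple saddle point, together with the accurate identification of the $w=z$ residue with $\mathbb K_{gas}$ after its own secondary contour deformation; once these ingredients are in place, the remaining work is routine saddle-point analysis.
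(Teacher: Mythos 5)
Your proposal follows essentially the same route as the paper's proof: the gas kernel is split off as the residue at $w=z$ when the $w$-contour is pushed outward (this is Proposition \ref{prop:prop52} and Corollary \ref{cor:KNrewrite}), the remaining double integral is scalarized via the $F$/$I_2-F$ spectral decomposition and localized at the triple saddle at $-\alpha^2$, and the quartic expansion of $\Phi_{1,2}$ in the uniformizer $\sqrt{z+\alpha^2}$ combined with the $(z+\alpha^2)^{-1/2}$ singularity of $F$ produces the $N^{-1/4}$ scale, the nilpotent prefactor, and the Pearcey integral, with the parity of $m$ reversing the orientation of one lobe exactly as in \eqref{eq:prefactor3}. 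The only (harmless) inaccuracy is that not all four saddles coalesce at the cusp: three merge into the triple saddle at $-\alpha^2$ while the fourth remains at $-\beta^2$, whose contribution is exponentially suppressed, as the paper also notes.
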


\begin{figure}[t]
	\begin{center}
		\begin{tikzpicture}
		\draw[help lines] (-2,0)--(2,0);
		\draw[line width=1mm,->,blue] (0,-2)--(0,1);
		\draw[line width=1mm,-,blue] (0,1)--(0,2);
		\draw[line width=1mm,red,->] (2,-2)--(1,-1);
		\draw[line width=1mm, red,->] (1,-1)--(.15,-.15)--(.15,.15)--(1.2,1.2);
		\draw[line width=1mm, red,-] (1,1)--(2,2);
		\draw[line width=1mm,red,->] (-2,2)--(-1,1);
		\draw[line width=1mm, red,->] (-1,1)--(-.15,.15)--(-.15,-.15)--(-1.2,-1.2);
		\draw[line width=1mm, red,-] (-1,-1)--(-2,-2);	
		\draw (1.5,1) node {$\Sigma$};
		\draw (-1.7,-1.2) node {$-\Sigma$};
		\draw (-0.25,1.7) node {$i \mathbb R$};	
		\end{tikzpicture}
		\caption{The contours of integration for the Pearcey integrals in \eqref{eq:PearceyLimit1} and \eqref{eq:PearceyLimit2}.
		\label{fig:PearceyIntegrals}} 
	\end{center}
\end{figure}
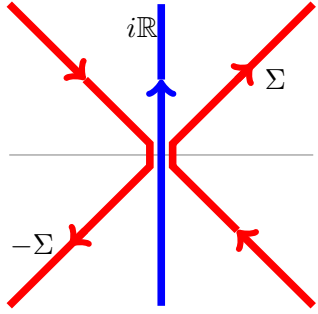

The proof of Theorem \ref{thm:cusplimit} is in Section \ref{subsec:cusplimit}.

The double integral in \eqref{eq:PearceyLimit1}
\begin{equation} \label{eq:Pearcey} \frac{1}{(2\pi i)^2} \int_{\Sigma \cup (-\Sigma)} \int_{i\mathbb R}
	\frac{e^{\frac{1}{4} s^4 + \frac{1}{2} v s^2 + us}}
	{e^{\frac{1}{4} t^4 + \frac{1}{2} v' t^2 + u't}} \frac{dsdt}{t-s} 
\end{equation}
is, up to a gaussian, known as the Pearcey kernel. It is one of the canonical kernels from
random matrix theory that arises typically as a scaling limit near a cusp
point. It was first described by Br\'ezin and Hikami \cite{BH} in
the context of random matrices with an external source, see also \cite{BK2}.
The Pearcey process was given in  \cite{OR2,TW}.
More recent contributions are for example \cite{ACvM1,BC,HHN}. Note that the actual Pearcey kernel includes a gaussian in addition to the double integral in \eqref{eq:Pearcey}. Remarkably, this term is hidden in the gas kernel and can be retrieved by a steepest descent analysis of that kernel.

\begin{theorem}\label{thm:gaussianlimit} 
Under the same assumptions as in Theorem \ref{thm:cusplimit} we have 
\begin{multline} \label{eq:gaussianlimit}
\lim_{N \to \infty} N^{1/4}(-1)^{(\Delta n -|\Delta m|)/2} 
\alpha^{-\Delta n}  \mathbb K_{gas}(m,n;m',n')\\
= 
\begin{cases} \ds
\frac{\sqrt{\alpha-\beta}}{2^{1/4} }\begin{pmatrix} 1& 1\\ -1 & -1\end{pmatrix} \frac{1}{\sqrt{2 \pi (v-v') }}e  ^{-\frac{(u-u')^2}{2(v-v')}},& \text{ if } v>v',\\
0, & 
\begin{array}{l} \text{if }v < v' \text{ or} \\
	\text{if } v = v' \text{ and } u \neq u',
	\end{array}
\end{cases}
\end{multline}
as $N\to \infty$. 
\end{theorem}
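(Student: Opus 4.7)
My approach is a saddle-point analysis of the gas kernel formula \eqref{eq:gaskernel2} in the cusp scaling regime, where $\Delta m = c_1(u'-u)N^{1/4}(1+o(1))$ and $\Delta n = c_2(v'-v)N^{1/2}(1+o(1))$ both tend to infinity. I will describe the case $\Delta m \geq 0$; the other case is handled identically with $F(z)$ replaced by $F(z) - I_2$. The two regimes $v < v'$ and $v > v'$ correspond to $\Delta n > 0$ and $\Delta n < 0$ respectively, and require different contour choices.

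\emph{Trivial case $v < v'$.} Deform $\gamma$ to the circle $\{|z|=r\}$ with $\beta^2 < r < \alpha^2$, which stays inside the domain of analyticity and encircles $[-\beta^2,0]$. Because $|z^{\Delta n/2}| = r^{\Delta n/2}$ on this contour, the integrand normalized by $\alpha^{-\Delta n}$ is bounded by $(r/\alpha^2)^{\Delta n/2}$, which decays exponentially as $\Delta n \to +\infty$; no $N^{1/4}$ prefactor can recover this, so the limit vanishes.

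\emph{Nontrivial case $v > v'$.} Now $\Delta n < 0$, so $|z^{\Delta n/2}|$ favors large $|z|$, and we must deform $\gamma$ to approach the branch point $-\alpha^2$. Introduce the local uniformizer $w = \sqrt{z+\alpha^2}$. Using $\alpha\beta = 1$ the key local expansions simplify to
\begin{equation*}
F(z)\frac{dz}{z} = \frac{\sqrt{\alpha-\beta}}{\alpha\sqrt{\alpha+\beta}}\begin{pmatrix}1 & 1\\-1 & -1\end{pmatrix} dw + O(w)\,dw
\end{equation*}
and $\log \lambda_1(z) = i\pi - k_1 w + O(w^2)$ with $k_1 = 2\sqrt{\alpha-\beta}/(\alpha\sqrt{\alpha+\beta})$, together with $\log z = \log(-\alpha^2) - w^2/\alpha^2 + O(w^4)$. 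The full exponent $-\frac{\Delta m}{2}\log\lambda_1 + \frac{\Delta n}{2}\log z$ produces the constant factor $(-1)^{(\Delta m+\Delta n)/2}\alpha^{\Delta n}$, which cancels the normalization on the left of \eqref{eq:gaussianlimit}, plus the dynamical part $\frac{\Delta m k_1}{2}w - \frac{\Delta n}{2\alpha^2}w^2 + O((\Delta m+\Delta n)w^3)$. Rescaling $w = sN^{-1/4}$ converts this to the finite limit $\frac{c_1 k_1(u'-u)}{2}s - \frac{c_2(v'-v)}{2\alpha^2}s^2 + o(1)$; the cubic and higher terms vanish thanks to the $N^{1/4}$ versus $N^{1/2}$ separation of the two scales.

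The contour inherited in the $s$-plane is then deformed to the steepest descent direction (imaginary, since $c_2(v'-v)/\alpha^2 < 0$) through the saddle $s_0 = \alpha^2 c_1 k_1 (u'-u)/(2 c_2(v'-v))$, and the Gaussian integral evaluates to $i\sqrt{2\pi/|C|}\,e^{-B^2/(2|C|)}$ with $B = c_1k_1(u'-u)/2$ and $|C| = c_2(v-v')/\alpha^2$. The arithmetic miracle is the identity $c_1^2 k_1^2 \alpha^2/c_2 = 4$, which one checks directly from the definitions and reduces $B^2/(2|C|)$ to $(u-u')^2/(2(v-v'))$. Multiplying by the local matrix prefactor, the factor $\frac{1}{2\pi i}$, the $1/N^{1/4}$ from $dw = ds\,N^{-1/4}$, and the $N^{1/4}$ normalization from the theorem, the result collapses exactly to $\frac{\sqrt{\alpha-\beta}}{2^{1/4}}\begin{pmatrix}1&1\\-1&-1\end{pmatrix}\frac{1}{\sqrt{2\pi(v-v')}}e^{-(u-u')^2/(2(v-v'))}$, as claimed.

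The main obstacles are technical. First, one must justify the contour deformation near $-\alpha^2$ and control the tail contribution from the portion where $|w|$ is not small; the natural estimate uses $|\lambda_1| > 1$ on $\gamma$ (from Lemma \ref{lem:lambdarho}) together with $|z|$ staying bounded away from $\alpha^2$, giving exponential smallness on the tails. Second, the signs require care: the safe route is to use the single-valued factorization $\lambda_1^{-\Delta m/2}(z)z^{\Delta n/2} = \rho_1^{-\Delta m}(z-1)^{\Delta m} z^{(\Delta m+\Delta n)/2}$ (all exponents integer, since $\Delta m+\Delta n$ is even) when evaluating the constant factor at $z=-\alpha^2$, and then verifying consistency with $(-1)^{(\Delta n-|\Delta m|)/2}$ in both parity regimes of $\Delta m$ so that the two formulas for $\Delta m \geq 0$ and $\Delta m < 0$ yield the same limit.
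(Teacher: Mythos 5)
Your argument follows the same route as the paper's proof: the $v>v'$ case is handled by pulling the contour to the branch point $-\alpha^2$, expanding $\log\lambda_1$, $\log z$ and $F$ in the local square-root variable, rescaling by $N^{-1/4}$, and evaluating a Gaussian integral over (a rotation of) the imaginary axis; the paper's change of variables $z=-\alpha^2+\alpha^2c_0sN^{-1/2}$ followed by $s\mapsto s^2$ is exactly your uniformizer $w=\sqrt{z+\alpha^2}=sN^{-1/4}$. I checked your constants: the identity $c_1^2k_1^2\alpha^2/c_2=4$ is correct, and the prefactor bookkeeping does collapse to $\frac{\sqrt{\alpha-\beta}}{2^{1/4}}\frac{1}{\sqrt{2\pi(v-v')}}$. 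The sign of your local expansion of $F(z)\,\frac{dz}{z}$ is off by $-1$ (you dropped the minus from $\frac1z\approx-\frac1{\alpha^2}$), and your sign of the linear term in $\log\lambda_1$ disagrees with \eqref{eq:term3}; these are branch/orientation choices that must be made consistently, but since you flag this and the final answer is unaffected in absolute terms, I regard it as a bookkeeping issue rather than an error.

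There is, however, one genuine omission: the theorem also asserts the limit is $0$ when $v=v'$ and $u\neq u'$, and your proof never addresses this case. It is not covered by your ``trivial case'' argument. When $v=v'$ one only knows $\Delta n=o(N^{1/2})$, and $\Delta n$ may be negative and of order larger than $|\Delta m|\sim N^{1/4}$ (say $-N^{2/5}$), in which case the normalizing factor $\alpha^{-\Delta n}$ grows like $e^{cN^{2/5}}$ and overwhelms the exponential decay $e^{-c'N^{1/4}}$ coming from $|\lambda_1|^{-|\Delta m|/2}$ on any \emph{fixed} circle $|z|=r<\alpha^2$; the estimate $(r/\alpha^2)^{\Delta n/2}$ likewise points the wrong way for $\Delta n<0$. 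So a fixed-contour bound cannot close this case. You need either to let the contour approach $-\alpha^2$ at an $N$-dependent rate, or (more cleanly) to run your own local analysis at $-\alpha^2$ with the degenerate quadratic coefficient $\frac{\Delta n}{2\alpha^2}N^{-1/2}\to 0$: the resulting integral $\frac{1}{2\pi}\int e^{-\epsilon_N t^2+iBt}\,dt=\frac{1}{2\sqrt{\pi\epsilon_N}}e^{-B^2/(4\epsilon_N)}$ tends to $0$ as $\epsilon_N\to0^+$ precisely because $B\propto(u-u')\neq0$, with an additional contour rotation needed if $\Delta n>0$. Adding this (short) argument would complete the proof.
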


It is very curious that the double integral part of the Pearcey kernel appears in the scaling limit at the cusp point, but only in the subleading term. A similar phenomenon was already observed in \cite{CJ} 
on the smooth parts of
the liquid-gas boundary. The gas phase is dominant with a subleading
Airy behavior. Also here the gaussian part of the Airy kernel is hiding in the gas kernel \cite[\S 3.2]{CJ}.  With some effort we can also find this from our approach.

\section{Non-intersecting paths} \label{sec:paths}
We discuss the non-intersecting paths on the Aztec diamond. What follows
in this section is not new, and can be found in several places,
see e.g.\ \cite{J02,J05,J06} and the recent works \cite{BDS,J17}. 
Note however, that we use a (random) double Aztec diamond to extend the
paths, see Section \ref{subsec:doubleAD} below, instead of a deterministic
extension in \cite{J02,J05}. 

\subsection{Non-intersecting paths} \label{subsec:paths}

The South, West and East dominos are marked by line segments as shown in
Figure \ref{fig:AztecDiamondPaths}. The North dominos have no marking.
There are also particles on the West and South dominos, but these will only
play a role later on. 
We look at the line segment as part of paths that go from left to right
and either go up (in a West domino), down (in an East domino), or go
horizontal (in a South domino). Each segment enters a domino in the black
square, and exits it from the white square within the domino.

We include the marking of the dominos into the tiling we obtain
non-intersecting paths, starting at the lower left side of the 
Aztec diamond
and ending at the lower right side. In the pictures that follow we
forget about the black/white shading of the dominos.

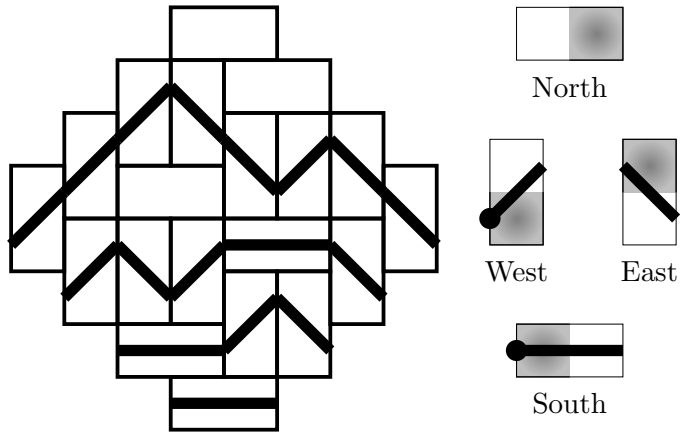
\begin{figure}[h]
\begin{center}
\begin{tikzpicture}[scale=0.7]


\foreach \x/\y in {-4/0,-3/1,-3/-1,-2/2,-1/-1,0/-2,1/1}
{ 
\draw [line width = 0.5mm] (\x,\y-1) rectangle (\x+1,\y+1);
	\draw [line width = 1.5mm] (\x,\y-0.5)--(\x+1,\y+0.5);
}

\foreach \x/\y in {-2/-1,-1/2,0/1,1/-2,2/-1,2/1,3/0}
{ 
\draw [line width = 0.5mm] (\x,\y-1) rectangle (\x+1,\y+1);
	\draw [line width = 1.5mm] (\x,\y+0.5)--(\x+1,\y-0.5);
}

\foreach \x/\y in {0/-3,-1/-2,1/0}
{  
\draw [line width = 0.5mm] (\x-1,\y-1) rectangle (\x+1,\y);
	\draw [line width = 1.5mm] (\x-1,\y-0.5)--(\x+1,\y-0.5);
}

\foreach \x/\y in {-1/1,0/4,1/3}
{ 
\draw [line width=0.5mm] (\x-1,\y-1) rectangle (\x+1,\y);
}

 \draw[outer color=lightgray,inner color=gray]
(5,0.5)--(5,-0.5)--(6,-0.5)--(6,0.5);
\draw [line width = 0mm] (5,-0.5) rectangle (6,1.5);
	\draw [line width = 1.5mm] (5,0)--(6,1);
	\fill (5, 0) circle [radius = 2mm]; 
\draw (5.5,-1) node {West};

 \draw[outer color=lightgray,inner color=gray]
(6.5,3)--(7.5,3)--(7.5,4)--(6.5,4);
\draw[line width = 0mm] (5.5,3) rectangle (7.5,4);
\draw (6.5,2.5) node {North};

 \draw[outer color=lightgray,inner color=gray]
(6.5,-3)--(5.5,-3)--(5.5,-2)--(6.5,-2);
\draw[line width = 0mm] (5.5,-3) rectangle (7.5,-2);
	\draw [line width = 1.5mm] (5.5,-2.5)--(7.5,-2.5);
	\fill (5.5, -2.5) circle [radius = 2mm]; 
\draw (6.5,-3.5) node {South};

 \draw[outer color=lightgray,inner color=gray]
(7.5,0.5)--(7.5,1.5)--(8.5,1.5)--(8.5,0.5);
\draw[line width = 0mm] (7.5,-0.5) rectangle (8.5,1.5);
	\draw [line width = 1.5mm] (7.5,1)--(8.5,0);
\draw (8,-1) node {East};
\end{tikzpicture}

\caption{Line segments and particles on the dominos, that
lead to non-intersecting paths in a domino tiling of 
the Aztec diamond.
\label{fig:AztecDiamondPaths}}
\end{center}
\end{figure}

Each path ends at the same height as where it started. Thus along each path
the number of West dominos is the same as the number of East dominos.
Also  each path has in each row the same number of West dominos
as the number of East dominos. Since we need this property, we
state it in a separate lemma.

\begin{lemma} \label{lem:EastWest}
In any domino tiling of the Aztec diamond, the following holds:
\begin{itemize}
\item[\rm (a)] In each row, the number of West dominos is the same
as the number of East dominos.
\item[\rm (b)] In each column, the number of North dominos is the
same as the number of South dominos.
\end{itemize}
\end{lemma}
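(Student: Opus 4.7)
The plan is to derive (a) directly from the non-intersecting path representation just introduced, and to obtain (b) by the rotational symmetry of the Aztec diamond.

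For (a), I fix an integer row index $y_0 \in \{-N+1,\ldots,N-1\}$ and consider a single path $\pi$ in the family associated to the tiling. By the left-right mirror symmetry of the sawtooth boundary of the Aztec diamond, the starting and ending points of $\pi$ lie at the same height $h$, which is a half-integer. The $y$-coordinate of $\pi$ traces a walk on $\mathbb Z + \tfrac{1}{2}$ built from unit up-steps (occurring precisely through West dominos), unit down-steps (through East dominos), and flat steps (through South dominos); North dominos contribute no step at all. Since this walk begins and ends at the same height $h$, its net displacement is zero, and hence for any integer level $y_0$ the number of up-crossings of the line $y=y_0$ by $\pi$ equals the number of down-crossings.

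The next step is the identification: a West domino in row $y_0$ lying on $\pi$ is in bijection with an up-crossing of $y=y_0$ by $\pi$, and similarly an East domino in row $y_0$ on $\pi$ corresponds to a down-crossing. Summing the crossing identity over all paths $\pi$ in the tiling (every West or East domino lies on exactly one path) then yields that the total number of West dominos in row $y_0$ equals the total number of East dominos in row $y_0$, which is exactly (a).

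Part (b) follows from the $90^{\circ}$ rotational symmetry of the Aztec diamond, which interchanges rows with columns and cyclically permutes the four domino types so that the pair (West, East) from (a) is sent to the pair (North, South) in (b). Alternatively, one may set up a second, bottom-to-top non-intersecting path family in which North and South dominos now contribute the diagonal steps and the argument of (a) applies verbatim. The only point that really requires verification is the structural claim that every path starts and ends at the same height, and this is a direct consequence of the reflection symmetry of the sawtooth boundary; beyond that the proof is a routine counting argument with no substantive obstacle.
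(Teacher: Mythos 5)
Your proof is correct and follows essentially the same route as the paper: part (a) via the path representation (West dominos give up-steps, East dominos give down-steps, each path returns to its starting height, so up- and down-crossings of each integer row line balance), and part (b) via the $90^{\circ}$ rotational symmetry of the Aztec diamond. You merely spell out the crossing argument that the paper leaves implicit in its one-line justification.
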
 
\begin{proof}
(a) This is immediate, since each path has the same number of 
West and East dominos in each row.
\medskip

(b) This follows from part (a), since the Aztec diamond model
is symmetric under 90 degrees rotation. 
\end{proof}

\subsection{Double Aztec diamond} \label{subsec:doubleAD}

The lengths of the paths vary greatly. To obtain a more symmetric picture,
which will be useful for what follows,
we attach to the right bottom side of the original Aztec diamond 
of size $2N$ another one of size $2N-1$ as in  
Figure~\ref{fig:DoublePathsParticles}.

\begin{lemma}
Any domino tiling of the double Aztec diamond splits into a domino 
tiling of the original Aztec diamond of size $2N$ and a domino tiling 
of the attached Aztec diamond of  size $2N-1$.

In other words: there are no dominos that are partly in the original 
Aztec diamond and partly in the newly attached Aztec diamond.
\end{lemma}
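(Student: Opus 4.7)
The plan is to prove the lemma by a short parity argument based on the checkerboard coloring of the combined region. First I would identify the interface between $AD_{2N}$ and $AD_{2N-1}$ as the staircase path shared by the lower-right boundary of the large diamond and the upper-left boundary of the small diamond (cf.\ Figure~\ref{fig:DoublePathsParticles}). Any domino that sits partly in $AD_{2N}$ and partly in $AD_{2N-1}$ must cover one unit square of $AD_{2N}$ adjacent to this staircase together with one unit square of $AD_{2N-1}$ adjacent to it from the opposite side, meeting across either a horizontal or a vertical edge of the interface.

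The key observation is a color-consistency statement: all unit squares of $AD_{2N}$ that share an edge with the interface have the same checkerboard color, and all unit squares of $AD_{2N-1}$ that share an edge with the interface have the opposite color. With coordinates arranged so that the attached diamond is placed as in Figure~\ref{fig:DoublePathsParticles}, consecutive $AD_{2N}$-squares encountered along the staircase differ by a diagonal shift $(-1,-1)$ (and similarly on the $AD_{2N-1}$ side), which preserves the parity of the lower-left corner and hence the checkerboard color. Consequently every crossing domino covers exactly one black square of $AD_{2N}$ and one white square of $AD_{2N-1}$, with no exceptions.

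Given this, the conclusion follows by counting colors. Since every horizontal row of $AD_{2N}$ (respectively $AD_{2N-1}$) has even length, each Aztec diamond considered on its own has equal numbers of black and white squares. Suppose a tiling of the double Aztec diamond had $k \geq 1$ crossing dominoes. These would remove exactly $k$ black squares from $AD_{2N}$ and $k$ white squares from $AD_{2N-1}$, leaving each remainder with an imbalance of $k$ between the two colors. Since any domino covers one black and one white square, neither remainder could then be tiled by dominoes internal to its own diamond. This forces $k=0$, so no domino crosses the interface and the tiling splits.

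The only real work is verifying the color-consistency claim, which amounts to following the staircase step by step and checking that each unit step along the interface preserves the checkerboard parity on each side. This is routine once the coordinates of the attachment are fixed; after that, the parity count closes the argument immediately.
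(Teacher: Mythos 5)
Your proposal is correct and is essentially the paper's own argument: both rest on the observation that all squares of one diamond adjacent to the interface share a single checkerboard color, so crossing dominoes create a black/white imbalance in the remainder that no internal domino tiling can absorb. The paper phrases the contradiction using only the large diamond (crossing dominoes would remove only white squares from it), while you count the imbalance on both sides, but this is a cosmetic difference.
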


\begin{proof}
The smaller Aztec diamond is attached to the original Aztec diamond along
its south-east boundary. In the checkerboard coloring there are only white
squares adjacent to this boundary as is seen in Figure \ref{fig:AztecTiling}. 

If some dominos are partly in the original Aztec diamond
and partly in the new one, then these dominos would cover a number
of white squares in the original Aztec diamond, and no black squares. 
That would leave us with more black squares than white squares 
that should be covered by dominos. 
This is impossible, since each domino covers exactly one black and one white square. 
\end{proof}

\begin{figure}[t]
\begin{center}
\begin{tikzpicture}[scale=0.7]

\foreach \x/\y in {-4/0,-3/1,-3/-1,-2/2,-1/-1,0/-2,1/1,5/-5,5/-3,6/-4}
{ 
\draw [line width = 0.5mm] (\x,\y-1) rectangle (\x+1,\y+1);
	\draw [line width = 1.5mm] (\x,\y-0.5)--(\x+1,\y+0.5);
	\fill (\x,\y-0.5) circle [radius = 2mm]; 
}

\foreach \x/\y in {-2/-1,-1/2,0/1,1/-2,2/-1,2/1,3/0,1/-4,2/-3,4/-5}
{ 
\draw [line width = 0.5mm] (\x,\y-1) rectangle (\x+1,\y+1);
	\draw [line width = 1.5mm] (\x,\y+0.5)--(\x+1,\y-0.5);
}

\foreach \x/\y in {-1/-2,0/-3,1/0,3/-4,4/-3,4/-1}
{  
\draw [line width = 0.5mm] (\x-1,\y-1) rectangle (\x+1,\y);
	\draw [line width = 1.5mm] (\x-1,\y-0.5)--(\x+1,\y-0.5);
	\fill (\x-1,\y-0.5) circle [radius = 2mm]; 	
}

\foreach \x/\y in {-1/1,0/4,1/3,3/-5,4/-2,4/-6}
{ 
\draw [line width=0.5mm] (\x-1,\y-1) rectangle (\x+1,\y);
}


\fill (4, -0.5) circle [radius = 2mm]; 
\fill (5, -1.5) circle [radius = 2mm]; 
\fill (6, -2.5) circle [radius = 2mm]; 
\fill (7, -3.5) circle [radius = 2mm]; 
\end{tikzpicture}

\caption{A tiling of the double Aztec diamond with non-intersecting
paths and particles along the paths.
\label{fig:DoublePathsParticles}}
\end{center}
\end{figure}
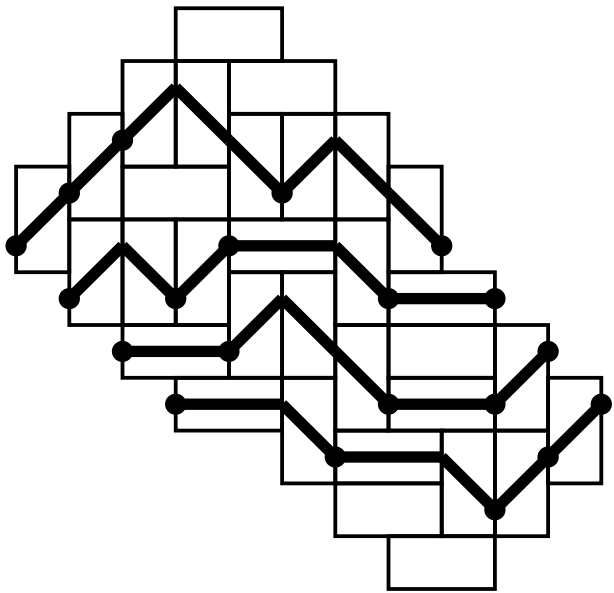

We thus cover the second Aztec diamond with dominos, independently 
from what we did in the original Aztec diamond. We also
put in the markings with line segments. Note however that a horizontal
domino at the very bottom of the second Aztec diamond is now a ``North domino".
A possible tiling is shown in Figure \ref{fig:DoublePathsParticles}
together with the corresponding non-intersecting paths and the particle system 
that we discuss in Section \ref{subsec:particles}.

The double Aztec diamond with partial overlap is considered in \cite{ACvM2,AJvM},
where the phenomenon of a tacnode is studied. For us, the two Aztec diamonds
do not overlap and there is no tacnode phenomenon. 

\subsection{Particle system} \label{subsec:particles}

Next we put particles on the paths. On each West and South domino we put
a particle at the left-most part of its marking as already shown in Figure \ref{fig:AztecDiamondPaths}.  
We do not put any particle
on the East and North dominos, although there may be a particle on the right
edge of an East (or West or South) domino if it is connected to a
West or South domino.  This a slight deviation from what we
did before. We now put the particles on the boundary of the West
and South dominos and not in the center of the black square.
We also put a particle at the end of each path, see 
Figure \ref{fig:DoublePathsParticles}.

Each path contains $2N+1$ particles.  The particles
are interlacing if we consider them along diagonal lines going from
north-west to south-east. There are
$2N+1$ diagonal lines and each diagonal line contains $2N$ particles.

To find a clearer picture, we forget about the dominos and  we rotate 
the figure over 45 degrees in clockwise direction. We further  introduce 
a shear transformation so that the starting and ending points of each path remain at the same height.
In a formula: $(x,y)$ is mapped to $(x+y,y)$.

Then we obtain a figure as in Figure \ref{fig:ParticleVerticalLines} where each
vertical line contains $2N$ particles.  The paths now consist of 
diagonal (right-up) parts, horizontal parts and vertical parts.
The diagonal parts come from the West dominos.
The horizontal parts come from the South dominos
and the vertical parts come from the East dominos.
Each path contains $2N+1$ particles.

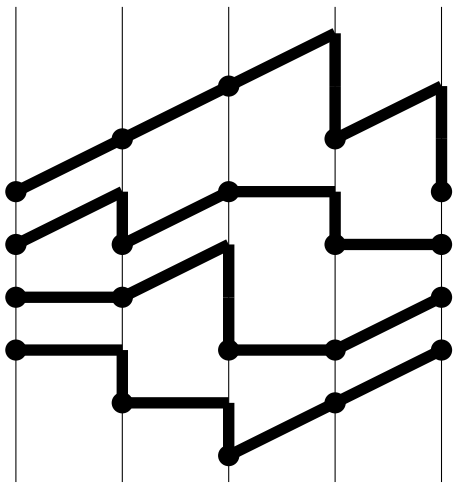
\begin{figure}[t]
\begin{center}
\begin{tikzpicture}[scale=0.7]
\foreach \x/\y in {-4/0,-3/1,-3/-1,-2/2,-1/-1,0/-2,1/1,5/-5,5/-3,6/-4}
{ 
	\draw [line width = 1.5mm] (\x+\y-0.5,\y-0.5)--(\x+\y+1.5,\y+0.5);
	\fill (\x+\y-0.5,\y-0.5) circle [radius = 2mm]; 
}

\foreach \x/\y in {-2/-1,-1/2,0/1,1/-2,2/-1,2/1,3/0,1/-4,2/-3,4/-5}
{ 
	
	\draw [line width = 1.5mm] (\x+\y+0.5,\y+0.5)--(\x+\y+0.5,\y-0.5);
}

\foreach \x/\y in {-1/-2,0/-3,1/0,3/-4,4/-3,4/-1}
{  
	\draw [line width = 1.5mm] (\x+\y-1.5,\y-0.5)--(\x+\y+0.5,\y-0.5);
	\fill (\x+\y-1.5,\y-0.5) circle [radius = 2mm]; 
}

\foreach \x/\y in {-1/1,0/4,1/3,3/-5,4/-2,4/-6}
{ 
}

\fill (3.5, -0.5) circle [radius = 2mm]; 
\fill (3.5, -1.5) circle [radius = 2mm]; 
\fill (3.5, -2.5) circle [radius = 2mm]; 
\fill (3.5, -3.5) circle [radius = 2mm]; 

\foreach \x in {-4.5,-2.5,-0.5,1.5,3.5}
\draw (\x,-6) -- (\x,3);
\end{tikzpicture}

\caption{Particle system on vertical lines
\label{fig:ParticleVerticalLines}}
\end{center}
\end{figure}

\subsection{Modified paths on a graph}

We are going to modify the paths, in such a way that each particle
is preceeded by a horizontal step of a half unit (except for the initial particles).

The particles are on the integer lattice $\mathbb Z  \times \mathbb Z$. 
We put coordinates so that the initial vertices are at $(0,j)$ for $j=0, \ldots, 2N-1$
and the ending vertices are at $(2N,j)$ for $j=0, \ldots, 2N-1$.
Each path consists of $2N$ parts, where the $m$th part goes from
$(m-1,k)$ to $(m,l)$ for some $k, l \in \mathbb Z$ with $m \leq k+1$. We modify
this part by an affine transformation that results in a path from
$(m-1,k)$ to $(m-1/2,l)$, followed by a horizontal step from
$(m-1/2,l)$ to $(m,l)$.

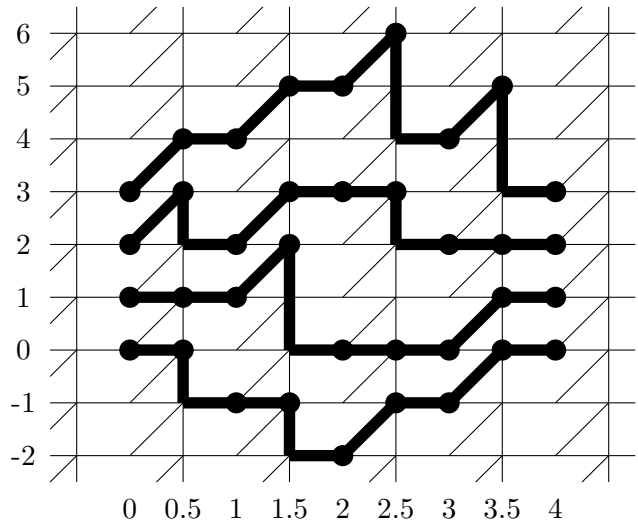
\begin{figure}[h]
\begin{center}
\begin{tikzpicture}[scale=0.7]

\foreach \x/\y in {-4/0,-3/1,-3/-1,-2/2,-1/-1,0/-2,1/1,5/-5,5/-3,6/-4}
{ 
	\draw [line width = 1.5mm] (\x+\y-0.5,\y-0.5)--(\x+\y+0.5,\y+0.5);
	\fill (\x+\y-0.5,\y-0.5) circle [radius = 2mm]; 
}
\foreach \x/\y in {-3/1,-2/2,-1/-1,0/-2,1/1,5/-5,5/-3,6/-4}
\draw [line width = 1.5mm] (\x+\y-1.5,\y-0.5)--(\x+\y-0.5,\y-0.5);

\foreach \x/\y in {-2/-1,-1/2,0/1,1/-2,2/-1,2/1,3/0,1/-4,2/-3,4/-5}
{ 
	
	\draw [line width = 1.5mm] (\x+\y-0.5,\y+0.5)--(\x+\y-0.5,\y-0.5);
}

\foreach \x/\y in {-1/-2,0/-3,1/0,3/-4,4/-3,4/-1}
{  
	\draw [line width = 1.5mm] (\x+\y-1.5,\y-0.5)--(\x+\y-0.5,\y-0.5);
	\fill (\x+\y-1.5,\y-0.5) circle [radius = 2mm]; 
}
\foreach \x/\y in {1/0,3/-4,4/-3,4/-1}
\draw [line width = 1.5mm] (\x+\y-2.5,\y-0.5)--(\x+\y-1.5,\y-0.5);

\foreach \x/\y in {-1/1,0/4,1/3,3/-5,4/-2,4/-6}
{ 
}

\foreach \y in {-3,-2,-1,0}
{
\fill (3.5, \y-0.5) circle [radius = 2mm]; 
\draw [line width = 1.5mm] (2.5,\y-0.5)--(3.5,\y-0.5);
}

\fill (-3.5,-3.5) circle [radius = 2mm]; 
\fill (-3.5,-2.5) circle [radius = 2mm]; 
\fill (-3.5,-0.5) circle [radius = 2mm]; 
\fill (-3.5,0.5) circle [radius = 2mm]; 

\fill (-1.5,-4.5) circle [radius = 2mm]; 
\fill (-1.5,-1.5) circle [radius = 2mm]; 
\fill (-1.5,-0.5) circle [radius = 2mm]; 
\fill (-1.5,1.5) circle [radius = 2mm]; 

\fill (0.5,-4.5) circle [radius = 2mm]; 
\fill (0.5,-3.5) circle [radius = 2mm]; 
\fill (0.5,-0.5) circle [radius = 2mm]; 
\fill (0.5,2.5) circle [radius = 2mm]; 

\fill (2.5,-3.5) circle [radius = 2mm]; 
\fill (2.5,-2.5) circle [radius = 2mm]; 
\fill (2.5,-1.5) circle [radius = 2mm]; 
\fill (2.5,1.5) circle [radius = 2mm];

\foreach \x in {-6,-4,-2,-0, 2,4}
\draw (\x+0.5,-6) -- (\x+0.5,3);
\foreach \y in {-5,-4,-3,-2,-1,0,1,2,3}
\draw (-6,\y-0.5) -- (5,\y-0.5);
\foreach \y in {-5,-4,-3,-2,-1,0,1,2}
{ \foreach \x in {-4,-2,-0, 2,4}
\draw (\x-0.5,\y-0.5)--(\x+0.5,\y+0.5);
}

\foreach \y in {-5,-4,-3,-2,-1,0,1,2}
\draw(-6,\y)--(-5.5,\y+0.5);
\foreach \x in {-4,-2,-0, 2,4}
\draw (\x-0.5,2.5)--(\x,3);
\foreach \x in {-6,-4,-2,-0, 2,4}
{\draw (\x,-6)--(\x+0.5,-5.5);
}

\foreach \x in {0,0.5,1,1.5,2,2.5,3,3.5,4}
\draw (\x+\x-4.5,-6.5) node {\x};
\foreach \y in {-2,-1,0,1,2,3,4,5,6}
\draw (-6.5,\y-3.5) node {\y};

\end{tikzpicture}

\caption{Modified paths on a directed graph. Horizontal and diagonal
edges are oriented to the right and vertical edges are oriented downwards. 
\label{fig:PathsOnGraph}}
\end{center}
\end{figure}

We also extend the particle system by putting particles at
the new vertical lines. If there is no vertical part, then the path has a unique
intersection point with the vertical line, and we put a particle there. If there is
a vertical part of the path at that level, then we put the particle at the highest point,  
see Figure \ref{fig:PathsOnGraph}.
Now there are $4N+1$ particles on each path.

The new paths have a two-step structure.
Starting from an initial position, we either move horizontally to the right 
by half a unit and stay at the same height, or we go diagonally up by one unit
in the vertical direction and horizontally by half a unit.
We call this a Bernoulli step. In both cases we end at a particle on the line
with horizontal coordinate $1/2$.

Then we make a number of vertical down steps followed by a horizontal step 
by half a unit to the right. The number of down steps can be  
any non-negative integer, including zero.  We call this a geometric step.

Then we repeat the pattern. We do a Bernoulli step, a geometric step, a Bernoulli
step, etc. The final step in each path is a geometric step, which should take us to the
same height as where the path started.

Another requirement is that the resulting paths are non-intersecting.
Any such path structure is in one-to-one correspondence with a unique
domino tiling of the double Aztec diamond.

The geometric steps cannot be too far down, since each path has to return
to its initial height, and each up step is done by one unit only. 
 In the example, the largest geometric down step is by two units, but in a
larger size example, one could imagine that larger steps are possible.

The paths lie on an infinite directed graph that is also shown in Figure \ref{fig:PathsOnGraph}.
We call it the Aztec diamond graph. Its set of vertices is 
$(\frac{1}{2} \mathbb Z) \times \mathbb Z$.
From a vertex $(m,n) \in \mathbb Z^2$ there are two directed edges 
\begin{itemize}
\item From $(m,n)$ to $(m+ \frac{1}{2},n+1)$ (diagonal up step)
\item From $(m,n)$ to $(m+ \frac{1}{2},n)$ (horizontal step)
\end{itemize}
The transition from $m$ to $m+\frac{1}{2}$ is a Bernoulli step.

From $(m+\frac{1}{2},n)$ there are also two directed edges
\begin{itemize}
\item From $(m + \frac{1}{2},n)$ to $(m+ \frac{1}{2},n-1)$ (vertical down step)
\item From $(m + \frac{1}{2},n)$ to $(m+ 1,n)$ (horizontal step)
\end{itemize}
We go from level $m+\frac{1}{2}$ to level $m +1$ by making a number of vertical down
steps (possibly zero) and then making the horizontal step to the right.
The transition from $m+\frac{1}{2}$ to $m+1$ is a geometric step.

For a given $N$ the paths start at the vertices with
coordinates $(0,j)$, $j=0, \ldots, 2N-1$, and end at the vertices
with coordinates $(2N,j)$, $j=0, \ldots, 2N-1$. The paths lie on
the graph and are not allowed to intersect, that is, the set of
vertices for two different paths is disjoint.
Since the graph is  planar and paths cannot go to the left, the
paths maintain their relative ordering. The path from
$(0,j)$ to $(2N,j)$ stays below the path from $(0,j+1)$ to $(2N,j+1)$
at all levels. 

\subsection{Weights} \label{subsec:weights}

There is a one-to-one correspondence between tilings of the double
Aztec diamond and non-intersecting paths on the Aztec diamond graph
with prescribed initial and ending positions as described above.

In the two periodic Aztec diamond we assign a weight \eqref{eq:weightT}
to a tiling with the corresponding probability \eqref{eq:probT}.
To be able to transfer this to the paths, we recall that in a Bernoulli step
a diagonal up-step corresponds
to a West domino, and a horizontal step corresponds to a South domino.
The vertical steps in a geometric step correspond to East dominos. 
The horizontal step that closes a geometric step was added artificially
and does not correspond to a domino.

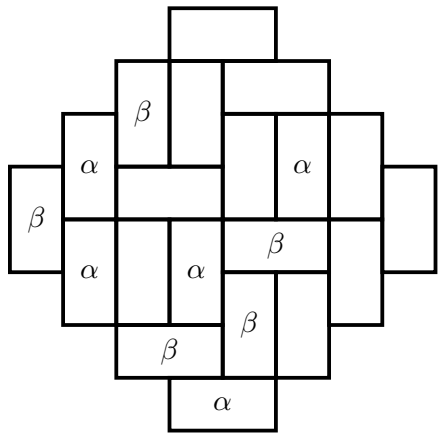
\begin{figure}[t]
\begin{center}
\begin{tikzpicture}[scale=0.7]


\foreach \x/\y in {-4/0,-3/1,-3/-1,-2/2,-1/-1,0/-2,1/1}
{ 
\draw [line width = 0.5mm] (\x,\y-1) rectangle (\x+1,\y+1);
}

\foreach \x/\y in {-2/-1,-1/2,0/1,1/-2,2/-1,2/1,3/0}
{ 
\draw [line width = 0.5mm] (\x,\y-1) rectangle (\x+1,\y+1);
}

\foreach \x/\y in {0/-3,-1/-2,1/0}
{  
\draw [line width = 0.5mm] (\x-1,\y-1) rectangle (\x+1,\y);
}

\foreach \x/\y in {-1/1,0/4,1/3}
{ 
\draw [line width=0.5mm] (\x-1,\y-1) rectangle (\x+1,\y);
}

\foreach \x in {-3.5} \draw (\x,0) node {$\beta$};
\foreach \x in {-2.5,1.5} \draw (\x,1) node {$\alpha$}; 
\foreach \x in {-2.5,-0.5} \draw (\x,-1) node {$\alpha$};
\foreach \x in {-1.5} \draw (\x,2) node {$\beta$}; 
\foreach \x in {0.5} \draw (\x,-2) node {$\beta$};

\foreach \y in {-3.5} \draw(0,\y) node {$\alpha$};
\foreach \y in {-0.5} \draw(1,\y) node {$\beta$};
\foreach \y in {-2.5} \draw(-1,\y) node {$\beta$};
\end{tikzpicture}

   \caption{Equivalent weighting of a tiling of the Aztec diamond. 
   The West dominos  in an odd row and the South dominos in an
   even column have weight $\alpha =a^2$. West dominos in an even row
   and South dominos in an odd column have weight $\beta = b^2$. 
   All North and East dominos have weight $1$.
\label{fig:AztecDiamondWeights}}
\end{center}
\end{figure}	 

We do not see the North dominos in the paths, and therefore we cannot
transfer the weights on the dominos to weights on path segments directly.
It is possible to assign weights to dominos in which North dominos have weight one 
and which is equivalent to \eqref{eq:weightD} as it leads to the 
same probabilities \eqref{eq:probT} on tilings. This was also 
done  in \cite{CJ}, but we present it in a different way here. 
Because of Lemma \ref{lem:EastWest} each column has the same number 
of North and South dominos, and these dominos all have
the same weight \eqref{eq:weightD}.
We obtain the same weight \eqref{eq:weightT} of a tiling if instead of assigning the same weight
$a$ or $b$ to all the horizontal dominos in a column, we assign $a^2$ or $b^2$ to
the South dominos and weight $1$ to the North dominos in that column.

For symmetry reasons we apply the same operation to  East and West dominos.
Then instead of \eqref{eq:weightD} we assign the following weight to a domino $D$,
where we recall that $\alpha = a^2$ and $\beta = b^2$,
\begin{equation} \label{eq:weightDhat} 
	\widehat{w}(D) =
	 \begin{cases} \alpha, & \text{if $D$ is a South domino in an even column}, \\
	  \beta, & \text{if $D$ is a South domino in an odd column}, \\
	  \beta, & \text{if $D$ is a West  domino in an even row}, \\
	  \alpha, & \text{if $D$ is a West domino in an odd row}, \\
	  1, & \text{if $D$ is a North or East domino}.
	\end{cases}	 
\end{equation}
In our running example, the new weights $\widehat{w}(D)$ are 
shown in Figure \ref{fig:AztecDiamondWeights}.

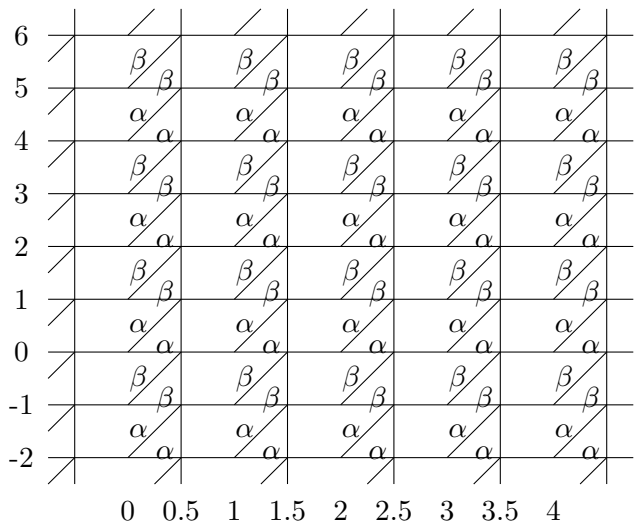
\begin{figure}[t]
\begin{center}
\begin{tikzpicture}[scale=0.7]
\foreach \x in {-6,-4,-2,-0, 2,4}
\draw (\x+0.5,-6) -- (\x+0.5,3);
\foreach \y in {-5,-4,-3,-2,-1,0,1,2,3}
\draw (-6,\y-0.5) -- (5,\y-0.5);
\foreach \y in {-5,-4,-3,-2,-1,0,1,2}
{ \foreach \x in {-4,-2,-0, 2,4}
\draw (\x-0.5,\y-0.5)--(\x+0.5,\y+0.5);
}
\foreach \y in {-5,-4,-3,-2,-1,0,1,2}
\draw(-6,\y)--(-5.5,\y+0.5);
\foreach \x in {-4,-2,-0, 2,4}
\draw (\x-0.5,2.5)--(\x,3);
\foreach \x in {-6,-4,-2,-0, 2,4}
{\draw (\x,-6)--(\x+0.5,-5.5);
}

\foreach \x in {0,0.5,1,1.5,2,2.5,3,3.5,4}
\draw (\x+\x-4.5,-6.5) node {\x};
\foreach \y in {-2,-1,0,1,2,3,4,5,6}
\draw (-6.5,\y-3.5) node {\y};


\foreach \y in {-5,-3,-1,1}
 \foreach \x in {-4,-2,-0,2,4} 
 {\draw (\x-0.3 ,\y) node {$\alpha$};
  \draw (\x+0.2, \y-0.4) node {$\alpha$};
  }
  
\foreach \y in {-4,-2,0,2}
 \foreach \x in {-4,-2,-0,2,4} 
 {\draw (\x-0.3 ,\y) node {$\beta$};
  \draw (\x+0.2, \y-0.4) node {$\beta$};
  }
\end{tikzpicture}

\caption{Weights on the edges of the Aztec diamond graph corresponding
to the weights \eqref{eq:weightDhat} on dominos. 
   	Horizontal edges
   and diagonal edges from level $m \in \mathbb Z$ to
   $m+1/2$ have weight $\alpha$ in even numbered rows
   and weight $\beta$ in odd numbered rows. 
	The vertical edges and  horizontal edges
	from level $m + 1/2$ to $m+1$ have weight $1$
	and these weights are not shown in the figure.
\label{fig:AztecGraphWeights}}
\end{center}
\end{figure}

Since North dominos have weight $1$ we can transfer the weights 
\eqref{eq:weightDhat} on dominos to weights on the edges of the  
Aztec diamond graph. The result is shown in Figure \ref{fig:AztecGraphWeights}.
The weights alternate per row. 

Horizontal edges from $(m,n) \in \mathbb Z^2$ to $(m+1/2,n)$ 
and diagonal edges from $(m,n)$ to $(m+1/2,n+1)$ have weight 
$\alpha$ if $n$ is even and weight $\beta$ if $n$ is odd. 
All other edges have weight $1$. 

\subsection{Transition matrices} \label{subsec:transition}

We use the layered structure of the Aztec diamond graph to introduce
transition matrices between levels. Here a level is just the horizontal 
coordinate. There are integer levels $m$ and half-integer levels
$m+1/2$ with $m \in \mathbb Z$. 

The transition from level $m$ to $m+1/2$ is  a Bernoulli step. 
Because of the weights, the transition matrix is,
with $x,y \in \mathbb Z$,
\begin{equation} \label{eq:Tmmh} 
	T_{m,m+1/2}(x,y) = \begin{cases} \alpha, & \text{if } 
	x \text{ is even and } y \in \{x, x+1\}, \\
	\beta, & \text{if } x \text{ is odd and } y \in \{x, x+1\}, \\
	0, & \text{otherwise}.
	\end{cases}
\end{equation}
Then $T_{m,m+1/2}$ is $2$-periodic, namely 
$T_{m,m+1/2}(x+2,y+2) = T_{m,m+1/2}(x,y)$ for all $x,y \in \mathbb Z$.
As a matrix it is a block Laurent matrix (i.e., a 
block Toeplitz matrix that is infinite in both directions)
with $2 \times 2$ blocks.
The diagonal block is 
$\begin{pmatrix} \alpha & \alpha \\ 0 & \beta \end{pmatrix}$,
the block on the first upper diagonal is
$\begin{pmatrix} 0 & 0 \\ \beta & 0 \end{pmatrix}$,
and all other diagonals are zero. The associated symbol \cite{BG} is
\begin{equation} \label{eq:Ammh} 
	A_{m,m+1/2}(z) = 
		\begin{pmatrix} \alpha & \alpha \\ 0 & \beta \end{pmatrix}
 + \begin{pmatrix} 0 & 0 \\ \beta & 0 \end{pmatrix} z 
= \begin{pmatrix} \alpha & \alpha \\ \beta z & \beta \end{pmatrix}
\end{equation}
with $z \in \mathbb C$.

To go from level $m+1/2$ to level $m+1$, we make a number of vertical 
down steps (possibly zero) and then a horizontal step.
All weights are $1$ and so the transition matrix is
\begin{equation} \label{eq:Tmhmp} 
	T_{m+1/2,m+1}(x,y) =
	 \begin{cases} 1, & \text{ if } y \leq x, \\
   	0, & \text{ if } y > x. \end{cases} \end{equation} 
This is a Laurent matrix, but we want to view it as a a  block Laurent
matrix with $2 \times 2$ blocks. The diagonal block is
$ \begin{pmatrix} 1 & 0 \\ 1 & 1 \end{pmatrix}$, all blocks below
the main diagonal are $\begin{pmatrix} 1 & 1 \\ 1 & 1 \end{pmatrix}$
and all blocks above the main diagonal are zero. The symbol is
\begin{align}  
	A_{m+1/2,m+1}(z)  & = \begin{pmatrix} 1 & 0 \\ 1 & 1 \end{pmatrix}
	+ \sum_{j=-\infty}^{-1}  \begin{pmatrix} 1 & 1 \\ 1 & 1 \end{pmatrix} z^j \label{eq:Amhmp}
	 = \frac{1}{z-1} \begin{pmatrix} z & 1 \\ z & z  \end{pmatrix},
	\end{align}
with $|z| > 1$.

Then (the product is matrix multiplication)
\begin{equation} \label{eq:defTmmp} 
	T = T_{m,m+1/2} T_{m+1/2,m+1} 
\end{equation} 
is the transition matrix from level $m$ to level $m+1$, and $T$ is two periodic.
The symbol for $T$  is easily seen to be the product of \eqref{eq:Ammh} 
and \eqref{eq:Amhmp} 
\begin{align} \nonumber
	A(z) & = A_{m,m+1/2}(z) A_{m+1/2,m+1}(z) \\
 	& = \frac{1}{z-1} \begin{pmatrix} 2\alpha z & \alpha(z+1) \\ 
	 	\beta z(z+1)& 2 \beta z \end{pmatrix} \label{eq:defA2}
			\end{align} 
which agrees with \eqref{eq:defA}.

More generally, for any integers $m < m'$ we have a transition matrix
$T^{m'-m}$ to go from level $m$ to level $m'$ with symbol $A^{m'-m}$.
In particular $T^{2N}$ is the transition matrix from level $0$
to $2N$ with symbol $A^{2N}$.

Now we want to invoke the Lindstr\"om-Gessel-Viennot lemma
\cite{GV,L}, see also \cite[Theorem 3.1]{J06} for a proof, which
gives an expression for the weighted number of non-intersecting
paths on the graph with prescribed  starting and ending positions.
For us, the starting positions are $(0,j)$, $j=0, \ldots, 2N-1$
and the ending positions $(2N, j)$, $j=0, \ldots, 2N-1$. 
Since $T^{2N}(j,k)$ is the sum of all weighted paths from 
$(0,j)$ to $(2N,k)$ and so by the Lindstr\"om-Gessel-Viennot lemma
the partition function is a determinant
\begin{equation} \label{eq:ZNdet} 
	Z_N = \det \left[ T^{2N}(j,k) \right]_{j,k=0, \ldots, 2N-1}. 
	\end{equation}

Because of the layered structure in the graph, we can also look at the positions
of the particles at intermediate levels $m$. We restrict
to integer values $m$, but we could also include the half-integer values.

Given an admissible $2N$-tuple of non-intersecting paths, we 
then find a point set configuration $(x_j^m)_{j=0, m=1}^{2N-1,2N-1}$ where $x_0^m < x_1^m < \cdots < x_{N-1}^m$
are the vertical coordinates of the particles at level $m$.
The probability measure on admissible tuples of non-intersecting
paths yields a probability measure on particle configurations
in $\{1, \ldots, 2N-1\} \times \mathbb Z^N $.

Then another application of the Lindstr\"om-Gessel-Viennot lemma
yields that the joint probability for the particle configuration
$(x_j^m)_{j=0, m=1}^{2N-1,2N-1}$ is
\begin{multline} \label{eq:multilevelAD} 
	\text{Prob} \left((x_j^m)_{j=0, m=1}^{2N-1,2N-1}\right) = 
 \frac{1}{Z_N} \prod_{m=0}^{N-1}  \det \left[ T(x_j^m, x_k^{m+1}) \right]_{j,k=0}^{2N-1}
 \\
 \text{with } x_j^0 = x_j^{2N} = j \text{ for } j=0, \ldots, 2N-1.
  \end{multline}
The point process \eqref{eq:multilevelAD} is determinantal. The correlation kernel
is given by the Eynard-Mehta theorem \cite{EM}, see also \cite{B,BR}.
\begin{proposition}
The correlation kernel is
\[ K(m, x ; m', y)
	= - \chi_{m > m'} T^{m'-m}(y,x)
		+ \sum_{i,j=0}^{2N-1} T^m(i,x) \left[\mathbf{G}^{-1}\right]_{j,i}
			T^{2N-m'}(y,j) \]
where $\mathbf{G} = \left(T^{2N}(i,j)\right)_{i,j=0}^{2N-1}$. 
		\end{proposition}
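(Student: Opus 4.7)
The statement is a textbook application of the Eynard–Mehta theorem to the joint density \eqref{eq:multilevelAD}, which already has the canonical ``chain of determinants with fixed boundary positions'' form that Eynard–Mehta is designed for. The plan is to recognize this form explicitly, verify that the Gram matrix is invertible, and then read off the kernel term by term.

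First, I would rewrite \eqref{eq:multilevelAD} as
\begin{equation*}
  \frac{1}{Z_N}\,\prod_{m=0}^{2N-1}\det\bigl[T(x_j^m,x_k^{m+1})\bigr]_{j,k=0}^{2N-1},
\end{equation*}
with the boundary data pinned at $x_j^0=x_j^{2N}=j$ for $j=0,\dots,2N-1$. Iterating the Cauchy–Binet formula in the intermediate variables collapses the product of determinants to $\det\bigl[T^{2N}(j,k)\bigr]_{j,k=0}^{2N-1}=\det\mathbf{G}$, which matches \eqref{eq:ZNdet}. Since all edge weights on the Aztec graph are strictly positive, every $T^{2N}(j,k)$ is non-negative and $Z_N>0$, so $\mathbf{G}$ is invertible; no further analytic issue arises.

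Second, I would invoke the Eynard–Mehta theorem (see \cite{EM}, or the sharpened statements in \cite{B,BR}). Applied to the above chain with fixed endpoints it produces a determinantal process whose kernel decomposes into an ``extended'' propagator piece active only when $m>m'$, plus a ``boundary'' piece built from the forward and backward propagators glued through the inverse Gram matrix. The three ingredients are visible in our set-up: the forward factor is $\psi_i^{(m)}(x):=T^m(i,x)$, the weighted sum over paths from the left boundary point $(0,i)$ to $(m,x)$; the backward factor is $\phi_j^{(m')}(y):=T^{2N-m'}(y,j)$, the weighted sum over paths from $(m',y)$ to the right boundary point $(2N,j)$; and the intra-chain propagator $T^{m-m'}$ (appearing transposed, as $T^{m'-m}(y,x)$, due to the orientation of rows/columns in the determinants) accounts for the case $m>m'$. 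Substituting yields precisely
\begin{equation*}
  K(m,x;m',y) = -\chi_{m>m'}\,T^{m'-m}(y,x) + \sum_{i,j=0}^{2N-1} T^m(i,x)\,[\mathbf{G}^{-1}]_{j,i}\,T^{2N-m'}(y,j),
\end{equation*}
which is the claimed formula.

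The only genuinely delicate step is the bookkeeping of conventions: in \eqref{eq:multilevelAD} the determinant has rows indexed by $j$ (a ``left'' label) and columns by $k$ (a ``right'' label), and the Eynard–Mehta statement in \cite{EM} uses an opposite convention in places, which is where the transposition $T^{m'-m}(y,x)$ (rather than $T^{m-m'}(x,y)$) and the index swap $[\mathbf{G}^{-1}]_{j,i}$ (rather than $[\mathbf{G}^{-1}]_{i,j}$) come from. I would therefore devote the bulk of the write-up to fixing the conventions and checking the assignment of the three propagators; once this is pinned down, the proof reduces to quoting the theorem, so no substantive obstacle remains.
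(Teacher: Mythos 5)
Your proposal is correct and follows exactly the paper's route: the paper simply invokes the Eynard--Mehta theorem at this point, and its Section 4 (Theorem \ref{thm:EMtheorem} and Corollary \ref{cor:EM2}) carries out precisely your identification, taking $\phi_i=\delta_i$ and $\psi_j=\delta_j$ so that the forward and backward factors become $T^m(i,x)$ and $T^{2N-m'}(y,j)$, with invertibility of $\mathbf{G}$ coming from $Z_N=\det\mathbf{G}>0$ via Lindstr\"om--Gessel--Viennot. One caveat on the bookkeeping you rightly single out as the delicate step: the Eynard--Mehta first term is $T_{m',m}(y,x)=T^{m-m'}(y,x)$ (consistent with \eqref{eq:theo12} and Corollary \ref{cor:EM2}), so the exponent $m'-m$ in the statement is best read as a typo rather than the effect of a transposition --- transposing swaps the arguments $x$ and $y$, it does not reverse the sign of the exponent.
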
 
In particular, if $m=m'$
\[ K_m(x,y) = K(m,x; m, y)
	=  \sum_{i,j=1}^{2N-1} T^m(i,x) \left[\mathbf{G}^{-1} \right]_{j,i}
			T^{2N-m}(y,j) \]
gives the correlation kernel for the particles at level $m$.

Note that $\mathbf{G}$ is a finite size submatrix of the two-sided infinite matrix $T^{2N}$
and $\mathbf{G}^{-1}$ is the inverse of this matrix. 
To handle the correlation kernel in the large $N$ limit, we 
need to find  a suitable way to invert the matrix $\mathbf{G}$. 
A fruitful idea is to biorthogonalize.
This can be done with matrix valued orthogonal polynomials,
and we will discuss this in greater generality in the next section.

\section{Determinantal point processes and MVOP} \label{sec:MVOP}

\subsection{The model} \label{subsec:model}

We analyze the following situation.
We take an integer $p \geq 1$, and we consider transition matrices
$T_m : \mathbb Z^2 \to \mathbb R$ for $m \in \mathbb Z$ 
that are $p$-periodic. This means that
\[ T_m(x+p,y+p) = T_m(x,y) \]
for every $m$ and $x,y \in \mathbb Z$. 

The model also depends on integers $N, L \in \mathbb N$ and $M \in \mathbb Z$. 
There will be $L+1$ levels numbered as $0,1, \ldots, L$. At each level $m$ 
there are $pN$ particles at integer positions denoted by 
\[ x_0^{m} < x_1^{m} < \cdots < x_{pN-1}^{m}. \]
The initial and ending positions (at levels $0$ and $L$) are deterministic
and are given by consecutive integers
\begin{equation} \label{eq:boundary} 
	x_j^0 = j, \qquad  x_j^L = pM +j, \qquad j = 0, \ldots, pN-1. 
	\end{equation}

Our assumption for this section is the following.
\begin{assumption} \label{ass:multilevel} 
$(x_j^m)_{j=0, m=0}^{pN-1,L}$ is a multi-level particle system with joint
probability 
\begin{multline} \label{eq:multilevel} 
	{\rm Prob} \left((x_j^m)_{j=0, m=0}^{pN-1,L} \right) 
	 = 
 \frac{1}{Z_N} 
 \det\left[ \delta_j(x_k^0)\right]_{j,k=0}^{pN-1} \\
  \cdot \left(\prod_{m=0}^{L-1}  \det \left[ T_m(x_j^m, x_k^{m+1}) \right]_{j,k=0}^{pN-1}
  \right)
 	\cdot \det\left[ \delta_{pM+j}(x_k^{L}) \right]_{j,k=0}^{pN-1}, 
 	\end{multline}
 where the transition matrices $T_m$ are $p$-periodic for every $m$.
The constant $Z_N$ in \eqref{eq:multilevel} is a normalizing constant and 
$\delta_j(x) = \delta_{j,x}$ is the Kronecker delta.
\end{assumption}
The determinantal factors with the delta-functions in \eqref{eq:multilevel} 
ensure the boundary conditions \eqref{eq:boundary}.

Assumption \ref{ass:multilevel} is satisfied for the two periodic Aztec
diamond by \eqref{eq:multilevelAD}, provided we take $p=2$, $L=2N$, $M=0$ and $T_m=T$ for each integer $m$, with $T$ given by \eqref{eq:defTmmp}, using \eqref{eq:Tmmh}, \eqref{eq:Tmhmp}. 	
There are three crucial assumptions contained in Assumption \ref{ass:multilevel}.
\begin{itemize}
\item  The transition matrices are $p$-periodic. 
It means that the $T_m$ are block Laurent matrices with $p \times p$ blocks.
\item The initial and ending positions of the particles are at
consecutive integers. This assumption allows to make a connection with
matrix valued polynomials. Note that we allow for a shift $pM$ in the
positions at level $L$ compared to the initial positions.
\item The transition matrices are such that \eqref{eq:multilevel} is
a probability. That is, \eqref{eq:multilevel} is always non-negative 
and we can find a normalization constant $Z_N$ such that all
probabilities \eqref{eq:multilevel} add up to $1$.
\end{itemize}
We made two other assumptions, namely
\begin{itemize}
\item the number $pN$ of particles at each level is a multiple of $p$, and
\item the shift $pM$ in the positions of the particles at the final level
is also a multiple of $p$, 
\end{itemize}
but these are less essential. They are made for convenience and ease of notation
and could be relaxed if needed. 

For future analysis, we also assume
\begin{assumption} \label{ass:blocksymbols}
The symbols for the block Laurent matrices $T_m$, $m \in \mathbb Z$,
are analytic in a  common annular domain $R_1 < |z| < R_2$ in the complex plane.
\end{assumption}

For $m < m'$ we use 
\begin{equation} \label{eq:Tmn} 
	T_{m,m'} = T_m \cdot T_{m+1} \cdot \cdots \cdot T_{m'-1}, 
\end{equation} 
for the transition matrix from level $m$ to level $m'$. The matrix multiplication
is well defined because of Assumption \ref{ass:blocksymbols}.
Every $T_{m,m'}$ is a block Laurent matrix with the same period $p$. 

The Eynard-Mehta theorem \cite{EM} applies to \eqref{eq:multilevel}.
We present  the Eynard-Mehta theorem as stated in \cite{B}.
We assume $\phi_j$, $\psi_j$ for $j=0,1, \ldots, N-1$ are given functions,
and for functions $\phi, \psi : \mathbb Z \to  \mathbb R$, and
$T : \mathbb Z \times \mathbb Z \to \mathbb R$ we use 
$\ds (\phi \ast T)(y) = \sum_{x \in \mathbb Z} \phi(x) T(x,y)$,
$\ds (T \ast \psi)(x) = \sum_{y \in \mathbb Z} T(x,y) \psi(y)$, and
$\ds  \phi \ast \psi = \sum_{x \in \mathbb Z} \phi(x) \psi(x)$.

\begin{theorem}[Eynard-Mehta] \label{thm:EMtheorem}
A multi-level particle system of the form
\begin{multline} \label{eq:multilevel2} 
	{\rm Prob} \left((x_j^m)_{j=0, m=0}^{N-1,L} \right) 
	 = 
 \frac{1}{Z_N} 
 \det\left[ \phi_j(x_k^0)\right]_{j,k=0}^{N-1} \\
  \cdot \left( \prod_{m=0}^{L-1}  \det \left[ T_m(x_j^m, x_k^{m+1}) \right]_{j,k=0}^{N-1}
  \right)
 	\cdot \det\left[ \psi_j (x_k^{L}) \right]_{j,k=0}^{N-1} 
\end{multline}
where $\phi_j$, $\psi_j$ for $j=0,1, \ldots, N-1$ are arbitrary given functions,
is determinantal with correlation kernel.
\begin{multline} \label{eq:EMtheorem} 
	K(m,x;m', y)  
	=  - \chi_{m> m'} T_{m',m}(y,x) \\
		+ \sum_{i,j=0}^{N-1} \left(\phi_i \ast T_{0,m} \right)(x) 
			\left[\mathbf{G}^{-t}\right]_{i,j}
			\left(T_{m',L} \ast \psi_j \right)(y) 
			\end{multline}
where the Gram matrix $\mathbf{G}$ is defined by
\begin{equation} \label{eq:Gram} 
	\mathbf{G} = \left( G_{i,j} \right)_{i,j=0}^{N-1},
	 \qquad
	 	G_{i,j} = \phi_i \ast T_{0,L} \ast \psi_j. 
\end{equation} 
	\end{theorem}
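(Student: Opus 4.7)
The plan is to derive the kernel formula via repeated application of the Andreief (generalized Cauchy-Binet) identity. In its basic form this states that
\[
\frac{1}{N!}\sum_{x_0,\ldots,x_{N-1}\in\mathbb Z}\det[f_i(x_j)]_{i,j=0}^{N-1}\det[g_i(x_j)]_{i,j=0}^{N-1} = \det[f_i\ast g_j]_{i,j=0}^{N-1},
\]
and it admits an ``intertwined'' version in which a transition kernel $T$ is inserted between the two Vandermonde-like factors. Assumption \ref{ass:blocksymbols} ensures that all of the convolutions appearing below converge absolutely, so the repeated summation interchanges are legitimate.

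First I would compute the normalization. Summing \eqref{eq:multilevel2} over every particle and collapsing the $L-1$ intermediate convolutions by the intertwined Andreief identity produces $Z_N = \det[\phi_i\ast T_{0,L}\ast\psi_j]_{i,j=0}^{N-1} = \det\mathbf{G}$. To obtain the $k$-point correlation function at distinct space-time locations $(m_a,y_a)$, $a=1,\ldots,k$, I would insert the indicators ``a particle sits at $(m_a,y_a)$'' into the joint law, marginalize the remaining free particles by the same Andreief procedure, and rearrange the resulting expression as a single $(N+k)\times(N+k)$ block determinant: an $N\times N$ block of $\phi_i\ast T_{0,L}\ast\psi_j$ entries (which is $\mathbf{G}$), two rectangular blocks containing $\phi_i\ast T_{0,m_a}(y_a)$ and $T_{m_b,L}\ast\psi_j(y_b)$, and a $k\times k$ block recording the direct interactions between the frozen points. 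Schur-complementing out the $\mathbf{G}$-block identifies this determinant with $Z_N$ times $\det[K(m_a,y_a;m_b,y_b)]_{a,b=1}^k$, yielding the claimed determinantal structure of the process.

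The main obstacle is correctly producing the $-\chi_{m>m'}T_{m',m}(y,x)$ summand. It appears as the $(a,b)$-entry of the $k\times k$ ``frozen'' block after Schur complementing: when $m_a>m_b$ a frozen particle at $(m_b,y_b)$ may lie on the same path as the frozen particle at $(m_a,y_a)$, contributing an explicit direct transition $T_{m_b,m_a}(y_b,y_a)$, whereas when $m_a\le m_b$ no such path contribution survives. The sign is forced by the antisymmetrization implicit in the LGV/Eynard-Mehta formalism and can be tracked by following permutations through the Andreief reductions. Once the distinct-level case is established, the case $m_a=m_b$ is recovered by specializing $m=m'$ and noting that $T_{m,m}=I$ reproduces the correct equal-level kernel. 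Invertibility of $\mathbf{G}$, needed for \eqref{eq:EMtheorem} to make sense, is equivalent to $Z_N\neq 0$, which is automatic whenever \eqref{eq:multilevel2} defines a genuine probability measure.
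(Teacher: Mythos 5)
The paper does not actually prove this theorem: it is presented as a known result, quoted from Borodin's survey \cite{B} and attributed to \cite{EM}, so there is no internal proof to compare against. Your strategy --- Andreief reduction of the intermediate levels, an $(N+k)\times(N+k)$ block determinant for the $k$-point function, and a Schur complement with respect to the Gram block --- is one of the standard routes to Eynard--Mehta, and the overall architecture is right: the identification $Z_N=\det\mathbf{G}$ (up to the combinatorial factor from ordered versus unordered configurations), the placement of $\phi_i\ast T_{0,m_a}$ and $T_{m_b,L}\ast\psi_j$ in the rectangular blocks, and the observation that invertibility of $\mathbf{G}$ is forced by $Z_N\neq 0$ are all correct. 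I checked your claimed frozen block $D_{ab}=\chi_{m_a>m_b}\,T_{m_b,m_a}(y_b,y_a)$ against the kernel \eqref{eq:EMtheorem} in the case $N=1$, $L=1$, $k=2$, and the Schur complement does reproduce $(-1)^k Z_N\det[K(m_a,y_a;m_b,y_b)]$, so the structure you propose is the correct one.

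The gap is at the step you yourself flag as the main obstacle. The entire content of the theorem is the derivation of that block-determinant identity --- the precise indicator $\chi_{m_a>m_b}$, the arguments $(y_b,y_a)$, and the sign that makes the Schur complement match $\det[K]$ --- and you justify it by saying a frozen particle at a lower level ``may lie on the same path'' as one at a higher level. But in the theorem the $T_m$, $\phi_j$, $\psi_j$ are arbitrary (possibly signed) functions: there are no paths, and no LGV interpretation is available at this level of generality, so the heuristic cannot serve as the argument. The identity must be obtained algebraically, by expanding the determinants in \eqref{eq:multilevel2}, summing out the unfrozen variables level by level, and tracking via inclusion--exclusion how each frozen variable is matched either to a row of $\mathbf{G}$ or to another frozen variable at a lower level; this bookkeeping is exactly what produces the indicator and the minus sign (equivalently, one invokes the conditional $L$-ensemble lemma of Borodin and Rains \cite{BR}). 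Until that computation is carried out, \eqref{eq:EMtheorem} is asserted rather than proven. A small additional slip: the equal-level case is not recovered from ``$T_{m,m}=I$'' (which is not part of the statement) but simply from the fact that $\chi_{m>m'}=0$ when $m=m'$, so the direct-transition term is absent.
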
	
We apply Theorem \ref{thm:EMtheorem} to \eqref{eq:multilevel} 
and we find the following correlation kernel \eqref{eq:EMkernel}.

\begin{corollary} \label{cor:EM2}
The multi-level particle system \eqref{eq:multilevel} is
determinantal with correlation kernel
\begin{multline} \label{eq:EMkernel} 
	K(m,x; m',y)  
	=  - \chi_{m> m'} T_{m',m}(y,x) \\
		+ \sum_{i,j=0}^{pN-1}  T_{0,m}(i,x) 
			\left[\mathbf{G}^{-t}\right]_{i,j} T_{m',L}(y,pM+j)  
			\end{multline}
with 
\begin{equation} \label{eq:Gramkernel} 
	\mathbf{G} = \left( G_{i,j} \right)_{i,j=0}^{pN-1},
	 \qquad
	 	G_{i,j} = T_{0,L}(i,pM+j). 
\end{equation} 
\end{corollary}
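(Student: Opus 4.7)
The plan is to deduce Corollary \ref{cor:EM2} as a direct specialization of the Eynard--Mehta theorem (Theorem \ref{thm:EMtheorem}). The multi-level distribution \eqref{eq:multilevel} is of precisely the form \eqref{eq:multilevel2} in the Eynard--Mehta theorem once we identify the ``boundary'' functions $\phi_j$ and $\psi_j$ correctly and replace the particle count $N$ by $pN$. The boundary conditions \eqref{eq:boundary} are enforced in \eqref{eq:multilevel} by the two determinantal factors involving Kronecker deltas, which matches \eqref{eq:multilevel2} with the choices
\begin{equation*}
  \phi_j(x) \;=\; \delta_j(x), \qquad \psi_j(x) \;=\; \delta_{pM+j}(x), \qquad j = 0, 1, \ldots, pN-1.
\end{equation*}
Indeed, $\det[\delta_j(x_k^0)]_{j,k=0}^{pN-1}$ vanishes unless $\{x_k^0\} = \{0, 1, \ldots, pN-1\}$, in which case it equals $\pm 1$ and forces $x_j^0 = j$ after combining with the ordering; similarly for the final level.

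Next I would evaluate the convolutions appearing in \eqref{eq:EMtheorem} against these delta functions. Since $\phi_i = \delta_i$, one has
\begin{equation*}
  (\phi_i \ast T_{0,m})(x) \;=\; \sum_{y \in \mathbb{Z}} \delta_i(y)\, T_{0,m}(y,x) \;=\; T_{0,m}(i,x),
\end{equation*}
and likewise $(T_{m',L} \ast \psi_j)(y) = T_{m',L}(y, pM+j)$, and the Gram matrix entries become
\begin{equation*}
  G_{i,j} \;=\; \phi_i \ast T_{0,L} \ast \psi_j \;=\; T_{0,L}(i, pM+j).
\end{equation*}
Substituting these identifications into the Eynard--Mehta kernel \eqref{eq:EMtheorem} (with $N$ replaced by $pN$) yields exactly \eqref{eq:EMkernel} with $\mathbf G$ as in \eqref{eq:Gramkernel}.

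There is essentially no conceptual obstacle here; the work is bookkeeping. The only point that deserves a remark is that the Eynard--Mehta theorem as stated in \cite{B} requires the Gram matrix $\mathbf G$ to be invertible so that $\mathbf G^{-t}$ makes sense; in our setting this is guaranteed by the fact that $Z_N \neq 0$ (equivalently $\det \mathbf G \neq 0$), which is implicit in Assumption \ref{ass:multilevel} since \eqref{eq:multilevel} is assumed to be a probability measure. Once invertibility is noted, the corollary follows immediately from Theorem \ref{thm:EMtheorem}.
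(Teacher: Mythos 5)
Your proposal is correct and follows exactly the paper's own argument: the corollary is obtained by specializing the Eynard--Mehta theorem to $\phi_i = \delta_i$, $\psi_j = \delta_{pM+j}$ and computing the convolutions $(\delta_i \ast T_{0,m})(x) = T_{0,m}(i,x)$ and $(T_{m',L} \ast \delta_{pM+j})(y) = T_{m',L}(y,pM+j)$. Your added remark on the invertibility of $\mathbf G$ is a sensible observation that the paper also makes, just after the corollary rather than inside the proof.
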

\begin{proof}
This follows from Theorem \ref{thm:EMtheorem} since
$ (\delta_i \ast T_{0,m})(x) = T_{0,m}(i,x)$ and
$(T_{m',L} \ast \delta_{pM+j})(y) = T_{m',L}(y,pM+j)$.  
\end{proof}

The Gram matrix $\mathbf{G}$ in \eqref{eq:Gram} is a finite section of the block
Laurent matrix $T_{0,L}$. It has size $pN \times pN$ and we also view
it as a block Toeplitz matrix of size $N \times N$ with blocks of size $p \times p$.
It is part of the conclusion of the Eynard Mehta Theorem \ref{thm:EMtheorem}
that $\mathbf{G}$ is invertible, and so it is in particular a consequence of
Assumptions \ref{ass:multilevel} and \ref{ass:blocksymbols}
that the matrix $\mathbf{G}$ is invertible.

\subsection{Symbols and matrix biorthogonality} \label{subsec:symbols}

Associated with the block Laurent matrices $T_m$ and $T_{m,m'}$ we  have 
the symbols $A_m(z)$ and $A_{m,m'}(z)$. According to Assumption \ref{ass:blocksymbols} 
all symbols are analytic in an annular domain
$R_1 < |z| < R_2$. We  have the identity
\begin{equation} \label{eq:Amn} 
	A_{m,n}(z) = A_m(z) A_{m+1}(z) \cdots A_{m'-1}(z), \qquad m < m', 
	\end{equation}
for $R_1 < |z| < R_2$. The series that define the symbol do not commute
(in general) and thus the  order of the factors in the product is important.

We let $\gamma$ be a circle of radius $R \in (R_1, R_2)$ with counterclockwise
orientation. By Cauchy's theorem, we can recover the Laurent matrix entries
from the symbols, and we have
\begin{equation} \label{eq:Tmnblock} 
	\left[ T_{m,m'}(px+i, py+j) \right]_{i,j=0}^{p-1}  
	 = \frac{1}{2\pi i} \oint_{\gamma}
	A_{m,m'}(z)  z^{x-y} \frac{dz}{z}. \end{equation}
In particular 
\begin{align}  \label{eq:T0Lblock}
	\left[T_{0,L}(px+i, pM+py+j) \right]_{i,j=0}^{p-1} 
	& = \frac{1}{2\pi i} \oint_{\gamma}
	A_{0,L}(z)  z^{x-y-M} \frac{dz}{z}. 
	\end{align}
and if we restrict to $0 \leq x, y \leq N-1$ then the
blocks \eqref{eq:T0Lblock} are the $p \times p$ blocks 
in the Gram matrix $\mathbf{G}$, see \eqref{eq:Gramkernel}.

We consider the matrix-valued weight
\begin{equation} \label{eq:defWgen} 
	W_{0,L}(z) = \frac{A_{0,L}(z)}{z^{M+N}}, \qquad z \in \gamma 
	\end{equation}
on the contour $\gamma$. Clearly $W_{0,L}$ also depends on $M$ and $N$
but we do not include this in the notation.
It introduces a bilinear pairing between $p \times p$ matrix valued functions
\begin{equation} \label{eq:pairing} 
	\langle P, Q \rangle
	= \frac{1}{2\pi i} \oint_{\gamma} P(z) W_{0,L}(z) Q^t(z) dz 
	\end{equation}
where $Q^t$ denotes the matrix transpose (no complex conjugation).
The integral is taken entrywise, and so $\langle P, Q \rangle$
is again a $p \times p$ matrix.

A matrix valued function is a polynomial of degree $\leq d$
if all its entries are polynomials of degree $\leq d$.

For invertible matrices $\mathbf P$ and $\mathbf Q$ of size 
$pN \times pN$ we define
\begin{equation} \label{eq:defPhij}
\begin{pmatrix} P_0(z) \\ P_1(z) \\ \vdots \\ P_{N-1}(z) \end{pmatrix}
	= {\mathbf P} \begin{pmatrix} I_p \\ z I_p \\ \vdots \\ z^{N-1} I_p \end{pmatrix} 
	\end{equation}
and
\begin{equation} \label{eq:defPsij}
\begin{pmatrix} Q_0(z) \\ Q_1(z) \\ \vdots \\ Q_{N-1}(z) \end{pmatrix}
	= {\mathbf Q} \begin{pmatrix} z^{N-1} I_p \\ \vdots \\ z I_p \\ I_p \end{pmatrix}. 	
	\end{equation}
Then $P_j$ and $Q_j$ for $j=0, 1, \ldots, N-1$ are matrix
valued polynomials of degrees $\leq N-1$.

\begin{proposition} Let $\mathbf P$ and $\mathbf Q$ be invertible $pN \times pN$ matrices,
and let $P_j, Q_j$ be the matrix valued polynomials as in \eqref{eq:defPhij}
and \eqref{eq:defPsij}.  Let $\mathbf{G}$ be the Gram matrix from  \eqref{eq:Gramkernel}. 
The following are equivalent.
\begin{enumerate}
\item[\rm (a)] ${\mathbf G}^{-1} = {\mathbf Q}^t {\mathbf P}$
\item[\rm (b)] For each $j,k = 0,1, \ldots, N-1$
\begin{equation} \label{eq:PsijPhikbiorthogonal} 
	\frac{1}{2\pi i} \oint_{\gamma}
	P_j(z) W_{0,L}(z) Q_k^t(z) dz = \delta_{j,k} I_p 
	\end{equation}
where $W_{0,L}(z) = \ds \frac{A_{0,L}(z)}{z^{M+N}}$ as in \eqref{eq:defWgen}.
\end{enumerate}
\end{proposition}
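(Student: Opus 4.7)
The plan is to compute the matrix pairing $\langle P_j, Q_k \rangle$ directly using the block expansions \eqref{eq:defPhij} and \eqref{eq:defPsij}, recognize the resulting integrals as the blocks of the Gram matrix $\mathbf{G}$, and identify the total as a single block of the product $\mathbf{P}\mathbf{G}\mathbf{Q}^t$.

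First, I would rewrite the $(x,y)$-block of $\mathbf{G}$ in terms of the weight matrix. From \eqref{eq:Gramkernel} and \eqref{eq:T0Lblock}, the $p \times p$ block $G_{x,y} = \bigl[T_{0,L}(px+i, pM+py+j)\bigr]_{i,j=0}^{p-1}$ can be written as
\[
G_{x,y} = \frac{1}{2\pi i}\oint_\gamma A_{0,L}(z)\, z^{x-y-M-1}\,dz
	= \frac{1}{2\pi i}\oint_\gamma W_{0,L}(z)\, z^{x-y+N-1}\,dz,
\]
using the definition \eqref{eq:defWgen} to trade $A_{0,L}$ for $W_{0,L}$ at the cost of the scalar factor $z^{M+N}$.

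Next, I would write the $P_j$ and $Q_k$ explicitly in terms of the $p \times p$ block entries of $\mathbf{P}$ and $\mathbf{Q}$. If $\mathbf{P}_{j,a}$ and $\mathbf{Q}_{k,b}$ denote the $(j,a)$- and $(k,b)$-blocks (where $j,a,k,b$ range over $0,\ldots,N-1$), then \eqref{eq:defPhij}--\eqref{eq:defPsij} give
\[
P_j(z) = \sum_{a=0}^{N-1} \mathbf{P}_{j,a}\, z^a, \qquad
Q_k(z) = \sum_{b=0}^{N-1} \mathbf{Q}_{k,b}\, z^{N-1-b}.
\]
Substituting into the pairing \eqref{eq:pairing} and using that scalars $z^a$ and $z^{N-1-b}$ commute past the matrix $W_{0,L}(z)$, I get
\[
\langle P_j, Q_k\rangle
	= \sum_{a,b=0}^{N-1} \mathbf{P}_{j,a}\!\left(\frac{1}{2\pi i}\oint_\gamma W_{0,L}(z)\, z^{a-b+N-1}\,dz\right)\!\mathbf{Q}_{k,b}^{t}
	= \sum_{a,b=0}^{N-1} \mathbf{P}_{j,a}\, G_{a,b}\, \mathbf{Q}_{k,b}^{t}.
\]
Recognizing the block transpose $(\mathbf{Q}^t)_{b,k} = \mathbf{Q}_{k,b}^{t}$, this last sum is precisely the $(j,k)$-block of the block matrix product $\mathbf{P}\mathbf{G}\mathbf{Q}^t$.

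Thus the biorthogonality \eqref{eq:PsijPhikbiorthogonal} is equivalent to $\mathbf{P}\mathbf{G}\mathbf{Q}^t = I_{pN}$, which, since $\mathbf{P}$ and $\mathbf{Q}$ are invertible, is in turn equivalent to $\mathbf{G}^{-1} = \mathbf{Q}^t \mathbf{P}$. This closes the equivalence. The only bookkeeping subtlety—and the main thing to get right—is that the block transpose both transposes the arrangement of blocks and transposes each block, so that $(\mathbf{Q}^t)_{b,k}$ really is $\mathbf{Q}_{k,b}^{t}$; once this is settled everything reduces to comparing the two expressions for the exponent of $z$ in the integrand, which match exactly because $G_{a,b}$ picks up the shift $N-1$ coming from the factor $z^{M+N}$ in $W_{0,L}$.
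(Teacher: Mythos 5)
Your proposal is correct and follows essentially the same route as the paper: both reduce the biorthogonality to the identity $\mathbf{P}\mathbf{G}\mathbf{Q}^t = I_{pN}$ by recognizing the moment integrals $\frac{1}{2\pi i}\oint_\gamma W_{0,L}(z)\,z^{a-b+N-1}\,dz$ as the blocks $G_{a,b}$ of the Gram matrix (the paper organizes this as the single matrix identity $\mathbf{P}^{-1}X(\mathbf{Q}^{-1})^t=\mathbf{G}$ rather than expanding block by block, but the computation is the same). The block-transpose bookkeeping and the exponent shift by $N-1$ are both handled correctly.
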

\begin{proof}
Consider
\[ X =
	\frac{1}{2\pi i} \oint_{\gamma} 
\begin{pmatrix} P_0(z) \\ P_1(z) \\ \vdots \\ P_{N-1}(z) \end{pmatrix}
 W_{0,L}(z) 
\begin{pmatrix} Q_0^t(z) & Q_1^t(z) & \cdots & Q_{N-1}^t(z) \end{pmatrix} dz.
\]
Then, by \eqref{eq:defWgen} and the definition \eqref{eq:defPhij}--\eqref{eq:defPsij} 
of the matrix valued polynomials, 
\begin{multline*} 
{\mathbf P}^{-1} X ({\mathbf Q}^{-1})^t = 
\frac{1}{2\pi i}
		\oint_{\gamma}
\begin{pmatrix} I_p  \\ z I_p  \\ \vdots \\  z^{N-1} I_p \end{pmatrix}
  	\frac{A_{0,L}(z)}{z^{M+N}}  \begin{pmatrix} z^{N-1} I_p  &  \cdots & z I_p & I_p \end{pmatrix} dz  \\
	= \frac{1}{2\pi i}
		\oint_{\gamma} 
\begin{pmatrix} I_p  \\ z I_p  \\ \vdots \\  z^{N-1} I_p \end{pmatrix}
  \frac{A_{0,L}(z)}{z^M} 
	\begin{pmatrix} I_p  & z^{-1} I_p & \cdots & z^{-N+1} I_p \end{pmatrix}  
	\frac{dz}{z} 
\end{multline*}
This is a a block Toeplitz matrix with $p \times p$ blocks.
For $0 \leq x, y \leq N-1$, the  $xy$-th block is
\[ \frac{1}{2 \pi i} \oint_{\gamma} A_{0,L}(z) z^{x-y-M} \frac{dz}{z} \]
which by \eqref{eq:Gramkernel} and  \eqref{eq:T0Lblock} is equal to the
$xy$-th block of $\mathbf{G}$. Thus
\[ {\mathbf P}^{-1} X ({\mathbf Q}^{-1})^t = \mathbf{G}, \]
which means that ${ \mathbf G}^{-1} = {\mathbf Q}^t {\mathbf P}$ if and only if $X= I_{pN}$.
This proves the proposition, since $X= I_{pN}$ is equivalent to the biorthogonality
\eqref{eq:PsijPhikbiorthogonal}.
\end{proof}

The property \eqref{eq:PsijPhikbiorthogonal} is a  matrix valued
biorthogonality between the two sequences $(P_j)_{j=0}^{N-1}$ 
and $(Q_j)_{j=0}^{N-1}$. The matrix valued biorthogonal polynomials
are clearly not unique but depend on the particular factorization of $\mathbf{G}^{-1}$.

\subsection{Reproducing kernel} \label{subsec:repkernel}
Let ${\mathbf G}^{-1} = {\mathbf Q}^t {\mathbf P}$ be any factorization of ${\mathbf G}^{-1}$
and let $P_j$, $Q_j$ be the matrix polynomials as in \eqref{eq:defPhij}
and \eqref{eq:defPsij}.
We consider
\begin{equation} 
\label{eq:RNwz} R_N(w,z) = \sum_{j=0}^{N-1} Q^t_j(w) P_j(z) 
\end{equation}
which is a bivariate polynomial of degree $\leq N-1$ in both $w$ and $z$.

\begin{lemma} \ \label{lem:RNkernel}
\begin{enumerate}
\item[\rm (a)]
For every matrix valued polynomial $P$ of degree $\leq N-1$ we have
\begin{equation} \label{eq:repkernel1}
	\frac{1}{2\pi i} \oint_{\gamma} P(w) W_{0,L}(w) R_N(w,z) dw = P(z). 	
	\end{equation}
\item[\rm (b)] 
For every matrix valued polynomial $Q$ of degree $\leq N-1$ we have
\begin{equation} \label{eq:repkernel2}
	\frac{1}{2\pi i} \oint_{\gamma} R_N(w,z) W_{0,L}(z) Q^t(z) dz = Q^t(w). 	
	\end{equation}
\item[\rm (c)] Either one of the properties (a) and (b) characterizes 
\eqref{eq:RNwz} in the sense that if a bivariate polynomial $\widehat{R}_N(w,z)$
of degree $\leq N-1$ in both $w$ and $z$ satisfies either (a) or (b),
then $\widehat{R}_N(w,z) = R_N(w,z)$ for every $w, z \in \mathbb C$.
\end{enumerate}
\end{lemma}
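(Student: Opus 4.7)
The plan is to prove all three parts using the matrix biorthogonality \eqref{eq:PsijPhikbiorthogonal} together with the fact that, because $\mathbf{P}$ and $\mathbf{Q}$ are invertible as block matrices, the families $\{P_j\}_{j=0}^{N-1}$ and $\{Q_j^t\}_{j=0}^{N-1}$ each form a basis (of the appropriate one-sided kind) for $p\times p$ matrix polynomials of degree $\le N-1$. Concretely, since \eqref{eq:defPhij} applies $\mathbf{P}$ to the column of monomials, the invertibility of $\mathbf{P}$ lets me express any matrix polynomial $P$ of degree $\le N-1$ as a \emph{left} linear combination $P(w)=\sum_{k=0}^{N-1} C_k P_k(w)$ with constant $p\times p$ coefficient matrices $C_k$. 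Similarly \eqref{eq:defPsij} gives that any such $D(w)$ can be written on the \emph{right} side as $D(w)=\sum_{k=0}^{N-1} Q_k^t(w) K_k$ for constant matrices $K_k$, and any $Q(z)$ can be written as $Q(z)=\sum_k G_k Q_k(z)$, so that $Q^t(z)=\sum_k Q_k^t(z) G_k^t$.

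For part (a), I would substitute $P(w)=\sum_k C_k P_k(w)$ and $R_N(w,z)=\sum_j Q_j^t(w) P_j(z)$ into the left-hand side of \eqref{eq:repkernel1}, interchange sum and integral, and pull the constant matrices out on the correct side. The double sum collapses by \eqref{eq:PsijPhikbiorthogonal} to $\sum_{k,j} C_k \delta_{k,j} I_p P_j(z)=\sum_k C_k P_k(z)=P(z)$. Part (b) is symmetric: expand $Q^t(z)=\sum_k Q_k^t(z) G_k^t$, substitute into the left-hand side of \eqref{eq:repkernel2}, and again \eqref{eq:PsijPhikbiorthogonal} forces the sum to collapse to $\sum_k Q_k^t(w) G_k^t = Q^t(w)$. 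The ordering of matrix multiplication is the only thing one has to watch during these computations, which is why the left/right basis expansions are chosen as above.

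For part (c), suppose $\widehat R_N(w,z)$ is bivariate of degree $\le N-1$ and satisfies (a); the case where it satisfies (b) is handled symmetrically. For each fixed $z$ set $D(w,z)=\widehat R_N(w,z)-R_N(w,z)$, a matrix polynomial in $w$ of degree $\le N-1$. Subtracting the two instances of \eqref{eq:repkernel1} gives
\begin{equation*}
\frac{1}{2\pi i}\oint_\gamma P(w)\,W_{0,L}(w)\,D(w,z)\,dw=0
\end{equation*}
for every matrix polynomial $P$ of degree $\le N-1$. Choosing $P=P_j$ and expanding $D(w,z)=\sum_k Q_k^t(w) K_k(z)$ on the right, the biorthogonality \eqref{eq:PsijPhikbiorthogonal} immediately yields $K_j(z)=0$ for each $j=0,\ldots,N-1$, so $D(\cdot,z)\equiv 0$. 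As $z$ was arbitrary, $\widehat R_N\equiv R_N$. The only nontrivial step in the whole argument is justifying the left/right expansions in the $P_j$ and $Q_j^t$ bases, which really just amounts to rewriting \eqref{eq:defPhij}--\eqref{eq:defPsij} using $\mathbf{P}^{-1}$ and $(\mathbf{Q}^t)^{-1}$; no deeper obstacle is anticipated.
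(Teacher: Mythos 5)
Your proposal is correct and follows essentially the same route as the paper: parts (a) and (b) by expanding in the (left, resp.\ right) biorthogonal bases and collapsing the sum via \eqref{eq:PsijPhikbiorthogonal}, and part (c) by expanding the putative kernel (or its difference from $R_N$) in one basis and extracting the coefficients with the same biorthogonality. The only cosmetic difference is that the paper treats the case where $\widehat R_N$ satisfies (b) while you treat (a), and your explicit justification of the left/right basis expansions via the invertibility of $\mathbf P$ and $\mathbf Q$ is a detail the paper leaves implicit.
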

\begin{proof}
Parts (a) and (b) are immediate from the biorthogonality \eqref{eq:PsijPhikbiorthogonal}
and the fact that any matrix valued polynomials $P$ of degree $\leq N-1$ can be written as
$P(z) = \ds  \sum_{k=0}^{N-1} A_k P_k(z) = \sum_{k=0}^{N-1} B_k Q_k(z)$
for suitable constant matrices $A_0, \ldots, A_{N-1}, B_0, \ldots, B_{N-1}$.

\medskip

Let $\widehat{R}_N(w,z)$ be as in part (c), and suppose that 
\begin{equation} \label{eq:repkernelhat} 
	\frac{1}{2\pi i} \oint_{\gamma} \widehat{R}_N(w,z) W_{0,L}(z) Q^t(z) dz = Q^t(w) 
	\end{equation}
for every matrix valued polynomial $Q$ of degree $\leq N-1$.
For a fixed $w$ we note that $z \mapsto \widehat{R}_N(w,z)$ is a matrix
valued polynomial of degree $\leq N-1$ and it can be written as a linear
combination of $P_0, \ldots, P_{N-1}$ with matrix coefficients. The matrix
coefficients depend on $w$, and we get for some $A_j(w)$, $j=0, \ldots, N-1$,
\begin{equation} \label{eq:RNhatexpansion} 
	\widehat{R}_N(w,z) = \sum_{j=0}^{N-1} A_j(w) P_j(z). \end{equation}
Then taking \eqref{eq:repkernelhat} with $Q = Q_k$, and using the
biorthogonality \eqref{eq:PsijPhikbiorthogonal}, we get
\[
Q_k^t(w) = 
	\sum_{j=0}^{N-1} A_j(w) 
	\frac{1}{2\pi i} \oint_{\gamma} P_j(z)  W_{0,L}(z) Q_k^t(z) dz
	= A_k(w) \]
for every $k = 0, \ldots, N-1$. Thus $\widehat{R}_N(w,z) = R_N(w,z)$
by \eqref{eq:RNhatexpansion} and \eqref{eq:RNwz}. This proves part (c) in case the 
reproducing property of (b) is satisfied. The other case follows similarly.
\end{proof}

Because of  \eqref{eq:repkernel1} and \eqref{eq:repkernel2} 
we call $R_N(w,z)$ the reproducing kernel for the pairing
\eqref{eq:pairing}.

From part (c) of Lemma \ref{lem:RNkernel} we find in particular that
the sum in \eqref{eq:RNwz} does not depend on the particular choice of
factorization ${\mathbf G}^{-1} = {\mathbf Q}^t {\mathbf P}$. It does depend
on $\mathbf G$ as can be also seen from  the calculation using 
 \eqref{eq:defPhij}, \eqref{eq:defPsij}, and \eqref{eq:RNwz} 
\begin{align} \nonumber
	R_N(w,z) & =  \begin{pmatrix} w^{N-1} I_p & \cdots &w I_p & I_p \end{pmatrix} 
			{\mathbf Q}^t {\mathbf P}  \begin{pmatrix} I_p \\ z I_p \\ \vdots \\ z^{N-1} I_p \end{pmatrix} 	\\
	& = \label{eq:RNwz2} 
		  \begin{pmatrix} w^{N-1} I_p & \cdots &w I_p & I_p \end{pmatrix} 
		{\mathbf G}^{-1} \begin{pmatrix} I_p \\ z I_p \\ \vdots \\ z^{N-1} I_p \end{pmatrix} 	
\end{align}
which clearly only depends on $\mathbf G$.

\subsection{Main theorem} \label{subsec:mainthm}
Now we are ready for the main theorem in this section.

\begin{theorem} \label{thm:Kernelblock}
Assume the transition matrices $T_m$ are $p$-periodic and that
the above Assumptions \ref{ass:multilevel} and \ref{ass:blocksymbols} are satisfied.
Then the multi-level particle system  \eqref{eq:multilevel} 
is  determinantal with correlation
kernel $K$ given by
\begin{multline} \label{eq:Kcontour}
	\left[ K(m, px+j; m', py+i) \right]_{i,j=0}^{p-1}
	= - \frac{\chi_{m > m'}}{2\pi i} \oint_{\gamma}
				A_{m',m}(z) z^{y-x} \frac{dz}{z},
		\\
		+  \frac{1}{(2\pi i)^2}
		\oint_{\gamma} \oint_{\gamma}
		A_{m',L}(w)  R_N(w,z) A_{0,m}(z)  \frac{w^{y}}{z^{x+1} w^{M+N}} dz dw \\
				\qquad x,y \in \mathbb Z, \, 0 < m, m' < L,
\end{multline}
where $R_N(w,z)$ is the reproducing kernel \eqref{eq:RNwz} built out of
matrix valued biorthogonal polynomials associated with the weight
$W_{0,L}(z) = \frac{A_{0,L}(z)}{z^{M+N}}$ on $\gamma$.
\end{theorem}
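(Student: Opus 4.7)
\emph{Proof proposal.} My plan is to start from the Eynard--Mehta representation in Corollary \ref{cor:EM2} and convert each ingredient into a double contour integral, using the symbol formula \eqref{eq:Tmnblock} and the explicit representation \eqref{eq:RNwz2} of $R_N(w,z)$ in terms of $\mathbf G^{-1}$. The key preparatory step is to organize the summation indices in \eqref{eq:EMkernel} block-wise, writing $i=pa+r$ and $j=pb+s$ with $a,b\in\{0,\ldots,N-1\}$ and $r,s\in\{0,\ldots,p-1\}$, and reading the two sides of \eqref{eq:Kcontour} as an equality of $p\times p$ matrices indexed by $(i,j)\in\{0,\ldots,p-1\}^2$ with external arguments of the form $px+j'$ and $py+i'$.

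For the indicator term $-\chi_{m>m'}T_{m',m}(y,x)$ in \eqref{eq:EMkernel}, a single application of \eqref{eq:Tmnblock} (with the roles of $m$ and $m'$ swapped) immediately produces $-\frac{\chi_{m>m'}}{2\pi i}\oint_{\gamma}A_{m',m}(z)z^{y-x}\frac{dz}{z}$, which is the first line of \eqref{eq:Kcontour}.

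The bulk of the argument is for the bilinear sum. I would replace the $p\times p$ blocks $[T_{0,m}(pa+r, px+j')]_{r,j'}$ and $[T_{m',L}(py+i', pM+pb+s)]_{i',s}$ by their contour integrals from \eqref{eq:Tmnblock}. Since $[\mathbf G^{-t}]_{pa+r,pb+s}=[\mathbf G^{-1}]_{pb+s,pa+r}$, the sum over $r,s$ then collapses into the matrix product $A_{m',L}(w)\,(\mathbf G^{-1})_{(b,a)}\,A_{0,m}(z)$, where $(\mathbf G^{-1})_{(b,a)}$ denotes the $(b,a)$-block of $\mathbf G^{-1}$ viewed as an $N\times N$ array of $p\times p$ blocks. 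What remains is the finite sum
\[
\sum_{a,b=0}^{N-1}(\mathbf G^{-1})_{(b,a)}\,w^{-b-1}z^{a-1},
\]
which after the relabeling $a\leftrightarrow b$ and comparison with the expansion \eqref{eq:RNwz2} equals $w^{-N}z^{-1}R_N(w,z)$. Combining with the leftover prefactor $w^{y-M}/z^{x}$ from the two contour integrals produces the weight $w^{y}/(z^{x+1}w^{M+N})$ in the second line of \eqref{eq:Kcontour}.

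The main obstacle will be the bookkeeping through the transpose in $\mathbf G^{-t}$, which swaps the natural row/column roles of the block indices: the sum one extracts from \eqref{eq:EMkernel} does not match the expansion \eqref{eq:RNwz2} of $R_N(w,z)$ directly, but only after the relabeling $a\leftrightarrow b$ that generates the compensating factor $w^{-N}z^{-1}$. Once this is handled correctly, the remaining manipulations are routine; the exchange of finite sums with contour integrals is justified throughout by Assumption \ref{ass:blocksymbols}, which guarantees that all the symbols $A_{m,m'}$ are simultaneously analytic on an annulus containing $\gamma$.
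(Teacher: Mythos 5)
Your proposal is correct and follows essentially the same route as the paper: both start from Corollary \ref{cor:EM2}, convert the transition matrices to contour integrals via \eqref{eq:Tmnblock}, collapse the inner sums into $p\times p$ block products, and identify the remaining block sum over $\mathbf G^{-1}$ with $w^{-N}z^{-1}R_N(w,z)$ via \eqref{eq:RNwz2}. The only cosmetic difference is that the factor $w^{-N}z^{-1}$ arises simply from matching the power conventions $w^{N-1-b}$ and $z^{a}$ in \eqref{eq:RNwz2}, not from the relabeling $a\leftrightarrow b$ itself; this does not affect the argument.
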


\begin{proof}
We already know that the particle system is determinantal with 
kernel given by \eqref{eq:EMkernel}.

The first term in \eqref{eq:EMkernel} gives rise to the first term on 
the right-hand side of \eqref{eq:Kcontour} in view of  \eqref{eq:Tmnblock}
(note that $x$ and $y$ are interchanged in $T_{m',m}(y,x)$ in \eqref{eq:EMkernel}).

Let $K_0(m, x; m',y)$ be the second term in the right-hand side of 
\eqref{eq:EMkernel}. Instead of summation indices $i$ and $j$ in the
double sum we  use $p\nu + k$ and $p\nu' +k'$, with $0 \leq \nu, \nu' \leq N-1$,
$0 \leq k, k' \leq p-1$. Then from \eqref{eq:EMkernel},
\begin{multline}
	K_0(m,px+j; m', py+i) 
		= \\ \sum_{\nu, \nu'=0}^{N-1} 
		 \sum_{k, k' = 0}^{p-1} T_{m',L}(py+i, pM + p\nu' + k')
			\left[ {\mathbf G}^{-1} \right]_{p\nu' + k', p\nu+k}
		T_{0,m} (p\nu+k, px+j).
\end{multline}
Using \eqref{eq:Tmnblock} we can  write this in block form
\begin{multline} \label{eq:K0blockfactor}
	\left[ K_0(m,px+j;m', py+i) \right]_{i,j=0}^{p-1}
	=  \\
	\left( \frac{1}{2\pi i} \oint_{\gamma}
		A_{m',L}(w) w^{y-M-\nu'} \frac{dw}{w} \right)_{0 \leq \nu' \leq N-1}
		{\mathbf G}^{-1}
	\left( \frac{1}{2\pi i} \oint_{\gamma}
		A_{0,m}(z) z^{\nu-x} \frac{dz}{z} \right)^t_{0 \leq \nu \leq N-1} 
\end{multline}		
where the first factor in the right-hand side of \eqref{eq:K0blockfactor}
is a block row vector of length $N$ with $p \times p$ blocks,
and the last factor is a similar block column vector.
We combine the integrals to obtain
a double integral and then we use \eqref{eq:RNwz2} to find
 \begin{multline*}
	\left[ K_0(m,px+j; m', py+i) \right]_{i,j=0}^{p-1}
	\\
	 = \frac{1}{(2\pi i)^2} \oint_{\gamma} \oint_{\gamma}
		A_{m',L}(w)
		\begin{pmatrix} w^{N-1} I_p & \cdots & w I_p & I_p \end{pmatrix}
			 \\
		\times 
	{\mathbf G}^{-1} \begin{pmatrix} I_p \\ zI_p \\ \vdots \\ z^{N-1} I_p \end{pmatrix}
		A_{0,m}(z) w^{y-M-N} z^{-x-1} dz dw \\
	= 
	 \frac{1}{(2\pi i)^2} \oint_{\gamma} \oint_{\gamma}
		A_{m',L}(w) R_N(w,z) A_{0,m}(z) w^{y-M-N} 
	 z^{-x-1} dz dw 
\end{multline*}
which completes the proof.
\end{proof}

\subsection{Matrix valued orthogonal polynomials} \label{subsec:MVOP}
The goal of this subsection and the next is to express the reproducing kernel
 \eqref{eq:RNwz} (or \eqref{eq:RNwz2}) in terms of matrix valued orthogonal polynomials
 (MVOP) and use a Christoffel-Darboux formula for the sum \eqref{eq:RNwz}.
Such a formula is known for MVOP in various forms \cite{DPS}, \cite{GIM}, \cite{AAGMM}, \cite{AM}.
We are however in a non-standard situation, with a non-Hermitian orthogonality,
and the MVOP need not exist for every degree. Fortunately, the degree $N$ MVOP
will exist in the present situation, as we will explain in this subsection.

If we can find a factorization ${\mathbf G}^{-1} = {\mathbf Q}^t {\mathbf P}$ leading to 
matrix valued polynomials \eqref{eq:defPhij}--\eqref{eq:defPsij} 
with $P_j(z) = Q_j(z)$ and $\deg P_j = j$ for every $j=0,1, \ldots, N-1$, 
then we would have a finite sequence of MVOP that are in fact orthonormal
\begin{equation} \label{eq:MVorthogonal} 
	\frac{1}{2\pi i} \oint_{\gamma}
	P_j(z) W_{0,L}(z) P_k^t(z) dz = \delta_{j,k} I_p,
	\qquad j,k=0, \ldots, N-1, \end{equation}
and then
\begin{equation} \label{eq:RinMVOP0} 
	R_N(w,z) = \sum_{j=0}^{N-1} P^t_j(w) P_j(z). 
	\end{equation}
From \eqref{eq:RinMVOP0} it would follow that $R_N(z,w) = R_N(w,z)^t$ and this is
an identity that is not necessarily satisfied.
Thus we cannot expect that the orthonormal MVOP exist.

Instead we focus on monic MVOP. If the monic MVOP $P_j$ exists for every degree
$j$, then we have
\begin{equation} \label{eq:monicnorm}
 \frac{1}{2\pi i} \oint_{\gamma} P_j(z) W_{0,L}(z) P_k^t(z) dz = \delta_{j,k} H_j,
	\qquad j, k = 0, 1, \ldots, N-1, \end{equation}
with some matrix $H_j$. If $H_j$ is invertible for every degree then
the two sequences of matrix valued polynomials  $(H_j^{-1} P_j(z))_{j=0}^{N-1}$
and $(P_j(z))_{j=0}^{N-1}$ are biorthogonal, and thus 
\begin{equation} \label{eq:RinMVOP} 
	R_N(w,z) = \sum_{j=0}^{N-1} P^t_j(w) H_j^{-1} P_j(z). 
	\end{equation}

The orthogonality \eqref{eq:MVorthogonal} is non-Hermitian orthogonality,
and it is not associated with a positive definite scalar product. Also existence
and uniqueness of the monic MVOP is not guaranteed in general.
However, the  MVOP of degree $N$ does exists, and this is a consequence of 
the fact that $\mathbf G$ is invertible.

\begin{lemma} \label{lem:PNexists}
There is a unique monic matrix valued polynomial 
\[ P_N(z) = z^N I_p + \cdots \]
of degree $N$ such that
\begin{equation} \label{eq:PNMVOP} 
	\frac{1}{2 \pi i} \oint_{\gamma} P_N(z) W_{0,L}(z) z^k dz = 0_p, \qquad 
	k = 0, 1, \ldots, N-1. \end{equation} 
\end{lemma}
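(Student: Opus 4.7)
The plan is to translate the orthogonality conditions \eqref{eq:PNMVOP} into a block linear system whose coefficient matrix is, up to a reordering of block columns, the Gram matrix $\mathbf{G}$ from \eqref{eq:Gramkernel}. Since $\mathbf{G}$ is invertible under Assumptions \ref{ass:multilevel} and \ref{ass:blocksymbols} (this was noted as part of the conclusion of the Eynard-Mehta theorem, just after Corollary \ref{cor:EM2}), both existence and uniqueness will follow from standard linear algebra.

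First I would write the unknown polynomial as
\[
    P_N(z) = z^N I_p + \sum_{j=0}^{N-1} C_j\, z^j,
\]
with $p \times p$ coefficient matrices $C_0, \ldots, C_{N-1}$ to be determined. Substituting into \eqref{eq:PNMVOP} and using $W_{0,L}(z) = A_{0,L}(z)/z^{M+N}$, the conditions become the block linear system
\[
    \sum_{j=0}^{N-1} C_j\, M_{j,k} \;=\; -M_{N,k}, \qquad k = 0, 1, \ldots, N-1,
\]
where
\[
    M_{j,k} \;=\; \frac{1}{2\pi i} \oint_{\gamma} A_{0,L}(z)\, z^{j+k-M-N}\, dz
\]
are $p \times p$ matrices. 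This is a square system of $Np^2$ scalar equations in the $Np^2$ scalar entries of $C_0, \ldots, C_{N-1}$, with block coefficient matrix $\mathcal{M} = (M_{j,k})_{j,k=0}^{N-1}$ of size $pN \times pN$.

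Next I would identify $\mathcal{M}$ with $\mathbf{G}$. From \eqref{eq:T0Lblock}, the $p \times p$ block of $\mathbf{G}$ at block position $(x,y)$, with $0 \leq x,y \leq N-1$, equals
\[
    \frac{1}{2\pi i} \oint_{\gamma} A_{0,L}(z)\, z^{x-y-M-1}\, dz.
\]
Choosing $x = j$ and $y = N-1-k$ makes this integrand coincide with the one defining $M_{j,k}$, so $M_{j,k}$ is the $(j, N-1-k)$-block of $\mathbf{G}$. Hence $\mathcal{M}$ is obtained from $\mathbf{G}$ simply by reversing the order of its block columns, and is invertible if and only if $\mathbf{G}$ is. Invoking the invertibility of $\mathbf{G}$ then gives a unique solution $(C_0, \ldots, C_{N-1})$, producing the unique monic MVOP $P_N$ of degree $N$.

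The only step requiring genuine attention is the exponent bookkeeping that matches $\mathcal{M}$ with $\mathbf{G}$ up to a column reversal; there is no real obstacle here since the rest is formal linear algebra and the key input---the invertibility of $\mathbf{G}$---is already in hand from the Eynard-Mehta theorem.
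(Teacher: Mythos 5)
Your proof is correct and follows essentially the same route as the paper: write $P_N$ with unknown block coefficients, observe that the orthogonality conditions form a block linear system whose coefficient matrix is the Gram matrix $\mathbf{G}$ (the paper reindexes the conditions via $k \mapsto N-1-k$ so that the matrix is literally $\mathbf{G}$, whereas you keep the original indexing and get $\mathbf{G}$ with its block columns reversed — an immaterial difference), and conclude from the invertibility of $\mathbf{G}$. The exponent bookkeeping in your identification $M_{j,k} = G_{j,N-1-k}$ checks out.
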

\begin{proof}
The conditions \eqref{eq:PNMVOP} give us $p^2N$ linear equations for the $p^2N$
unknown coefficients of a monic matrix valued polynomial of degree $N$. The linear 
system has matrix $\mathbf G$, provided we number the coefficients and the 
conditions appropriately, and since $\mathbf G$ is invertible, the existence and 
uniqueness of $P_N$ follows.

More explicitly, write $\ds P_N(z) = I_p z^N  + \sum_{j=0}^{N-1} C_j z^j$
with $p \times p$ matrices $C_j$ that are to be determined. The orthogonality
conditions
\[ \frac{1}{2\pi i} \oint_{\gamma} P_N(z) W_{0,L}(z) z^{N-1-k} dz = 0,
	\qquad  k=0, 1, \ldots, N-1, \]
yield
\[ \sum_{j=0}^{N-1}   C_j 
	\frac{1}{2\pi i} \oint_{\gamma} W_{0,L}(z) z^{N+j-k} \frac{dz}{z}
		= - \frac{1}{2 \pi i} \oint_{\gamma} W_{0,L}(z) z^{2N-k} \frac{dz}{z}, \]
for $k=0,1, \ldots, N-1$.
Since $W_{0,L}(z) = \frac{A_{0,L}(z)}{z^{M+N}}$ the left hand side is
\[ \sum_{j=0}^{N-1}   C_j 
	\frac{1}{2\pi i} \oint_{\gamma} \frac{A_{0,L}(z)}{z^M} z^{j-k} \frac{dz}{z}
	=
	\sum_{j=0}^{N-1} C_j G_{j,k}	 \]
where $G_{j,k}$ denotes the $j,k$th block of the block Toeplitz matrix $\mathbf G$,
see also \eqref{eq:T0Lblock}.
Varying $k = 0, 1, \ldots, N-1$, we see that 
\[ \begin{pmatrix} C_0 & \ldots & C_{N-1} \end{pmatrix} {\mathbf G}
	=  - \frac{1}{2 \pi i} \oint_{\gamma}  W_{0,L}(z) z^{2N} 
		\begin{pmatrix} I_p & z^{-1} I_p & \cdots & z^{-N+1} I_p \end{pmatrix} 
		\frac{dz}{z}. \]
The matrix $\mathbf G$ is invertible, and thus the matrix coefficients
$C_0, \ldots, C_{N-1}$ are uniquely determined, and the monic MVOP of
degree $N$ exists uniquely. 
\end{proof}

\subsection{Riemann-Hilbert problem and Christoffel-Darboux formula} \label{subsec:RHP}
The MVOP of degree $N$ is characterized by a Riemann-Hilbert problem of 
size $2p \times 2p$.
The RH problem asks for a  $2p \times 2p$ matrix valued function 
$Y : \mathbb C \setminus \gamma \to \mathbb C^{2p \times 2p}$ satisfying
\begin{itemize}
\item $Y$ is analytic,
\item $Y_+ = Y_- \begin{pmatrix} I_p & W_{0,L} \\ 0_p & I_p \end{pmatrix}$
on $\gamma$ with counterclockwise orientation,
\item $Y(z) = (I_{2p} + O(z^{-1})) 
	\begin{pmatrix} z^N I_p & 0_p \\ 0_p & z^{-N} I_p \end{pmatrix}$ as $z \to \infty$.
\end{itemize}
In the scalar valued case, i.e.\ $p=1$, the RH problem is 
due to Fokas, Its, and Kitaev \cite{FIK}. The matrix valued 
extension can be found in \cite{CM,D,GIM}. It is similar
to the RH problem for multiple orthogonal polynomials
\cite{vAGK}. 

The RH problem has a unique solution, since by Lemma \ref{lem:PNexists} the monic MVOP of degree $P_N$
exists and is unique. The solution is
\begin{equation} \label{eq:solY} 
	Y(z) =
	 \begin{pmatrix} P_N(z) & \ds \frac{1}{2\pi i} \oint_{\gamma} \frac{P_N(s) W_{0,L}(s)}{s-z} ds \\[10pt]
	 	Q_{N-1}(z) & \ds \frac{1}{2\pi i} \oint_{\gamma} \frac{Q_{N-1}(s) W_{0,L}(s)}{s-z} ds
	 	\end{pmatrix}, \quad z \in \mathbb C \setminus \gamma, 
	 	\end{equation}
where $Q_{N-1}$ is a matrix valued polynomial of degree $\leq N-1$ such that
\begin{equation} \label{eq:QNMVOP} 
	\frac{1}{2 \pi i} \oint_{\gamma} Q_{N-1}(z) W_{0,L}(z) z^k dz = 
	\begin{cases} 0_p, & 	k = 0, 1, \ldots, N-2, \\
		- I_p, & k = N-1. \end{cases} 
		\end{equation}
One can show that $Q_{N-1}$ also uniquely exists, since the conditions 
\eqref{eq:QNMVOP} give a system of $p^2N$ linear equations for the $p^2N$
coefficients of $Q_{N-1}$, and the matrix of this system can be identified 
with $\mathbf G$.
Since $\mathbf G$ is invertible, there is a unique solution. If the leading
coefficient of $Q_{N-1}$ would be invertible (which is typically the
case, but it is not guaranteed in general) 
then the monic MVOP $P_{N-1}$ 
of degree $N-1$ would exist as well and $Q_{N-1} = - H_{N-1}^{-1} P_{N-1}$,
where $H_{N-1}$ is as in \eqref{eq:monicnorm}.

In the following result we express the reproducing kernel $R_N(w,z)$
in terms of the solution of the RH problem. It can be viewed as a 
Christoffel-Darboux formula and in this form it is due to 
Delvaux \cite{D}. It is similar to the 
Christoffel-Darboux formulas for  
multiple orthogonal polynomials \cite{BK1,DK} which also
use the RH problem.
  
\begin{proposition} \label{prop:CDformula}
We have
\begin{equation} \label{eq:CDformula} 
	R_N(w,z) = 
	\frac{1}{z-w} \begin{pmatrix} 0_p & I_p \end{pmatrix}
			Y^{-1}(w)  Y(z) \begin{pmatrix} I_p \\ 0_p \end{pmatrix}.
			\end{equation}
\end{proposition}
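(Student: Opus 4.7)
The plan is to verify that the right-hand side of \eqref{eq:CDformula} satisfies the characterizing reproducing property of $R_N$ and then to invoke the uniqueness statement in Lemma~\ref{lem:RNkernel}(c). Denote the right-hand side by
\[
  \widehat{R}_N(w,z) = \frac{1}{z-w} \begin{pmatrix} 0_p & I_p \end{pmatrix} Y^{-1}(w)\, Y(z) \begin{pmatrix} I_p \\ 0_p \end{pmatrix}.
\]
I will show first that $\widehat R_N$ is a bivariate matrix polynomial of degree at most $N-1$ in each variable, and then that it satisfies \eqref{eq:repkernel2}.

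For the polynomiality, the jump matrix in the RH problem is unipotent upper triangular, so the first block column $Y(z) \bigl(\begin{smallmatrix} I_p \\ 0_p \end{smallmatrix}\bigr) = \bigl(\begin{smallmatrix} P_N(z) \\ Q_{N-1}(z) \end{smallmatrix}\bigr)$ has no jump across $\gamma$ and, combined with the asymptotics at infinity, is polynomial in $z$. Since $\det Y \equiv 1$ (the jump has unit determinant and $\det Y \to 1$ at infinity), $Y^{-1}$ exists off $\gamma$ and satisfies a dual RH problem with jump $\bigl(\begin{smallmatrix} I_p & -W_{0,L} \\ 0_p & I_p \end{smallmatrix}\bigr)$ and asymptotics $\bigl(\begin{smallmatrix} w^{-N} I_p & 0 \\ 0 & w^{N} I_p \end{smallmatrix}\bigr)(I + O(w^{-1}))$; from this the row $\begin{pmatrix} 0_p & I_p \end{pmatrix} Y^{-1}(w)$ inherits no jump and is polynomial in $w$. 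The apparent pole at $z = w$ is removable because $Y^{-1}(w) Y(w) = I_{2p}$ forces the numerator to vanish there; division by $z-w$ then brings the bi-degree down to at most $N-1$.

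For the reproducing identity, I would rewrite the $W_{0,L}$ factor using the jump relation $Y_+ = Y_- \bigl(\begin{smallmatrix} I_p & W_{0,L} \\ 0_p & I_p \end{smallmatrix}\bigr)$, which on $\gamma$ gives
\[
  Y(z) \begin{pmatrix} I_p \\ 0_p \end{pmatrix} W_{0,L}(z) = Y_+(z) \begin{pmatrix} 0_p \\ I_p \end{pmatrix} - Y_-(z) \begin{pmatrix} 0_p \\ I_p \end{pmatrix}.
\]
Substituting this into $\tfrac{1}{2\pi i} \oint_\gamma \widehat R_N(w,z) W_{0,L}(z) Q^t(z)\, dz$ splits it as $I_+ - I_-$. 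Take $w$ outside $\gamma$. The integrand of $I_+$ extends analytically to the interior of $\gamma$, so $I_+ = 0$ by Cauchy's theorem. The integrand of $I_-$ extends analytically to the exterior, where it decays like $z^{-2}$ at infinity (since $Y(z) \bigl(\begin{smallmatrix} 0_p \\ I_p \end{smallmatrix}\bigr) = O(z^{-N})$ from the normalization and $\deg Q \leq N-1$), so the integral reduces to the residue at the pole $z = w$. That residue collapses via $Y^{-1}(w) Y(w) = I_{2p}$ to $-Q^t(w)$, giving the full integral the value $Q^t(w)$. For $w$ inside $\gamma$ the argument is symmetric with the roles of $I_+$ and $I_-$ exchanged. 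Lemma~\ref{lem:RNkernel}(c) then gives $\widehat R_N = R_N$.

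The main obstacle is careful orientation bookkeeping: tracking which side of $\gamma$ carries which boundary value of $Y$ or $Y^{-1}$, which contour deformation crosses the pole $z = w$, and whether the resulting residue enters with a $+$ or a $-$. Once these sign conventions are fixed, no further analytic input is needed beyond Cauchy's theorem and the asymptotics of $Y$ at infinity.
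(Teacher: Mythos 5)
Your proof is correct, but the central computation takes a genuinely different route from the paper's. The paper (following Delvaux \cite{D}) substitutes the explicit solution \eqref{eq:solY} into the integral, splits $\frac{Q^t(z)}{z-w}$ as $\frac{Q^t(z)-Q^t(w)}{z-w}+\frac{Q^t(w)}{z-w}$, kills the first piece with the orthogonality conditions \eqref{eq:PNMVOP} and \eqref{eq:QNMVOP} (the difference quotient having degree $\leq N-2$), and recognizes the second piece as the Cauchy-transform column of $Y$, after which $Y^{-1}(w)Y(w)=I_{2p}$ finishes the job. You never invoke the orthogonality conditions or the explicit form of $Y$: you convert the weight into a difference of boundary values via the jump relation, $Y(z)\bigl(\begin{smallmatrix} I_p \\ 0_p\end{smallmatrix}\bigr)W_{0,L}(z)=\bigl(Y_+(z)-Y_-(z)\bigr)\bigl(\begin{smallmatrix} 0_p \\ I_p\end{smallmatrix}\bigr)$, and evaluate the two resulting integrals by Cauchy's theorem on one side of $\gamma$ and by the residue at $z=w$ together with the $O(z^{-2})$ decay on the other; the restriction $\deg Q\leq N-1$ enters through that decay estimate rather than through orthogonality. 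This is the ``intrinsic'' RH version of the Christoffel--Darboux identity and applies verbatim to any solution of the RH problem, whereas the paper's version makes explicit which orthogonality relations are being consumed. You also establish the bivariate polynomiality from scratch (no jump for the first block column of $Y$ and the bottom block row of $Y^{-1}$, Liouville, removability at $z=w$), where the paper simply cites \cite[Lemma 2.3]{D}. The sign bookkeeping you flag does work out: with $\gamma$ counterclockwise and $+$ the boundary value from the interior, one finds $I_+=0$ and $I_-=-Q^t(w)$ for $w$ exterior to $\gamma$, hence $I_+-I_-=Q^t(w)$ as required.
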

\begin{proof}
This is due to Delvaux \cite[Proposition 1.10]{D}, see also \cite{GIM}.
Since the context and notation of \cite{D} is somewhat different 
from the present setting, we give an outline of the proof.

The right-hand side of \eqref{eq:CDformula} is a bivariate
polynomial in $z$ and $w$ of degrees $\leq N-1$ in both variables,
see Lemma 2.3 in \cite{D} for details. 
We show that it satisfies the reproducing kernel property from 
Lemma \ref{lem:RNkernel} (b),
and we follow the proof of \cite[Proposition 2.4]{D}.

Let $Q$ be a matrix valued polynomial of degree $\leq N-1$.
We replace $R_N(w,z)$ by the right-hand side of \eqref{eq:CDformula} in
the integral on the left of \eqref{eq:repkernel2} and we obtain
\eqref{eq:solY} we find
\[ \begin{pmatrix} 0 & I_p \end{pmatrix} Y^{-1}(w)
	 \frac{1}{2\pi i} \oint_{\gamma} 
	 \begin{pmatrix} P_N(z) \\ Q_{N-1}(z) \end{pmatrix} W_{0,L}(z) Q^t(z)
	 \frac{dz}{z-w}
	 \]
which is equal to
\begin{multline}
 \begin{pmatrix} 0 & I_p \end{pmatrix} Y^{-1}(w)
	  \frac{1}{2\pi i} \oint_{\gamma} 
	 \begin{pmatrix} P_N(z) \\ Q_{N-1}(z) \end{pmatrix} W_{0,L}(z)
	 \frac{Q^t(z)-Q^t(w)}{z-w} dz \\
 + \begin{pmatrix} 0 & I_p \end{pmatrix} Y^{-1}(w)
	\left( \frac{1}{2\pi i} \oint_{\gamma}  	
 	\begin{pmatrix} 
	  P_N(z) \\ 
	  Q_{N-1}(z) \end{pmatrix} W_{0,L}(z) \frac{dz}{z-w} \right)
	 Q^t(w). \label{eq:CDproof1}
 \end{multline}

For every $w$ we have that $z \mapsto \frac{Q^t(z) - Q^t(w)}{z-w}$
is a matrix valued polynomial of degree $\leq N-2$. 
The first term in \eqref{eq:CDproof1}  then vanishes because
of the orthogonality conditions \eqref{eq:PNMVOP} and \eqref{eq:QNMVOP} satisfied by $P_N$ and $Q_{N-1}$.
The second term contains the second column of $Y$, see 
\eqref{eq:solY}, and therefore \eqref{eq:CDproof1} is equal to
\[ \begin{pmatrix} 0 & I_p \end{pmatrix} Y^{-1}(w)
	Y(w) \begin{pmatrix} 0 \\ I_p \end{pmatrix} Q^t(w) \]
and this is indeed $Q^t(w)$, which proves that the right-hand side
of \eqref{eq:CDformula} has the reproducing property of Lemma \ref{lem:RNkernel} (b)
that characterizes $R_N(w,z)$ by Lemma \ref{lem:RNkernel} (c). The proposition follows.
\end{proof}

We insert \eqref{eq:CDformula} into formula
\eqref{eq:Kcontour} and find a convenient formula for the correlation
kernel in terms of the solution of the RH problem. 

A possible asymptotic analysis of the kernel would consist of two parts.
First we do an analysis of the RH problem that
would give us the asymptotic behavior of the kernel \eqref{eq:CDformula}.
Then this is followed by an asymptotic analysis of the
double contour integral in \eqref{eq:Kcontour} by means of classical
methods of steepest descent. We are able to this for the 
two periodic Aztec diamond.

\subsection{Example 1: Aztec diamond} \label{subsec:uniformAD}
Let us put a weight $1$ on a horizontal domino and a weight $q$
on a vertical domino in a tiling of the Aztec diamond of size $N$. 
This corresponds to putting weights $1$
on the horizontal edges in the Aztec diamond graph, see Figure
\ref{fig:AztecGraphWeights}, and weight $q$ on the diagonal and vertical edges.

Assumption \eqref{ass:multilevel} holds with $L=N$, $M=0$, and
transition matrices that are independent of $m$, 
\[ T(x,y) = T_m(x,y) = \begin{cases}
	  0, & y \geq x+2, \\
	  q, & y = x+1, \\
	  q^{x-y} + q^{2+x-y}, & y \leq x.
	  \end{cases} \]
Then $T$ is a Laurent matrix and the symbol is
\[ A(z) = qz + \sum_{j=-\infty}^{0} (1+q^2) q^{-j} z^j
	= \frac{z(qz+1)}{z-q}, \qquad |z| > q. \] 
Hence,
\[ A_{0,L}(z) = A^L(z) = \left( \frac{z(qz+1)}{z-q} \right)^L, \]
and since $L = N$ and $M=0$, it follows that
\begin{equation} \label{eq:AztecW1}
	W_{0,L}(z) = \frac{A_{0,L}(z)}{z^{M+N}} = \left( \frac{qz+1}{z-q} \right)^N. 
	\end{equation}
The (scalar) weight is rational with a pole at $z=q$ and a zero at $z=-1/q$,
both of order $N$.
The orthogonal polynomials are properly rescaled Jacobi polynomials, 
but with parameters $-N, N$.

The Jacobi polynomial of degree $k$ with parameters
$\alpha$ and $\beta$ may be given by the Rodrigues formula
\begin{equation} \label{eq:PkJacobi} 
	P_k^{(\alpha, \beta)}(x)
	= \frac{1}{2^k k!} (x-1)^{-\alpha} (x+1)^{-\beta}
		\frac{d^k}{dx^k} \left[ (x-1)^{k+\alpha} (x+1)^{k+\beta} \right]. 
		\end{equation}
see e.g.\ \cite[Chapter IV]{S}.
The Jacobi polynomial is usually considered with parameters $\alpha, \beta > -1$,
but the formula \eqref{eq:PkJacobi} makes sense for arbitrary parameters, and it always
gives a polynomial of degree $\leq k$. There is a reduction in the degree if and only if
$\alpha + \beta \in \{-k-1, \ldots, -2k \}$.

If $\alpha$ and $\beta$ are integers, and $\gamma$ is a circle not going through
$\pm 1$,
then for $j=0,1, \ldots$, we find by using \eqref{eq:PkJacobi} and after integrating by parts $k$ times
\begin{multline*} 
	\frac{1}{2\pi i} \oint_{\gamma} P_k^{(\alpha,\beta)}(z)
	(z-1)^{\alpha} (z+1)^{\beta} z^j dz \\
	 = \frac{1}{2^k k!} 	\frac{1}{2\pi i} \oint_{\gamma} 
		\frac{d^k}{dz^k} \left[ (z-1)^{k+\alpha} (z+1)^{k+\beta} \right] z^j
		dz \\
	 = \frac{(-1)^k}{2^k k!} 	\frac{1}{2\pi i} \oint_{\gamma} 
		(z-1)^{k+\alpha} (z+1)^{k+\beta} \left( \frac{d^k}{dz^k} z^j \right)	dz 
		\end{multline*}
Thus 
\begin{align} \label{eq:Jacobiorthogonal}
	\frac{1}{2\pi i} \oint_{\gamma} P_k^{(\alpha,\beta)}(z)
	(z-1)^{\alpha} (z+1)^{\beta} z^j dz = 0, \qquad j =0, 1, \ldots, k-1.
	\end{align}
	
The weight \eqref{eq:AztecW1} has a pole at $z=q$ and a zero at $z=-1/q$. By
an affine change of variables we  map these to $\pm 1$, and then we use
\eqref{eq:Jacobiorthogonal} with $\alpha = -N$ and $\beta=N$. 
It follows that
\[ \frac{1}{2\pi i} \oint_{\gamma} P_k^{(-N,N)} \left( \frac{2z-q + q^{-1}}{q+q^{-1}} \right)
	 W_{0,L}(z) z^j dz = 0, \qquad j =0,1, \ldots, k-1. \]
Thus the orthogonal polynomials for the weight \eqref{eq:AztecW1}
are the rescaled Jacobi polynomials
\[ P_k^{(-N,N)} \left( \frac{2z-q + q^{-1}}{q+q^{-1}} \right). \]	
For $k=N$ there is a further reduction since
\begin{equation} \label{eq:PNuniformAD} 
	P_N^{(-N,N)}(z) = \frac{1}{2^N} \binom{2N}{N} (z-1)^N.
	\end{equation}
	
The uniform Aztec diamond has been analyzed in detail 
with Krawtchouk polynomials in \cite{EKLP, J01, J05}, which are 
discrete orthogonal polynomials. We find it intriguing that
an alternative approach with Jacobi polynomials seems also
possible.

\subsection{Example 2: Hexagon tiling} \label{subsec:hexagon}
Another popular model are lozenge tilings of a hexagon. A lozenge tiling
is equivalent to a system of $N$ non-intersecting paths on the graph
with vertex set $\mathbb Z^2$ and directed edges from $(m,n)$ to $(m',n')$
if and only if $m' = m+1$ and $n' \in \{n,n+1\}$. 
The starting positions are $(0,0), \ldots, (0,N-1)$
and ending positions at $(L,M), \ldots, (L, M+N-1)$, where
the parameters $L,M,N$ are non-negative integers with $M \leq L$.
In Figure \ref{fig:PathsOnGraph2} we have $N=6$, $M=3$ and $L=8$.

\begin{figure}[t]
\begin{center}
\begin{tikzpicture}[scale=0.8]

\foreach \y in {-5,-4,-3,-2,-1,0,1,2,3}
\draw (-6,\y-0.5) -- (5,\y-0.5);

\foreach \y in {-5,-4,-3,-2,-1,0,1,2}
{ \foreach \x in {-5,-4,-3,-2,-1,-0,1, 2,3,4}
\draw (\x-0.5,\y-0.5)--(\x+0.5,\y+0.5);
}

\foreach \x in {0,1,2,3,4,5,6,7,8}
\draw (\x/2+\x/2-4.5,-6) node {\x};
\foreach \y in {0,1,2,3,4,5,6,7,8}
\draw (-6.5,\y-5.5) node {\y};

\fill (-4.5,-0.5) circle [radius = 2mm]; 
\fill (-4.5,-1.5) circle [radius = 2mm]; 
\fill (-4.5,-2.5) circle [radius = 2mm];
\fill (-4.5,-3.5) circle [radius = 2mm]; 
\fill (-4.5,-4.5) circle [radius = 2mm]; 
\fill (-4.5,-5.5) circle [radius = 2mm];  

\fill (-3.5,0.5) circle [radius = 2mm]; 
\fill (-3.5,-0.5) circle [radius = 2mm]; 
\fill (-3.5,-2.5) circle [radius = 2mm];
\fill (-3.5,-3.5) circle [radius = 2mm]; 
\fill (-3.5,-4.5) circle [radius = 2mm]; 
\fill (-3.5,-5.5) circle [radius = 2mm];  

\fill (-2.5,0.5) circle [radius = 2mm]; 
\fill (-2.5,-0.5) circle [radius = 2mm]; 
\fill (-2.5,-1.5) circle [radius = 2mm];
\fill (-2.5,-3.5) circle [radius = 2mm]; 
\fill (-2.5,-4.5) circle [radius = 2mm]; 
\fill (-2.5,-5.5) circle [radius = 2mm]; 

\fill (-1.5,1.5) circle [radius = 2mm]; 
\fill (-1.5,-0.5) circle [radius = 2mm]; 
\fill (-1.5,-1.5) circle [radius = 2mm];
\fill (-1.5,-2.5) circle [radius = 2mm]; 
\fill (-1.5,-3.5) circle [radius = 2mm]; 
\fill (-1.5,-5.5) circle [radius = 2mm]; 

\fill (-0.5,1.5) circle [radius = 2mm]; 
\fill (-0.5,0.5) circle [radius = 2mm]; 
\fill (-0.5,-1.5) circle [radius = 2mm];
\fill (-0.5,-2.5) circle [radius = 2mm]; 
\fill (-0.5,-3.5) circle [radius = 2mm]; 
\fill (-0.5,-4.5) circle [radius = 2mm]; 

\fill (0.5,1.5) circle [radius = 2mm]; 
\fill (0.5,0.5) circle [radius = 2mm]; 
\fill (0.5,-1.5) circle [radius = 2mm];
\fill (0.5,-2.5) circle [radius = 2mm]; 
\fill (0.5,-3.5) circle [radius = 2mm]; 
\fill (0.5,-4.5) circle [radius = 2mm]; 
 
\fill (1.5,1.5) circle [radius = 2mm]; 
\fill (1.5,0.5) circle [radius = 2mm]; 
\fill (1.5,-0.5) circle [radius = 2mm];
\fill (1.5,-2.5) circle [radius = 2mm]; 
\fill (1.5,-3.5) circle [radius = 2mm]; 
\fill (1.5,-4.5) circle [radius = 2mm]; 

\fill (2.5,2.5) circle [radius = 2mm]; 
\fill (2.5,1.5) circle [radius = 2mm]; 
\fill (2.5,0.5) circle [radius = 2mm];
\fill (2.5,-1.5) circle [radius = 2mm]; 
\fill (2.5,-2.5) circle [radius = 2mm]; 
\fill (2.5,-3.5) circle [radius = 2mm]; 

\fill (3.5,2.5) circle [radius = 2mm]; 
\fill (3.5,1.5) circle [radius = 2mm]; 
\fill (3.5,0.5) circle [radius = 2mm];
\fill (3.5,-0.5) circle [radius = 2mm]; 
\fill (3.5,-1.5) circle [radius = 2mm]; 
\fill (3.5,-2.5) circle [radius = 2mm]; 

\draw [line width = 1.5mm] (-4.5,-0.5)--(-3.5,0.5)--(-2.5,0.5)--(-1.5,1.5)--(-0.5,1.5)--(0.5,1.5)--(1.5,1.5)--(2.5,2.5)--(3.5,2.5);
\draw [line width = 1.5mm] (-4.5,-1.5)--(-3.5,-0.5)--(-2.5,-0.5)--(-1.5,-0.5)--(-0.5,0.5)--(0.5,0.5)--(1.5,0.5)--(2.5,1.5)--(3.5,1.5);
\draw [line width = 1.5mm] (-4.5,-2.5)--(-3.5,-2.5)--(-2.5,-1.5)--(-1.5,-1.5)--(-0.5,-1.5)--(0.5,-1.5)--(1.5,-0.5)--(2.5,0.5)--(3.5,0.5);
\draw [line width = 1.5mm] (-4.5,-3.5)--(-3.5,-3.5)--(-2.5,-3.5)--(-1.5,-2.5)--(-0.5,-2.5)--(0.5,-2.5)--(1.5,-2.5)--(2.5,-1.5)--(3.5,-0.5);
\draw [line width = 1.5mm] (-4.5,-4.5)--(-3.5,-4.5)--(-2.5,-4.5)--(-1.5,-3.5)--(-0.5,-3.5)--(0.5,-3.5)--(1.5,-3.5)--(2.5,-2.5)--(3.5,-1.5);
\draw [line width = 1.5mm] (-4.5,-5.5)--(-3.5,-5.5)--(-2.5,-5.5)--(-1.5,-5.5)--(-0.5,-4.5)--(0.5,-4.5)--(1.5,-4.5)--(2.5,-3.5)--(3.5,-2.5);

\end{tikzpicture}

\caption{Non-intersecting paths on the graph for lozenge tilings of a hexagon.
\label{fig:PathsOnGraph2}}
\end{center}
\end{figure}
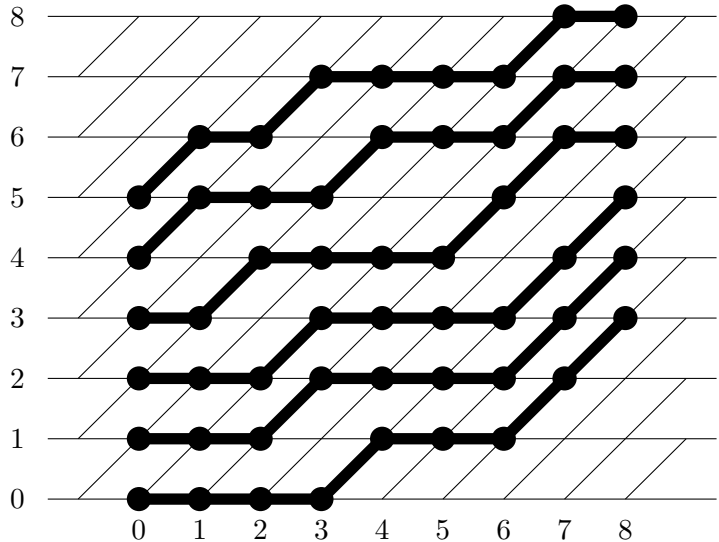

Consider the uniform case where each edge in the graph has weight $1$.
Then we are again in the situation of Assumption \ref{ass:multilevel} 
and at each level $m$ we have the transition matrices
\[ T_m(x,y) = \begin{cases} 1, & y = x \\
		1, & y = x+1 \\
		0, & \text{otherwise}. \end{cases} \]
that is independent of $m$. It is a Laurent matrix with symbol
$A(z) = z + 1$.
Then $A_{0,L}(z) = (z+1)^L$ and the weight is
\begin{equation} \label{eq:W0Lhexagon} 
	W_{0,L}(z) = \frac{A_{0,L}(z)}{z^{M+N}} = \frac{(z+1)^L}{z^{M+N}}. \end{equation}
The scalar weight is again rational with one zero and one pole. 

The orthogonal polynomials are again rescaled Jacobi polynomials, but
now with parameters $-(M+N)$ and $L$, namely for $0 \leq k < N$,
\begin{equation} \label{eq:HexagonJacobi} 
	 P_k^{(-M-N,L)}(2z+1).
\end{equation}
An asymptotic analysis of the lozenge tilings of the hexagon based on 
the Jacobi polynomials \eqref{eq:HexagonJacobi} 
seems possible, but has not been pursued yet.
See \cite{KM,MMO,MO} for an asymptotic analysis of 
Jacobi polynomials with varying non-standard parameters.

\subsection{Example 3: Aztec diamond with periodic weights}
\label{subsec:periodicAD}
	
The third example is the main interest of this paper: the two periodic
Aztec diamond of size $2N$. 

We saw in Section \ref{sec:paths} that the model gives rise to
the multi-level particle system \eqref{eq:multilevelAD}. This 
satisfies the Assumption \ref{ass:multilevel} if we take $p=2$
and $L=2N$ and $M=0$. The transition matrices are independent of $m$,
see \eqref{eq:defTmmp} and the matrix symbol is given by \eqref{eq:defA2}.
The weight matrix is $W_{0,L}(z) = \frac{A_{0,L}(z)}{z^{M+N}}
	= W^N(z)$ with
\begin{align} \nonumber 
	W(z) & = \frac{A^2(z)}{z}
	= \frac{1}{z(z-1)^2} \begin{pmatrix} 2\alpha z & \alpha(z+1) \\ 
	 	\beta z(z+1)& 2 \beta z \end{pmatrix}^2 \\
	& = \frac{1}{(z-1)^2} 
		\begin{pmatrix} (z+1)^2 + 4 \alpha^2 z & 2 \alpha(\alpha + \beta)(z+1) \\
	    2 \beta (\alpha + \beta)z(z+1) & (z+1)^2 + 4 \beta^2 z 
	    \end{pmatrix}
	    \label{eq:WAztec} 
	 	\end{align}
as in \eqref{eq:defW} and \eqref{eq:Wexplicit}. 
Observe that $W$ has  no pole at the origin.

Theorem \ref{thm:Kernelblock} applies and it gives the form
of the correlation kernel, in $2\times 2$ matrix form, that
will be stated in \eqref{eq:KNAztec} below. It is equivalent to
the form already announced in 
Section \ref{sec:results} in \eqref{eq:AztecKernelwithCDSum}.

The correlation kernel contains the reproducing kernel $R_N(w,z)$ 
with respect to the varying weight $W^N$, and that by Proposition
\ref{prop:CDformula} is expressed in terms of the RH problem for
the MVOP of degree $N$. By Lemma \ref{lem:PNexists} we conclude 
that the degree $N$ monic  MVOP with
respect to the weight $W^N$ exists. What about degrees $< N$?
While it does not matter for the rest that follows, we can
show that the MVOP of lower degrees indeed exist.

\begin{lemma}
The monic MVOP $P_k$ exists for every degree $k=0, \ldots, N-1$.
\end{lemma}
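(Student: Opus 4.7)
The plan is to adapt the proof of Lemma \ref{lem:PNexists} to degree $k < N$: reduce the existence of the monic $P_k$ to the non-vanishing of a Gram determinant, identify that determinant with a subdeterminant of $T^{2N}$, and finally invoke the Lindstr\"om--Gessel--Viennot (LGV) lemma together with an explicit non-intersecting path tuple to certify positivity.

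First I would write $P_k(z) = I_2 z^k + \sum_{j=0}^{k-1} C_j z^j$ and impose the orthogonality conditions against the test polynomials $z^{k-1-l}$ for $l = 0, 1, \ldots, k-1$. Substituting $W^N(z) = A^{2N}(z)/z^N$ and invoking \eqref{eq:Tmnblock} (with $p = 2$, $M = 0$, $L = 2N$), the $2 \times 2$ block
\[
\frac{1}{2\pi i} \oint_{\gamma} W^N(z) \, z^{j + k - 1 - l} \, dz
\]
is recognized as the $(j,\, l + N - k)$ block of the infinite matrix $T^{2N}$. Hence the linear system for the matrix coefficients $C_0, \ldots, C_{k-1}$ has as its coefficient matrix the $2k \times 2k$ submatrix
\[
\mathbf{G}_k = \bigl[\, T^{2N}(i, \, i' + 2(N-k)) \,\bigr]_{i,\, i' = 0}^{2k-1},
\]
and the monic $P_k$ exists and is unique precisely when $\det \mathbf{G}_k \neq 0$.

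To analyze this determinant I would apply the LGV lemma on the Aztec diamond graph. Since the graph is planar and all edges are directed rightward, any pair of non-intersecting paths must preserve the order of their endpoints, so only the identity permutation contributes and
\[
\det \mathbf{G}_k \;=\; \sum_{(P_0, \ldots, P_{2k-1})} \prod_{i=0}^{2k-1} w(P_i),
\]
summed over non-intersecting tuples with $P_i : (0, i) \to (2N, \, 2(N-k) + i)$. All edge weights are positive (one of $\alpha$, $\beta$, or $1$), so it is enough to exhibit one admissible tuple. A natural choice: let each $P_i$ execute $2(N-k)$ consecutive upward Bernoulli steps followed by $2k$ horizontal Bernoulli steps, with every geometric step trivial. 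At every (half-)integer level the heights of consecutive paths then differ by exactly $1$, so the paths never meet and each reaches its prescribed terminus.

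The only point requiring care is the first step: choosing test polynomials $z^{k-1-l}$ rather than the more obvious $z^l$ is essential, since it is precisely this reordering that turns the Gram matrix into a submatrix of $T^{2N}$ amenable to LGV rather than an unhelpful block Hankel matrix. Once that identification is in hand, the LGV step and the explicit construction of a non-intersecting tuple are routine, and one concludes $\det \mathbf{G}_k > 0$, hence the existence (and uniqueness) of $P_k$.
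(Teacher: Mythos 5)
Your proof is correct and follows essentially the same route as the paper: both identify the Gram matrix for the degree-$k$ problem with the finite section of $T^{2N}$ whose column indices are shifted by $2M = 2(N-k)$, and both certify its invertibility by exhibiting the same non-intersecting path tuple ($2(N-k)$ diagonal up-steps followed by $2k$ horizontal steps). The only difference is presentational: you spell out the Lindstr\"om--Gessel--Viennot positivity argument directly, whereas the paper reaches the same conclusion by observing that the shifted endpoints still define a determinantal point process and then re-invoking Lemma \ref{lem:PNexists} with $M = N-k$.
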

\begin{proof}
Note that $W^N(z) = \frac{A^{2N}(z)}{z^N}$, which we can also write as
\[ W^N(z) = \frac{A^{2N}(z)}{z^{M+k}}, \qquad \text{if } M = N-k. \]
For $k < N$, we consider $2k$ non-intersecting paths on the same graph
with $L=2N$ levels, starting at consecutive positions $0,1, \ldots, 2k-1$,
and ending at shifted positions $2M, 2M+1, \ldots, 2M+2k-1$.
Provided that there are such non-intersecting paths, we have a
determinantal point process as before, and we conclude by an application
of Lemma \ref{lem:PNexists}  that the
monic MVOP of degree $k$ uniquely exists. 

It is readily seen that such paths indeed exist  for the Aztec diamond graph.
For example, by letting the $2k$ paths make  a diagonal up-step $2M = 2N-2k$ times
followed by $2k$ horizontal steps, and there are no down steps.
Then these paths are indeed non-intersecting.
\end{proof}
The construction of non-intersecting paths does not work if $k \geq N+1$
and the MVOP does not exist for those degrees.

In the next section we continue with the analysis of the RH problem
and we show that the correlation kernel \eqref{eq:AztecKernelwithCDSum}
can be rewritten as \eqref{eq:theo12}.

\section{Analysis of the RH problem} \label{sec:RHP}
We consider an Aztec diamond of size $2N$ with two periodic weighting.

\subsection{Correlation kernel} \label{subsec:ADkernel}
In the two periodic Aztec diamond we find the  matrix symbol
\begin{equation} \label{eq:AAztec} 
	A(z) = \frac{1}{z-1} \begin{pmatrix} 2 \alpha z & \alpha(z + 1) \\
	\beta z(z+1) & 2 \beta z \end{pmatrix}, \qquad \alpha \beta = 1, 
	\end{equation}
and the matrix valued weight is $W^N$ with $W$ given by \eqref{eq:WAztec}. 
Note that $W^N$ is a rational function with a pole at $z=1$ only.

The contour $\gamma$ in the RH problem from Section \ref{subsec:RHP}
goes around $0$ and lies in the domain $|z| > 1$. 
By analyticity, since $W$ only has a pole at $z=1$, we are
free to deform the contour to a circle around $1$. 
We use  $\gamma_1$ to denote the circle of radius $r < 1$ around $1$. 
We obtain the following RH problem
for $Y : \mathbb C \setminus \gamma_{1} \to \mathbb C^{4 \times 4}$.
\begin{itemize}
\item $Y$ is analytic,
\item $Y$ has jump 
\begin{equation} \label{eq:Yjump} 
	Y_+(z) = Y_-(z) \begin{pmatrix} I_2 & W^N(z) \\
	0_2 & I_2 \end{pmatrix}, \qquad z \in \gamma_{1}, 
	\end{equation}
\item $Y$ has asymptotic behavior
\begin{equation} \label{eq:Yasymp} 
	Y(z) = \left(I_4 + O(z^{-1}) \right)
	\begin{pmatrix} z^N I_2 & 0_2 \\ 0_2 & z^{-N} I_2 \end{pmatrix} 
		\qquad \text{as } z \to \infty. 
		\end{equation}
\end{itemize}

Because of Theorem \ref{thm:Kernelblock} and  \eqref{eq:CDformula} 
we find the following correlation kernel for arbitrary integer
levels $m$, $m'$ with $0 < m, m' < 2N = L$ and $M=0$,
\begin{multline} \label{eq:KNAztec} 
	\begin{pmatrix} K_N(m, 2x; m', 2y) & K_N(m,2x+1; m', 2y) \\
	K_N(m,2x; m', 2y+1) & K_N(m,2x+1; m', 2y+1) \end{pmatrix} \\
	= - \frac{\chi_{m>m'}}{2\pi i} \oint_{\gamma} A^{m-m'}(z) 
	z^{y-x-1} dz + \\
	 	\frac{1}{(2\pi i)^2} \oint_{\gamma_{0,1}} \oint_{\gamma_{0,1}}
		A^{2N-m'}(w) 
	\begin{pmatrix} 0_2 & I_2 \end{pmatrix} Y^{-1}(w) Y(z) \begin{pmatrix} 
     I_2 \\ 0_2 \end{pmatrix}
	A^{m}(z)  \frac{w^y}{z^{x} w^N} \frac{dz dw}{z(z-w)}.
\end{multline}
The contour $\gamma_{0,1}$ in \eqref{eq:KNAztec} is a circle of radius
$>1+r$ around the origin, as before. The radius is  large
enough such that  $\gamma_{1}$ lies inside  $\gamma_{0,1}$.

The analysis of the correlation kernel \eqref{eq:KNAztec} consists of two parts.
First we apply a RH analysis to the RH problem for $Y$
and then we use this for an asymptotic analysis of the double integral.

The RH analysis is remarkably simple. It is not an asymptotic
analysis, since the outcome is an exact new formula for the correlation kernel.

\begin{theorem} \label{thm:KNexact} 
Assume $2y \geq m'$ and $N$ is even. Then
the correlation kernel \eqref{eq:KNAztec} is equal to
\begin{multline} 
	- \frac{\chi_{m>m'}}{2\pi i} \oint_{\gamma_{0,1}} A^{m-m'}(z) z^{y-x} 
		\frac{dz}{z} + \\
 	\frac{1}{(2\pi i)^2} \oint_{z \in \gamma_{0,1}}  \oint_{w \in \gamma_1}
		A^{N-m'}(w) F(w) A^{-N+m}(z) \frac{z^{N/2}}{w^{N/2}}
		\frac{(z-1)^N}{(w-1)^N}  \frac{w^{y}}{z^{x}} \frac{dz dw}{z(z-w)}.
		  \label{eq:theo41}
\end{multline}
where $F$ is given by \eqref{eq:defF}.
\end{theorem}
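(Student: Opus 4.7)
The plan is to solve the Riemann--Hilbert problem \eqref{eq:Yjump}--\eqref{eq:Yasymp} for $Y$ in closed form, then to substitute into \eqref{eq:KNAztec} and deform the $w$-contour from $\gamma_{0,1}$ inward to $\gamma_1$ to obtain \eqref{eq:theo41}. The central tool is the spectral decomposition $W^N(z) = \lambda_1^N(z) F(z) + \lambda_2^N(z)(I_2-F(z))$, which comes from \eqref{eq:Wdecomp} together with $\lambda_1\lambda_2 = 1$ (Lemma \ref{lem:lambdarho}(d)). Since $\lambda_2^N$ has a zero of order $2N$ at $z = 1$ by Lemma \ref{lem:lambdarho}(b), while $F$ and $\lambda_{1,2}$ are all analytic in a neighborhood of the closed disk bounded by $\gamma_1$ (the branch cuts $(-\infty,-\alpha^2]\cup[-\beta^2,0]$ do not meet this disk), the matrix $\lambda_2^N(I_2-F)$ is holomorphic inside $\gamma_1$. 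I would therefore set $\tilde Y(z) := Y(z)\bigl(\begin{smallmatrix} I_2 & -\lambda_2^N(I_2-F)\\ 0_2 & I_2\end{smallmatrix}\bigr)$ inside $\gamma_1$ and $\tilde Y = Y$ outside, reducing the jump to $\bigl(\begin{smallmatrix} I_2 & \lambda_1^N F\\ 0_2 & I_2 \end{smallmatrix}\bigr)$ with the same asymptotics at $\infty$; the new effective weight $\lambda_1^N F$ is rank-one with a single pole of order $2N$ at $z=1$.

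Next, solve this reduced RH problem. I would seek $P_N(z) = f(z) A^N(z) F(z) + g(z) A^N(z)(I_2-F(z))$, where $(f,g)$ is viewed as a meromorphic function on the genus-1 Riemann surface $\mathcal R$ (the values on the two sheets, swapping across the branch cuts so that $P_N$ is single-valued on $\mathbb C$, since $A^N F$ and $A^N(I_2-F)$ swap across the cuts). Monicity and $\deg P_N = N$ translate to prescribed asymptotics of $(f,g)$ at the two points over $\infty$, and the reduced orthogonality $\oint_{\gamma_1} P_N \lambda_1^N F\, s^k\,ds = 0$ for $k=0,\ldots,N-1$ becomes the divisor condition that $f$ has a zero of order $N$ at $1^{(1)}$. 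These conditions determine $(f,g)$ uniquely up to normalization on the genus-1 surface; the second column of $Y$ is then reconstructed by the Cauchy transform of $P_N W^N$.

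The third step is substitution and contour deformation. Substitute the explicit $Y$ into \eqref{eq:KNAztec}. In the annulus between $\gamma_{0,1}$ and $\gamma_1$, the $w$-integrand has a possible singularity only at $w=0$, coming from $w^{y-N}$ (a pole of order $N-y$ when $y<N$); the factor $\begin{pmatrix}0_2 & I_2\end{pmatrix} Y^{-1}(w) Y(z)\begin{pmatrix}I_2\\0_2\end{pmatrix}$ is entire in $w$, and $A^{2N-m'}(w)$ is analytic at $w = 0$. The crucial cancellation uses that $A(0) = \bigl(\begin{smallmatrix} 0 & -\alpha\\ 0 & 0 \end{smallmatrix}\bigr)$ is nilpotent with $A(0)^2 = 0$; writing $A(w) = A(0) + w B(w)$ and using combinatorics on words with no two consecutive $A(0)$'s, one checks that $A(w)^k$ vanishes at $w=0$ to order $\lfloor k/2\rfloor$. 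Under the parity assumption on $m+n$ and $m'+n'$ we have $m'$ even, so for $k = 2N-m'$ the order equals $N-m'/2$; the hypothesis $2y \ge m'$ then gives $N-m'/2 \ge N-y$, which cancels the $w=0$ pole exactly. The deformation to $\gamma_1$ is therefore unobstructed, and on $\gamma_1$ the explicit form of $\tilde Y$ together with the identities $A^N F = (z-1)^{-N}\rho_1^N F$ and $\rho_1^2 = z(z-1)^2\lambda_1$ converts the resulting integrand into the form \eqref{eq:theo41} with the explicit factor $F(w)$.

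The main obstacle is the middle step: explicit construction of the pair $(f,g)$ on $\mathcal R$ meeting the divisor condition at $1^{(1)}$ and the asymptotic conditions at both points above $\infty$ simultaneously. While standard in the genus-one setting, this requires careful bookkeeping with the involution of $\mathcal R$ and produces a formula whose single-valuedness on $\mathbb C$ must be verified. A secondary technicality is the matching of the explicit Christoffel--Darboux expression with the form of \eqref{eq:theo41}; this uses both the evenness of $N$ (so that $z^{N/2}$ and $w^{N/2}$ are single-valued) and the algebraic relations connecting $A, F, \rho_{1,2}, \lambda_{1,2}$.
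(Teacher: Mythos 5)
Your high-level strategy is the same as the paper's: solve the Riemann--Hilbert problem for $Y$ in closed form, substitute into \eqref{eq:KNAztec}, and collapse the $w$-integration onto $\gamma_1$. Several of your supporting observations are correct and match what the paper uses (the decomposition $W^N=\lambda_1^N F+\lambda_2^N(I_2-F)$, the analyticity of $\lambda_2^N(I_2-F)$ inside $\gamma_1$, and the counting argument showing that $A^{2N-m'}(w)$ vanishes at $w=0$ to an order that cancels the pole of $w^{y-N}$ precisely when $2y\ge m'$ --- although your parenthetical ``$m'$ even'' is not implied by the hypotheses; for odd $m'$ one instead uses that $2y$ is even, so $2y\ge m'$ forces $2y\ge m'+1$). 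The decisive middle step, however, is both missing and set up incorrectly. The unique monic MVOP of degree $N$ is $P_N(z)=(z-1)^N\,W_\infty^{N/2}\,W^{-N/2}(z)$ with $W_\infty$ the nontrivial unipotent matrix of \eqref{eq:Winfty} (see \eqref{eq:PN}). Every matrix of your proposed form $f(z)A^N(z)F(z)+g(z)A^N(z)(I_2-F(z))$ with scalar $f,g$ is diagonalized by $E(z)$ and hence commutes with $W(z)$, whereas $P_N(z)$ does not, because $W_\infty^{N/2}$ is strictly lower triangular and $W(z)$ is not lower triangular for $z\ne -1$, see \eqref{eq:Wexplicit}. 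So the ansatz cannot contain the solution; it would have to be enlarged (e.g.\ to matrix coefficients), at which point the divisor/asymptotics bookkeeping you defer becomes genuinely nontrivial --- note also that $F$ has a branch point at $\infty$ (the spectral projections of $W$ degenerate there since $W_\infty$ is a Jordan block), so the asymptotic matching at the two points over $\infty$ is not the routine genus-one exercise you suggest.

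The paper avoids all of this: the chain $Y\mapsto X\mapsto U\mapsto T\mapsto S$ reduces the jump to the strictly lower-triangular form \eqref{eq:Sjump}, after which $S$ is a single Cauchy transform \eqref{eq:Ssolution} --- no theta functions or divisor conditions enter. Your third step also misdescribes how $\gamma_1$ appears in \eqref{eq:theo41}: one cannot simply deform the $w$-contour of \eqref{eq:KNAztec} inward to $\gamma_1$ and find $F(w)$ and $(w-1)^{-N}$ in the integrand; in the paper the $\gamma_1$-integral is already present as the Cauchy transform in \eqref{eq:Ssolution}, the order of integration is exchanged, and the original $w$-integral over $\gamma_{0,1}$ is then evaluated by residues \eqref{eq:Proof41d}, which is exactly where the hypotheses ``$N$ even'' and ``$2y\ge m'$'' are consumed. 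Your contour and residue observations are salvageable and essentially equivalent to that step, but without a correct explicit solution of the RH problem the argument is incomplete.
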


Passing from the non-intersecting path model back to the domino
tilings of the Aztec diamond, we should make the change of variables
$m \mapsto m$, $2x \mapsto m+n$, $m' \mapsto m'$ and $2y \mapsto m' + n'$,
that come from the shear transformation described in Section \ref{subsec:particles}.
Inserting these values in \eqref{eq:theo41} we obtain the correlation
kernel \eqref{eq:theo12} and so Theorem \ref{thm:formula} follows
immediately from Theorem \ref{thm:KNexact}.

The rest of Section \ref{sec:RHP} is devoted to the proof of 
Theorem \ref{thm:KNexact}.  We follow the general scheme of 
the  analysis of  RH problems, known as the 
Deift-Zhou steepest descent analysis \cite{DZ}, which was first 
applied to orthogonal polynomials in \cite{DKMVZ1,DKMVZ2}.
Extensions to larger size RH problems are for example 
in \cite{BK2,DuKu}, see also the survey \cite{Kui} and
the references therein. However, the RH analysis in this section
is not an asymptotic analysis, as it produces the
exact formula \eqref{eq:theo41}.

\subsection{Eigenvalues and eigenvectors on the Riemann surface}
We use the eigenvalues $\rho_{1,2}$ of 
$\begin{pmatrix} 2\alpha z & \alpha(z+1) \\ 
\beta z(z+1) & 2 \beta z \end{pmatrix}$ and the
eigenvalues $\lambda_{1,2}$  of $W$ as already introduced
in \eqref{eq:rho12} and \eqref{eq:lambda12}.
The corresponding eigenvectors are in the columns of the matrix
\begin{equation} \label{eq:defE} 
	E(z) = \begin{pmatrix} \alpha(z + 1) & \alpha(z + 1) \\ 
	\rho_1(z)- 2\alpha z & \rho_2(z) - 2 \alpha z \end{pmatrix}.
\end{equation} 
and we have the decompositions \eqref{eq:Adecomp}, 
\eqref{eq:Fdecomp} and \eqref{eq:Wdecomp}.

The eigenvalues and eigenvectors are defined and analytic
in the complex plane cut along the two intervals $(-\infty, -\alpha^2]$
and $[-\beta^2,0]$ where we have 
$\lambda_{1,\pm} = \lambda_{2,\mp}$, $\rho_{1,\pm} = \rho_{2,\mp}$, 
and
\begin{equation} \label{eq:Ejump} 
	E_+ = E_- \sigma_1  \qquad \text{on } (-\infty,-\alpha^2] \cup [-\beta^2,0]. 
	\end{equation}
with $\sigma_1 = \begin{pmatrix} 0 & 1 \\ 1 & 0 \end{pmatrix}$.
	
As already mentioned in Remark \ref{rem:RS}, we use 
the two sheeted Riemann surface $\mathcal R$ associated with the equation
\eqref{eq:RSeq}. 
The Riemann surface has  genus one, unless $\alpha = \beta =1$, in which case the genus is zero.

We use $z$ for a generic coordinate on $\mathcal R$, and if we 
want to emphasize
that $z$ is on the $j$th sheet, we write $z^{(j)}$, for $j=1,2$.
We write $\lambda$ for the function on $\mathcal R$,
\begin{equation} \label{eq:lambda} \lambda(z) =
	\lambda_j(z) \quad \text{ if } z = z^{(j)} \text{ is on the $j$th sheet}, 
	\end{equation}
see \eqref{eq:lambda12}, and similarly for $\rho$.
These are meromorphic functions on $\mathcal R$, namely  
\[ \rho = (\alpha + \beta) z + y, \qquad \lambda = \frac{\rho^2}{z(z-1)^2}, \]
see \eqref{eq:RSeq}, \eqref{eq:rho12} and \eqref{eq:lambda12}.

\begin{lemma} \label{lem:lambdarho}
\begin{enumerate}
\item[\rm (a)] $\rho$ has a simple zero at $z=0$, a double zero at $z = 1^{(2)}$ 
(the point $z=1$ on the second sheet), a triple pole at $z=\infty$, 
and no other zeros or poles,
\item[\rm (b)] $\lambda$ has a double zero at $z=1^{(2)}$, a 
double pole at  $z=1^{(1)}$, and no other zeros or poles,
\item[\rm (c)] The function
\begin{equation} \label{eq:rho-2az}	
	\rho(z) -2 \alpha z  =
		(\beta-\alpha) z + y 
		\end{equation}
has a zero at $z=0$, and a double zero at $z=-1^{(1)}$ (if $\alpha > \beta$).
\item[\rm (d)] $\lambda_1(z) \lambda_2(z) = 1$ for every $z$ 
and $\lambda(\infty) = 1$.
\item[\rm (e)] For real $x$ we have
\begin{align}  \label{eq:lambdareal1}
	|\lambda_{1,\pm}(x)| & = |\lambda_{2,\pm}(x)| = 1,  && x \in (-\infty,-\alpha^2] \cup [-\beta^2,0], \\  \label{eq:lambdareal2} 
  \lambda_1(x) & > 1 > \lambda_2(x) > 0, && x \in (0, \infty), \\
	\lambda_1(x) & < -1 < \lambda_2(x) < 0, && x \in (-\alpha^2,-\beta^2).
	\label{eq:lambdareal3}
	\end{align}
\item[\rm (f)] $|\lambda_1(z)| > |\lambda_2(z)|$ holds for every $z \in 
\mathbb C \setminus ((-\infty,-\alpha^2] \cup [-\beta^2,0])$.
\end{enumerate}
\end{lemma}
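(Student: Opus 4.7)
The plan is to reduce everything to the algebraic identity
\begin{equation*}
\rho_1(z)\rho_2(z) = (\alpha+\beta)^2 z^2 - y^2 = -z(z-1)^2,
\end{equation*}
which follows from $y^2 = z(z+\alpha^2)(z+\beta^2)$, the normalization $\alpha\beta = 1$, and a direct expansion. Squaring and dividing by $z^2(z-1)^4$ gives part (d), $\lambda_1\lambda_2 \equiv 1$, and the value $\lambda(\infty)=1$ follows from $\rho\sim \pm z^{3/2}$ at infinity.

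For (a)--(c) I would treat $\rho$ and its variants as meromorphic functions on the genus-one surface $\mathcal{R}$ and work in local parameters. At infinity $\rho$ has a triple pole because $y\sim\pm z^{3/2}$ dominates $(\alpha+\beta)z$, so $\rho$ must have three zeros counted with multiplicity. The local parameter $\tau=\sqrt{z}$ at the branch point $z=0$ gives $y\sim\tau$, hence a simple zero of $\rho$. The remaining double zero sits at $1^{(2)}$: the identity $\alpha\beta=1$ forces $y(1)^2 = (1+\alpha^2)(1+\beta^2) = (\alpha+\beta)^2$, so $y_2(1)=-(\alpha+\beta)$ and $\rho_2(1)=0$, while implicit differentiation of $y^2=z(z+\alpha^2)(z+\beta^2)$ yields $\rho_2'(1)=0$. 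Part (b) is then immediate from $\lambda=\rho^2/(z(z-1)^2)$: the square of the simple zero of $\rho$ at $0$ cancels the simple pole of $1/z$; at $1^{(1)}$ the $(z-1)^{-2}$ factor produces a double pole since $\rho\ne 0$ there; and at $1^{(2)}$ the fourth order zero of $\rho^2$ minus the double zero of $(z-1)^2$ leaves a double zero. Part (c) is analogous for $(\beta-\alpha)z+y$: it vanishes at $z=0$ by the same $\tau$-expansion, and $y(-1)^2 = (\alpha-\beta)^2$ combined with the sheet choice gives $(\alpha-\beta)+y(-1^{(1)}) = 0$, with vanishing derivative by implicit differentiation.

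For (e) the identity does all the work. On the cuts $y^2\leq 0$, so $y_\pm$ are purely imaginary, $\rho_-=\overline{\rho_+}$, and
\begin{equation*}
|\rho_\pm(x)|^2 = \rho_+(x)\rho_-(x) = -x(x-1)^2 = |x(x-1)^2|,
\end{equation*}
yielding $|\lambda_\pm(x)|=1$. On $(0,\infty)$ and $(-\alpha^2,-\beta^2)$, where $y$ is real, the eigenvalues $\lambda_1,\lambda_2$ are real with product one, and their difference $\lambda_1-\lambda_2 = 4(\alpha+\beta)y/(z-1)^2$ fixes the ordering once the sign of $y$ on sheet one is known. For (f) I would use that $\log|\lambda_1|$ is harmonic on $\mathbb{C}\setminus(\text{cuts}\cup\{1\})$, equals $0$ on the cuts by (e), tends to $0$ at infinity by (d), and tends to $+\infty$ at $z=1$ by the double pole from (b); the minimum principle then forces $\log|\lambda_1|>0$ strictly in the complement of the cuts, which is equivalent to $|\lambda_1|>|\lambda_2|$.

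The main delicacy is pinning down the sign of $y$ on sheet one at $z=-1$ and throughout $(-\alpha^2,-\beta^2)$, which enters both (c) and the last case of (e). I would settle it by a short monodromy calculation: starting at $z=1$ on sheet one where $y=\alpha+\beta>0$ and continuing along a semicircle in the upper half-plane to $z=-1$, the factors $z$, $z+\alpha^2$, $z+\beta^2$ contribute net argument changes $\pi$, $0$, $\pi$, so $\arg y$ advances by $\pi$ and $y(-1^{(1)})=-(\alpha-\beta)$. The same continuation shows $y<0$ on the first sheet over $(-\alpha^2,-\beta^2)$. Once this orientation is fixed, the remaining verifications reduce to routine applications of the algebraic identity and local-parameter expansions.
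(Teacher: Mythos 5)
Your proof is correct, and it supplies the verifications of (a)--(c) that the paper dismisses as ``easy to verify from the definitions.'' Where the lemma has real content the two arguments coincide: your part (d) rests on the identity $\rho_1(z)\rho_2(z)=(\alpha+\beta)^2z^2-y^2=-z(z-1)^2$, while the paper computes $\det W(z)=1$ from \eqref{eq:Wexplicit}; these are the same computation in different clothing, since $\det W=(\rho_1\rho_2)^2/(z^2(z-1)^4)$, and your version has the advantage of also delivering $|\rho_\pm(x)|^2=\rho_+(x)\rho_-(x)=|x(x-1)^2|$ on the cuts, hence \eqref{eq:lambdareal1}, in one stroke (the paper instead combines the jump relation $\lambda_{1,\pm}=\lambda_{2,\mp}$ with part (d)). Part (f) is proved by exactly the same harmonicity and minimum-principle argument in both. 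The one organizational difference is in \eqref{eq:lambdareal2}--\eqref{eq:lambdareal3}: the paper deduces the orderings from part (f) together with the signs of $\lambda_{1,2}$ on the two intervals, which sidesteps any sign determination for $y$, whereas you read them off from $\lambda_1-\lambda_2=4(\alpha+\beta)y/(z-1)^2$, which forces you to fix the sign of $y$ on the first sheet over $(-\alpha^2,-\beta^2)$. Your monodromy computation along the upper unit semicircle does this correctly ($y(-1^{(1)})=-(\alpha-\beta)$, consistent with the expansion of $\rho_1$ near $-\alpha^2$ used later in Lemma \ref{lem:Phi12}), and that sign is needed anyway to place the double zero in part (c) on the first sheet, so nothing is wasted. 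The only cosmetic slip is in part (b), where ``the square of the simple zero of $\rho$ at $0$ cancels the simple pole of $1/z$'' mixes conventions: in the local parameter $\tau=\sqrt{z}$ at the branch point, $\rho^2$ vanishes to order $2$ and $z=\tau^2$ vanishes to order $2$, which is the cancellation you mean.
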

\begin{proof}
Parts (a), (b), and (c) are easy to verify from the definitions. We note that
part (d) comes from the fact that
\begin{equation} \label{eq:detW}
	 \det W(z) = 1 
\end{equation}
for every $z$, which follows from \eqref{eq:WAztec} by a direct calculation,
and therefore $\lambda_1(z) \lambda_2(z) = \det W(z) = 1$ for every $z$.
Also from \eqref{eq:WAztec}
\begin{equation} \label{eq:Winfty} 
	\lim_{z \to \infty} W(z) = \begin{pmatrix} 1 & 0 \\ 2\alpha(\alpha + \beta) & 1 \end{pmatrix}  =: W_{\infty}
	\qquad \text{ as } z \to \infty \end{equation}
and so for its eigenvalues we have $\lambda_{1,2}(z) \to 1 $ 
as $z \to \infty$. 

For $x \in (-\infty,-\alpha^2] \cup [-\beta^2,0]$ we have $\lambda_{1,\pm}(x) = \lambda_{2,\mp}(x)$
and $\lambda_{1,\pm}(x) = \frac{1}{\lambda_{2,\pm}(x)}$ because of part (d).
Then  the identity $\lambda_{1,+}(x) \lambda_{1,-}(x) = 
\lambda_{2,+}(x) \lambda_{2,-}(x) = 1$ follows, which gives 
\eqref{eq:lambdareal1}. 

The functions $ \log |\lambda_1|$ and $\log |\lambda_2|$ are harmonic
on $\mathbb C \setminus ((-\infty,-\alpha^2] \cup [-\beta^2,0] \cup \{1\})$,
they are both zero on $(-\infty, -\alpha^2] \cup [-\beta^2,0]$, have the value $1$
at infinity, while 
\[ \lim_{z \to 1} \log |\lambda_1(z)| = +\infty, \qquad 
	 \lim_{z \to 1} \log |\lambda_2(z)| = -\infty \]
because of part (b). 
Then by the minimum principle for harmonic functions 
$\log |\lambda_1(z)| > \log |\lambda_2(z)|$ for 
every $z \in \mathbb C \setminus ((-\infty,-\alpha^2] \cup [-\beta^2,0])$.
This establishes part (f), and also the inequalities \eqref{eq:lambdareal2}
and \eqref{eq:lambdareal3} of  part (e) since $\lambda_{1}(x)$ and $\lambda_2(x)$
are real and positive for  $x \in (0,\infty)$ and real and negative for
$x \in (-\alpha^2, -\beta^2)$, see \eqref{eq:lambda12}.
\end{proof}

\subsection{First transformation $Y \mapsto X$ of the RH problem}
We use the matrix of eigenvalues \eqref{eq:defE} in the
first transformation of the RH problem. We define
\begin{equation} \label{eq:defX}
	 X = Y \begin{pmatrix} E & 0 \\ 0 & E \end{pmatrix} 
\end{equation}
which satisfies the following RH problem.
\begin{itemize}
\item $X$ is analytic on $\mathbb C \setminus (\gamma_{1} \cup (-\infty,-\alpha^2]
	\cup [-\beta^2,0])$,
\item $X$ has jumps
\begin{equation} \label{eq:Xjump}
	 X_+ = \begin{cases} X_- \begin{pmatrix} I_2 &  \Lambda^N \\
	0_2 & I_2 \end{pmatrix} & \text{ on } \gamma_{1}, \\
	 X_- \begin{pmatrix} \sigma_1 & 0_2 \\ 0_2 & \sigma_1 \end{pmatrix}
	& \text{ on } (-\infty, -\alpha^2] \cup [-\beta^2, 0],
	\end{cases} \end{equation}
\item $X$ has asymptotic behavior 
\begin{equation} \label{eq:Xasymp} 
	X(z) = (I_4 + O(1/z)) \begin{pmatrix} z^N E(z) & 0_2 \\ 
	0_2 & z^{-N} E(z) \end{pmatrix} \qquad \text{as } z \to \infty.
	\end{equation}
\end{itemize}	
This is easy to verify from the RH problem for $Y$, the 
definition \eqref{eq:defX}, and the properties
\eqref{eq:Wdecomp} and \eqref{eq:Ejump}.

Since $\det Y(z) = 1$ and 
\begin{equation} \label{eq:detE} 
	\det E(z) = -2 \alpha (z+1) \sqrt{z(z+\alpha^2)(z+\beta^2)},
	\end{equation}
which is easy to check from  \eqref{eq:defE},
we have by \eqref{eq:defX}
\begin{equation} \label{eq:detX} 
	\det X = (\det E)^2 = 4 \alpha^2 z (z+1)^2(z+\alpha^2)(z+\beta^2). 
\end{equation}

\subsection{Second transformation $X \mapsto U$}
Remarkably, we do not need equilibrium measures or $g$-functions  
for the next transformation.

From  Lemma \ref{lem:lambdarho} (b) we know that 
both $z \mapsto (z-1)^2 \lambda_1(z)$ and $z \mapsto (z-1)^{-2} \lambda_2(z)$ have
a removable singularity at $z=1$, and hence they are
analytic in $\mathbb C \setminus ((-\infty,-\alpha^2] \cup [-\beta^2, 0])$ without any zeros. 
We recall that $N$ is even and we put 
\begin{multline} \label{eq:defU} 
	U = L X \\
	\times \begin{cases}
	\diag\left( \frac{\lambda_1^{N/2}}{(z-1)^N}, \frac{\lambda_2^{N/2}}{(z-1)^N}, 
	\frac{(z-1)^N}{\lambda_1^{N/2}}, \frac{(z-1)^N}{\lambda_2^{N/2}} \right),
	& \text{ for } |z-1| > r, \\
	\diag\left( (z-1)^N\lambda_1^{N/2}, 
		\frac{\lambda_2^{N/2}}{(z-1)^N}, 
		\frac{1}{(z-1)^N \lambda_1^{N/2}}, 
		\frac{(z-1)^N}{\lambda_2^{N/2}} \right),
	& \text{ for } |z-1| < r, \end{cases}
	 \end{multline}
where $L$ is the constant matrix
\begin{equation}  \label{eq:defL}
	L = \begin{pmatrix} W_{\infty}^{-N/2} & 0_2  \\
		0_2 & W_{\infty}^{N/2} \end{pmatrix}.
		\end{equation}
	with $W_{\infty}$ as in \eqref{eq:Winfty}.
Then $U$ is defined on 
$\mathbb C \setminus (\gamma_{1} \cup (-\infty,-\alpha^2]\cup [-\beta^2,0])$, and from the definition \eqref{eq:defU} and
 the RH problem for $X$ we obtain
\begin{itemize}
\item $U$ is analytic,
\item $U$ has the jumps
\begin{equation} \label{eq:Ujump} 
	 U_+ = \begin{cases} U_-  \begin{pmatrix} (z-1)^{2N} & 0 & 1 & 0 \\
	0 & 1 & 0 & (z-1)^{2N} \\ 0 & 0 & (z-1)^{-2N} & 0 \\
	0 & 0 & 0 & 1 \end{pmatrix}  \text{ on } \gamma_{1}, \\
	 U_- \begin{pmatrix} \sigma_1 & 0_2 \\ 0_2 & \sigma_1 
	 \end{pmatrix} \qquad \qquad \qquad	
	 \text{ on } (-\infty,-\alpha^2] \cup [-\beta^2,0].
	 \end{cases}	
	\end{equation}
\item $U$ has asymptotic behavior
\begin{align} \nonumber 
	U(z) & = L (I_4 + O(1/z)) \begin{pmatrix}  E(z) \Lambda^{N/2}(z) & 0_2 \\ 
	0_2 &  E(z) \Lambda^{-N/2}(z) \end{pmatrix} \\
    & = (I_4 + O(1/z)) \begin{pmatrix}  E(z) & 0_2 \\ 
	0_2 &  E(z) \end{pmatrix} 
	 \qquad \text{as } z \to \infty. \label{eq:Uasymp} 
	\end{align}
\end{itemize}
To obtain the jump \eqref{eq:Ujump} on $(-\infty,-\alpha^2] \cup [-\beta^2,0]$ 
we also have to use the
fact that $\lambda_{1,\pm} = \lambda_{2,\mp}$ on these cuts.

The asymptotic condition \eqref{eq:Uasymp} requires some explanation.
The first equality in \eqref{eq:Uasymp} is clear from the definition 
\eqref{eq:defU} of $U$ for $|z-1|>r$, and the asymptotic 
behavior \eqref{eq:Xjump} of $X$. By \eqref{eq:Wdecomp} we have 
$E(z) \Lambda^{\pm N/2}(z) = W^{\pm N/2}(z) E(z)$ so that by
\eqref{eq:WAztec}, \eqref{eq:Winfty}, and \eqref{eq:defL} we get that
\begin{align*}  
	L \begin{pmatrix} E(z) \Lambda^{N/2}(z) & 0_2 \\ 0_2 & E(z)
	\Lambda^{-N/2}(z) \end{pmatrix}
   &	= L \begin{pmatrix} W^{N/2}(z) E(z) & 0_2 \\ 0_2 & W^{-N/2}(z) E(z) \end{pmatrix} \\
	& = \left( I_4 + O(1/z) \right) \begin{pmatrix} E(z) & 0_2 \\ 0_2 & E(z)
	\end{pmatrix}, \end{align*}
and this leads to the second equality in \eqref{eq:Uasymp}.

\subsection{Third transformation $U \mapsto T$}
In the third transformation we turn the entries $(z-1)^{\pm 2N}$
in the jump matrix on $\gamma_{1}$ into an off-diagonal entry. It corresponds
to the opening of lenses in a steepest descent analysis. We also remove
the $24$-entry in the jump matrix on $\gamma_{1}$.

We define
\begin{equation} \label{eq:defT} 
	T(z) = \begin{cases} U(z), & \text{for } |z-1|>r, \\
	U(z) \begin{pmatrix} 0 & 0 & -1 & 0 \\
	0 & 1 & 0 & -(z-1)^{2N} \\
	1 & 0 & (z-1)^{2N} & 0 \\
	0 & 0 & 0 & 1 \end{pmatrix}, & \text{for } |z-1| < r.  
	\end{cases} 
\end{equation}
Straightforward calculations, where we just use \eqref{eq:defT}
and the RH problem for $U$,  show that $T$ satisfies
\begin{itemize}
\item $T : \mathbb C \setminus (\gamma_{1} \cup (-\infty,-\alpha^2] \cup [-\beta^2, 0])
\to \mathbb C^{4 \times 4}$ is analytic,
\item $T$ has the jumps
\begin{equation} \label{eq:Tjump} 
	 T_+ = \begin{cases} T_-  
	 \begin{pmatrix} 1 & 0 & 0 & 0 \\ 
		0 & 1 & 0 & 0 \\
		(z-1)^{-2N} & 0 & 1 & 0 \\
		0 & 0 & 0 & 1
		\end{pmatrix} \qquad \text{on } \gamma_{1}, \\
	 T_- \begin{pmatrix} \sigma_1 & 0_2 \\ 0_2 & \sigma_1 
	 \end{pmatrix} \qquad \
	 \text{ on } (-\infty,-\alpha^2] \cup [-\beta^2,0].
	 \end{cases}	
	\end{equation}
\item $T$ has asymptotic behavior
\begin{align}  
	T(z) & = (I_4 + O(1/z)) \begin{pmatrix}  E(z) & 0_2 \\ 
	0_2 &  E(z) \end{pmatrix} 
	 \qquad \text{as } z \to \infty. \label{eq:Tasymp} 
	\end{align}
\end{itemize}

\subsection{Fourth transformation $T \mapsto S$}
We next remove the jumps on the negative real axis. We use
$\begin{pmatrix} E & 0 \\ 0 & E \end{pmatrix}$ as global parametrix,
since it has the same jump on $(-\infty, - \alpha^2] \cup [-\beta^2,0]$ 
as $T$ has, see \eqref{eq:Tjump}.
We define
\begin{equation} \label{eq:defS}
   S = T \begin{pmatrix} E^{-1} & 0_2 \\ 0_2 & E^{-1} \end{pmatrix}.
\end{equation}
and then $S$ has no jump on $(-\infty,-\alpha^2) \cup (-\beta^2, 0)$,
that is, $S_+ = S_-$ on these two intervals.

Since $E$ is not invertible at $z=0$, $z=-\alpha^2$, $z=-\beta^2 $,
see also \eqref{eq:detE}, we could have introduced
singularities at these points. Therefore we look at the 
combined transformations $Y \mapsto X \mapsto U \mapsto T \mapsto S$
in order to express $S$ directly in terms of $Y$. For $z$ outside of $\gamma_{1}$ 
we have by \eqref{eq:defX}, \eqref{eq:defU}, \eqref{eq:defT} and \eqref{eq:defS},
\begin{align*} 
	S  & =  U \begin{pmatrix} E^{-1} & 0_2 \\ 0_2 & E^{-1} \end{pmatrix} \\
		& = L  X \begin{pmatrix} (z-1)^{-N} \Lambda^{N/2} E^{-1} & 0_2 \\
			0_2 & (z-1)^N \Lambda^{-N/2} E^{-1} \end{pmatrix} \\
		& = L Y \begin{pmatrix} (z-1)^{-N} E \Lambda^{N/2} E^{-1} & 0_2 \\
			0_2 & (z-1)^N E \Lambda^{-N/2} E^{-1} \end{pmatrix}.
	\end{align*}
Since $ E \Lambda E^{-1} = W$ by \eqref{eq:Wdecomp}, we simply have (recall $N$ is even)
\begin{equation} \label{eq:defSinY} 
	S = L Y \begin{pmatrix} (z-1)^{-N} W^{N/2} & 0_2 \\ 
	0_2 & (z-1)^N W^{-N/2} \end{pmatrix},
		\quad |z-1| > r.
	 \end{equation}
This shows indeed that \eqref{eq:defS} does not introduce any singularities, since 
$\det W(z) = 1$ for every  $z$, and $W(z)$ and $W^{-1}(z)$ have poles  at $z=1$ only.

Thus $S$ has analytic continuation across $(-\infty, -\alpha^2]$ 
and $[-\beta^2,0)$ and satisfies the following RH problem that we
obtain immediately from \eqref{eq:defS} and the RH problem for $T$.
\begin{itemize}
\item $S : \mathbb C \setminus \gamma_{1} \to \mathbb C^{4 \times 4}$ is analytic,
\item $S$ has the jump
\begin{equation} \label{eq:Sjump} 
	S_+(z) = S_-(z) \begin{pmatrix} I_2 & 0_2 \\ (z-1)^{-2N} F(z)  & I_2 \end{pmatrix} \quad
	\text{ for } z \in \gamma_{1},
	\end{equation}
where $F(z) = E(z) \begin{pmatrix} 1 & 0 \\ 0 & 0 \end{pmatrix} E^{-1}(z)$ 
is as in \eqref{eq:defF} and \eqref{eq:Fdecomp}.
\item $S$ has asymptotic behavior
\begin{align}  
	S(z) = I_4 + O(1/z)  \qquad \text{as } z \to \infty. \label{eq:Sasymp} 
	\end{align}
\end{itemize}
The RH problem is now normalized at infinity. Note also that 
the transformation \eqref{eq:defS} restores the property $\det S = 1$,
since $\det T = \det X = (\det E)^2$, 
see \eqref{eq:detX}.

The jump matrix in  \eqref{eq:Sjump} is lower triangular, and
the RH problem for $S$ is normalized at infinity by \eqref{eq:Sasymp}.
This means that we can solve the RH problem explicitly by a contour 
integral. We find
\begin{equation} \label{eq:Ssolution}
S(z) = \begin{pmatrix} I_2 & 0_2 \\[5pt]
	\ds \frac{1}{2\pi i} \oint_{\gamma_{1}} \frac{F(s)}{(s-1)^{2N} (s-z)} ds & I_2
	\end{pmatrix}, \qquad z \in \mathbb C  \setminus \gamma_{1}.
\end{equation}

\subsection{Proof of Theorem \ref{thm:KNexact} }

We can now give the proof of Theorem \ref{thm:KNexact}.
\begin{proof}
We analyze the effect of the transformations on the correlation kernel
\eqref{eq:KNAztec}. From \eqref{eq:defSinY} and \eqref{eq:defW} we have 
for $z,w$ outside of $\gamma_{1}$,
\begin{multline*} 
	\begin{pmatrix} 0_2 & I_2 \end{pmatrix}
	Y^{-1}(w) Y(z) \begin{pmatrix} I_2 \\ 0_2 \end{pmatrix} \\
	 = 
	(w-1)^N (z-1)^N    W^{-N/2}(w) 
	\begin{pmatrix} 0_2 & I_2 \end{pmatrix}
	S^{-1}(w) S(z) \begin{pmatrix} I_2 \\ 0_2 \end{pmatrix}
	W^{-N/2}(z) \\ = 
	(w-1)^N (z-1)^N w^{N/2} z^{N/2}   
	A^{-N}(w) 
	\begin{pmatrix} 0_2 & I_2 \end{pmatrix}
	S^{-1}(w) S(z) \begin{pmatrix} I_2 \\ 0_2 \end{pmatrix}
	A^{-N}(z).	
	\end{multline*}
Thus
\begin{multline}	\label{eq:Proof41a}
A^{2N-m'}(w) 
	\begin{pmatrix} 0_2 & I_2 \end{pmatrix} Y^{-1}(w) Y(z) \begin{pmatrix} 
     I_2 \\ 0_2 \end{pmatrix}
	A^{m}(z) w^{-N} \\
	= 
	(w-1)^N (z-1)^N w^{-N/2} z^{N/2} A^{N-m'}(w) 
	\begin{pmatrix} 0_2 & I_2 \end{pmatrix}
	S^{-1}(w) S(z) \begin{pmatrix} I_2 \\ 0_2 \end{pmatrix}
	A^{-N+m}(z)
	\end{multline}
which is part of the expression that appears in the double integral in  \eqref{eq:KNAztec}.
Because of \eqref{eq:Ssolution} and
\[ S^{-1}(w) = \begin{pmatrix} I_2 & 0_2 \\
	\ds -\frac{1}{2\pi i} \oint_{\gamma_{1}} 
		\frac{F(s)}{(s-1)^{2N}(s-w)} ds 	& I_2 \end{pmatrix} \]
we have
\begin{multline}  \label{eq:Proof41b} 
	\begin{pmatrix} 0_2 & I_2 \end{pmatrix}
	S^{-1}(w) S(z) \begin{pmatrix} I_2 \\ 0_2 \end{pmatrix}
	\\ = 
	\frac{1}{2\pi i} \oint_{\gamma_{1}} \frac{F(s)}{(s-1)^{2N}(s-z)} ds
	-\frac{1}{2\pi i} \oint_{\gamma_{1}} \frac{F(s)}{(s-1)^{2N}(s-w)} ds
	\\
	= 
	-\frac{z-w}{2\pi i} \oint_{\gamma_{1}} \frac{F(s)}{(s-1)^{2N}(s-w)(s-z)} ds \end{multline}
	
Using \eqref{eq:Proof41a} and \eqref{eq:Proof41b} we
see that the double integral in \eqref{eq:KNAztec} is equal to
\begin{multline} \label{eq:Proof41c} 
	-\frac{1}{(2\pi i)^2}
		\oint_{\gamma_{0,1}} \oint_{\gamma_{0,1}}
		A^{N-m'}(w) 
		\left(\frac{1}{2 \pi i}  
		\oint_{\gamma_{1}} \frac{F(s)}{(s-1)^{2N}(s-w)(s-z)} ds \right)
		\\
		\times
	A^{-N+m}(z) \frac{z^{N/2}}{w^{N/2}} (w-1)^N (z-1)^N 
		\frac{w^y}{z^{x+1}} dz dw.		
		 \end{multline}

We change the order of integration in \eqref{eq:Proof41c} and
evaluate the $w$-integral first. By  a residue calculation
\begin{multline} \label{eq:Proof41d} 
	\frac{1}{2\pi i} \oint_{\gamma_{0,1}}
	A^{N-m'}(w) (w-1)^N w^{y-N/2} \frac{1}{w-s} dw \\ 
	= 	A^{N-m'}(s) (s-1)^N s^{y-N/2}, \qquad s \in \gamma_{1}.
	\end{multline}
Indeed the singularities at $w=0$ and $w=1$ in the integrand 
in the left-hand side of \eqref{eq:Proof41d} are removable 
(we use \eqref{eq:AAztec}, $N$ is even, and $2y \geq m'$). 
The only singularity is at $w=s$
and \eqref{eq:Proof41d} indeed follows by Cauchy's formula
since $s \in \gamma_{1}$ lies inside $\gamma_{0,1}$.

Using \eqref{eq:Proof41d} in \eqref{eq:Proof41c}
and changing the integration variable $s$ to $w$, we obtain
the double integral in \eqref{eq:theo41}.
The single integral in \eqref{eq:theo41} is of course immediate
from \eqref{eq:KNAztec}. This completes the proof
of  Theorem \ref{thm:KNexact}.
\end{proof}

\subsection{A consistency check}

The RH analysis gives us explicit formulas, and in particular also for
the $2 \times 2$ left upper block
\[ P_N(z) = \begin{pmatrix} Y_{11}(z) & Y_{12}(z) \\ Y_{21}(z) & Y_{22}(z) \end{pmatrix}
	\]
which by \eqref{eq:solY} should be the monic MVOP of degree $N$. Following the transformations $Y \mapsto X \mapsto U \mapsto T \mapsto S$ and
the expression \eqref{eq:Ssolution} for $S$, we see that
\begin{equation} \label{eq:PN} 
	P_N(z) = (z-1)^N  W_{\infty}^{N/2} W^{-N/2}(z) 
	\end{equation}
which is indeed a monic matrix valued polynomial of degree $N$ since $N$ is even.
Note that $W(z)$ has a double pole at $z=1$, hence $W^{-N/2}(z)$ has a pole
of order $N$ at $z=1$,  and the pole is compensated by the $N$ th order
zero of $(z-1)^N$.

We then check that 
\[ P_N(z) W^N(z) = (z-1)^N  W_{\infty}^{N/2} 
	W^{N/2}(z) \] 
also has a removable singularity at $z=1$. Hence it is also a matrix polynomial
and the matrix orthogonality follows in a trivial way from Cauchy's theorem
\[ \frac{1}{2\pi i} \oint_{\gamma_1} P_N(z) W^N(z) Q(z) dz = 0_2 \]
for every matrix valued polynomial $Q$ (and not just for polynomials of degree 
$\leq N-1$).

The degree $N-1$ polynomial $Q_{N-1}$ 
is in the left lower $2 \times 2$ block of $Y$, see \eqref{eq:solY}.
This is a polynomial of degree $N-1$, but not necessarily a monic one.
From the transformation in the RH analysis and \eqref{eq:Ssolution},
we find
\begin{multline} \label{eq:QNoutside} 
	Q_{N-1}(z) = \\
	 (z-1)^N  W_{\infty}^{-N/2} 
	\left(	\frac{1}{2 \pi i} \oint_{\gamma_{1}} \frac{F(s)}{(s-1)^{2N}(s-z)} ds \right)	
	W^{-N/2}(z) \end{multline}
for $|z-1| > r$, which is indeed $O(z^{N-1})$ as $z \to \infty$. 
The analytic continuation to $|z-1| < r$, is given by the Sokhotskii-Plemelj formula,
\begin{multline*} 
	Q_{N-1}(z) = \\
	 (z-1)^N  W_{\infty}^{-N/2}
	\left(	\frac{1}{2 \pi i} \oint_{\gamma_{1}} \frac{F(s)}{(s-1)^{2N}(s-z)} ds \right)	
	W^{-N/2}(z) \\
	+ (z-1)^{-N} W_{\infty}^{-N/2} F(z) W^{-N/2}(z).
	 \end{multline*}
Note that from \eqref{eq:Fdecomp} and \eqref{eq:Wdecomp} we have 
$F(z) W^{-N/2}(z) = \lambda_1^{-N/2}(z) F(z)$ and this has a zero at $z=1$
of order $N$ because of Lemma \ref{lem:lambdarho} (b).
The zero cancels the $N$th order pole in $(z-1)^{-N}$,
and we see that the extra term is analytic for $|z-1|< r$, which
confirms that the expression defining $Q_{N-1}$ is a polynomial of degree $\leq N-1$.

Let's verify the orthogonality \eqref{eq:QNMVOP} where we take a contour $\gamma$
that lies in the exterior of $\gamma_1$. 
For an integer $k \geq 0$, we have by \eqref{eq:QNoutside}
\begin{multline*} 
	\frac{1}{2\pi i} \oint_{\gamma_1} Q_{N-1}(z) W^N(z) z^k dz = 
	W_{\infty}^{-N/2} \\
	\times 
 	\frac{1}{(2\pi i)^2}
 		\oint_{\gamma_1} (z-1)^N 
 		\left( \oint_{\gamma_{1}} \frac{F(s)}{(s-1)^{2N}(s-z)} ds \right)
 		W^{N/2}(z) z^k dz.
 		\end{multline*}
We change the order of integration and use Cauchy's formula
(the only pole is at $z=s$) to obtain
\begin{multline} \label{eq:QNMVOP1}
	\frac{1}{2\pi i} \oint_{\gamma} Q_{N-1}(z) W^N(z) z^k dz 
	=  W_{\infty}^{-N/2} \frac{1}{2\pi i}
		\oint_{\gamma_{1}} \frac{-F(s) W^{N/2}(s)}{(s-1)^{N}}   
		s^k  ds.
\end{multline}

Note that $(I_2 - F(s)) W^{N/2}(s) = (I_2- F(s)) \lambda_2^{N/2}(s)$
(again by \eqref{eq:Fdecomp} and \eqref{eq:Wdecomp}) and 
$\lambda_2^{N/2}(s)$ has a zero of order $N$ at $s=1$ by 
Lemma \ref{lem:lambdarho} (b). Thus
\[ \frac{1}{2\pi i}
		\oint_{\gamma_{1}} \frac{(I_2-F(s)) W^{N/2}(s)}{(s-1)^{N}}   
		s^k  ds = 0, \]
and combining this with \eqref{eq:QNMVOP1} leads to
\begin{equation} \label{eq:QNMVOP2}
	\frac{1}{2\pi i} \oint_{\gamma} Q_{N-1}(z) W^N(z) z^k dz 
	=  W_{\infty}^{-N/2} \frac{1}{2\pi i}
		\oint_{\gamma_{1}} \frac{-W^{N/2}(s)}{(s-1)^{N}} s^k  ds.
\end{equation}
Recall that $W^{N/2}(s)$ is a rational matrix valued function
whose only pole is at $s=1$ and it is bounded at infinity. 
Then in \eqref{eq:QNMVOP2} we move the contour $\gamma_{1}$ to infinity. There is no
contribution from infinity if $k \leq N-2$, while for $k=N-1$
there is a residue contribution at infinity and the
expression \eqref{eq:QNMVOP2} becomes $-I_2$ for $k=N-1$. 
We conclude that  \eqref{eq:QNMVOP} indeed holds.

\section{Asymptotic analysis} \label{sec:asymp}

In the final section of the paper we are analyzing the formula 
\eqref{eq:theo12} in a scaling limit where
$N \to \infty$ and the coordinates $(m,n)$ and $(m',n')$ scale linearly
with $N$. We are going to distinguish the three phases of the model,
and prove Theorems \ref{thm:gaslimit}, \ref{thm:cusplimit},
and \ref{thm:gaussianlimit}.

\subsection{Preliminaries}
We first rewrite the formula \eqref{eq:theo12} in a form that 
already contains the gas phase kernel \eqref{eq:gaskernel}
and double integrals with the phase functions $\Phi_1$ and $\Phi_2$
from \eqref{eq:defPhi}, see Corollary \ref{cor:KNrewrite}.

We may and do assume that the contour $\gamma_{0,1}$ is a contour
in $\mathbb C \setminus ((-\infty,-\alpha^2] \cup [-\beta^2,0])$
going around the interval $[-\beta^2,0]$ once in positive direction.

\begin{lemma} \label{lem:wdecay}
The integrand in the double integral in \eqref{eq:theo12}
is $O(w^{-N+n'/2 - 1/2})$ as $w \to \infty$.
\end{lemma}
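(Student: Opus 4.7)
The plan is a direct bookkeeping of the growth of each $w$-factor in the integrand. The only $w$-dependent part of the integrand in \eqref{eq:theo12} is
\[
 \frac{A^{N-m'}(w)\,F(w)}{(z-w)\, w^{N/2} (w-1)^N}\, w^{(m'+n')/2},
\]
with the remaining factors depending only on $z$ (and fixed as $w \to \infty$).

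First I would simplify the matrix factor using the spectral identities recorded in \eqref{eq:Adecomp} and \eqref{eq:Fdecomp}. These imply $A(w) F(w) = \tfrac{\rho_1(w)}{w-1}\, F(w)$, and by iteration
\[
 A^{N-m'}(w) F(w) = \left(\frac{\rho_1(w)}{w-1}\right)^{N-m'} F(w).
\]
Since $\rho_1(w) = (\alpha+\beta) w + \sqrt{w(w+\alpha^2)(w+\beta^2)} \sim 2 w^{3/2}$ on the first sheet as $w \to \infty$, the scalar prefactor is $O(w^{(N-m')/2})$.

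Next I would read off the entrywise growth of $F(w)$ from \eqref{eq:defF}: the diagonal entries tend to $1/2$, the $(1,2)$ entry is $O(w^{-1/2})$, and the $(2,1)$ entry is $O(w^{1/2})$. Multiplying by the bounded $z$-dependent matrix $A^{-N+m}(z)$ on the right preserves the maximum row growth, so that $A^{N-m'}(w) F(w) A^{-N+m}(z)$ has entries bounded by $O(w^{(N-m'+1)/2})$ (the worst case being the second row, carried by the $(2,1)$ entry of $F$).

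Finally I would combine the remaining scalar factors $\tfrac{1}{z-w} = O(w^{-1})$, $\tfrac{1}{w^{N/2}(w-1)^N} = O(w^{-3N/2})$, and $w^{(m'+n')/2}$, giving a total entrywise bound
\[
 O\bigl(w^{-1 + (N-m'+1)/2 + (m'+n')/2 - 3N/2}\bigr) = O\bigl(w^{-N + n'/2 - 1/2}\bigr),
\]
as claimed. The main (mild) obstacle is keeping track of the fact that the $(2,1)$ entry of $F$ grows like $w^{1/2}$ rather than being bounded, as this is precisely what gives the $-1/2$ in the exponent instead of $-1$; otherwise everything is routine algebra.
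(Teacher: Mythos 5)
Your proposal is correct and lands on exactly the same key intermediate estimate as the paper, namely $A^{N-m'}(w)F(w)=O(w^{(N-m'+1)/2})$, after which the remaining bookkeeping is identical. The only difference is how that estimate is obtained: you use the spectral identity $A^k(w)F(w)=\bigl(\rho_1(w)/(w-1)\bigr)^kF(w)$ together with $\rho_1(w)\sim w^{3/2}$ and $F(w)=O(w^{1/2})$, whereas the paper pairs factors via the elementary observations $A^2(w)=O(w)$ and $A(w)F(w)=O(w)$; both routes are routine and give the same exponent, though your version has the minor advantage of working uniformly for $N-m'$ of either sign without a parity case split (your constant $2$ in $\rho_1(w)\sim 2w^{3/2}$ should be $1$, but this is irrelevant for the $O$-bound).
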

\begin{proof}
From the formulas \eqref{eq:defA}-\eqref{eq:defF} we easily get
that $A(w) = O(w)$,  $F(w) = O(w^{1/2})$, $A^2(w) = O(w)$ 
and $A(w) F(w)  = O(w)$ as $w \to \infty$.
This implies that $A^{N-m'}(w) F(w) = O(w^{(N-m'+1)/2})$ as $w \to \infty$.
Use this in the integrand in \eqref{eq:theo12} and the lemma follows.
\end{proof}

Note that $n'$ could go up to $2N-1$, and then  $O(w^{-N+n'/2 - 1/2}) = O(w^{-1})$.
 However, then we are close to the boundary of the Aztec diamond, and we do not
consider this in what follows, since we focus on the gas phase. 
So we assume $n' \leq 2N-2$, and then the integrand in double integral
in \eqref{eq:theo12} is $O(w^{-3/2})$
as $w \to \infty$.

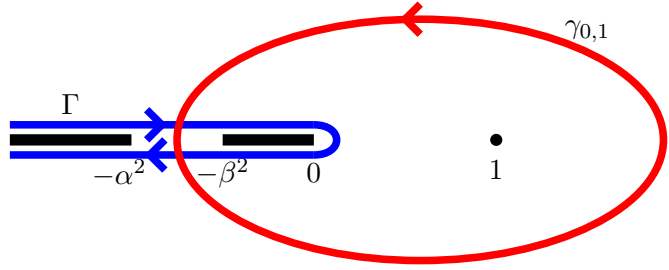
\begin{figure}[t]
\begin{center}
\begin{tikzpicture}[scale=0.8]

\draw [line width=1.5mm] (-6,0)--(-4,0);
\draw (-4.2,-0.55) node {$-\alpha^2$};

\draw [line width= 1.5mm] (-2.5,0)--(-1,0);
\draw (-2.5,-0.55) node {$-\beta^2$};
\draw (-1,-0.55) node {$0$};

\fill (2,0) circle [radius=1mm];
\draw (2,-0.5) node {$1$};

\draw [blue,line width = 1mm] (-6,0.25)--(-1,0.25);
\draw [blue,line width = 1mm] (-6,-0.25)--(-1,-0.25);
\draw [blue,line width = 1mm] (-1,-0.25) .. controls (-0.5,-0.25) and (-0.5,0.25) .. (-1,0.25);
\draw [blue,line width = 1mm] (-3.75,0.0)--(-3.5,0.25)--(-3.75,0.5);
\draw [blue,line width = 1mm] (-3.45,-0.0)--(-3.7,-0.25)--(-3.45,-0.5);
\draw (-5,0.6) node {$\Gamma$};

\draw (3.5,1.8) node {$\gamma_{0,1}$};
\draw [red,line width=1mm] (0.75,0) ellipse (4cm and 2cm);
\draw [red,line width=1mm] (0.75,1.75)--(0.5,2)--(0.75,2.25);
\end{tikzpicture} 
\end{center}

\caption{Contours $\gamma_{0,1}$ and $\Gamma$
\label{fig:Contours2}}
\end{figure}

Then for a fixed $z \in \gamma_{0,1}$,  we deform the contour $\gamma_1$ to a 
contour $\Gamma$ going around the negative real axis, starting at $-\infty$ in the 
upper half-plane and ending at $-\infty$ in the lower half-plane, as in
Figure \ref{fig:Contours2}. Since the integrand 
is $O(w^{-3/2})$, by Lemma \ref{lem:wdecay},
there is no contribution from infinity, but there is a residue
contribution from the pole at $w=z$. These residues
 combine to give the $z$-integral (we use that $F(z)$ and $A(z)$ commute)
\[ \frac{1}{2\pi i} \oint_{\gamma_{0,1}} F(z) A^{m-m'}(z) 
	\frac{z^{(m'+ n')/2}}{z^{(m+n)/2}} \frac{dz}{z} \]
Together with the single integral in \eqref{eq:theo12}
this gives the limit \eqref{eq:gaskernel} that we expect to get 
in the gas phase. We proved the following.

\begin{proposition} \label{prop:prop52}
Suppose $N$ is even and $(m,n) \in \mathcal B_N$, 
$(m',n') \in \mathcal B_N$ with $m+n$ and $m'+ n'$  even
and $n' \leq 2N-2$. 
Then 
\begin{multline} \label{eq:prop52}
	\mathbb K_N(m,n; m',n') 
	= \mathbb K_{gas}(m,n ; m', n')
	\\
	+ \frac{1}{(2\pi i)^2} \oint_{\gamma_{0,1}} \frac{dz}{z} 
		\int_{\Gamma} 
		\frac{dw}{z-w} A^{N-m'}(w) F(w) A^{-N+m}(z) \\
		\times
		\frac{z^{N/2} (z-1)^N}{w^{N/2} (w-1)^N} \frac{w^{(m'+n')/2}}{z^{(m+n)/2}}.
\end{multline}
\end{proposition}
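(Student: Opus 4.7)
My plan is to deform, for each fixed $z\in\gamma_{0,1}$, the inner $w$-contour in the double integral of \eqref{eq:theo12} from $\gamma_1$ (a loop around $w=1$) outward to the contour $\Gamma$ (which wraps the negative real axis, see Figure \ref{fig:Contours2}). Writing the inner integrand as
\begin{equation*}
g_z(w)=\frac{1}{z-w}\,A^{N-m'}(w)\,F(w)\,A^{-N+m}(z)\,\frac{z^{N/2}(z-1)^N}{w^{N/2}(w-1)^N}\,\frac{w^{(m'+n')/2}}{z^{(m+n)/2}},
\end{equation*}
I observe that, as a function of $w$, it is meromorphic on $\mathbb{C}\setminus\bigl((-\infty,-\alpha^2]\cup[-\beta^2,0]\bigr)$, with a pole at $w=1$ enclosed by $\gamma_1$, a simple pole at $w=z$ outside $\gamma_1$, and branch cuts of $F$ along exactly the curves that $\Gamma$ wraps around.

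\textbf{Step 1 (deformation).} Since we assume $n'\leq 2N-2$, Lemma \ref{lem:wdecay} gives $g_z(w)=O(w^{-3/2})$ as $w\to\infty$, so a large circle contributes nothing. Applying Cauchy's theorem in the region bounded by $\gamma_1$, a large circle $C_R$, and keyholes around the two branch cuts, and letting $R\to\infty$, yields
\begin{equation*}
\tfrac{1}{2\pi i}\oint_{\gamma_1}g_z(w)\,dw=-\operatorname{Res}_{w=z}g_z(w)+\tfrac{1}{2\pi i}\int_\Gamma g_z(w)\,dw,
\end{equation*}
with $\Gamma$ oriented as in Figure \ref{fig:Contours2}.

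\textbf{Step 2 (residue and comparison with $\mathbb{K}_{gas}$).} The factor $1/(z-w)$ contributes residue $-1$ at $w=z$; substituting $w=z$ in the remaining factors, and using that $F$ and $A$ commute (immediate from the joint eigendecomposition \eqref{eq:Adecomp}--\eqref{eq:Fdecomp}), collapses everything to
\begin{equation*}
-\operatorname{Res}_{w=z}g_z(w)=F(z)\,A^{m-m'}(z)\,z^{(m'+n')/2-(m+n)/2}.
\end{equation*}
Combined with the single integral already present in \eqref{eq:theo12}, the non-$\Gamma$ contribution to $\mathbb{K}_N$ becomes
$\tfrac{1}{2\pi i}\oint_{\gamma_{0,1}}\bigl(F(z)-\chi_{m>m'}I_2\bigr)A^{m-m'}(z)\,z^{(m'+n')/2-(m+n)/2}\tfrac{dz}{z}$.
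I then must identify this with $\mathbb{K}_{gas}$ in \eqref{eq:gaskernel}, whose contour $\gamma$ only encircles $[-\beta^2,0]$. For this I use the identities $F(z)A^k(z)=\bigl(\rho_1(z)/(z-1)\bigr)^k F(z)$ and $(F(z)-I_2)A^k(z)=\bigl(\rho_2(z)/(z-1)\bigr)^k(F(z)-I_2)$, immediate from \eqref{eq:Adecomp}--\eqref{eq:Fdecomp}. Combined with Lemma \ref{lem:lambdarho}(a) (so $\rho_1(1)\neq 0$ and $\rho_2$ has a double zero at $z=1$), these show that the integrand is analytic at $z=1$ in both cases $m\le m'$ and $m>m'$; hence $\gamma_{0,1}$ collapses to $\gamma$ without change, matching $\mathbb{K}_{gas}$ exactly.

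\textbf{Main obstacle.} The only delicate point is Step 1: careful tracking of orientations to confirm that the homotopy from $\gamma_1$ to $\Gamma$ picks up the pole at $w=z$ exactly once with sign $-1$, and that neither the arc at infinity nor the two endpoints of $\Gamma$ at $-\infty$ leave any residual contribution. Everything else reduces to a direct computation with the matrix eigendecomposition.
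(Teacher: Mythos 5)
Your proposal is correct and follows essentially the same route as the paper: deform the $w$-contour from $\gamma_1$ outward to $\Gamma$, use the $O(w^{-3/2})$ decay from Lemma \ref{lem:wdecay} to kill the contribution at infinity, pick up the residue at $w=z$, and combine it with the single integral of \eqref{eq:theo12} to recover $\mathbb K_{gas}$ via the commutativity of $F$ and $A$. Your Step 2 is in fact slightly more explicit than the paper about why the resulting $z$-integrand is analytic at $z=1$, so that the contour can be taken to encircle only $[-\beta^2,0]$ as in \eqref{eq:gaskernel}.
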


Thus to establish Theorem \ref{thm:gaslimit} we have to prove that 
in the gas phase the double integral in \eqref{eq:prop52} tends to $0$
as $N \to \infty$ at an exponential rate.

We can rewrite \eqref{eq:prop52} where we assume that
$m = (1+\xi_1) N$, $n = (1+\xi_2)N$, $m'= (1+\xi_1')N$
and $n'= (1+\xi_2')N$. We use $\Phi_1$ and $\Phi_2$ as
in \eqref{eq:defPhi}, and to emphasize that these
functions depend on $\xi_1$ and $\xi_2$, we write
$\Phi_1(z; \xi_1, \xi_2)$ and
$\Phi_2(z; \xi_1, \xi_2)$.

\begin{corollary} \label{cor:KNrewrite}
Suppose $m = (1+ \xi_1)N$,  $n = (1+\xi_2)N$, $m'= (1+\xi_1')N$,
and $n'= (1+\xi_2')N$ with $-1 < \xi_1, \xi_2, \xi_1', \xi_2' < 1$.
Assume $N$, $m+n$ and $m'+ n'$ are even. Then 
\begin{multline} \label{eq:KNremainder}
	\mathbb K_N(m,n; m',n') 
	= \mathbb K_{gas}(m,n; m', n')
	\\
	+ \frac{1}{(2\pi i)^2} \oint_{\gamma_{0,1}} \frac{dz}{z}
	\int_{\Gamma} \frac{dw}{z-w} F(w) F(z)
		e^{N( \Phi_1(z;\xi_1,\xi_2)-\Phi_1(w; \xi_1',\xi_2'))/2} \\
	+ 
\frac{1}{(2\pi i)^2} \oint_{\gamma_{0,1}} \frac{dz}{z}
	\int_{\Gamma} \frac{dw}{z-w} F(w) (I_2- F(z))
		e^{N( \Phi_2(z; \xi_1,\xi_2)-\Phi_1(w; \xi_1',\xi_2'))/2},
		\end{multline}
with contours as in Figure \ref{fig:Contours2}.
\end{corollary}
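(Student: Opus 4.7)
The plan is to insert the scalings $m=(1+\xi_1)N$, $n=(1+\xi_2)N$, $m'=(1+\xi_1')N$, $n'=(1+\xi_2')N$ directly into the double integral in \eqref{eq:prop52} and reorganise the integrand using the simultaneous eigendecomposition \eqref{eq:Adecomp}, \eqref{eq:Fdecomp} of $A(z)$ and $F(z)$. Since $F(z)$ is the spectral projector of $(z-1)A(z)$ onto the $\rho_1$-eigenspace and commutes with $A(z)$, I have
\[ A^{N-m'}(w)F(w) = \Bigl(\tfrac{\rho_1(w)}{w-1}\Bigr)^{N-m'} F(w), \qquad A^{-N+m}(z) = \sum_{j=1,2} \Bigl(\tfrac{\rho_j(z)}{z-1}\Bigr)^{m-N} P_j(z), \]
with $P_1(z)=F(z)$ and $P_2(z)=I_2-F(z)$. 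Multiplying these out yields a sum of two matrix pieces $F(w)F(z)$ and $F(w)(I_2-F(z))$, each weighted by a scalar depending only on $z,w$ through $\rho_j(z)$, $\rho_1(w)$ and the linear factors $z-1$, $w-1$.

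Next, using $(\rho_j(z)/(z-1))^2 = z\lambda_j(z)$, which follows from \eqref{eq:rho12}–\eqref{eq:lambda12}, I write $(\rho_j(z)/(z-1))^{k} = z^{k/2}\lambda_j^{k/2}(z)$ for the integer exponents $k=m-N$ and $k=N-m'$, and combine with the scalar factor $\tfrac{z^{N/2}(z-1)^N}{w^{N/2}(w-1)^N}\tfrac{w^{(m'+n')/2}}{z^{(m+n)/2}}$ already present in \eqref{eq:prop52}. A short algebraic calculation using $m-N=\xi_1 N$, $n=(1+\xi_2)N$ collapses the $z$-dependent factor in the $F(z)$-piece to
\[ (z-1)^N z^{-(1+\xi_2)N/2}\,\lambda_1^{\xi_1 N/2}(z) \;=\; \exp\!\bigl[\tfrac{N}{2}\Phi_1(z;\xi_1,\xi_2)\bigr], \]
and, with $\lambda_1$ replaced by $\lambda_2$, gives $\exp[\tfrac{N}{2}\Phi_2(z;\xi_1,\xi_2)]$ for the $(I_2-F(z))$-piece, by the definition \eqref{eq:defPhi}. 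The $w$-side, where only $F(w)$ ever appears, produces $\exp[-\tfrac{N}{2}\Phi_1(w;\xi_1',\xi_2')]$ for both terms. Substituting these expressions back into \eqref{eq:prop52} reproduces \eqref{eq:KNremainder} verbatim.

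The only delicate point is branch consistency: the individual factors $\lambda_j^s$, $z^s$ and $(\rho_j/(z-1))^s$ are multi-valued, but the integer exponents $m-N$ and $N-m'$ make $(\rho_j/(z-1))^k$ single-valued, while the shorthand $e^{N\Phi_j/2}$ is to be read using the principal branches of $\log z$, $\log(z-1)$ and $\log\lambda_j$ on the sheets specified in Remark \ref{rem:RS}. The contours $\gamma_{0,1}$ and $\Gamma$ of Figure \ref{fig:Contours2} already lie in the domain of analyticity of these branches, so no contour deformation is needed and the whole argument is purely algebraic; the hardest aspect is the branch bookkeeping rather than any substantive analysis.
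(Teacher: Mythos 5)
Your proposal is correct and follows essentially the same route as the paper: starting from \eqref{eq:prop52}, using the commuting eigendecomposition \eqref{eq:Adecomp}--\eqref{eq:Fdecomp} to split $A^{-N+m}(z)$ into the $F(z)$ and $I_2-F(z)$ pieces, and absorbing the scalar factors into $e^{N\Phi_j/2}$ via $(\rho_j(z)/(z-1))^2=z\lambda_j(z)$. The algebra checks out, and your remark on branch consistency is a sensible addition that the paper leaves implicit.
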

\begin{proof}
Because of \eqref{eq:Adecomp}--\eqref{eq:Fdecomp} we have
\[ A^{N-m'}(w) F(w) = F(w) \left( \frac{\rho_{1}(w)}{w-1} \right)^{N-m'}. \]
Thus in view of \eqref{eq:lambda12} and \eqref{eq:defPhi} we get 
\begin{align} \nonumber
	A^{N-m'}(w) F(w) \frac{w^{(m'+n')/2}}{w^{N/2} (w-1)^N}
		& =  F(w) \lambda_1^{(N-m')/2}(w) \frac{w^{n'/2}}{(w-1)^N} \\
		& = F(w) e^{-N \Phi_1(w; \xi_1', \xi_2')/2},
		\label{eq:wintegrand}
		\end{align} 
and similarly, 
\begin{multline} \label{eq:zintegrand}
	A^{-N+m}(z)  \frac{z^{N/2} (z-1)^N}{z^{(m+n)/2}} \\
	= F(z) e^{N \Phi_1(z; \xi_1, \xi_2)/2}
	+ (I_2-F(z)) e^{N \Phi_2(z; \xi_1, \xi_2)/2}.
		\end{multline} 
Using  \eqref{eq:wintegrand} and \eqref{eq:zintegrand} 
in \eqref{eq:prop52} we arrive at \eqref{eq:KNremainder}.
\end{proof}

\subsection{Saddle points} \label{subsec:saddles}

The large $N$ behavior of the $z$-integrals in \eqref{eq:KNremainder}
is dominated by the factors $e^{N \Phi_1(z)}$ and 
$e^{N \Phi_2(z)/2}$  that are exponential in $N$. 
Similarly the $w$ part of the integrand is dominated by  
$e^{-N \Phi_1(w)/2}$. 

We study the saddle points, which in Definition \ref{def:saddles} 
were already introduced as 
the zeros of the meromorphic differential $\Phi'(z) dz$ from \eqref{eq:MeroDiff}
defined on the Riemann surface $\mathcal R$ associated wth \eqref{eq:RSeq}. 
Of course, $\Phi$ depends on $\xi_1, \xi_2$, and thus the saddle points depend on 
these parameters. Throughout we restrict to $-1 < \xi_1, \xi_2 < 1$.
The differential has simple poles at $1^{(1)}$, $1^{(2)}$, $0$ 
and $\infty$ with residues given in the following table. 

\begin{center}
\begin{tabular}{c|ccc|c}
	     & residue of & residue & residue  & residue  \\
	pole &  of $\frac{dz}{z-1}$ & of $\frac{dz}{z}$ & of $\frac{\lambda'}{\lambda} dz$ &
	of $\Phi' dz$ \\[5pt]  \hline 
	$1^{(1)}$ & $1$ & $0$ & $-2$ & $-2\xi_1 + 2$ \\
	$1^{(2)}$ & $1$ & $0$ & $2$  & $2\xi_1 + 2$ \\
	$0$ 	  & $0$ & $2$ & $0$  & $- 2 \xi_2-2$ \\
	$\infty$  & $-2$ & $-2$ & $0$ & $2\xi_2-2$ \\[5pt] \hline 
	\end{tabular}
\end{center}
The residues of $\frac{\lambda'}{\lambda} dz$ at $z = 1^{(1)}$ and $z = 1^{(2)}$ come from
the double pole and double zero that $\lambda$ has at these points,
see Lemma \ref{lem:lambdarho} (b). The residues add up to zero, as it should be. 

We assume $\alpha > 1$ so that the genus of $\mathcal R$ is one. Then
there are also  four zeros of $\Phi' dz$ counting multiplicities.

Recall that the real part of the Riemann surface consists of the cycles 
$\mathcal C_1$ and $\mathcal C_2$ as in \eqref{eq:RSreal}. 
\begin{proposition} \label{prop:saddleC1}
For every $\xi_1, \xi_2 \in (-1,1)$ there are at least two distinct saddle points 
on the cycle $\mathcal C_1$.
\end{proposition}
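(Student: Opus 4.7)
The plan is to use a maximum/minimum argument for the real part of $\Phi$ restricted to the closed loop $\mathcal{C}_1 \subset \mathcal{R}$. The key first observation is that the four simple poles of $\omega = \Phi'(z)\,dz$ listed in Section~\ref{subsec:saddles} all lie on $\mathcal{C}_2$, so $\omega$ is holomorphic along $\mathcal{C}_1$. Although $\Phi$ itself is multi-valued on $\mathcal{R}$, along $\mathcal{C}_1$ the arguments of $z-1$, $z$, and $\lambda$ stay constant: for $z \in [-\alpha^2,-\beta^2]$ each is negative real on both sheets (using Lemma~\ref{lem:lambdarho}(e) for $\lambda$), so every logarithm can be written as $\log|\cdot|+i\pi$. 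Hence $\Phi|_{\mathcal{C}_1}$ takes values in $\mathbb{R}+ic$ for a constant $c$, and $\Phi_r := \re \Phi|_{\mathcal{C}_1}$ is a smooth real-valued function on the topological circle $\mathcal{C}_1$.

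I would then argue that $\Phi_r$ attains both its maximum and minimum on the compact $\mathcal{C}_1$, and that these two values are distinct. Otherwise $\Phi_r$ would be constant, forcing $\omega$ to vanish on an open subset of $\mathcal{C}_1$; but this is impossible since $\omega$ is a nonzero meromorphic $1$-form on the genus-one surface $\mathcal{R}$ with only four zeros counting multiplicity.

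Next, at any critical point $p$ of $\Phi_r$ that is not a branch point, I would show that $p$ is automatically a zero of $\omega$ on $\mathcal{R}$. In a local real coordinate $x$ along $\mathcal{C}_1$, the formula for $\Phi'$ gives $\omega_p(\partial_x) = \Phi'(p) \in \mathbb{R}$, because all three terms $2/(z-1)$, $-(1+\xi_2)/z$, and $\xi_1 \lambda'(z)/\lambda(z)$ are real there (the last using that $\lambda_1$ is real-analytic and negative on the interior of each sheet of $\mathcal{C}_1$). The condition $d\Phi_r/dx = 0$ then gives $\omega_p(\partial_x) = 0$, and by $\mathbb{C}$-linearity of $\omega_p$ on the complex tangent space this forces $\omega_p = 0$.

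The main technical obstacle is ruling out the possibility that both extrema are absorbed by the branch points $-\alpha^2, -\beta^2$. In local coordinate $s$ with $z = -\alpha^2 + s^2$ I would use the expansion $\lambda_1 = -1 + c_1 s + O(s^2)$, where $c_1 \ne 0$ can be read off from $\lambda_1 - \lambda_2 = 4(\alpha+\beta)y/(z-1)^2$ combined with $y \sim \alpha\sqrt{\alpha^2-\beta^2}\,s$, to get $\omega = -\xi_1 c_1\,ds + O(s)\,ds$ at $s=0$. If $\xi_1 \ne 0$ this is nonzero at $s=0$, so the branch point is not a critical point of $\Phi_r$; both the max and min must then lie at interior points, giving two distinct zeros of $\omega$ on $\mathcal{C}_1$. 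If instead $\xi_1 = 0$, the same computation shows that $\omega$ already vanishes at both branch points $-\alpha^2$ and $-\beta^2$, supplying the required two distinct zeros directly. The delicate point throughout is using the reality of $\omega_p(\partial_x)$ to promote a critical point of the real function $\Phi_r$ into a genuine zero of the complex $1$-form $\omega$ on $\mathcal{R}$.
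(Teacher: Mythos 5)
Your argument is correct and rests on the same key fact as the paper's own proof, namely that $\Re\Phi$ is single-valued on the closed cycle $\mathcal C_1$ (equivalently, the period of $\Phi'(z)\,dz$ around $\mathcal C_1$ has vanishing real part, the paper's equation \eqref{eq:Boutroux}) while $\Phi'(z)\,dz$ is real along $\mathcal C_1$. The paper concludes via ``a real differential integrating to zero over a circle must change sign, hence vanish at two distinct points,'' whereas you conclude via ``a single-valued real function on a circle attains distinct maximum and minimum''; these are the same argument in dual form, and your explicit expansions at the branch points merely supply detail that the sign-change formulation does not require.
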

\begin{proof}
If $\mathcal C$ is a path from $P$ to $Q$ on the Riemann surface 
avoiding the poles, then by \eqref{eq:defPhi},
\[ \int_{\mathcal C} \Phi'(z) dz = 
	\left[2 \log(z-1) - (1+\xi_2) \log z + \xi_1 \log \lambda(z) \right]_P^Q \]
for a choice of  continuous branches of the logarithms along the path. 
Since $\xi_1$ and $\xi_2$ are real, it follows that the real
part is well-defined, it depends on $P$ and $Q$, but is
otherwise  independent of the path. Thus 
\begin{equation} \label{eq:Boutroux}
	\Re \left(\oint_{\mathcal C} \Phi'(z) dz \right) = 0
	\end{equation} 
	for a closed path $\mathcal C$.

Observe that that there are no poles on the cycle $\mathcal C_1$, and $\Phi'$ is real there. If there were
no two distinct zeros on $\mathcal C_1$, then there would be no sign change,
and the integral would be non-zero and real, which would contradict the
condition \eqref{eq:Boutroux}.
\end{proof}

The saddle points are explicit in case $\xi_1 = 0$, since then by \eqref{eq:MeroDiff}
\begin{equation} \label{eq:MeroDiff0} 
	\Phi'(z) dz = \left( \frac{2}{z-1} -  \frac{1+\xi_2}{z} \right) dz. 
\end{equation}
The equation $\frac{2}{z-1} = \frac{1+\xi_2}{z}$ has the unique solution
\begin{equation} \label{eq:defzc} 
	z_c(\xi_2) = - \frac{1+ \xi_2}{1-\xi_2}. 
	\end{equation}
This gives us two saddle points, namely the two points on $\mathcal R$
with \eqref{eq:defzc} as  $z$-coordinate.
The other two saddles come from the branch points $-\alpha^2$, $-\beta^2$,
which are zeros of the differential $dz$. The branch point $z=0$
is also a zero of $dz$, but this zero gets cancelled by the (double)
pole of $\frac{1+\xi_2}{z}$ in \eqref{eq:MeroDiff0}. 

For special values of $\xi_2$ the saddles at $z=z_c(\xi_2)$ coincide
with the saddle at $-\alpha^2$ or $-\beta^2$. This happens
for the values $\pm \xi_{2}^*$ with 
\begin{align} \label{eq:defxi2c}
	\xi_{2}^*  = \frac{\alpha-\beta}{\alpha + \beta} \in (0,1). 
	\end{align}
Then depending on the value of $\xi_2$, we are in the liquid or gas phase,
or on the liquid-gas transition, as defined in Definition \ref{def:phases}.

\begin{lemma} \label{lem:phases0}
Suppose $\xi_1=0$ and $-1 < \xi_2 < 1$.
\begin{enumerate}
\item[\rm (a)] $(0,\xi_2)$ cannot be in the solid phase. 
\item[\rm (b)] If $\xi_2 \in (-1, -\xi_2^*) \cup (\xi_2^*,1)$
then $(0, \xi_2) \in \mathfrak L$. 	
\item[\rm (c)] If $-\xi_2^* < \xi_2 < \xi_2^*$ then 
$(0, \xi_2) \in \mathfrak G$. 	
\item[\rm (d)] If $\xi_2 = \pm \xi_2^*$ then $(0,\xi_2)$
is on the liquid-gas transition. 
\end{enumerate}
\end{lemma}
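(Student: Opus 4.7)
The plan is to exploit the drastic simplification that $\xi_1 = 0$ induces in the differential \eqref{eq:MeroDiff}, which reduces to the rational
\begin{equation*}
\Phi'(z)\,dz = \left(\frac{2}{z-1} - \frac{1+\xi_2}{z}\right) dz,
\end{equation*}
whose only zero on $\mathbb{C}$ is the simple zero $z_c(\xi_2) = -(1+\xi_2)/(1-\xi_2)$. My first step is to lift this differential to $\mathcal{R}$ and enumerate its saddle points. At the branch points $-\alpha^2$ and $-\beta^2$ the local coordinate is $t$ with $z-z_0 = t^2$, so $dz = 2t\,dt$ contributes an extra simple zero; at $z = 0$ the corresponding zero of $dz$ is consumed by the simple pole of $(1+\xi_2)/z$; and $\infty$ supplies only poles. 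A pole-zero count on the genus-one surface then accounts for exactly four saddle points: the two lifts $z_c^{(1)}, z_c^{(2)}$ of $z_c$ and the branch points $-\alpha^2, -\beta^2$, all simple for $\xi_2 \neq \pm \xi_2^*$.

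Since the branch-point saddles always lie on $\mathcal{C}_1$, the whole classification reduces to locating $z_c^{(1)}, z_c^{(2)}$ on $\mathcal{R}$. Here I will use that $\xi_2 \mapsto z_c(\xi_2)$ is a strictly decreasing bijection from $(-1,1)$ onto $(-\infty,0)$, together with the elementary identity $z_c(\pm \xi_2^*) = -\alpha^{\pm 2}$, which follows from $\alpha\beta = 1$. In particular $z_c$ is always negative, so the lifts can never lie on $\mathcal{C}_2$ (whose $z$-coordinate ranges over $[0,\infty]$); this immediately rules out the solid phase and proves (a).

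The main step is then a case analysis based on the sign of $p(z) = z(z+\alpha^2)(z+\beta^2)$ at $z = z_c$. For $-\xi_2^* < \xi_2 < \xi_2^*$ we have $z_c \in (-\alpha^2,-\beta^2)$, where $p > 0$, so both lifts carry a real $y$-coordinate and lie on $\mathcal{C}_1$; combined with the branch-point saddles this puts all four simple saddles on $\mathcal{C}_1$, yielding the gas phase and (c). For $\xi_2 \in (-1,-\xi_2^*) \cup (\xi_2^*, 1)$ the value $z_c$ lands on one of the cuts $[-\beta^2,0]$ or $(-\infty,-\alpha^2]$, where $p < 0$, so the $y$-coordinate is purely imaginary and the lifts fall outside $\mathcal{R}_r$, yielding the liquid phase and (b). For $\xi_2 = \pm \xi_2^*$ the saddle $z_c$ coalesces with a branch point, and a short local computation in the coordinate $t$ will show that the resulting saddle on $\mathcal{C}_1$ is triple, producing the liquid-gas transition and (d).

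The only delicate point I expect is the bookkeeping near the branch points: one must track how $\Phi'(z)\,dz$ transforms under $z - z_0 = t^2$ in order both to certify the extra saddles at $-\alpha^2, -\beta^2$ and to confirm their simplicity for $\xi_2 \neq \pm \xi_2^*$. This is standard for meromorphic differentials on a hyperelliptic curve but is what makes the branch points enter the saddle count on an equal footing with the generic zero $z_c$.
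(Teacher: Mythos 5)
Your proposal is correct and follows essentially the same route as the paper: for $\xi_1=0$ the differential reduces to $\bigl(\tfrac{2}{z-1}-\tfrac{1+\xi_2}{z}\bigr)dz$, the four saddles are the two lifts of $z_c(\xi_2)=-\tfrac{1+\xi_2}{1-\xi_2}$ together with the branch points $-\alpha^2,-\beta^2$, and the classification is a case analysis on where $z_c$ lands relative to $[-\alpha^2,-\beta^2]$. Your added details (monotonicity of $z_c$, the sign of $z(z+\alpha^2)(z+\beta^2)$ at $z_c$ to decide membership in $\mathcal R_r$, and the local coordinate $t$ at the branch points certifying the triple saddle when $z_c=-\alpha^{\pm2}$) are exactly the justifications the paper leaves implicit.
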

\begin{proof}
(a) It is clear from \eqref{eq:defzc} that $z_c(\xi_2) < 0$ and so
there are no saddles on the positive real axis.

(b) If $-1  < \xi_2 < -\xi_2^*$ then $z_c(\xi_2) \in (-\beta^2,0)$,
and if $\xi_2^* < \xi_2 < 1$ then $z_c(\xi_2) \in (-\infty,-\alpha^2)$.
Even though $z_c(\xi_2)$ is real, the two saddles with $z$ coordinate
equal to $z_c(\xi_2)$ are not on the real part of
the Riemann surface, and thus in both cases we are in the liquid phase.

(c)  If $-\xi_2^* < \xi_2 < \xi_2^*$ then $z_c(\xi_2) \in (-\alpha^2, -\beta^2)$.
Then the saddles with $z$ coordinate equal to $z_c(\xi_2)$ are on the cycle
$\mathcal C_1$. The branch points $-\alpha^2$ and $-\beta^2$ are the other
two saddles and they are also on the cycle. Thus all four saddles are on 
the cycle $\mathcal C_1$ and they are distinct, and we are in the gas phase.  

(d) If $\xi_2 = -\xi_2^*$ then $z_{c}(\xi_2) = -\beta^2$ and 
if $\xi_2 = \xi_2^*$ then  $z_c(\xi_2) = -\alpha^2$. In both cases
there is a triple saddle point at one of the branch points, and we
are in the liquid-gas transition. 		
\end{proof}
		 
\subsection{Algebraic equation}
The condition of coalescing saddle points leads to an algebraic equation
for $\xi_1$ and $\xi_2$. We are able to  calculate it with the help of Maple.

First of all, the saddle point equation $\Phi'(z) dz = 0$, see \eqref{eq:MeroDiff}, 
leads us to  consider
\[ \frac{2}{z-1} - \frac{(1+\xi_2)}{z} + 
	\xi_1 \frac{\lambda'(z)}{\lambda(z)} = 0 \]
which after clearing denominators, and using 
\eqref{eq:lambda12} and \eqref{eq:rho12},  gives a polynomial equation
in $z$ and $y=\sqrt{z(z+\alpha^2)(z+\beta^2)}$. We eliminate the square
root to obtain a polynomial equation in $z$ of degree $4$, which is
\begin{multline} \label{eq:saddleEq} 
	(1-\xi_2)^2 z^4  
	+ ((\alpha^2+\beta^2) ((1-\xi_2)^2-\xi_1^2) + 2 (1-\xi_2^2 -\xi_1^2)) z^3 \\
	+ (2(\alpha^2+ \beta^2)(1-\xi_2^2-\xi_1^2) + 
	2-4\xi_1^2 +2 \xi_2^2)) z^2 \\
	+ ((\alpha^2+\beta^2)((1+\xi_2)^2-\xi_1^2) + 2 (1-\xi_2^2-\xi_1^2))z 	+  (1+\xi_2)^2 = 0
	\end{multline}
By definition, the saddles are the four zeros of the polynomial
\eqref{eq:saddleEq}.

The discriminant with respect to $z$ of \eqref{eq:saddleEq} 
is a polynomial in $\xi_1$ and $\xi_2$ that has trivial factors $\xi_1^2$ 
and $\xi_2^2$. The remaining factor is a degree $8$ polynomial, which is 
symmetric in the two variables. Setting this to zero, we obtain the following
equation for coalescing saddles: 
\begin{multline} \label{eq:AlgEq}
	(\alpha^2+1)^6 (\xi_1^8 + \xi_2^8) \\
	- 4(\alpha^2+1)^4 (\alpha^2+2\alpha-1)(\alpha^2-2\alpha-1) 
	(\xi_1^6 \xi_2^2 + \xi_1^2 \xi_2^6) \\ 
	-4 (\alpha^2+1)^4 (\alpha^4 - \alpha^2+1) (\xi_1^6 + \xi_2^6) \\
	+ 2(\alpha^2+1)^2 (3 \alpha^8 - 20 \alpha^6 +82 \alpha^4 -20 \alpha^2 +3) 
	\xi_1^4 \xi_2^4 \\
	+ 4(\alpha^2+1)^2 (\alpha^8 +17 \alpha^6 -48 \alpha^4 + 17 \alpha^2 + 1)
	(\xi_1^4 \xi_2^2 + \xi_1^2 \xi_2^4) \\
	+ 6(\alpha^4-1)^2 (\alpha^4+1)  (\xi_1^4 + \xi_2^4) \\
	+ 4(\alpha^2-1)^2 (\alpha^8 - 22 \alpha^6 -42 \alpha^4 - 22 \alpha^2 + 1) \xi_1^2 \xi_2^2 \\
	-4 (\alpha^2-1)^4  (\alpha^2+\alpha+1) (\alpha^2-\alpha+1) 
	(\xi_1^2 + \xi_2^2) \\
	+ (\alpha^2-1)^6 = 0. 
	\end{multline}
	
Up to a multiplicative constant, the equation \eqref{eq:AlgEq} coincides 
with the one given by Chhita and Johansson \cite[Appendix A]{CJ}.
See also \cite[Section 8]{Pr} for an equation 
that corresponds to \eqref{eq:AlgEq} with $\alpha = 2$
up to a change of variables.
For $\alpha = 1$, \eqref{eq:AlgEq} 
reduces  (up to a numerical factor) to
\[ (1- \xi_1^2 -\xi_2^2) (\xi_1^2 + \xi_2^2)^3 =0 \]
and the real section is the unit circle.

\begin{remark}
The discriminant of \eqref{eq:saddleEq} also vanishes for 
$\xi_1 = 0$ or $\xi_2=0$, and there is indeed a double root 
of \eqref{eq:saddleEq} for  these values of the parameters. However, 
they do not correspond to coalescing saddle points.
For $\xi_1 =0$, the double root is at  $z= z_c(\xi_2)$ from \eqref{eq:defzc}.
which corresponds to two different saddles on the Riemann surface $\mathcal R$,
unless $\xi_2 = \pm \frac{\alpha - \beta}{\alpha + \beta}$,
see Lemma \ref{lem:phases0}.  
For $\xi_2=0$, the double root turns out to be at $z=-1$, 
but the saddles are also on different sheets of $\mathcal R$. 
\end{remark}

For $\alpha > 1$, the  real section of \eqref{eq:AlgEq} has two components, 
as shown in  Figure~\ref{fig:AlgCurve}, 
both contained in the square $-1 \leq \xi_1, \xi_2 \leq 1$.
The outer component is a smooth closed curve that touches the square in
the points $(\pm 1,0)$ and $(0,\pm 1)$. 
The inner component is a closed curve with four cusps at locations
$\left(\pm \tfrac{\alpha-\beta}{\alpha+\beta}, 0\right)$, and 
$\left(0,\pm \tfrac{\alpha-\beta}{\alpha+\beta}\right)$.
It can indeed be checked that for $\xi_2=0$, the equation \eqref{eq:AlgEq}
factorizes as
\[ (\xi_1^2-1) \left( (\alpha^2+1)^2 \xi_1^2 - (\alpha^2-1)^2 \right)^3 = 0 \]
and it has solutions $\xi_1 = \pm 1$ (with multiplicity $1$)
and $\xi_1 = \pm \frac{\alpha^2-1}{\alpha^2+1} = 
\pm \frac{\alpha-\beta}{\alpha+\beta}$ (with multiplicity $3$). 

\begin{proposition} \label{prop:phases}
Let $-1 < \xi_1, \xi_2 < 1$.

\begin{enumerate}
\item[\rm (a)] $(\xi_1, \xi_2) \in \mathfrak G$ (gas phase)
if and only if  $(\xi_1, \xi_2)$ is inside the inner component  
of the algebraic curve.
\item[\rm (b)] $(\xi_1,\xi_2) \in \mathfrak L$ (liquid phase)
if and only $(\xi_1,\xi_2)$ is outside the inner component and inside the
outer component. 
\item[\rm (c)] $(\xi_1, \xi_2) \in \mathfrak S$ (solid phase)
if and only if $(\xi_1, \xi_2)$ is outside the outer component.
\end{enumerate}
\end{proposition}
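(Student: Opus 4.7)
The plan is to combine local constancy of the saddle configuration with explicit identification at a representative point in each region. Concretely, the four saddle points are the roots of the quartic \eqref{eq:saddleEq}; they depend continuously on $(\xi_1,\xi_2)$. Because \eqref{eq:saddleEq} has real coefficients, non-real roots occur in complex conjugate pairs, so a root can move between $\mathcal{R}_r$ and $\mathcal{R}\setminus\mathcal{R}_r$ only by passing through a real double root. Similarly, a real root can only cross between the cycles $\mathcal{C}_1$ and $\mathcal{C}_2$ by passing through one of the branch points $0,-\alpha^2,-\beta^2$, which again produces a double root of \eqref{eq:saddleEq}. In both cases the $z$-discriminant of \eqref{eq:saddleEq} vanishes, and after stripping the ``trivial'' factors $\xi_1^2,\xi_2^2$ (which, as noted in the remark following \eqref{eq:AlgEq}, correspond to the two sheets of $\mathcal R$ carrying saddles with the same $z$-coordinate rather than to a true coalescence) this is exactly \eqref{eq:AlgEq}. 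Hence the phase of Definition \ref{def:phases} is constant on each connected component of $(-1,1)^2$ minus the real algebraic curve \eqref{eq:AlgEq}.

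Reading off Figure \ref{fig:AlgCurve}, the complement has three types of connected pieces: the interior of the inner curve (one piece), the annular region between the inner and outer curves (one piece), and the exterior of the outer curve inside the square (four pieces, one near each corner of the square, since the outer curve touches $(-1,1)^2$ only at $(\pm 1,0)$ and $(0,\pm 1)$). The $(\xi_1,\xi_2)\mapsto(\pm\xi_1,\pm\xi_2)$ symmetry visible in \eqref{eq:AlgEq} forces the same phase on all four corner pieces.

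It now remains to identify the phase on each type of piece via a sample point. The origin $(0,0)$ lies in the inner piece and, by Lemma \ref{lem:phases0}(c), belongs to $\mathfrak{G}$; the point $(0,\tfrac{1+\xi_2^*}{2})$ with $\xi_2^*=\tfrac{\alpha-\beta}{\alpha+\beta}$ lies in the annular piece and, by Lemma \ref{lem:phases0}(b), belongs to $\mathfrak{L}$. This establishes parts (a) and (b). For part (c), I would take a point on the diagonal $(\xi_1,\xi_2)=(s,s)$ with $\tfrac{\alpha+1}{2\sqrt{\alpha^2+1}}<s<1$, which lies in a corner piece by the explicit diagonal intersection recorded in Section \ref{subsec:phases}. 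For such $(s,s)$ one must show that two of the four roots of \eqref{eq:saddleEq} are simple and positive, hence project saddles onto $\mathcal{C}_2$; combined with Proposition \ref{prop:saddleC1}, which already guarantees two saddles on $\mathcal{C}_1$, this exhausts the saddle budget and places $(s,s)$ in $\mathfrak{S}$.

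The main obstacle is this last verification. The cleanest route appears to be a degeneration at $s\to 1^-$: the leading coefficient $(1-s)^2$ of \eqref{eq:saddleEq} vanishes, the quartic drops to a cubic, and a short perturbative analysis shows that the four roots split as one escaping to $+\infty$ along the positive real axis, one approaching $z=1$ from the right, and two tending to distinct negative reals in $(-\alpha^2,-\beta^2)$. Since both $1$ and $+\infty$ project to $\mathcal{C}_2$ while $(-\alpha^2,-\beta^2)$ is the projection of $\mathcal{C}_1$, this yields two simple saddles on each of $\mathcal{C}_2$ and $\mathcal{C}_1$ for $s$ close enough to $1$, so that $(s,s)\in\mathfrak{S}$ and by local constancy the entire corner component is solid. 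Once this single sign calculation is in place, the remainder of the proof is purely topological.
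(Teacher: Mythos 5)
Your proposal is correct and follows essentially the same route as the paper: local constancy of the saddle configuration off the algebraic curve, identification of the gas and liquid regions via the $\xi_1=0$ analysis of Lemma \ref{lem:phases0}, and a corner computation for the solid phase (the paper evaluates \eqref{eq:saddleEq} exactly at the four corners $(\pm1,\pm1)$, where it degenerates to give saddles at $z=0,1$ or $z=1,\infty$ plus two on $\mathcal C_1$, rather than perturbing along the diagonal as you do, but this is the same calculation in the limit). Your explicit treatment of the non-coalescing double roots on the axes $\xi_1=0$, $\xi_2=0$ matches the remark the paper makes after \eqref{eq:AlgEq}.
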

\begin{proof}
If $\xi_1 = 0$ then all statements of the proposition follow
from Lemma~\ref{lem:phases0}.

The proof in the general cases follows by a continuity  argument,
since the saddles depend continuously on the parameters $\xi_1, \xi_2$,
and a saddle can only leave the real part of the Riemann surface if it 
coalesces with another saddle and then the pair can move away from 
the real part. This transition
can thus only occur for $\xi_1, \xi_2$ satisfying the algebraic
equation \eqref{eq:AlgEq}.
Note that this argument also applies to the point at infinity,
since by definition \eqref{eq:RSreal} the point at infinity 
is included in the real part. 

Combining with Lemma \ref{lem:phases0} we find that 
any point $(\xi_1, \xi_2)$ inside the inner component belongs
to the gas phase, and any point in between the inner and outer
component belongs to the liquid phase.

To treat the solid phase, we check  that any $(\xi_1, \xi_2)$
close enough to a corner point is in the solid phase.
We can see this from the equation \eqref{eq:AlgEq}.
If $\xi_1= \pm 1$ and $\xi_2 = -1$ then \eqref{eq:saddleEq} 
has solutions $z=0$, $z=1$ (and two other solutions that are on the
cycle $\mathcal C_1$), and if $\xi_1 = \pm 1$ and $\xi_2=1$ then
 \eqref{eq:saddleEq}  has solutions $z=\infty$ and $z=1$. 
Thus for each of the four corner points there are two distinct 
saddles on $\mathcal C_2$. 
 Then by continuity this continues to be the case if $(\xi_1, \xi_2)$ is 
 close enough to one of the corner points, and it continues to be so until
 the two saddles on $\mathcal C_2$ coalesce, and this happens on
 the outer component. 

This completes the proof of Proposition \ref{prop:phases}.
\end{proof}

\subsection{Gas phase: steepest descent paths}
In the gas phase all four saddles are located on the cycle $\mathcal C_1$,
and they are all simple. To prepare for the proof of Theorem \ref{thm:gaslimit},
we need more precise information on the location of the saddles.

\begin{lemma}
Suppose $(\xi_1, \xi_2) \in \mathfrak G$.
Then the function 
\begin{equation} \label{eq:RePhi} 
	\Re \Phi(z) = 2\log |z-1| -  (1+\xi_2) \log |z| + \xi_1 \log \left| \lambda(z) \right|, \quad z \in \mathcal C_1,
	\end{equation}
attains a local minimum at two of the saddles, 
say $z_{s,1}$ and $z_{s,2}$,  where $z_{s,j}$ is on the $j$th sheet for $j=1,2$.

It attains a local maximum on $\mathcal C_1$ at the other two saddles 
$z_{s,3} < z_{s,4}$.
If $\xi_1 > 0$ then $z_{s,3}$ and $z_{s,4}$ are on the first sheet
and $-\alpha^2 < z_{s,3} < z_{s,1} < z_{s,4} < -\beta^2$, 
if $\xi_1 < 0$ then $z_{s,3}$ and $z_{s,4}$ are on the second sheet and
and $-\alpha^2 < z_{s,3} < z_{s,2} < z_{s,4} < -\beta^2$, and
if $\xi_1 = 0$ then $z_{s,3} = -\alpha^2$ and $z_{s,4} = -\beta^2$ are at
the branch points.
\end{lemma}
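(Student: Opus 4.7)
The plan is to combine Morse-theoretic counting on the cycle $\mathcal{C}_1$ with an explicit calculation at $\xi_1=0$ and a local perturbation argument, then extend to the whole gas phase by continuity. The key starting observation is that $\Phi'(z)\,dz$ is real on $\mathcal{C}_1$: $z$ is real and $\lambda(z)$ is real on this cycle by Lemma~\ref{lem:lambdarho}(e), so $\frac{dz}{z-1}$, $\frac{dz}{z}$ and $\frac{d\lambda}{\lambda}$ are all real there. Hence the zeros of $\Phi'\,dz$ on $\mathcal{C}_1$ are precisely the critical points of $\Re\Phi|_{\mathcal{C}_1}$, and the four simple saddles in the gas phase are non-degenerate critical points. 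Since $\mathcal{C}_1$ is topologically a circle and $\Re\Phi$ has no poles there, Morse theory forces two local minima and two local maxima.

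For the base case $\xi_1=0$, the reduced differential \eqref{eq:MeroDiff0} has the unique simple zero $z_c(\xi_2)$ from \eqref{eq:defzc}, giving one saddle on each sheet, while the branch points $-\alpha^2,\,-\beta^2$ contribute the other two saddles through the zero of $dz$. Using local coordinates $t$ with $z=-\alpha^2+t^2$ and $z=-\beta^2-t^2$, so that $t\in\mathbb{R}$ parameterizes $\mathcal{C}_1$, the restriction $\Re\Phi(z(t))$ is even in $t$ and its second derivative at $t=0$ equals $2\Phi'(-\alpha^2)$, respectively $-2\Phi'(-\beta^2)$. Both are negative precisely when $-\xi_2^*<\xi_2<\xi_2^*$, which by Lemma~\ref{lem:phases0} characterizes the gas regime at $\xi_1=0$. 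Hence both branch points are local maxima and $z_c(\xi_2)$ is a local minimum on each sheet.

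For $\xi_1\neq 0$ I would reduce to $\xi_1>0$ via the sheet-swap symmetry $\Phi(z^{(2)};\xi_1,\xi_2)=\Phi(z^{(1)};-\xi_1,\xi_2)$. By Lemma~\ref{lem:lambdarho}(b), $\frac{d\lambda}{\lambda}$ has its only poles at $1^{(1)},\,1^{(2)}$ and is nonzero at the branch points, so $\Phi'\,dz$ no longer vanishes there when $\xi_1\neq 0$. A local expansion in $t$ near $-\alpha^2$ yields the leading-order saddle position
\[
t_*\;\approx\;\frac{\xi_1\,c_1}{2\,C_\alpha},\qquad C_\alpha\;=\;-\frac{2}{1+\alpha^2}+\frac{1+\xi_2}{\alpha^2},\qquad c_1\;=\;\frac{d\lambda}{dt}\bigg|_{t=0}.
\]
In the gas phase $C_\alpha<0$ (since $\xi_2<\xi_2^*$), and the Puiseux expansion of $\rho$ at $-\alpha^2$ combined with the sheet labeling from Lemma~\ref{lem:lambdarho}(e) gives $c_1<0$. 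Hence $t_*>0$ for $\xi_1>0$, placing the perturbed saddle on sheet~1 just to the right of $-\alpha^2$. The analogous analysis at $-\beta^2$ also produces a saddle on sheet~1, just to the left of $-\beta^2$. Thus both maxima sit on sheet~1 for small $\xi_1>0$.

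This configuration propagates throughout the connected region $\mathfrak{G}\cap\{\xi_1>0\}$ by continuity: the four saddles vary continuously and remain simple (otherwise we cross out of $\mathfrak{G}$), and none can cross a branch point since branch points are never saddles for $\xi_1\neq 0$. The min--max alternation along the circle then forces the ordering $-\alpha^2<z_{s,3}<z_{s,1}<z_{s,4}<-\beta^2$, and the $\xi_1<0$ case follows by sheet swap. The main obstacle is the sign bookkeeping in the Puiseux expansion at the branch points: one must carefully track the sheet-to-$t$ correspondence and use Lemma~\ref{lem:lambdarho}(e) to deduce $c_1<0$. Once this is done, Morse counting and continuity finish the argument.
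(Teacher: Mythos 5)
Your proposal is correct in substance but follows a genuinely different route from the paper. The paper also starts from the explicit $\xi_1=0$ case and uses continuity to keep one local minimum on each sheet, but for $\xi_1\neq 0$ it never computes where the branch-point saddles go. Instead it exploits the square-root blow-up of $\lambda_j'/\lambda_j$ at the endpoints of $[-\alpha^2,-\beta^2]$ (the signs in \eqref{eq:lambda1B}--\eqref{eq:lambda2B}, which encode the same information as your $c_1<0$): for $\xi_1\neq 0$ each $\Phi_j'$ tends to $+\infty$ at one endpoint and $-\infty$ at the other, so each sheet carries an odd number of saddles, forcing a $3+1$ split of the four gas-phase saddles; for $\xi_1>0$ the signs show $\Re\Phi_1$ increases near $-\alpha^2$ and decreases near $-\beta^2$, which combined with the persistent interior minimum forces the max--min--max pattern on sheet~$1$ directly, at every point of $\mathfrak G$. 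Your argument replaces this with a Morse count on the circle $\mathcal C_1$ (two minima, two maxima --- an observation the paper only uses implicitly) plus a Puiseux expansion locating the perturbed saddles $t_*\approx \xi_1 c_1/(2C_\alpha)$ for small $\xi_1$, followed by propagation through the connected set $\mathfrak G\cap\{\xi_1>0\}$. Both are sound; the paper's endpoint-sign parity argument buys you a proof that works pointwise in $\mathfrak G$ without locating the saddles or invoking connectedness of $\mathfrak G\cap\{\xi_1>0\}$, while your perturbative computation gives more quantitative information (the leading-order saddle positions, which the paper only needs later, at the cusp). The one step you defer --- the sheet-to-$t$ correspondence and the sign of $c_1$ --- is exactly the content of the expansion $\lambda_1(z)=-1-\tfrac{2\sqrt{\alpha-\beta}}{\sqrt{\alpha+\beta}}\bigl(\tfrac{z+\alpha^2}{\alpha^2}\bigr)^{1/2}+O(z+\alpha^2)$ appearing in the paper's cusp analysis, and it confirms your claims ($c_1<0$, $t_*>0$ on sheet~$1$ for $\xi_1>0$), so the gap is fillable; just be aware that without it your argument is incomplete at precisely the point where the sheets get distinguished.
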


\begin{proof}
The lemma is easy to verify if $\xi_1 = 0$, since
$z \mapsto 2\log |z-1| -  (1+\xi_2) \log |z|$ attains a local minimum at
$z = z_c(\xi_2)$ given by \eqref{eq:defzc}. Thus 
$z_{s,j} = \left(z_c(\xi_2)\right)^{(j)}$ for $j=1,2$, and the other saddles $z_{s,3}$
and $z_{s,4}$ are at the branch points. 

For $\xi_1 \neq 0$, we notice that both $\lambda_1$ and $\lambda_2$ 
are real and negative on the interval $[-\alpha^2,-\beta^2]$, see \eqref{eq:lambdareal3},
with a square root behavior at endpoints (which follows from \eqref{eq:lambda12}
and the square roots in \eqref{eq:rho12}). Thus $\lambda_{1,2}'$ become
infinite at the endpoints, and a closer inspection of \eqref{eq:rho12}, \eqref{eq:lambda12}
shows that 
\begin{equation} \label{eq:lambda1B} 
	\lim_{z \to -\alpha^2+}  \frac{\lambda_1'(z)}{\lambda_1(z)} = + \infty, \qquad 
	\lim_{z \to -\beta^2-}  \frac{\lambda_1'(z)}{\lambda_1(z)} = - \infty, 	 
	 \end{equation}
and
\begin{equation} \label{eq:lambda2B} 
	\lim_{z \to -\alpha^2+}  \frac{\lambda_2'(z)}{\lambda_2(z)} = - \infty, \qquad
	 \lim_{z \to -\beta^2-}  \frac{\lambda_2'(z)}{\lambda_2(z)} = + \infty.
	 \end{equation}
Thus if $\xi_1 \neq 0$, both functions
\begin{equation} \label{eq:dPhij} 
	\Phi_j'(z) = \frac{2}{z-1} - \frac{1+\xi_2}{z} + 
	\xi_1 \frac{\lambda'(z)}{\lambda_j(z)}, \qquad j=1,2, 
	\end{equation}
are infinite at the endpoints of the interval $[-\alpha^2, -\beta^2]$
but with opposite signs. By continuity there is an odd number of zeros 
for each of them. There are exactly four simple saddles on the cycle $\mathcal C_1$
as we are in the gas phase, and therefore one of
$\Phi_j'$, $j=1,2$, has three simple zeros and the other one has one simple zero.

We already noted that for $\xi_1 =0$ 
\begin{equation} \label{eq:RePhij} 
	\Re \Phi_j(z) =  2\log |z-1| -  (1+\xi_2) \log |z| + \xi_1 \log |\lambda_j(z)| 
	\end{equation}
attains a local minimum at an interior saddle for $j=1,2$. Because of analytic
dependence on parameters this continues to be the case for $\xi_1 \neq 0$,
and in fact, since there is no coalescence of saddle points, it remains
true for every $(\xi_1, \xi_2) \in \mathfrak{G}$. Thus saddles $z_{s,1}$ and 
$z_{s,2}$ where $\Re \Phi$ has a local minimum exist, and $z_{s,j}$ is on
the $j$th sheet.

Now, if $\xi_1 > 0$ then from \eqref{eq:lambda1B} and \eqref{eq:dPhij}
it follows that 
\[ \lim_{z \to -\alpha^2+} \Phi_1'(z) = +\infty, \qquad
	\lim_{z \to -\beta^2-} \Phi_1'(z) = -\infty \] 
and so $\Re \Phi_1$ increases on an interval $[-\alpha^2, -\alpha^2 + \delta]$ 
and decreases on $[-\beta^2-\delta, - \beta^2]$ for some $\delta > 0$. Since there is
a local minimum at $z_{s,1}$ on the first sheet, it should be 
that $\Re \Phi_1$ has two local maxima, say $z_{s,3} < z_{s,4}$, with
$-\alpha^2 < z_{s,3} < z_{s,1} < z_{s,4} < -\beta^2$.

In case $\xi_1 < 0$ we find in the same way that the local maxima
are on the second sheet, with
$-\alpha^2 < z_{s,3} < z_{s,2} < z_{s,4} < -\beta^2$.
\end{proof}

The \textbf{path of steepest descent} from the saddle $z_{s,j}$, $j=1,2$,
is the curve $\gamma_{sd,j}$ through $z_{s,j}$ where
the imaginary part of $\Phi_j$ is constant and the real part decays
if we move away from the saddle. Since $\Re \Phi_j$ on $\mathcal C_1$ 
has a local minimum at $z_{s,j}$, the path of steepest descent meets
the real line at a right angle.  

Emanating from $z_{s,j}$, $j=1,2$ are also curves $\gamma_{l,j}$ and
$\gamma_{r,j}$ where the real part is constant ($l$ stands for left, and
$r$ stands for right). The curve $\gamma_{l,j} $
emanates from  $z_{s,j}$ at angles $\pm 3\pi/4$. It consists of a
part in the upper half plane and its mirror image with respect to
the real line in the lower half plane. 
Similarly, $\gamma_{r,j}$ emanates from $z_{s,j}$ 
at angles $\pm \pi/4$, and it is also symmetric in the real line. 
Near $z_{s,j}$ we have that $\gamma_{l,j}$ is to the left of
the steepest descent path $\gamma_{sd,j}$ and $\gamma_{r,j}$ is
to the right. 

Then $\gamma_{l,j}$ and $\gamma_{r,j}$ are parts of the boundary
of the domains:
\begin{align} \label{eq:defOmegajmin}
	\Omega_j^{-} & = \{ z \in \mathbb C \mid \Re \Phi_j(z) < \Re \Phi_j(z_{s,j}) \}, \\
	\label{eq:defOmegajplus}
	\Omega_j^+ & = \{ z \in \mathbb C \mid \Re \Phi_j(z) > \Re \Phi_j(z_{s,j}) \}.
\end{align}

\begin{lemma}  \label{lem:lemma57}
Suppose $(\xi_1, \xi_2) \in \mathfrak G$, and $j =1,2$.
Then the following hold.
\begin{enumerate}
\item[\rm (a)] 
All three curves
$\gamma_{sd,j}$, $\gamma_{l,j}$ and $\gamma_{r,j}$ are
simple closed curves enclosing the interval $[-\beta^2,0]$.
\item[\rm (b)]
The steepest descent path $\gamma_{sd,j}$ intersects the positive real line at $1$,
$\gamma_{l,j}$ intersects the positive real line at a value $> 1$
while $\gamma_{r,j}$ intersects the positive real line at a value $< 1$.
\item[\rm (c)]
$\Omega_j^-$ is a  bounded open set with at most three connected components.
One component (which we call the main component) 
contains $\gamma_{sd,j} \setminus \{ z_{s,j} \}$.
There are at most two other connected components, 
namely a  component containing $-\beta^2$ (if $\Re \Phi_j(-\beta^2) < \Re \Phi_j(z_{s,j})$)
and a component containing $-\alpha^2$ (if $\Re \Phi_j(-\alpha^2) < \Re \Phi_j(z_{s,j})$).
The other components (if they exist) are at positive distance from the main component.
\item[\rm (d)] $\Omega_j^+$ is an open set
with an unbounded component that contains a contour $\Gamma_{1,j}$
that goes around $(-\infty,-\alpha^2]$ 
and with a bounded component that contains a contour $\Gamma_{2,j}$
going around the interval $[-\beta^2,0]$.
\end{enumerate}
\end{lemma}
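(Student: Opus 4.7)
The key observation in the gas phase is that all four saddles of $\Phi_j$ lie on the real interval $[-\alpha^2,-\beta^2]$, so $\Re\Phi_j$ has no critical points in the open upper half plane. Combined with the boundary behavior $\Re\Phi_j\to+\infty$ at $z=0,\infty$ and $\Re\Phi_j\to-\infty$ at $z=1$, and the $z\mapsto\bar z$ symmetry, the topology of the level set $\{\Re\Phi_j=c_s\}$ is tightly constrained, and parts (a)--(b) reduce to a planarity argument in the closed upper half plane.

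First I would establish monotonicity of $\Phi_j$ on the positive real axis. Since $\Phi_j$ is real and analytic on $(0,\infty)\setminus\{1\}$ with no critical points there (all zeros of $\Phi_j'$ lie in $[-\alpha^2,-\beta^2]$), the boundary behavior forces $\Phi_j$ to be strictly decreasing on $(0,1)$ and strictly increasing on $(1,\infty)$, so $\Re\Phi_j=c_s$ at exactly two points $a_1\in(0,1)$ and $a_2\in(1,\infty)$, with $\Re\Phi_j<c_s$ on $(a_1,a_2)$. Next, by the maximum principle no closed curve in the open upper half plane can lie inside the level set $\{\Re\Phi_j=c_s\}$ (such a curve would bound a region on which $\Re\Phi_j$ is harmonic and constant on the boundary), so each component of the level set in the open upper half plane is a simple arc with both endpoints on the real axis. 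The gradient flow line of $-\Re\Phi_j$ starting at $z_{s,j}$ in the upper half plane (the upper half of $\gamma_{sd,j}$) has strictly decreasing $\Re\Phi_j$, cannot terminate at an interior point (no critical points) nor escape to infinity (where $\Re\Phi_j\to+\infty$), so it must converge to $z=1$; together with its reflection, $\gamma_{sd,j}$ is a simple closed curve through $z_{s,j}$ and $z=1$.

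Because $z_{s,j}$ is a local minimum of $\Re\Phi_j$ along the real interval $[-\alpha^2,-\beta^2]$, $\Phi_j''(z_{s,j})$ is real and positive, so the critical level set emanates from $z_{s,j}$ at the four angles $\pm\pi/4,\pm 3\pi/4$; the upper half plane rays at $\pi/4$ and $3\pi/4$ are the upper halves of $\gamma_{r,j}$ and $\gamma_{l,j}$, respectively. Together with the $\pi/2$ ray of $\gamma_{sd,j}$, these three non-crossing arcs in the upper half plane emanate from $z_{s,j}$ at the ordered angles $\pi/4<\pi/2<3\pi/4$ and end at distinct points on the real axis. The main component of $\Omega_j^-$ (the one containing $z=1$) has real-axis trace equal to $(a_1,a_2)$---any extension to the negative real axis or to the cuts would be disconnected from $(a_1,a_2)$ by the region around $z=0$ where $\Re\Phi_j\to+\infty$---so the only admissible endpoints for the three arcs are $a_1$, $1$, $a_2$. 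By planarity the angular order at $z_{s,j}$ must match the nested rightward ordering of endpoints, forcing $\gamma_{r,j}$ to end at $a_1$, $\gamma_{sd,j}$ at $1$, and $\gamma_{l,j}$ at $a_2$. After reflection each of the three curves is a Jordan curve through $z_{s,j}<-\beta^2$ and a positive real point, hence encloses $[-\beta^2,0]$; this yields (a) and (b).

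For (c), apart from $z_{s,j}$ at level $c_s$, all other critical values of $\Re\Phi_j$ lie strictly above $c_s$, so below level $c_s$ new components of $\{\Re\Phi_j<c\}$ can only arise at ``sinks'' of $\Re\Phi_j$: the singularity at $z=1$ and the two branch points $-\alpha^2,-\beta^2$ where the $\log|\lambda_j|$ term produces a square-root cusp that may make $\Re\Phi_j$ dip below $c_s$. This yields at most three components. For (d), the Jordan curve $\gamma_{l,j}$ bounding the main $\Omega_j^-$ component encloses $[-\beta^2,0]$ while $0\in\Omega_j^+$ (as $\Re\Phi_j\to+\infty$ at $0$), so the $\Omega_j^+$-component of $0$ is bounded and admits a loop around $[-\beta^2,0]$, while the unbounded component (containing $\infty$ and the far part of $(-\infty,-\alpha^2]$) admits a loop around the cut $(-\infty,-\alpha^2]$. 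The most delicate step is the exclusion of negative-real-axis endpoints in the planarity argument, which requires combining the local angular data at $z_{s,j}$ with the global connectivity of $\Omega_j^-$.
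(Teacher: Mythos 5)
Your overall strategy (monotonicity of $\Phi_j$ on the positive real axis, the maximum principle to exclude closed level arcs in the open upper half plane, boundedness from $\Re\Phi_j(z)\to+\infty$ as $z\to\infty$, and the local angle analysis at the simple saddle) matches the paper's. However, the step you yourself flag as "most delicate" — ruling out that $\gamma_{l,j}$ or $\gamma_{r,j}$ terminates on one of the branch cuts $(-\infty,-\alpha^2]$ or $[-\beta^2,0]$, equivalently that the main component of $\Omega_j^-$ meets the real axis only along $(a_1,a_2)$ — is not actually proved. Your justification is that such an endpoint "would be disconnected from $(a_1,a_2)$ by the region around $z=0$ where $\Re\Phi_j\to+\infty$." But the blow-up at $z=0$ only puts a \emph{neighborhood} of $0$ into $\Omega_j^+$; it does not erect a barrier across the upper half plane, so a priori a component of $\Omega_j^-$ could arch over that neighborhood and connect points above the cuts to points above $(a_1,a_2)$. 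Nothing in your argument excludes this.

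The missing ingredients are exactly the ones the paper uses at this point. First, $\Re\Phi_j$ is \emph{monotone on each cut} (strictly decreasing on $(-\infty,-\alpha^2]$, strictly increasing on $[-\beta^2,0]$), which holds for all $\xi_1$ because $|\lambda_{j,\pm}(x)|=1$ there, so the $\xi_1\log|\lambda_j|$ term drops out of $\Re\Phi_j$; you never record this fact, yet it is also what underlies your "square-root cusp" count in part (c). Second, if an arc of $\{\Re\Phi_j=c_s\}$ emanating from $z_{s,j}$ reached a cut at a point $q$, then together with its mirror image it would bound a Jordan domain $D$ whose boundary carries the value $c_s$ except on a subinterval of the cut where (by the monotonicity just mentioned) $\Re\Phi_j<c_s$; the maximum principle on $D$ minus that subinterval then forces $\Re\Phi_j<c_s$ throughout $D$, contradicting the fact that $z_{s,j}$ is a simple saddle at which $\Re\Phi_j$ has a local \emph{minimum} along $\mathcal{C}_1$, so that the adjacent real points lie in $\Omega_j^+$ and hence in $D\cap\Omega_j^+$. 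With this contradiction in hand your planarity ordering of the three endpoints goes through. Relatedly, your treatment of part (c) gives a Morse-theoretic heuristic ("new components arise only at sinks") but omits the claim that the extra components stay at positive distance from the main one, which again follows from the simple-saddle structure at $z_{s,j}$ together with the monotonicity on the cuts.
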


\begin{figure}[t]
 \begin{center}
 \begin{overpic}[scale=0.5]{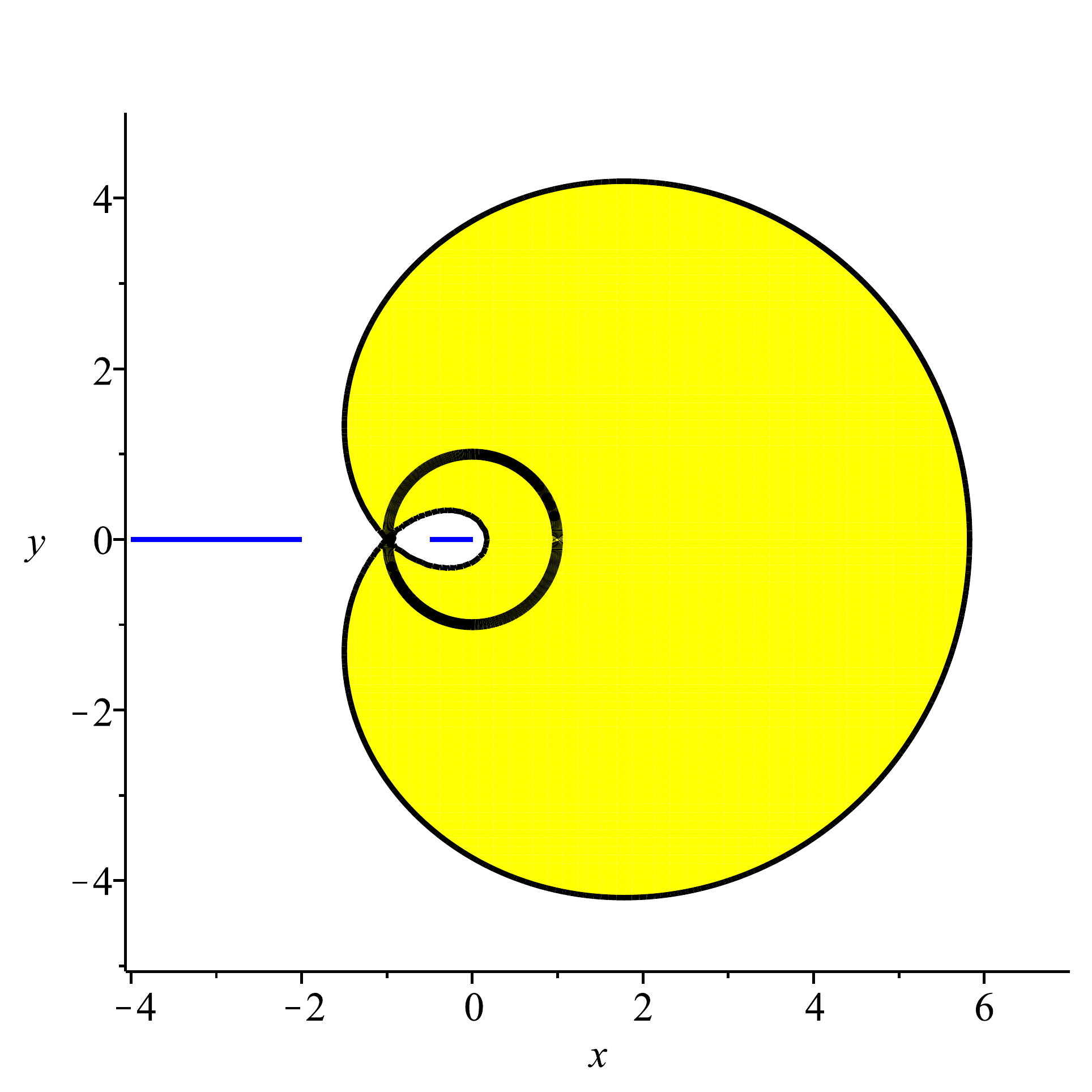}
 	\put(50,55){$\gamma_{sd,j}$}
 	\put(40,45){$\gamma_{r,j}$}
 	\put(37,80){$\gamma_{l,j}$}
 	\put(22,47){$ -\alpha^2$}
 	\put(70,55){$\Omega_j^-$}
 	\put(18,65){$\Omega_j^+$}
 \end{overpic} 
\caption{Illustration for Lemma \ref{lem:lemma57}. 
The figure shows the steepest descent curve $\gamma_{sd,j}$ (dark curve) from the saddle $z_{s,j}$ and the level lines $\gamma_{l,j}$ and $\gamma_{r,j}$ of $\Re \Phi_j$
that enclose the domain $\Omega_j^-$ for the case $\xi_1=0$.
In this case $\Omega_j^-$ has one component, but for $\xi_1 \neq 0$,
there could be a component containing $-\alpha^2$, and a component containing
$-\beta^2$. \label{eq:Omegaj}}
\end{center}
\end{figure}
 
\begin{proof}
The lemma is straightforward to verify if $\xi_1 = 0$, since in that case
\begin{equation} \label{eq:RePhi12} 
	\Re \Phi_1(z) = \Re \Phi_2(z) = 2 \log |z-1| - (1+\xi_2) \log |z| 
	\end{equation}
and $z_{s,1} = z_{s,2} = z_c(\xi_2) = 
- \frac{1+\xi_2}{1-\xi_2}$ which is in $(-\alpha^2,-\beta^2)$ since
we are in the gas phase.
Then $\gamma_{l,j}$, $\gamma_{sd,j}$, and $\gamma_{r,j}$ 
are independent of $j$ and we have a situation as in Figure \ref{eq:Omegaj}. 
The curves $\gamma_{l,j}$ and $\gamma_{r,j}$ enclose the domain $\Omega_j^-$ 
that is shaded in the figure, and $\Omega_j^-$ has only one component in this case.

The non-shaded domain $\Omega_j^+$ has an unbounded component with boundary
$\gamma_{l,j} \cup (-\infty,-\alpha^2]$ and a bounded component with boundary 
$\gamma_{r,j} \cup [-\beta^2,0]$. The contours $\Gamma_{1,j}$ and $\Gamma_{2,j}$
can be taken in $\Omega_j^+$ as specified in part (d) of the lemma.

From \eqref{eq:RePhi12} it is easy to see that $\Re \Phi_j$ is strictly decreasing on 
$(-\infty, z_c(\xi_2)]$,  strictly increasing on $(z_c(\xi_2),0]$ with
value $+\infty$ at $0$, then again strictly decreasing on $[0,1]$ with value
$-\infty$ at $1$, and finally again strictly increasing on $[1, \infty)$
with limiting value $+\infty$ at $\infty$. 

For $\xi_1 \neq 0$ the behavior on the positive real axis is the same:
for both $j=1$ and $j=2$, we have that $\Phi_j(x)$ is real for $x > 0$ and 
it decreases from $+\infty$
to $-\infty$ on the interval $[0,1]$ and increases from $-\infty$ to $+\infty$
on $[1,\infty)$. This is so because otherwise there would be a zero of the
derivative, which would be a saddle point, but all saddle points are on
the cycle $\mathcal C_1$ since we are in the gas phase.

The behavior of $\Re \Phi_j$ on the cuts $(-\infty, -\alpha^2] \cup [-\beta^2,0]$
is exactly the same as what we have for $\xi_1 = 0$. Indeed 
it does not depend on $\xi_1$ at all, since $|\lambda_{j,\pm}(x)| = 1$ for 
$x$ on the cuts, see \eqref{eq:lambdareal1}  and \eqref{eq:RePhij}. Thus
\begin{equation} \label{eq:RePhijincrease}
\begin{aligned}
	&\Re \Phi_j(x) \text{ is strictly decreasing for } x \in (-\infty,-\alpha^2], \\
	&\Re \Phi_j(x) \text{ is strictly increasing for } x \in [-\beta^2,0].
	\end{aligned}
	\end{equation}

Now let's follow the paths $\gamma_{l,j}$ and $\gamma_{r,j}$, where the real part 
$\Re \Phi_j$ is constant,  as they move away from $z_{s,j}$ into the upper half plane.
These paths remain bounded, since $\Re \Phi_j(z) \to +\infty$ as $|z| \to \infty$, 
which follows from \eqref{eq:RePhij}, Lemma \ref{lem:lambdarho} (c), and the
fact that $\xi_2 < 1$. 
The two paths cannot come together in the upper half plane, since then
they would enclose a domain on which $\Re \Phi_j$ is harmonic and constant on the
boundary, which violates the maximum/minimum principle of harmonic functions. 
For the same reason they cannot meet at a point on the positive real axis.

Suppose now one of the paths comes to the cut $(-\infty,-\alpha^2]$
at a point $q$. Then this path, together with its mirror image
in the real line, encloses a bounded domain $D$ and $\Re \Phi_j$ has
the constant value $\Re \Phi_j(z_{s,j})$ on its boundary. 
The value of $\Re \Phi_j$ is smaller on the interval $[q,-\alpha^2]$ because of  \eqref{eq:RePhijincrease}.
Also $\Re \Phi_j$ is harmonic on $D \setminus [q,-\alpha^2]$ and it
follows from the maximum principle for harmonic functions that
\[ \Re \Phi_j(z) < \Re \Phi_j(z_{s,j}) \qquad \text{ for every } z \in D. \]
This is a contradiction, since $z_{s,j}$ is a saddle at which 
$\Re \Phi_j$ attains a local minimum when restricted to the cycle $\mathcal C_1$.

We arrive at a similar contradiction if one of the paths $\gamma_{l,j}$
and $\gamma_{r,j}$ comes to the cut $[-\beta^2,0]$.

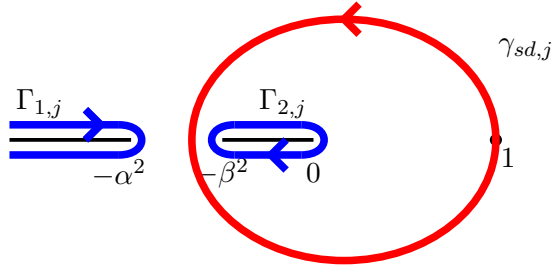
\begin{figure}[t]
\begin{center}
\begin{tikzpicture}[scale=0.8]

\draw [line width=0.5mm] (-6,0)--(-4,0);
\draw (-4.2,-0.55) node {$-\alpha^2$};

\draw [line width= 0.5mm] (-2.5,0)--(-1,0);
\draw (-2.5,-0.55) node {$-\beta^2$};
\draw (-1,-0.55) node {$0$};

\fill (2,0) circle [radius=1mm];
\draw (2.2,-0.3) node {$1$};

\draw (2.5,1.5) node {$\gamma_{sd,j}$};
\draw [red,line width=1mm] (-0.5,0) ellipse (2.5cm and 2cm);
\draw [red,line width=1mm] (-0.25,1.75)--(-0.5,2)--(-0.25,2.25);

\draw [blue,line width = 1mm] (-6,0.25)--(-4.2,0.25);
\draw [blue,line width = 1mm] (-6,-0.25)--(-4.2,-0.25);
\draw [blue,line width = 1mm] (-4.2,-0.25) .. controls (-3.7,-0.25) and (-3.7,0.25) .. (-4.2,0.25);
\draw [blue,line width = 1mm] (-4.75,0.0)--(-4.5,0.25)--(-4.75,0.5);

\draw [blue,line width = 1mm] (-2.3,0.25)--(-1.2,0.25);
\draw [blue,line width = 1mm] (-2.3,-0.25)--(-1.2,-0.25);
\draw [blue,line width = 1mm] (-2.3,-0.25) .. controls (-2.8,-0.25) and (-2.8,0.25) .. (-2.3,0.25);
\draw [blue,line width = 1mm] (-1.2,-0.25) .. controls (-0.7,-0.25) and (-0.7,0.25) .. (-1.2,0.25);
\draw [blue,line width = 1mm] (-4.75,0.0)--(-4.5,0.25)--(-4.75,0.5);

\draw [blue,line width = 1mm] (-1.45,-0.0)--(-1.7,-0.25)--(-1.45,-0.5);
\draw (-5.5,0.6) node {$\Gamma_{1,j}$};
\draw (-1.5,0.6) node {$\Gamma_{2,j}$};

\end{tikzpicture} 
\end{center}

\caption{Steepest descent contour $\gamma_{sd,j}$ and contours
$\Gamma_{1,j}$ and $\Gamma_{2,j}$ in case $\Omega_{j}^-$ has
only one component.
\label{fig:Contours3}}
\end{figure}

Thus the two paths can only leave the upper half-plane via the positive
real axis. Since $\Phi_j$ is strictly decreasing on $[0,1]$ and 
strictly increasing on $[1,\infty]$, we must conclude that 
$\gamma_{l,j}$ comes to the unique value in $(1,\infty)$
and $\gamma_{r,j}$ to the  unique value in $(0,1)$ 
where $\Phi_j$ is equal to $\Re \Phi_j(z_{s,j})$.
Together with their mirror images in the real axis, they both form
simple closed curves that go around the cut $[-\beta^2,0]$.
They enclose a domain where $\Re \Phi_j$ is smaller
than $\Re \Phi_j(z_{s,j})$, i.e., it is contained 
in $\Omega_j^-$, see \eqref{eq:defOmegajmin}.

The path of steepest descent $\gamma_{sd,j}$ lies in $\Omega_j^-$,
and $\Re \Phi_j$ decreases along $\gamma_{sd,j}$ in the upper half plane.
It will meet the positive real axis at $1$, where $\Re \Phi_j$ is $-\infty$,
since if it would meet the positive real axis at some other point, this point
would be a saddle, but there is no saddle on $\mathcal C_2$.

We now proved parts (a) and (b) of Lemma \ref{lem:lemma57}.
We also proved that the component of $\Omega_j^-$ that is bounded
by $\gamma_{l,j}$ and $\gamma_{r,j}$ contains the steepest
descent curve $\gamma_{sd,j} \setminus \{z_{s,j}\}$.
We also see that $\Omega_j^-$ is bounded since $\Re \Phi_j(z) \to +\infty$ 
as $|z| \to \infty$.

Any other connected component of $\Omega_j^-$ has to intersect with the
branch cuts $(-\infty,-\alpha^2] \cup [-\beta^2,0]$, since otherwise
we have again a contradiction with the minimum principle for harmonic
functions. 

If a component of $\Omega_j^-$ intersects $(-\infty, -\alpha^2]$ then it will do
so along an interval $(q,-\alpha^2]$ for some $q < -\alpha^2$, 
because of \eqref{eq:RePhijincrease}.
Hence there can be at most one such component, and this exists if and only if
if $\Re \Phi_j(-\alpha^2) < \Re \Phi_j(z_{s,j})$.
Similarly, there is at most one component that intersects $[-\beta^2,0]$,
and thus in total there are at most three components.

Finally, since $z_{s,j}$ is a simple saddle, and $\Re \Phi_j$ attains a minimum
at $z_{s,j}$ when we restrict to the cycle $\mathcal C_1$, there is 
$\delta > 0$ such that
\[ (z_{s,j}-\delta, z_{s,j}) \cup  (z_{s,j}, z_{s,j} +\delta)
	\subset \Omega_j^+. \]
Thus the boundary of another component (if it exists) intersects 
the interval $(-\alpha^2, -\beta^2)$
at a point different from $z_{s,j}$, and thus the component stays at
positive distance from the main component. 
This proves part (c) of the lemma.

The component of $\Omega_j^-$ that contains  $-\alpha^2$ (if it exists)
is bounded and belongs to the exterior of the closed contour $\gamma_{l,j}$. However,
by part (c), it remains at a positive distance from $\gamma_{l,j}$,
and therefor we can find a contour $\Gamma_{1,j}$ as in the statement
of part (d) that goes around the interval $(-\infty,-\alpha^2]$ and
avoids the closure of the domain $\Omega_j^-$ while staying to the left of
$\gamma_{l,j}$.
Similarly, we can find $\Gamma_{2,j}$ as in part (d).  See Figures \ref{fig:Contours3}
and \ref{fig:Contours4} for plots of $\gamma_{sd,j}$ 
and $\Gamma_{1,j}$ and $\Gamma_{2,j}$ in the situations where $\Omega_j^-$
has no component containing $-\alpha^2$ or $-\beta^2$ (Figure \ref{fig:Contours3})
and where $\Omega_j^-$ has a component containing $-\alpha^2$ that $\Gamma_{1,j}$
should avoid (Figure \ref{fig:Contours4}). 

This completes the proof of Lemma \ref{lem:lemma57}.
\end{proof}

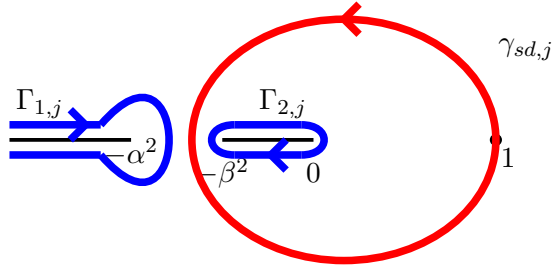
\begin{figure}[t]
\begin{center}
\begin{tikzpicture}[scale=0.8]

\draw [line width=.5mm] (-6,0)--(-4,0);
\draw (-4,-0.2) node {$-\alpha^2$};

\draw [line width=.5mm] (-2.5,0)--(-1,0);
\draw (-2.5,-0.55) node {$-\beta^2$};
\draw (-1,-0.55) node {$0$};

\fill (2,0) circle [radius=1mm];
\draw (2.2,-0.3) node {$1$};

\draw (2.5,1.5) node {$\gamma_{sd,j}$};
\draw [red,line width=1mm] (-0.5,0) ellipse (2.5cm and 2cm);
\draw [red,line width=1mm] (-0.25,1.75)--(-0.5,2)--(-0.25,2.25);

\draw [blue,line width = 1mm] (-6,0.25)--(-4.5,0.25);
\draw [blue,line width = 1mm] (-6,-0.25)--(-4.5,-0.25);
\draw [blue,line width = 1mm] (-4.5,-0.25) .. controls (-3,-2) and (-3,2) .. (-4.5,0.25);
\draw [blue,line width = 1mm] (-5,0.0)--(-4.75,0.25)--(-5,0.5);

\draw [blue,line width = 1mm] (-2.3,0.25)--(-1.2,0.25);
\draw [blue,line width = 1mm] (-2.3,-0.25)--(-1.2,-0.25);
\draw [blue,line width = 1mm] (-2.3,-0.25) .. controls (-2.8,-0.25) and (-2.8,0.25) .. (-2.3,0.25);
\draw [blue,line width = 1mm] (-1.2,-0.25) .. controls (-0.7,-0.25) and (-0.7,0.25) .. (-1.2,0.25);
\draw [blue,line width = 1mm] (-1.45,-0.0)--(-1.7,-0.25)--(-1.45,-0.5);
\draw (-5.5,0.6) node {$\Gamma_{1,j}$};
\draw (-1.5,0.6) node {$\Gamma_{2,j}$};

\end{tikzpicture} 
\end{center}

\caption{Steepest descent contour $\gamma_{sd,j}$ and contours
$\Gamma_{1,j}$ and $\Gamma_{2,j}$ in case $\Omega_{j}^-$ has 
a component containing $-\alpha^2$, but no component containing
$-\beta^2$.
\label{fig:Contours4}}
\end{figure}

\subsection{Gas phase: proof of Theorem \ref{thm:gaslimit}}
\label{subsec:gaslimit}

We are now ready to prove Theorem \ref{thm:gaslimit}.

\begin{proof}
To establish Theorem \ref{thm:gaslimit} we are going to show
that in the gas phase the two double integrals in 
\eqref{eq:KNremainder} tend to $0$ as $N \to \infty$
at an exponential rate.

In the situation of Theorem \ref{thm:gaslimit}
we have $m = (1+\xi_1)N + o(N)$, $n=(1+\xi_2) N+o(N)$,
$m' = (1+\xi_1)N + o(N)$, $n'=(1+\xi_2) N+o(N)$,
so that in \eqref{eq:KNremainder} 
we should replace $\xi_1$ and $\xi_1'$ by $\xi_1 + o(1)$,
and $\xi_2$ and $\xi_2'$ by $\xi_2 + o(1)$. 
The $o(1)$ terms give only a subleading contribution,
and it is enough to prove that both integrals
\begin{equation} \label{eq:KNremainder1} 
\frac{1}{(2\pi i)^2} \oint_{\gamma_{0,1}} \frac{dz}{z}
	\int_{\Gamma} \frac{dw}{z-w} F(w) F(z)
		e^{N(\Phi_1(z) - \Phi_1(w))/2} 
	\end{equation}
	and
\begin{equation} \label{eq:KNremainder2} 
\frac{1}{(2\pi i)^2} \oint_{\gamma_{0,1}} \frac{dz}{z}
	\int_{\Gamma} \frac{dw}{z-w} F(w) (I_2- F(z))
		e^{N(\Phi_2(z) - \Phi_1(w))/2} 
	\end{equation}
are exponentially small as $N \to \infty$. Here we write  again
$\Phi_{1}$ and $\Phi_2$ instead of $\Phi_1(\cdot; \xi_1,\xi_2)$
and $\Phi_2(\cdot; \xi_1, \xi_2)$.

We move the contour $\gamma_{0,1}$ in \eqref{eq:KNremainder1} to the
steepest descent path $\gamma_{sd,1}$ through $z_{s,1}$, and we have
\begin{equation} \label{eq:RePhi1z} 
	\Re \Phi_1(z) \leq \Re \Phi_1(z_{s,1}), \qquad z \in \gamma_{sd,1}. 
	\end{equation}
The contour $\Gamma$ in \eqref{eq:KNremainder1} traverses the interval 
$(-\alpha^2, -\beta^2)$ twice, but in opposite directions. The integrand has no branching
on this interval, and therefore the two contributions cancel out.
We can thus replace $\Gamma$ by two contours $\Gamma_{1,1}$ and $\Gamma_{2,1}$
as in Lemma \ref{lem:lemma57} (d), where 
$\Gamma_{1,1}$ goes around $(-\infty, -\alpha^2]$, $\Gamma_{2,1}$ goes around
$[-\beta^2,0]$, and
\begin{equation} \label{eq:RePhi1w} 
	\Re \Phi_1(w) > \Re \Phi_1(z_{s,1}), \qquad w \in \Gamma_{1,1} \cup \Gamma_{2,1} 
	\end{equation}
since the contours are in $\Omega_1^+$, see \eqref{eq:defOmegajplus},
and $\Re \Phi_1(w) = (1-\xi_2) \log|w| + O(1)$ as $w \to \infty$.

By \eqref{eq:RePhi1z}, \eqref{eq:RePhi1w} the
factor $e^{N(\Phi_1(z)-\Phi_1(w))/2}$ is $O(e^{-cN})$ for some $c > 0$, uniformly 
for  $(z,w) \in \gamma_{sd,1} \times (\Gamma_{1,1} \cup \Gamma_{2,1})$. 
Thus \eqref{eq:KNremainder1} tends to $0$ at an exponential rate as $N \to \infty$.

We are going to apply a similar argument to \eqref{eq:KNremainder2}
and deform $\gamma_{0,1}$ to the steepest descent contour 
$\gamma_{sd,2}$
passing through $z_{s,2}$. 
We again replace $\Gamma$ by the union of two contours,
one going around $(-\infty,-\alpha^2]$ and the other around $[-\beta^2,0]$,
but now we are going to move these contours to the cuts, where we note
that $\lambda_{1,\pm} = \lambda_{2,\mp}$
and $\Phi_{1,\pm} = \Phi_{2,\mp}$ plus a purely imaginary constant
(depending on the precise branches of the logarithms that we choose in \eqref{eq:defPhi}).
We also note that there is no pole at $w=0$, and that $I_2-F(w)$
is the analytic continuation of $F(w)$ to the second sheet. 
Then we deform the contours further to $\Gamma_{1,2}$ and $\Gamma_{2,2}$ as in Lemma \ref{lem:lemma57} (d),
and we see that \eqref{eq:KNremainder2} is equal to
\begin{multline} \label{eq:KNremainder3}
- \frac{1}{(2\pi i)^2} \oint_{\gamma_{sd,2}} \frac{dz}{z}
	\int_{\Gamma_{1,2} \cup \Gamma_{2,2}} \frac{dw}{z-w}
		(I_2- F(w)) (I_2- F(z))
		e^{N( \Phi_2(z)-\Phi_2(w))/2}.
		\end{multline}
The minus sign comes since we use the orientation on $\Gamma_{1,2}$
and $\Gamma_{2,2}$ as in Figure \ref{fig:Contours4}.

Analogous to \eqref{eq:RePhi1z} and \eqref{eq:RePhi1w} we now have
\[ \Re \Phi_2(z) \leq \Re \Phi_2(z_{s,2}) <
	\Re \Phi_2(w), \qquad z \in \gamma_{sd,2}, \quad  
	w \in \Gamma_{1,2} \cup \Gamma_{2,2}, \]
and the integral \eqref{eq:KNremainder3} tends to $0$ at 
an exponential rate as $N \to \infty$.

This completes the proof of Theorem \ref{thm:gaslimit}.
\end{proof}

\subsection{Cusp points: proof of Theorem \ref{thm:cusplimit}}
\label{subsec:cusplimit}

For the proof of Theorem \ref{thm:cusplimit} we are going
to use \eqref{eq:KNremainder} once more.

In the scaling \eqref{eq:scaling1} of the parameters we have that the 
saddle points coalesce to a triple saddle point at $-\alpha^2$. 
We are going to deform $\gamma_{0,1}$ so that it comes close to $-\alpha^2$.
The main contribution to the integrals in \eqref{eq:KNremainder}
then comes from the triple saddle at $-\alpha^2$, as the integrands are
exponentially small if $w$ and/or $z$ are outside of a small neighborhood
of $-\alpha^2$. This follows as in the proof of the gas phase.

Hence, our task is to investigate how the entries in the integrals in \eqref{eq:KNremainder}
behave for $z$ and $w$ close to $-\alpha^2$. We do this in the following
two lemmas and their corollaries. 
Besides the constants $c_1$ and $c_2$ from \eqref{eq:constants1} we also use
\begin{equation} \label{eq:constants2}
	c_0 = \frac{\alpha + \beta}{\sqrt{2}}.
\end{equation}
\begin{lemma} \label{lem:Phi12}
We have
\begin{multline} \label{eq:Phi12nearsaddle}
	 \Phi_{1,2}(z; \xi_1, \xi_2) \\
 	= 2 \log(\alpha^2+1) - (1+ \xi_2) \log(\alpha^2)
 	+ (1-\xi_2 \pm \xi_1) \pi i \sgn(\Im z)
 	\\
 	\pm \frac{2 \sqrt{\alpha-\beta}}{\sqrt{\alpha+\beta}}
 		\xi_1 \left(\frac{z+\alpha^2}{\alpha^2}\right)^{1/2}
 	+ (\xi_2 - \xi_2^*) \left( \frac{z+\alpha^2}{\alpha^2} \right)
 	+  \frac{1}{(\alpha+\beta)^2} \left(\frac{z+\alpha^2}{\alpha^2}\right)^2 \\
 	+ \xi_1 O\left(z+\alpha^2\right)
 	+ (\xi_2 - \xi_2^*) O\left((z+\alpha^2)^2 \right)
 	+ O\left((z+\alpha^2)^3\right). 
 	\end{multline}
The sign $\pm$ means that $+$ applies to $\Phi_1$ 
and $-$ applies  to $\Phi_2$.
\end{lemma}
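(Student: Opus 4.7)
The plan is a direct Puiseux expansion of $\Phi_1$ and $\Phi_2$ around the branch point $z=-\alpha^2$, using the local parameter $u=\bigl((z+\alpha^2)/\alpha^2\bigr)^{1/2}$ on each sheet of $\mathcal R$. First I would expand $y=\sqrt{z(z+\alpha^2)(z+\beta^2)}=\mp\alpha^2\sqrt{\alpha^2-\beta^2}\,u\,(1+O(u^2))$, with the sign chosen separately on each sheet to match the global branch convention of Remark~\ref{rem:RS}, and propagate this through $\rho_{1,2}(z)=(\alpha+\beta)z\pm y=-(\alpha+\beta)\alpha^2\bigl[1\pm Bu-u^2+O(u^3)\bigr]$, where $B=\sqrt{(\alpha-\beta)/(\alpha+\beta)}$.

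The key algebraic input is the identity $\alpha\beta=1$, equivalently $(\alpha+\beta)\alpha=\alpha^2+1$, which makes the leading constant of $\lambda_{1,2}=\rho_{1,2}^2/(z(z-1)^2)$ at $z=-\alpha^2$ collapse to $-1$ (confirming that $-\alpha^2$ is also a branch point for $\log\lambda$) and further simplifies $(3\alpha^2+1)/(\alpha^2+1)=(3\alpha+\beta)/(\alpha+\beta)$. A short computation then yields
\begin{equation*}
\lambda_{1,2}(z)\;=\;-\bigl(1\pm 2Bu+2B^2u^2+O(u^3)\bigr),
\end{equation*}
and taking the logarithm produces a crucial cancellation, $\log(1\pm 2Bu+2B^2u^2)=\pm 2Bu+2B^2u^2-\tfrac12(2Bu)^2+O(u^3)=\pm 2Bu+O(u^3)$, so
\begin{equation*}
\log\lambda_{1,2}(z)\;=\;\pm i\pi\,\sgn(\Im z)\pm 2Bu+O(u^3),
\end{equation*}
where the branches of $\log(-1)$ are pinned by continuity from $z=\infty$ together with $\log\lambda_1+\log\lambda_2=0$ coming from $\lambda_1\lambda_2=1$ in Lemma~\ref{lem:lambdarho}(d).

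Finally, I assemble $\Phi_{1,2}=2\log(z-1)-(1+\xi_2)\log z+\xi_1\log\lambda_{1,2}$ by combining the above with the principal-branch expansions $\log(z-1)=\log(\alpha^2+1)+i\pi\sgn(\Im z)-\alpha^2u^2/(\alpha^2+1)-\alpha^4u^4/(2(\alpha^2+1)^2)+O(u^6)$ and $\log z=\log(\alpha^2)+i\pi\sgn(\Im z)-u^2-u^4/2+O(u^6)$. The imaginary constants combine to $(1-\xi_2\pm\xi_1)i\pi\sgn(\Im z)$; the $u$-coefficient is $\pm 2B\xi_1$, matching the stated $\pm\tfrac{2\sqrt{\alpha-\beta}}{\sqrt{\alpha+\beta}}\xi_1$; the $u^2$-coefficient collapses to $\xi_2-(\alpha^2-1)/(\alpha^2+1)=\xi_2-\xi_2^*$; and writing $1+\xi_2=(1+\xi_2^*)+(\xi_2-\xi_2^*)$ and placing the second piece into the $(\xi_2-\xi_2^*)O((z+\alpha^2)^2)$ remainder, the $u^4$-coefficient simplifies via $(\alpha^2+1)^2=\alpha^2(\alpha+\beta)^2$ and $1+\xi_2^*=2\alpha/(\alpha+\beta)$ to $\alpha\beta/(\alpha+\beta)^2=1/(\alpha+\beta)^2$. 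The $\xi_1$-dependent cubic and higher contributions from $\log\lambda_{1,2}$ absorb into $\xi_1\,O(z+\alpha^2)$, and all remaining higher-order pieces into $O((z+\alpha^2)^3)$.

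The main obstacle is not depth but bookkeeping: choosing the branches of $y$, $u$, and $\log(-1)$ on each sheet so that the ``$+$'' and ``$-$'' in the expansion attach to $\Phi_1$ and $\Phi_2$ in the manner the statement requires. The substantive miracle is that every one of the nontrivial simplifications---the value $\lambda_{1,2}(-\alpha^2)=-1$, the vanishing of the $u^2$-term of $\log\lambda_{1,2}$, and the collapse of the $u^4$-coefficient of $\Phi_{1,2}$---flows from the single identity $\alpha\beta=1$.
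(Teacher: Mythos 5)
Your proposal is correct and follows essentially the same route as the paper: a direct local (Puiseux) expansion of the three logarithmic terms at the branch point $z=-\alpha^2$, with $\Phi_2$ obtained from $\Phi_1$ by the sign flip coming from $\log\lambda_2=-\log\lambda_1$. The only organizational difference is that the paper shortcuts the quadratic and quartic bookkeeping by splitting off $\Phi_1(\cdot\,;0,\xi_2^*)$ and using that $-\alpha^2$ is its critical point, whereas you expand $\log(z-1)$, $\log z$, and $\log\lambda_{1,2}$ separately and verify the same cancellations (all driven by $\alpha\beta=1$) by hand; both are valid and your sharper $O(u^3)$ control of $\log\lambda_{1,2}$ is more than the lemma requires.
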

\begin{proof}
We recall 
\begin{equation}  \label{eq:Phi1sum}
\begin{aligned}
  \Phi_1(z; \xi_1, \xi_2) 
	& = 2 \log(z-1) - (1+\xi_2) \log z + \xi_1 \log \lambda_1(z) \\
	& = \Phi_1(z; 0, \xi_2^*) - (\xi_2 - \xi_2^*) \log z
		+ \xi_1 \log \lambda_1(z). 
\end{aligned} \end{equation}
All terms are multi-valued because of the logarithms. 
We take principal branches of $\log(z-1)$ and $\log z$, and the analytic 
branch of $\log \lambda_1(z)$ in  $\mathbb C \setminus ((-\infty,0] \cup [1, \infty))$
that is real and positive on $(0,1)$ (recall $\lambda_1(x) > 1$ for $x \in (0,1)$).

Since $\Phi_1(z;0,\xi_2^*)$ has a critical point at $z=-\alpha^2$
with second derivative equal to
\[ \frac{d^2}{dz^2} \Phi_1(z;0,\xi_2^*) = \frac{2}{\alpha^4 (\alpha+ \beta)^2} \]
we have as $z \to -\alpha^2$,
\begin{multline} \label{eq:term1} 
	\Phi_1(z;0,\xi_2^*) 
	=  
	2\log(\alpha^2+1) - (1+\xi_2^*) \log (\alpha^2) +
	(1-\xi_2^*) \pi i \sgn(\Im z) \\
		+ \frac{1}{(\alpha+\beta)^2} \left(\frac{z+\alpha^2}{\alpha^2} \right)^2 
			+ O\left((z+\alpha^2)^3\right). 
\end{multline}
By an expansion of $\log z$ around $z= -\alpha^2$,
\begin{equation} \label{eq:term2}
	\log z
	=  \log (\alpha^2) + \pi i \sgn(\Im z) - 
	\frac{z+\alpha^2}{\alpha^2} + O\left((z+\alpha^2)^2\right). 
		\end{equation}
	
From the formula \eqref{eq:rho12} for $\rho_{1}$
\[ \rho_{1}(z) = - \alpha^2(\alpha + \beta)
	- \alpha \sqrt{\alpha^2-\beta^2} (z+\alpha^2)^{1/2}
		+ O\left(z+ \alpha^2\right) \]
and then from the formula \eqref{eq:lambda12} for $\lambda_{1}$ we get
\[ \lambda_{1}(z) = -1 -
	2\frac{ \sqrt{\alpha-\beta}}{\sqrt{\alpha+\beta}}
	\left(\frac{z+\alpha^2}{\alpha^2} \right)^{1/2} + O(z+ \alpha^2), \]
which confirms the fact that $\lambda_{1}(x) < -1$
for $x \in (-\alpha^2, -\beta^2)$ as stated in \eqref{eq:lambdareal3} of 
Lemma \ref{lem:lambdarho}~(e). 
Our choice of $\log \lambda_1(z)$ is such that 
$\log \lambda_1(z) \to \pm \pi i$ as $z \to -\alpha^2$ with $ \pm \Im z > 0$,
Hence
\begin{equation} \label{eq:term3} 
	\log \lambda_{1}(z)
	= \pi  i \sgn(\Im z) + 2
	\frac{\sqrt{\alpha-\beta}}{\sqrt{\alpha+\beta}} 
	\left( \frac{z+\alpha^2}{\alpha^2} \right)^{1/2} 
	+ O(z+ \alpha^2) \end{equation}
as $z \to \infty$.
Using \eqref{eq:term1}, \eqref{eq:term2}, and \eqref{eq:term3}
in \eqref{eq:Phi1sum} we find the expansion \eqref{eq:Phi12nearsaddle} for $\Phi_1$.

The expansion for $\Phi_2$ also follows, since
$\lambda_2 = 1/\lambda_1$ by Lemma \ref{lem:lambdarho}~(d), which means that
$\log \lambda_2 = - \log \lambda_1$, and so
$\Phi_2$ is obtained from $\Phi_1$ by simply changing the
sign of $\xi_1$.
\end{proof}

\begin{corollary} \label{cor:Phi12scaling}
Suppose
\begin{equation} \label{eq:scaling2}
	\frac{z+\alpha^2}{\alpha^2} = c_0 s N^{-1/2}, \quad
 	\frac{w+\alpha^2}{\alpha^2} = c_0 t N^{-1/2}, 
\end{equation}
with constant $c_0$ as in \eqref{eq:constants2}. Then under
the scaling assumptions \eqref{eq:scaling1}, \eqref{eq:constants1}, 
we have 
\begin{multline} \label{eq:expPhizPhiw}
	\exp \left(N( \Phi_{1,2}(z; \xi_1, \xi_2) 
	- \Phi_1(w;\xi_1', \xi_2'))/2 \right) \\
	= (-1)^{(\Delta n-\Delta m)/2} (\pm 1)^{m} \alpha^{\Delta n} 
	  \frac{e^{\pm u s^{1/2} + \frac{1}{2} vs + \frac{1}{4} s^2}}
	  {e^{u't^{1/2} +\frac{1}{2} v't + \frac{1}{4} t^2}}
		\left(1  + O(N^{-1/4}) \right)
 \end{multline}
as $N \to \infty$.
\end{corollary}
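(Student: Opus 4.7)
The plan is to substitute the local expansion of $\Phi_{1,2}$ from Lemma~\ref{lem:Phi12} into $\tfrac{N}{2}\bigl(\Phi_{1,2}(z;\xi_1,\xi_2) - \Phi_1(w;\xi_1',\xi_2')\bigr)$, apply the scalings \eqref{eq:scaling1} and \eqref{eq:scaling2}, and identify the resulting expression term by term. I would partition the terms of the expansion into three groups: (i) the logarithmic constants, (ii) the imaginary constants $(1-\xi_2\pm\xi_1)\pi i\,\sgn(\Im z)$, and (iii) the analytic corrections in half-integer powers of $(z+\alpha^2)/\alpha^2$.

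For group (iii), plugging in $\xi_1 = c_1 u N^{-3/4}(1+o(1))$, $\xi_2-\xi_2^* = c_2 v N^{-1/2}(1+o(1))$, and $(z+\alpha^2)/\alpha^2 = c_0 s N^{-1/2}$, the square-root, linear, and quadratic terms of $\tfrac{N}{2}\Phi_{1,2}$ limit respectively to $\pm u s^{1/2}$, $\tfrac{1}{2}vs$, and $\tfrac{1}{4}s^2$. This reduces to the three algebraic identities $c_1\sqrt{c_0} = \sqrt{(\alpha+\beta)/(\alpha-\beta)}$, $c_0 c_2 = 1$, and $c_0^2 = (\alpha+\beta)^2/2$, each an immediate consequence of \eqref{eq:constants1}--\eqref{eq:constants2}. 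The analogous computation applied to $\Phi_1(w;\xi_1',\xi_2')$ (always with the $+$ sign) produces the reciprocal factor $e^{u't^{1/2}+\tfrac{1}{2}v't+\tfrac{1}{4}t^2}$. For group (i), the $2\log(\alpha^2+1)$ terms cancel in the difference, and the $-(1+\xi_2)\log\alpha^2$ terms combine through $\xi_2'-\xi_2 = \Delta n/N$ to produce $\Delta n\log\alpha$, yielding the prefactor $\alpha^{\Delta n}$.

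For group (ii), substituting $N(1-\xi_2) = 2N-n$ and $N\xi_1 = m-N$ rewrites $\tfrac{N}{2}$ times the imaginary constant of $\Phi_1$ as $\tfrac{\pi i}{2}(N+m-n)\sgn(\Im z)$ and that of $\Phi_2$ as $\tfrac{\pi i}{2}(3N-m-n)\sgn(\Im z)$. The parity hypotheses ($N$, $m+n$, $m'+n'$ all even) force both combinations to be integer multiples of $\pi i$, so exponentiation yields signs $\pm 1$. Taking the difference with the analogous $w$-term and using that $\Delta n-\Delta m$ is even (a consequence of the same parities) produces $(-1)^{(\Delta n-\Delta m)/2}$ in both cases, with a surviving factor $e^{-\pi i m}=(-1)^m$ appearing only in the $\Phi_2$ case from the identity $e^{\pi i N}=1$. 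A quick verification over the four choices of $\sgn(\Im z),\sgn(\Im w)\in\{\pm 1\}$ confirms that the answer is independent of these signs, once the parities of $m+n$ and $m'+n'$ are used.

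The error bound follows directly from Lemma~\ref{lem:Phi12}: the remainder terms $\xi_1 O(z+\alpha^2)$, $(\xi_2-\xi_2^*)O((z+\alpha^2)^2)$, and $O((z+\alpha^2)^3)$ scale, under \eqref{eq:scaling1}--\eqref{eq:scaling2}, as $O(N^{-5/4})$, $O(N^{-3/2})$, $O(N^{-3/2})$, so after multiplication by $N/2$ the dominant error is $O(N^{-1/4})$, which upon exponentiation becomes the advertised $(1+O(N^{-1/4}))$. The main obstacle is the bookkeeping in (ii): the individual contributions $\tfrac{\pi i}{2}(2N-n)\sgn(\Im z)$ and $\tfrac{\pi i}{2}(m-N)\sgn(\Im z)$ can separately be half-integer multiples of $\pi i$ and hence fourth roots of unity; it is only their sum, thanks to the parity assumption $m+n$ even, that becomes an integer multiple of $\pi i$ and collapses to a clean sign, and it is precisely this mechanism that distinguishes the $\Phi_1$ and $\Phi_2$ cases via the $(\pm 1)^m$ factor in the final formula.
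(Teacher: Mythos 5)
Your proposal is correct and follows essentially the same route as the paper's proof: substitute the expansion of Lemma \ref{lem:Phi12} under the scalings \eqref{eq:scaling1}--\eqref{eq:scaling2}, match the algebraic terms to $\pm us^{1/2}+\tfrac12 vs+\tfrac14 s^2$ via the constants $c_0,c_1,c_2$, extract $\alpha^{\Delta n}$ from the $\log\alpha^2$ terms, and reduce the imaginary constants to the sign $(-1)^{(\Delta n-\Delta m)/2}(\pm1)^m$ using the parity of $N$, $m+n$, $m'+n'$. Your explicit verification that the result is independent of the choices of $\sgn(\Im z)$ and $\sgn(\Im w)$ is a point the paper only asserts in passing, and your sign bookkeeping (including the $(-1)^m$ surviving only in the $\Phi_2$ case) agrees with the paper's computation.
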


\begin{proof}
From Lemma \ref{lem:Phi12} and the scalings \eqref{eq:scaling1} and \eqref{eq:scaling2} 
we find after straighforward calculations 
\begin{multline} \label{eq:Phiz11}
 \Phi_{1,2}(z; \xi_1, \xi_2)
	= 2 \log (\alpha^2+1) - (1+\xi_2) \log(\alpha^2)
		+ (1-\xi_2 \pm \xi_1) \pi i \sgn(\Im z) \\
		+ \left(\pm 2 u s^{1/2} 
		+ vs + \frac{1}{2} s^2 \right) N^{-1} + O(N^{-5/4})
 \end{multline}
and similarly for $\Phi_1(w; \xi_1', \xi'_2)$. Thus for $\Im z > 0$,
$\Im w > 0$,
\begin{multline} \label{eq:PhizPhiw}
 \Phi_{1,2}(z; \xi_1, \xi_2)  - \Phi_1(w;\xi_1', \xi_2') \\
	=  (\xi_2'- \xi_2) \log(\alpha^2)
		+ (-\xi_1' \pm \xi_1 + \xi_2'-\xi_2)  \pi i \\
		+ \left(2 (\pm u s^{1/2} - u't^{1/2}) 
		+ vs - v't + \frac{1}{2} (s^2-t^2) \right) N^{-1} + O(N^{-5/4}).
 \end{multline}
We note
\begin{equation} \label{eq:prefactor1} 
	e^{\frac{N}{2}(\xi_2'- \xi_2) \log(\alpha^2)}
	= e^{\frac{1}{2}(n'-n) \log (\alpha^2)}
	= \alpha^{\Delta n} 
	\end{equation}
since $\xi_2 = n/N-1$ and $\xi_2' = n'/N-1$, and
\begin{align} \label{eq:prefactor2}
	e^{\frac{N}{2}(-\xi_1' \pm \xi_1 + \xi_2'-\xi_2) \pi i}
	= e^{\frac{1}{2}(-m' \pm m + n'-n)) \pi i}
\end{align}
since also $\xi_1 = m/N-1$ and $\xi_2' = m'/N-1$.
Recall $m+n$ and $m'+n'$ are even, and $-m' \pm m +n' -n$
is even as well. Thus 
\begin{equation} \label{eq:prefactor3}
e^{\frac{N}{2}(-\xi_1' \pm \xi_1 + \xi_2'-\xi_2)} 
	= (-1)^{(\Delta n - \Delta m)/2} (\pm 1)^m.
	\end{equation}
The expansion \eqref{eq:expPhizPhiw} is immediate from
\eqref{eq:PhizPhiw}, \eqref{eq:prefactor1}, and \eqref{eq:prefactor3}.

We derived \eqref{eq:expPhizPhiw} for $\Im z > 0$ and $\Im w > 0$.
Similar calculations give \eqref{eq:expPhizPhiw} for
other combinations of signs of $\Im z$ and $\Im w$. 
\end{proof}

We also need the behavior of $F(z)$ near the saddle $z=-\alpha^2$.
\begin{lemma}
We have the following.
\begin{enumerate}
\item[\rm (a)] As $z \to -\alpha^2$,
\begin{multline} \label{eq:FwFz0}
 F(z) =   \frac{\sqrt{\alpha-\beta}}{2 
	\sqrt{\alpha+\beta}} \begin{pmatrix} 1 & 1 \\ -1 & -1 \end{pmatrix}
	\left( \frac{z+\alpha^2}{\alpha^2} \right)^{-1/2}
		+ \frac{1}{2} I_2 + O\left((z+\alpha^2)^{1/2}\right). 
\end{multline}
\item[\rm (b)] As $w,z \to -\alpha^2$,
\begin{multline} \label{eq:FwFz1}
	F(w) F(z) \\
	=  \frac{\sqrt{\alpha-\beta}}{4 \sqrt{\alpha+\beta}} 
	 \begin{pmatrix} 1 & 1 \\ -1 & -1 \end{pmatrix}
	 \left( \left( \frac{w + \alpha^2}{\alpha^2} \right)^{-1/2} + 
	 \left(\frac{z+\alpha^2}{\alpha^2} \right)^{-1/2} \right) 
		+ O(1). \end{multline}
\item[\rm (c)] As $w,z \to -\alpha^2$,
\begin{multline} \label{eq:FwFz2}
	F(w) (I_2 -  F(z)) \\
	= 
	 \frac{\sqrt{\alpha-\beta}}{4 \sqrt{\alpha+\beta}} 
	 \begin{pmatrix} 1 & 1 \\ -1 & -1 \end{pmatrix}
	 \left( \left(\frac{w + \alpha^2}{\alpha^2} \right)^{-1/2} 
	 - \left(\frac{z+\alpha^2}{\alpha^2} \right)^{-1/2} \right) 
		+ O(1). 
		\end{multline}
\end{enumerate}	
\end{lemma}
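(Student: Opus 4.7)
The approach is a direct local expansion of \eqref{eq:defF} at the branch point $z=-\alpha^2$; parts (b) and (c) then follow from (a) by multiplying out, crucially exploiting the fact that the leading singular matrix is nilpotent.

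For (a), I would expand the two ingredients of \eqref{eq:defF} separately. The polynomial matrix $\begin{pmatrix} (\alpha-\beta)z & \alpha(z+1) \\ \beta z(z+1) & -(\alpha-\beta)z \end{pmatrix}$ is analytic, and evaluating at $z=-\alpha^2$ using $\alpha\beta=1$ (so that $\alpha^2(\alpha-\beta)=\alpha(\alpha^2-1)$ and $\alpha^2\beta=\alpha$) collapses all four entries to $\pm\alpha(1-\alpha^2)$. The result is the rank-one matrix $\alpha(1-\alpha^2)\,M$, where $M:=\begin{pmatrix} 1 & 1 \\ -1 & -1 \end{pmatrix}$. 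The square root factors near $-\alpha^2$ as $\sqrt{z(z+\alpha^2)(z+\beta^2)} = (z+\alpha^2)^{1/2}\,g(z)$ with $g$ analytic and $g(-\alpha^2)=-\alpha\sqrt{\alpha-\beta}\sqrt{\alpha+\beta}$; the minus sign is the one consistent with the global branch convention (positive for real $z>0$), equivalently pinned down by the fact from Lemma \ref{lem:lambdarho}(e) that $\lambda_1(x)<\lambda_2(x)<0$ on $(-\alpha^2,-\beta^2)$. Substituting, rewriting $(z+\alpha^2)^{-1/2}=\alpha^{-1}\bigl((z+\alpha^2)/\alpha^2\bigr)^{-1/2}$, and using $\alpha^2-1=\alpha(\alpha-\beta)$, the leading constant simplifies to precisely $\tfrac{\sqrt{\alpha-\beta}}{2\sqrt{\alpha+\beta}}$, giving the singular term of (a); the $\tfrac12 I_2$ is inherited from the first term of \eqref{eq:defF}, and the remainder is $O((z+\alpha^2)^{1/2})$.

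The key to (b) and (c) is the observation that $M^2=0_2$. Abbreviating the expansion from (a) as $F(z)=CM\,\tau(z)^{-1/2}+\tfrac12 I_2+O(\tau(z)^{1/2})$ with $\tau(z):=(z+\alpha^2)/\alpha^2$ and $C:=\tfrac{\sqrt{\alpha-\beta}}{2\sqrt{\alpha+\beta}}$, the ostensibly most singular contribution $C^2 M^2\,\tau(w)^{-1/2}\tau(z)^{-1/2}$ to $F(w)F(z)$ simply drops out. The only singular terms that survive arise from multiplying each $CM\,\tau^{-1/2}$ factor by the constant $\tfrac12 I_2$ on the opposite side, producing exactly $\tfrac{C}{2}M\bigl(\tau(w)^{-1/2}+\tau(z)^{-1/2}\bigr)$; all other products carry a compensating $\tau^{1/2}$ and are $O(1)$. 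This establishes (b). For (c), I would simply write $F(w)(I_2-F(z))=F(w)-F(w)F(z)$ and substitute the expansion from (a) for $F(w)$ and the result of (b) for $F(w)F(z)$: the full $CM\,\tau(w)^{-1/2}$ singularity of $F(w)$, minus the $\tfrac{C}{2}M\,\tau(w)^{-1/2}$ coming from $F(w)F(z)$, reduces to $\tfrac{C}{2}M\,\tau(w)^{-1/2}$, while $-F(w)F(z)$ contributes the $-\tfrac{C}{2}M\,\tau(z)^{-1/2}$ that accounts for the sign change; the $\tfrac12 I_2$ pieces collapse into the $O(1)$ remainder.

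The whole calculation is essentially algebraic once the branch sign of the square root is correctly identified, and the main opportunity for a sign error lies precisely there — everything else is bookkeeping. Once $g(-\alpha^2)=-\alpha\sqrt{\alpha-\beta}\sqrt{\alpha+\beta}$ is in hand, the identities $\alpha\beta=1$, $\alpha^2-1=\alpha(\alpha-\beta)$, and $M^2=0$ conspire to produce the clean expressions of (a)--(c) without further work.
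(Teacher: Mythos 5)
Your proof is correct and is essentially the paper's own argument: part (a) is a direct expansion of \eqref{eq:defF} at $z=-\alpha^2$, the only delicate point being that the branch of $\sqrt{z(z+\alpha^2)(z+\beta^2)}$ that is positive for $z>0$ factors as $(z+\alpha^2)^{1/2}g(z)$ with $g(-\alpha^2)=-\alpha\sqrt{\alpha^2-\beta^2}$ (the \emph{negative} root), exactly as you determine, and parts (b), (c) then follow from the nilpotency $F_0^2=0_2$ of $F_0=\left(\begin{smallmatrix}1&1\\-1&-1\end{smallmatrix}\right)$. Your bookkeeping with $\alpha\beta=1$ and $\alpha^2-1=\alpha(\alpha-\beta)$ checks out, so there is nothing to add.
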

	
\begin{proof}
Part (a) follows from \eqref{eq:defF} where we have to take
into account that  $\sqrt{z(z+\beta^2)}$ is the negative square
root for $z=-\alpha^2$.

Parts (b) and (c) follow from part (a) since 
$F_0 = \begin{pmatrix} 1 & 1 \\ -1 & -1 \end{pmatrix}$ is a 
nilpotent matrix, i.e., $F_0^2 = 0_2$.
\end{proof}

\begin{corollary} \label{cor:FwFzscaling}
Under the same scaling  \eqref{eq:scaling2} 
as in Corollary \ref{cor:Phi12scaling} we have
\begin{equation} \label{eq:FwFz3}
	F(w) F(z) 
	=  \frac{2^{1/4} \sqrt{\alpha-\beta}}{4(\alpha+\beta)} 
	 \begin{pmatrix} 1 & 1 \\ -1 & -1 \end{pmatrix}
	 \left( s^{-1/2} + t^{-1/2} \right) N^{1/4}  
		+ O(1) \end{equation}
and
\begin{equation}  \label{eq:FwFz4}
	F(w) (I_2- F(z)) 
	=  \frac{2^{1/4} \sqrt{\alpha-\beta}}{4(\alpha+\beta)} 
	 \begin{pmatrix} 1 & 1 \\ -1 & -1 \end{pmatrix}
	 \left(- s^{-1/2} + t^{-1/2} \right) N^{1/4}  
		+ O(1) \end{equation}
		as $N \to \infty$.
\end{corollary}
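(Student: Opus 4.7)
The plan is that Corollary \ref{cor:FwFzscaling} is essentially a substitution exercise: the scaling \eqref{eq:scaling2} is designed to convert the square-root singularities $(z+\alpha^2)^{-1/2}$ and $(w+\alpha^2)^{-1/2}$ appearing in the preceding lemma into clean factors of $N^{1/4}$ times $s^{-1/2}$, $t^{-1/2}$. So I would simply insert \eqref{eq:scaling2} into \eqref{eq:FwFz1} and \eqref{eq:FwFz2}, and verify that the constant works out.

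The key computation is the following. With $\tfrac{z+\alpha^2}{\alpha^2} = c_0 s N^{-1/2}$ and $c_0 = \tfrac{\alpha+\beta}{\sqrt 2}$ as in \eqref{eq:constants2}, one has
\[
	\left(\frac{z+\alpha^2}{\alpha^2}\right)^{-1/2} = c_0^{-1/2} \, s^{-1/2} N^{1/4},
	\qquad c_0^{-1/2} = \frac{2^{1/4}}{\sqrt{\alpha+\beta}},
\]
and identically for the $w$-variable with $t$ in place of $s$. Multiplying the prefactor in \eqref{eq:FwFz1} by $c_0^{-1/2}$ gives
\[
	\frac{\sqrt{\alpha-\beta}}{4\sqrt{\alpha+\beta}} \cdot \frac{2^{1/4}}{\sqrt{\alpha+\beta}}
	= \frac{2^{1/4}\sqrt{\alpha-\beta}}{4(\alpha+\beta)},
\]
which is exactly the constant appearing in \eqref{eq:FwFz3} and \eqref{eq:FwFz4}. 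The matrix $\begin{pmatrix} 1 & 1 \\ -1 & -1\end{pmatrix}$ carries through unchanged from the lemma. For \eqref{eq:FwFz4} one only needs to note that the sign pattern (minus on the $z$-variable, plus on the $w$-variable) is already recorded in \eqref{eq:FwFz2}, so no further calculation is needed.

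Finally, the $O(1)$ remainder in \eqref{eq:FwFz1}--\eqref{eq:FwFz2} is a genuine $O(1)$ as $z,w \to -\alpha^2$, and since the scaling \eqref{eq:scaling2} sends $z,w$ to $-\alpha^2$ (with $s,t$ fixed) as $N \to \infty$, this remainder remains $O(1)$ uniformly on compact sets of $(s,t)$ bounded away from the origin, which is what is claimed in \eqref{eq:FwFz3} and \eqref{eq:FwFz4}. There is no real obstacle here: the preceding lemma has already done the analytic work of expanding $F$ around the branch point $-\alpha^2$, and Corollary \ref{cor:FwFzscaling} only rephrases that expansion in the coordinates $(s,t)$ adapted to the Pearcey scaling. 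The proof is therefore a short one-paragraph verification.
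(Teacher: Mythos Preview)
Your proposal is correct and follows exactly the approach of the paper, which simply says the corollary follows from inserting \eqref{eq:scaling2} into \eqref{eq:FwFz1} and \eqref{eq:FwFz2} and using the value \eqref{eq:constants2} of $c_0$. You have spelled out the constant computation and the handling of the $O(1)$ remainder more explicitly than the paper does, but the argument is the same.
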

\begin{proof}
This follows from inserting \eqref{eq:scaling2} into 
\eqref{eq:FwFz1} and \eqref{eq:FwFz2} and taking note of the
value \eqref{eq:constants2} for $c_0$.
\end{proof}

Now we are ready for the proof of Theorem \ref{thm:cusplimit}.

\begin{proof}[Proof of Theorem \ref{thm:cusplimit}] 
We deform the contour $\gamma_{0,1}$ in \eqref{eq:KNremainder} 
to the one shown  in Figure \ref{fig:defascdesc}. 
It consists of the steepest descent contour $\gamma_{sd,1}$ for
$\Phi_1( \cdot; 0, \xi_2^*)$  through $-\alpha^2$, 
but with a slight indentation around $-\alpha^2$ of size $O(N^{-1/2})$.
The steepest descent curve $\gamma_{sd,1}$ is actually a perfect 
circle passing through $-\alpha^2$ and $1$, since $\xi_1=0$ in the present situation.
Also $\Phi_1 = \Phi_2$, since $\xi_1=0$.
 
The contour $\Gamma$ in \eqref{eq:KNremainder} is split into two components,
as in the proof of Theorem \ref{thm:gaslimit}. We denote them by $\Gamma_1$
and $\Gamma_2$ as  shown in Figure \ref{fig:defascdesc}.
We reverse the orientation on both $\Gamma_1 \cup \Gamma_2$ and $\gamma_{0,1}$,
which does not change the values of the double integrals in \eqref{eq:KNremainder}.

	\begin{figure}[t]
		\begin{center}
			\begin{tikzpicture}
			\filldraw (0,0) circle (2pt);
			\filldraw (1,0) circle (2pt);
			\filldraw (-.75,0) circle (2pt);
			\filldraw[black] (-2,0) circle (2pt);
			\draw[very thick] (-.75,0)--(0,0);
			\draw[very thick] (-4,0)--(-2,0);
			\draw[help lines] (-4,0)--(4,0);
			\draw [red, line width=1.5mm,domain=0:163] plot ({1.5*cos(\x)-0.5}, {1.5*sin(\x)});
			\draw [red, line width=1.5mm,domain=197:360] plot ({1.5*cos(\x)-0.5}, {1.5*sin(\x)});
			\draw [red,->,line width = 1.5mm] (-.5,1.5)--(-0.45,1.5);
			\draw [red,line width = 1.5mm,domain=-85:85] plot ({.4*cos(\x)-2}, {.4*sin(\x)});
			\draw [blue,line width = 1.5mm,domain=-90:90] plot ({.2*cos(\x)-2}, {.2*sin(\x)});
			
			\draw[line width = 1.5mm, blue,->] (-2,.2)--(-3,.2);
			\draw[line width = 1.5mm, blue,-] (-4,.2)--(-3,.2);
			\draw[line width = 1.5mm, blue,-] (-2,-.2)--(-3,-.2);
			\draw[line width = 1.5mm, blue,->] (-4,-.2)--(-3,-.2);
			
			\draw [blue,line width=1.5mm,domain=90:270] plot ({.2*cos(\x)-.75}, {.2*sin(\x)});
			\draw [blue, line width=1.5mm,domain=-90:90] plot ({.2*cos(\x)}, {.2*sin(\x)});
			
			\draw[line width=1.5mm, blue,-] (0,.2)--(-.5,.2);
			\draw[line width=1.5mm, blue,-] (-.75,.2)--(-.3,.2);
			\draw[line width =1.5mm, blue,-] (-.3,-.2)--(0,-.2);
			\draw[line width=1.5mm, blue,->] (-0.75,-.2)--(-.15,-.2);
			
			\draw (-3.5,0.5) node {$\Gamma_1$};
			\draw (-0.5,0.5) node {$\Gamma_2$};
			\draw (1,1.1) node {$\gamma_{0,1}$};
			\draw (-2.3,-0.5) node {$-\alpha^2$};
			\draw (-0.9,-0.5) node {$-\beta^2$};
			\draw (0,-0.5) node {$0$};
			\draw (1.5,-0.3) node {$1$};
			\end{tikzpicture}
		\end{center}
		\caption{The  contours $\Gamma_1$ and $\Gamma_2$ and $\gamma_{0,1}$
		with the reversed orientation. 
		\label{fig:defascdesc} } 
	\end{figure}
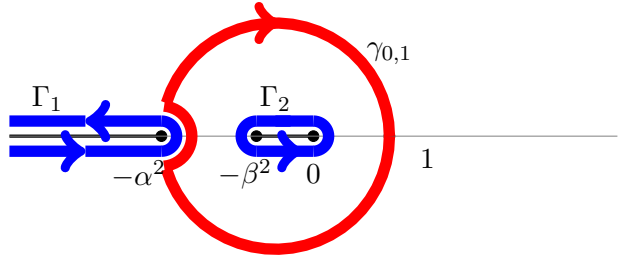
	
As in the proof of Theorem \ref{thm:gaslimit} we then have
\[ \Re \Phi_{1,2}(z)  < \Re \Phi_1(-\alpha^2) < \Re \Phi_1(w),
	 \]
whenever $z \in \gamma_{0,1}$ and 
$w \in (\Gamma_1 \setminus \{-\alpha^2\}) \cup \Gamma_2$.
From these inequalities we find that it is only a neighborhood of $-\alpha^2$ 
that contributes, and in particular the component $\Gamma_2$ that goes  
around $[-\beta^2,0]$ will not contribute to the limit.
Near $-\alpha^2$  we introduce the change of variables
\begin{equation} \label{eq:zwmapstost} 
	z= -\alpha^2 + \alpha^2 c_0 s N^{-1/2}, \quad 
	w = -\alpha^2 + \alpha^2 c_0 t N^{-1/2} 
\end{equation}
with $c_0$ as in \eqref{eq:constants2}. Then 
\begin{align} 
	\frac{dz dw}{z(z-w)}
	& = \frac{\alpha^2 c_0}{z}  \frac{ds dt}{s-t} \, N^{-1/2}
	\approx \frac{\alpha+\beta}{\sqrt{2}} \frac{ds dt}{t-s} \, N^{-1/2} 
	  \label{eq:dsdt} 
		\end{align}
where we used $z \sim -\alpha^2$ and the value \eqref{eq:constants2} for $c_0$.

From \eqref{eq:expPhizPhiw},  \eqref{eq:FwFz3}-\eqref{eq:FwFz4}
and \eqref{eq:dsdt}, we find that the leading order behavior for
the sum of the two double integrals in \eqref{eq:KNremainder} as 
$N \to \infty$ is  equal to 
\begin{multline} \label{eq:integralsSigma1Sigma2} 
	(-1)^{(\Delta n-\Delta m)/2}  \alpha^{\Delta n} 
	\frac{\sqrt{\alpha-\beta}}{4 \cdot 2^{1/4}} 
	\begin{pmatrix} 1 & 1 \\ - 1 & - 1 \end{pmatrix} N^{-1/4}
		\\	\times
	\left( \frac{1}{(2\pi i)^2} 
	\int_{s \in \Sigma_1} \int_{t \in \Sigma_2}
		\frac{e^{u s^{1/2} + \frac{1}{2} vs + \frac{1}{4} s^2}}
		{e^{u't^{1/2} + \frac{1}{2} v't + \frac{1}{4}t^2}}
			(s^{-1/2} + t^{-1/2})
			\frac{ds dt}{t-s} \right. \\
	\left. + (-1)^m
	\frac{1}{(2\pi i)^2} 
	\int_{s \in \Sigma_1} \int_{t \in \Sigma_2}
		\frac{e^{-u s^{1/2} + \frac{1}{2} vs + \frac{1}{4} s^2}}
		{e^{u't^{1/2} + \frac{1}{2} v't + \frac{1}{4}t^2}}
			(-s^{-1/2} + t^{-1/2})
			\frac{ds dt}{t-s} \right)
	\end{multline}
where $\Sigma_1$ and $\Sigma_2$ are as in Figure \ref{fig:PearceyContours2}.
$\Sigma_2$ is the image of $\Gamma_1$ under the mapping \eqref{eq:zwmapstost}.
It is a contour going around the negative real axis. $\Sigma_1$ arises from
applying \eqref{eq:zwmapstost} to the part of $\gamma_{0,1}$ in a small
disk around $-\alpha^2$, and deforming  and extending it so that 
$\Sigma_1$ is the imaginary axis with an indentation around $0$.

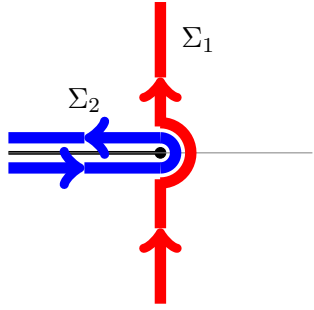
\begin{figure}[t]
\begin{center}
	\begin{tikzpicture}
	\filldraw[black] (-2,0) circle (2pt);
	\draw[line width = 0.5mm] (-4,0)--(-2,0);
	\draw[help lines] (-4,0)--(0,0);
	\draw [red,line width = 1.5mm,domain=-90:90] plot ({.4*cos(\x)-2}, {.4*sin(\x)});
	\draw [red,line width = 1.5mm,->] (-2,.35)--(-2,1);
	\draw [red, line width = 1.5mm,-] (-2,1)--(-2,2);
	\draw [red, line width = 1.5mm,->] (-2,-2)--(-2,-1);
	\draw [red, line width = 1.5mm,-] (-2,-1)--(-2,-.35);
	\draw [blue,line width = 1.5mm,domain=-90:90] plot ({.2*cos(\x)-2}, {.2*sin(\x)});
	\draw (-1.5,1.5) node {$\Sigma_1$};
	\draw (-3,.7) node {$\Sigma_2$};
	\draw[line width = 1.5mm, blue,->] (-2,.2)--(-3,.2);
	\draw[line width = 1.5mm, blue,-] (-4,.2)--(-3,.2);
	\draw[line width = 1.5mm, blue,-] (-2,-.2)--(-3,-.2);
	\draw[line width = 1.5mm, blue,->] (-4,-.2)--(-3,-.2);
	\end{tikzpicture}
\caption{The contours $\Sigma_1$ and $\Sigma_2$} \label{fig:PearceyContours2}
		\end{center}
	\end{figure}

We make a further change of variables by mapping 
$t = (t')^2$, $t' \in i \mathbb R$ in both integrals, and 
$s  = (s')^2$, $s' \in \Sigma$ in the first integral, but 
$s = (s')^2$, $s' \in (-\Sigma)$ in  the second double integral. 
Recall that $\Sigma$ and $-\Sigma$ are as in Figure \ref{fig:PearceyIntegrals}.
We find a sum of two double integrals, but the integrands are exactly the same.
We pick up a Jacobian $4s't'$ from the change  of variables,
and we use 
\[ \left((s')^{-1} + (t')^{-1}\right)  \frac{s' t' }{(t')^2-(s')^2} = \frac{1}{t'-s'} \]
to simplify the expression.
Then the integrand on the right-hand side of \eqref{eq:PearceyLimit1}
and \eqref{eq:PearceyLimit2} follows 
(provided we drop the accents from $s'$ and $t'$).

This completes the proof of Theorem \ref{thm:cusplimit}.
\end{proof}

\subsection{Cusp points: proof of Theorem \ref{thm:gaussianlimit}}
\label{subsec:gausslimit}

\begin{proof}
The proof of Theorem \ref{thm:gaussianlimit} is based on the 
integral representation  \eqref{eq:gaskernel2} for 
$\mathbb K_{gas}$. 

We already noted in Remark \ref{rem:decay}, that 
$\mathbb K_{gas}(m,n;m',n')$ tends to zero at
an exponential rate whenever $|\Delta m| + |\Delta n| \to \infty$.
In the situation of Theorem \ref{thm:gaussianlimit} we have
\begin{align*} 
	\Delta m & = (\xi_1'- \xi_1)N = c_1(u'-u) N^{1/4} + o(N^{1/4}), \\
	\Delta n & = (\xi_2'- \xi_2)N = c_2(v'-v) N^{1/2} + o(N^{1/2}), 
	\end{align*}
as $N \to \infty$.	Thus \eqref{eq:gaussianlimit} holds
if $v < v'$ or if $v = v'$ and $u \neq u'$.

So we assume $v > v'$. Then $\mathbb K_{gas}(m,n; m',n')$ 
still decays to $0$ as $N \to \infty$, but 
$\alpha^{-\Delta n}$ increases since $\Delta n < 0$. 
and the limit in 
\eqref{eq:gaussianlimit} will exist, as we shown next.

We argue similar to the proof of Theorem \ref{thm:cusplimit}. 
We start by deforming the contour $\gamma$ to a contour
going around the interval $(-\infty, -\alpha^2]$ as before. 
That is, we deform $\gamma$ to $\Gamma_1$ as in Figure \ref{fig:defascdesc} but with opposite direction. 
We can do this since for $\Delta n < 0$ there is enough decay 
of the integrand in \eqref{eq:gaskernel2} as $z \to \infty$.

It is then helpful to rewrite \eqref{eq:gaskernel2} to 
\begin{multline*}
\mathbb K_{gas} (m,n;m',n') \\
	=\begin{cases} \ds
 \frac{1}{2 \pi i} \int _{\gamma} F(z) e^{N (\Phi_1(z;\xi_1,\xi_2)-\Phi_1(z;\xi_1',\xi_2'))/2} \frac{dz}{z}, & 
 \text{if } \Delta m \geq 0, \\[10pt]
	\ds \frac{1}{2 \pi i} \int _{\gamma} (F(z)-I_2) e^{N (\Phi_2(z;\xi_1,\xi_2)-\Phi_2(z;\xi_1',\xi_2'))/2} \frac{dz}{z}, &
	\text{if } \Delta m <0.
\end{cases}
\end{multline*}
After a change of variables \eqref{eq:zwmapstost}, using the expansions \eqref{eq:Phiz11} and \eqref{eq:FwFz0}, and finally changing $s$ to $s^2$, all as in the proof of Theorem \ref{thm:cusplimit}, we find 
\begin{multline}
N^{1/4}(-1)^{(\Delta n-\Delta m)/2} \alpha^{-\Delta n}  \mathbb K_{gas}(m,n;m',n')\\
= 
\begin{cases} \ds
\frac{\sqrt{\alpha-\beta}}{2^{1/4} }\begin{pmatrix} 1& 1\\ -1 & -1\end{pmatrix} \frac{1}{2 \pi i} 
\int_{-i 	\infty}^{i \infty} e^{\frac12 s^2 (v-v')+ s (u-u')} ds +o(1),& \Delta m \geq 0,\\[10pt] 
\ds
\frac{(-1)^{\Delta m}
\sqrt{\alpha-\beta}}{2^{1/4}} \begin{pmatrix} 1& 1\\ -1 & -1\end{pmatrix} \frac{1}{2 \pi i}  
\int_{-i 	\infty}^{i \infty} e^{\frac12 s^2 (v-v')- s (u-u')} ds +o(1), &\Delta m<0,
\end{cases}
\end{multline}
as $N\to \infty$. 
The limit \eqref{eq:gaussianlimit} then follows after 
observing that the integral is a well-known representation 
of the gaussian 
\[
\int_{-i \infty}^{i \infty} e^{\frac12 s^2 (v-v')\pm s (u-u')} ds= \frac{\sqrt{2 \pi } i}{\sqrt{v-v'}}e^{-\frac{(u-u')^2}{v-v'}},
\qquad \text{ for } v > v'.
\]
This concludes the proof of Theorem \ref{thm:gaussianlimit}.
\end{proof}

\section{Acknowledgements}
We thank Sunil Chhita and Kurt Johansson for many
fruitful discussions. We are also grateful to Christophe Charlier for a careful reading of a preliminary manuscript. 

Most of this work was done when Arno Kuijlaars was a visiting
professor at KTH in spring semester of 2017, funded by
the Knut and Alice Wallenberg Foundation. 

Maurice Duits is supported by the grants 2012.3128 and
2016.05450 of the Swedish Research Council and by the 
G\"oran Gustafsson foundation. 

Arno Kuijlaars is supported by long term structural 
funding-Methusalem grant of  the Flemish Government, 
and via support of FWO Flanders through projects  
G.0864.16 and EOS G0G9118N.

\end{document}